\newtheorem{theorem}{Theorem}[subsection]
\newtheorem{lemma}[theorem]{Lemma}
\newtheorem{cor}[theorem]{Corollary}
\newtheorem{prop}[theorem]{Proposition}
\theoremstyle{definition}
\newtheorem{defn}[theorem]{Definition}
\newtheorem{hypothesis}[theorem]{Hypothesis}
\newtheorem{example}[theorem]{Example}
\newtheorem{remark}[theorem]{Remark}
\newtheorem{convention}[theorem]{Convention}
\newtheorem{notation}[theorem]{Notation}
\newcommand{\CC}{\mathbb{C}}
\newcommand{\DD}{\mathbb{D}}
\newcommand{\FF}{\mathbb{F}}
\newcommand{\QQ}{\mathbb{Q}}
\newcommand{\RR}{\mathbb{R}}
\newcommand{\ZZ}{\mathbb{Z}}
\newcommand{\be}{\mathbf{e}}
\newcommand{\bv}{\mathbf{v}}
\newcommand{\bw}{\mathbf{w}}
\newcommand{\calC}{\mathcal{C}}
\newcommand{\calH}{\mathcal{H}}
\newcommand{\calM}{\mathcal{M}}
\newcommand{\calO}{\mathcal{O}}
\newcommand{\calR}{\mathcal{R}}
\newcommand{\gothm}{\mathfrak{m}}
\newcommand{\gotho}{\mathfrak{o}}
\newcommand{\dual}{\vee}
\DeclareMathOperator{\alg}{alg}
\DeclareMathOperator{\an}{an}
\DeclareMathOperator{\Aut}{Aut}
\DeclareMathOperator{\bd}{bd}
\DeclareMathOperator{\coker}{coker}
\DeclareMathOperator{\End}{End}
\DeclareMathOperator{\Frac}{Frac}
\DeclareMathOperator{\Gal}{Gal}
\DeclareMathOperator{\GL}{GL}
\DeclareMathOperator{\Hom}{Hom}
\DeclareMathOperator{\inte}{int}
\DeclareMathOperator{\rank}{rank}
\DeclareMathOperator{\trdeg}{trdeg}
\numberwithin{equation}{theorem}
\begin{document}
\title[Structure of connections on nonarchimedean curves]{Local and global structure of connections on
nonarchimedean curves}
\author{Kiran S. Kedlaya}
\date{29 August 2014}
\thanks{The author was supported by NSF (grant DMS-1101343), DARPA (grant HR0011-09-1-0048),
MIT (NEC Fund, Cecil and Ida Green professorship), UCSD (Stefan E. Warschawski professorship).}

\begin{abstract}
Consider a vector bundle with connection on a $p$-adic analytic curve in the sense of Berkovich.
We collect some improvements and refinements of recent results on the structure of such connections,
and on the convergence of local horizontal sections. This builds on work from the author's 2010 book
and on subsequent improvements by Baldassarri and Poineau--Pulita. One key result exclusive to this paper is that the convergence polygon of a connection is locally constant around every type 4 point.
\end{abstract}

\maketitle

\section*{Introduction}

The theory of $p$-adic ordinary differential equations 
has been an active part of number theory ever since the pioneering
work of Dwork, starting with his $p$-adic analytic proof of the rationality aspect of the Weil conjectures circa 1960
(predating the development of \'etale cohomology). The subsequent half-century saw slow but substantial progress
on the question of convergence of solutions of $p$-adic differential equations; in that time, new spheres of application
(rigid cohomology, $p$-adic Hodge theory, numerical computation of zeta functions, $p$-adic dynamical systems) have attracted additional attention to the area. 
A broad survey of the theory of $p$-adic differential equations
has been given recently by the author in the book \cite{kedlaya-book}.

At about the time that \cite{kedlaya-book} was written, it was observed by Baldassarri
\cite{baldassarri, baldassarri-divizio}
that the classical theory
of $p$-adic differential equations could be rearticulated much more clearly using Berkovich's
language of analytic geometry over complete nonarchimedean fields. 
That is because the classical theory is heavily concerned with the convergence of local solutions
of $p$-adic differential equations around certain \emph{generic points}, which appear naturally
in Berkovich's framework on an equal footing with rigid analytic points.
In this language, one can also naturally treat general $p$-adic curves, not just subspaces
of the affine line, by using semistable models to obtain scaling parameters;
Baldassarri demonstrated this in \cite{baldassarri} by establishing continuity of the
radius of convergence for a differential module over a semistable $p$-adic curve.

The radius of convergence function for a differential module over a curve measures only the joint radius
of convergence of all local horizontal sections around a point. A finer invariant is the \emph{convergence
polygon}, a Newton polygon whose slopes record the extent to which there exist subspaces of the local horizontal
sections which converge on larger discs. Building on the results of \cite{kedlaya-book},
it has been shown recently by
Poineau and Pulita \cite{pulita-poineau2, pulita-poineau} 
that the convergence polygon is again a continuous function which factors through the retraction onto
some finite skeleton (as in the work of Payne \cite{payne}).
Informally, this means that the convergence of local horizontal sections is controlled by
finitely many numerical invariants. Another proof is included in this paper, while a simplified version of our proof will appear in \cite{baldassarri-kedlaya}. While formally different, these proofs share many common ingredients; for instance, our key Lemma~\ref{L:variation0 convexity} is materially equivalent to \cite[Proposition~7.5]{pulita-poineau}. In fact, the main difference between the arguments here and those in \cite{pulita-poineau} is that the combinatorial argument there is replaced by a compactness argument.

The purpose of this paper is to collect some results about differential modules on nonarchimedean analytic curves
over fields of characteristic 0 which refine and extend the aforementioned results as well as some
other results from \cite{kedlaya-book}. Here is a partial list of the new results of the present paper.
\begin{itemize}
\item
We make a finer analysis of refined differential modules over a field of analytic functions
than is made in \cite{kedlaya-book}; see \S\ref{subsec:fields2}.
This leads to results about refined differential modules on open annuli; see
\S\ref{subsec:annuli refined}.

\item
We provide more detailed discussion of the theory of exponents for differential modules on annuli satisfying
the Robba condition (existence of horizontal sections over any open disc); see \S\ref{sec:Robba}
and \S\ref{sec:Robba2}.

\item
We generalize the $p$-adic local monodromy theorem to arbitrary differential modules over an open annulus at one boundary, with no hypotheses on Frobenius structures or $p$-adic exponents; see \S\ref{sec:solvable}.

\item
We show that the convergence polygon of a differential module on a curve is constant locally around any
point of type 4; see \S\ref{subsec:solvable}. This strengthens the continuity theorem of \cite{pulita-poineau2, pulita-poineau3, pulita-poineau}. 

\item
We show that every curve admits a triangulation such that locally at any interior point, the connection decomposes
into a particularly simple form; see \S\ref{subsec:clean}. Such triangulations and decompositions can be used to give a global version
of the Christol-Mebkhout index formula; see \cite{pulita-poineau4, pulita-poineau5}
for some arguments along these lines.
\end{itemize}

As in \cite{kedlaya-book}, we have made an effort to maintain as much parity as possible between the cases
of zero and positive residual characteristic. One unavoidable complication in the latter case 
is the existence of some pathologies
in the theory of regular singularities caused by the existence of $p$-adic Liouville numbers (numbers which are
not integers but which admit extremely good integer approximations).
These complications generally emerge when considering cohomology; in this paper, we primarily limit ourselves to
statements of a ``precohomological''
nature, for which one can skirt these complications with some extra work.

Note that while many of the interesting applications of $p$-adic differential equations involve spaces of
dimension greater than 1, in this paper we follow the model of \cite{kedlaya-book} 
and confine attention to \emph{ordinary} $p$-adic differential equations. It is of course natural to consider
also higher-dimensional spaces; in so doing, one should be able to obtain a unification of some existing
work. Such work would include the study of good formal structures for formal flat meromorphic connections
\cite{kedlaya-goodformal1, kedlaya-goodformal2} in the case of zero residual characteristic and 
semistable reduction for overconvergent $F$-isocrystals \cite{kedlaya-semistable1, kedlaya-semistable2,
kedlaya-semistable3, kedlaya-semistable4} in the case of positive residual characteristic.

\subsection*{Acknowledgments}
Thanks to Francesco Baldassarri for arranging a visit to Padova in September 2012 during which some of this work
was completed. Thanks also to Matt Baker, Bruno Chiarellotto, Xander Faber, Andrea Pulita, and J\'er\^ome Poineau for helpful discussions.

\section{Preliminaries}

We begin with some assorted preliminary definitions and arguments.
This also provides an opportunity to set running notation for the whole paper.

\setcounter{theorem}{0}
\begin{notation}
Throughout the paper, let $K$ denote an \emph{analytic field} (a field equipped with a nonarchimedean multiplicative norm $|\cdot|$ with respect to which it is complete) of characteristic $0$.
Let $\gotho_K$, $\gothm_K$, and $\kappa_K$ denote the valuation subring, maximal ideal, and residue field of $K$,
respectively. Let $p$ denote the characteristic of $\kappa_K$;
put $\omega = 1$ if $p=0$ and $\omega = p^{-1/(p-1)}$ if $p>0$.
Let $\CC$ denote a completed algebraic closure of $K$.
\end{notation}

\subsection{A lemma on linear groups}

We need a bit of elementary analysis of linear groups in the spirit of Andr\'e's abstract analysis of filtrations
\cite{andre-filtrations}. This will be used to analyze the structure of the automorphism groups of certain Tannakian categories,
especially those generated by refined differential modules over fields
(\S \ref{subsec:fields2})
and solvable differential modules over annuli
(\S \ref{sec:solvable}).
For the formalism of Tannakian categories, including the Tannaka-Krein duality theorem,
see \cite{tannakian}.

\begin{lemma} \label{L:Jordan}
Let $F$ be a field of characteristic $0$. Fix a positive integer $n$ and let $G_0 \subseteq G_1 \subseteq \cdots$
be an increasing sequence of finite subgroups of $\GL_n(F)$ such that $G_i$ is normal in $G_j$ whenever $i \leq j$.
\begin{enumerate}
\item[(a)]
The union $G = \bigcup_{i=0}^\infty G_i$ contains an abelian normal
subgroup $H$ of finite index.
\item[(b)]
There exists an index $i$ such that $G/G_i$ is isomorphic to a subgroup of $(\QQ/\ZZ)^n$,
and in particular is abelian.
\end{enumerate}
\end{lemma}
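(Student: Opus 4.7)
The plan is to bootstrap from Jordan's classical theorem, which asserts that for each $n$ there is a constant $J(n)$ such that every finite subgroup of $\GL_n(F)$ in characteristic zero contains an abelian normal subgroup of index at most $J(n)$. I will build the abelian normal subgroup $H$ of (a) as an ascending union of compatibly chosen Jordan subgroups, and then derive (b) by simultaneously diagonalizing $H$ and exploiting the fact that $(\QQ/\ZZ)^n$ is unchanged, up to isomorphism, by quotienting by any finite subgroup.

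For part (a), I would let $\mathcal{A}_i$ denote the set of abelian normal subgroups of $G_i$ of index at most $J(n)$, which is nonempty by Jordan and finite. The key compatibility observation is that for $i \leq j$ and $A \in \mathcal{A}_j$, the intersection $A \cap G_i$ again lies in $\mathcal{A}_i$: it is clearly abelian, it is normal in $G_i$ because both $A$ and $G_i$ are normal in $G_j$, and the index bound follows from $G_i/(G_i \cap A) \cong G_i A/A \leq G_j/A$ together with the fact that $G_i A$ is a subgroup of $G_j$ (using normality of $G_i$ in $G_j$). This makes $\{\mathcal{A}_i\}$ an inverse system of nonempty finite sets; picking a point of the inverse limit gives $A_i \in \mathcal{A}_i$ satisfying $A_i = A_j \cap G_i$ for $i \leq j$. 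In particular $A_i \subseteq A_j$, so $H := \bigcup_i A_i$ is abelian; the same coherence yields $g A_i g^{-1} = A_i$ for any $g \in G_j$ with $j \geq i$, so $H$ is normal in $G$; and the injective transition maps $G_j/A_j \hookrightarrow G_{j+1}/A_{j+1}$ (injective because $G_j \cap A_{j+1} = A_j$) exhibit $G/H$ as an ascending union of groups of order at most $J(n)$, forcing $[G:H] \leq J(n)$.

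For part (b), since $[G:H]$ is finite, I would pick $i$ large enough that all finitely many coset representatives of $H$ in $G$ already lie in $G_i$, so that $G = HG_i$ and hence
\[ G/G_i = HG_i/G_i \cong H/(H \cap G_i) = H/A_i, \]
which is in particular abelian. To embed this in $(\QQ/\ZZ)^n$: the elements of $H$ are commuting and of finite order, hence semisimple, and can be simultaneously diagonalized over $\overline{F}$; this embeds $H$ into the torsion subgroup of $(\overline{F}^\times)^n$, which is isomorphic to $(\QQ/\ZZ)^n$ in characteristic zero. The final ingredient is that the quotient of $(\QQ/\ZZ)^n$ by any finite subgroup is again isomorphic to $(\QQ/\ZZ)^n$: the quotient is divisible torsion, and a short snake-lemma computation applied to multiplication by $p$ (using $p \cdot (\QQ/\ZZ)^n = (\QQ/\ZZ)^n$) shows its $p$-torsion has $\Fp$-rank exactly $n$ for every prime $p$. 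Composing, $G/G_i \cong H/A_i \hookrightarrow (\QQ/\ZZ)^n/A_i \cong (\QQ/\ZZ)^n$.

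The main obstacle will be the compatibility step: Jordan's theorem by itself only produces an abelian subgroup inside each $G_i$ with no coherence across $i$, and the union of an arbitrary such sequence need not be abelian. Once the coherent choice is made by the inverse-limit argument, the remainder rests only on classical facts — Jordan's theorem, simultaneous diagonalization of commuting semisimple matrices, and the structure of divisible abelian torsion groups.
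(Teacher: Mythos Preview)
Your proposal is correct and follows essentially the same route as the paper: Jordan's theorem gives a uniformly bounded family of abelian normal subgroups, an inverse-limit (K\"onig/Tikhonov) argument produces a coherent choice whose union is the desired $H$, and part (b) follows by embedding $H$ into the torsion of $(\overline{F}^\times)^n \cong (\QQ/\ZZ)^n$ and passing to the quotient. Your write-up is in fact more explicit than the paper's on several points (normality and finite index of $H$, the identification $H\cap G_i = A_i$, and the verification that $(\QQ/\ZZ)^n$ modulo a finite subgroup is again $(\QQ/\ZZ)^n$).
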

\begin{proof}
By Jordan's theorem on finite linear groups \cite[Chapter~36]{curtis-reiner}, 
there exists a constant $f(n)$ such that each $G_i$ contains an abelian normal
subgroup of index at most $f(n)$. Let $S_i$ be the set of abelian normal subgroups of $G_i$ of index at most $f(n)$.
For each $H_j \in S_j$ and each $i \leq j$, the map $G_i / (S_i \cap H_j) \to G_j/H_j$ is injective,
so $G_i \cap H_j \in S_i$.
We may thus assemble the sets $S_i$ into an inverse system via restriction, and the inverse limit is necessarily nonempty by Tikhonov's theorem. This proves (a).

Given (a), let $\overline{F}$ be an algebraic closure of $F$. Then $H$ is an abelian torsion group which embeds into $(\overline{F}^*)^n$. This implies that $H$ is isomorphic to a subgroup of
$(\QQ/\ZZ)^n$, as then is any quotient of $H$.
Note also that since the group $G/H$ is finite and is the union of its subgroups $G_i/(G_i \cap H)$, there must exist an index $i$ 
for which the inclusion $G_i/(G_i \cap H) \to G/H$ is bijective.
The group $G/G_i$ is then isomorphic to the abelian group $H/(G_i \cap H)$.
This proves (b).
\end{proof}

\begin{prop} \label{P:finite Tannakian}
Let $F$ be a field of characteristic $0$. Let $V$ be a finite-dimensional $F$-vector space.
Let $G$ be an algebraic subgroup of $\GL(V)$.
Let $\{G^r\}_{r \in \RR}$ be a family of normal algebraic subgroups of $G$.
For $r \geq -\infty$, put $G^{r+} = \cup_{s>r} G^s$. Assume also the following conditions.
\begin{enumerate}
\item[(a)]
For every $r,s \in \RR$ with $r \leq s$, $G^s$ is a normal subgroup of $G^r$.
\item[(b)]
For every $s \in \RR$, there exists $r < s$ such that $G^r = G^s$.
\item[(c)]
There exists $r \in \RR$ such that $G^r$ is the trivial group.
\item[(d)]
For every $r \in \RR$ for which $G^{r+}$ is finite 
and all nonnegative integers $g,h$, the $G^{r+}$-invariant subspace of $(V^\dual)^{\otimes g} \otimes V^{\otimes h}$
admits a direct sum decomposition into $G$-stable subspaces, each of which restricts to an
isotypical representation of $G^r/G^{r+}$.
\item[(e)]
In (d), the isotypical representations of $G^r/G^{r+}$ that occur all have finite image.
\item[(f)]
For all nonnegative integers $g,h$ and every one-dimensional
$G$-stable subspace $W$ of $(V^\dual)^{\otimes g} \otimes V^{\otimes h}$,
the image of $G^{-\infty+}$ in $\GL(W)$ is finite.
\end{enumerate}
Then $G^{-\infty+}$ is itself finite.
\end{prop}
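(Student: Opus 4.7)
The plan is to proceed in three stages. First, I would show by induction on $r$ that each $G^r$ is finite, the base case being hypothesis (c). For the inductive step, suppose $G^{r+}$ is finite. Since $V$ is a faithful $G^r$-representation and taking $G^{r+}$-invariants is exact in characteristic $0$ for the finite group $G^{r+}$, every representation of $G^r/G^{r+}$ is a subquotient of some $((V^\dual)^{\otimes g}\otimes V^{\otimes h})^{G^{r+}}$. Hypothesis (d) decomposes the latter into $G$-stable, $G^r/G^{r+}$-isotypical pieces, each with finite $G^r/G^{r+}$-image by (e); a standard Tannakian argument (the Tannakian group of the category generated by these invariants embeds into a product of finite images and admits a faithful finite-dimensional representation) then shows $G^r/G^{r+}$ is itself finite, hence so is $G^r$.

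Next, I would list the breaks of the filtration as $r_1>r_2>\cdots$. By (a), the resulting ascending chain $G^{r_1}\subseteq G^{r_2}\subseteq\cdots$ of finite subgroups satisfies the normality hypothesis of Lemma~\ref{L:Jordan}, whose part (b) yields an index $i_0$ such that $A:=G^{-\infty+}/G^{r_{i_0}}$ is abelian and embeds into $(\QQ/\ZZ)^n$. Since $G^{r_{i_0}}$ is already finite, it now suffices to show $A$ finite.

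The key structural consequence of (d) I would exploit is that any $G$-stable, $G^r/G^{r+}$-isotypical subspace must have $G$-invariant isotype, since otherwise $G$ would carry it to a component with a different isotype, contradicting $G$-stability. Iterating this observation across the breaks above $r_{i_0}$ (where each $G^{r_j}/G^{r_{j-1}}$ is a subgroup of $A$ and so abelian, forcing all isotypes to be $1$-dimensional characters), I would conclude that every character of $A$ appearing in any $((V^\dual)^{\otimes g}\otimes V^{\otimes h})^{G^{r_{i_0}}}$ is fixed by the $G$-conjugation action on $A$. This iteration, propagating the observation across the tower of nested invariant subspaces, is the main obstacle and must be handled with care. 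Once established, Tannakian faithfulness of the total module $\bigoplus_{g,h} ((V^\dual)^{\otimes g}\otimes V^{\otimes h})^{G^{r_{i_0}}}$ over $A$ implies that the visible characters separate the points of $A$. For each such $G$-invariant character $\chi$, its eigenspace $W_\chi$ is $G$-stable, and $\det W_\chi$ is a $1$-dimensional $G$-stable subspace of a tensor power on which $A$ acts via $\chi^{\dim W_\chi}$; hypothesis (f) then forces $\chi$ to have finite order on $A$. Such characters extend uniquely to algebraic characters of the quotient $\widetilde{G}/G^{r_{i_0}}$, where $\widetilde{G}$ is the Zariski closure of $G^{-\infty+}$ in $\GL(V)$, and so lie in the torsion subgroup of the finitely generated character lattice $X^*(\widetilde{G}/G^{r_{i_0}})$, which has some uniform exponent $N$. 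A finite subset of such separating characters then embeds $A$ into a finite product of groups of order dividing $N$, forcing $A$, and hence $G^{-\infty+}$, to be finite.
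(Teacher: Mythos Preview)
Your stage~1 contains the essential gap. The implication ``$G^{r+}$ finite $\Rightarrow$ $G^r$ finite'' is correct and is exactly the final paragraph of the paper's proof, but calling this ``induction on $r$'' hides the missing limit step: over the reals there is no descending induction principle. Concretely, nothing in hypotheses (a)--(c) prevents the breaks of the filtration from accumulating from above at some finite $r_0$; at such a point $G^{r_0+}$ is an increasing union of finite groups and may well be infinite, so your inductive step simply does not apply there. You therefore cannot conclude that every $G^r$ is finite, and in particular you cannot begin stage~2 by enumerating the breaks as a sequence $r_1 > r_2 > \cdots$ tending to $-\infty$.

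The paper confronts this head-on by setting $r = \inf\{s : G^s \text{ finite}\}$ and proving directly that $G^{r+}$ is finite. The argument is essentially your stages~2--3 relocated from $-\infty$ to this infimum: Lemma~\ref{L:Jordan} gives $s_0 > r$ with $G^{r+}/G^{s_0} \hookrightarrow (\QQ/\ZZ)^n$; Tannaka--Krein produces a $G$-stable subspace $X$ of some $(V^\dual)^{\otimes g} \otimes V^{\otimes h}$ on which $G^{s_0}$ is trivial but $G^{r+}$ is not; iterated use of (d) refines $X$ into $G$-stable $G^{r+}$-isotypical summands; if $G^{r+}$ were infinite, some summand $Y$ would have infinite $G^{r+}$-image, and $W = \wedge^{\dim Y} Y$ would contradict (f). Once $G^{r+}$ is known to be finite, your stage-1 lemma (via (d),(e)) forces $G^r$ finite and hence $r = -\infty$. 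Note also that the paper's determinant trick bypasses your character-lattice detour entirely: no uniform bound on character orders is needed, since a single summand $Y$ with infinite image already yields the contradiction.
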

\begin{proof}
Let $S$ be the set of $r \in \RR$ for which $G^r$ is finite.
By (a), the set $S$ is up-closed. By (b), the set $S$ does not contain its infimum.
By (c), the set $S$ is nonempty.

Put $r = \inf S$; by the previous paragraph, $r \notin S$. Suppose by way of contradiction that $G^{r+}$ is infinite.
By Lemma~\ref{L:Jordan}, there exists $s_0>r$ such that $G^{r+}/G^{s_0}$ 
embeds into a product of finitely many copies of $\QQ/\ZZ$.
By Tannaka-Krein duality, we can choose $g,h$ so that $(V^\dual)^{\otimes g} \otimes V^{\otimes h}$
contains a $G$-stable subspace $X$ on which $G^{s_0}$ acts trivially but $G^s$ acts nontrivially for
some $s \in (r,s_0)$. 
By applying (d) finitely many times (with $r$ replaced by varying choices of $s \in (r,s_0)$), we can split $X$ as a direct sum of $G$-stable summands,
each of which is $G^{r+}$-isotypical. Since $G^{r+}$ is not finite,
we can choose a $G$-stable summand $Y$ of $X$ such that $G^{r+}$ has image in $\GL(Y)$
isomorphic to an infinite subgroup of $\QQ/\ZZ$. 
Put $W = \wedge^{\dim(Y)} Y$; this space occurs as a $G$-invariant subspace of 
$(V^\dual)^{\otimes g} \otimes V^{\otimes h}$ for some possibly different values of $g$ and $h$.
However, the image of $G^{r+}$ in $\GL(W)$ is again isomorphic to an infinite subgroup of $\QQ/\ZZ$, contradicting (f).

We conclude that $G^{r+}$ is finite. 
Suppose now that $r \in \RR$.
By Tannaka-Krein duality, the action of $G^r$ on the direct sum of the 
$G^{r+}$-invariant subspaces of $(V^\dual)^{\otimes g} \otimes V^{\otimes h}$ over all
nonnegative integers $g,h$ is a faithful representation of $G^r/G^{r+}$.
However, by (e), the action on each individual summand factors through a finite group; since $G^r$
is algebraic, this implies that $G^r$ is finite. But then $r \in S$, a contradiction.
We must thus have $r = -\infty$, which yields the desired result.
\end{proof}

We will apply Proposition~\ref{P:finite Tannakian} via the following Tannakian interpretation.
\begin{remark} \label{R:finite Tannakian}
Let $F$ be a field of characteristic $0$.
Let $\calC$ be a Tannakian category equipped with a fibre functor $\omega$ to the category of finite-dimensional
$F$-vector spaces. Assign to each nonzero element $V \in \calC$ an element $r = r(V) \in \RR \cup \{-\infty\}$ depending only on the isomorphism class of $V$, subject to the following conditions.
\begin{enumerate}
\item[(a)]
For any $V \in \calC$, $r(V^\dual) = r(V)$.
\item[(b)]
For any short exact sequence $0 \to V_1 \to V \to V_2 \to 0$ in $\calC$, $r(V) = \max\{r(V_1), r(V_2)\}$.
\item[(c)]
For any $V_1, V_2 \in \calC$, $r(V_1 \otimes V_2) \leq \max\{r(V_1), r(V_2)\}$.
\end{enumerate}
For $V \in \calC$, let $[V]$ denote the Tannakian subcategory of $\calC$ generated by $V$;
note that $r(W) \leq r(V)$ for all $W \in [V]$.
Let $G(V) \subseteq \GL(\omega(V))$ denote the automorphism group of the restriction of $\omega$ to $[V]$;
this is an algebraic group over $F$, so all of its pro-algebraic quotients are also algebraic.
For $r \in \RR$, let $G^r(V)$ be the subgroup of $G(V)$ acting trivially on $\omega(W)$ for all
$W \in [V]$ with $r(W) < r$. For $r \in \RR \cup \{-\infty\}$, put $G^{r+}(V) = \cup_{s>r} G^s(V)$;
if this group is finite,
then it equals the subgroup of $G(V)$ acting trivially on $\omega(W)$ for all $W \in [V]$ with $r(W) \leq r$
(because there exists $s>r$ for which $G^s(V) = G^{r+}(V)$ and hence $r(W) \notin (r,s)$ for all $W \in [V]$).

The groups $G^r(V)$ then satisfy conditions (a),(b),(c)
of Proposition~\ref{P:finite Tannakian}. This is evident for (a) and (c). For (b), note that
the objects $W \in [V]$ for which $G^s(V)$ acts trivially on $\omega(W)$
form a Tannakian category which is finitely generated (because restricting $\omega$ gives a fibre functor
whose automorphism group $G(V)/G^s(V)$ is algebraic, not just pro-algebraic).

To enforce conditions (d),(e),(f) of Proposition~\ref{P:finite Tannakian}, it would suffice to have the following
additional information about $\calC$.
\begin{enumerate}
\item[(i)]
Every $V \in \calC$ with $r(V) > -\infty$
admits a direct sum decomposition $V = \bigoplus_i V_i$ in which each summand
$V_i$ satisfies $r(V_i^\dual \otimes V_i) < r(V)$. (This implies (d).)
\item[(ii)]
For every $V \in \calC$ with $r(V^\dual \otimes V) < r(V)$, there exists a positive integer $n$ such that
$r(V^{\otimes n}) < r(V)$. (Given (i), this implies (e).)
\item[(iii)]
For every $V \in \calC$ with $\dim_F \omega(V) = 1$, there exists a positive integer $n$ such that
$r(V^{\otimes n}) = -\infty$. (This implies (f).)
\end{enumerate}
Note also that if in (ii) and (iii) the integer $n$ can always be taken to be a power of a fixed prime $p$,
then the group $G^{-\infty+}(V)$ is then forced to be not only finite but also a $p$-group.
\end{remark}

\begin{lemma} \label{L:isolate character}
Suppose the conditions of Remark~\ref{R:finite Tannakian} hold
and that in (ii) and (iii) the integer $n$ can always be taken to be a power of a fixed prime $p$. Then for any $V \in \calC$ with 
$r(V) > -\infty$, there exists $W \in \calC$ such that the
action of $G^{-\infty +}(V)$ on $W$ is $\tau$-isotypical
for some character $\tau: G^{-\infty +}(V) \to \GL_1(F)$ of order $p$.
\end{lemma}
\begin{proof}
By Remark~\ref{R:finite Tannakian}, $G^{-\infty +}(V)$ is a finite $p$-group, which must be nontrivial since $r(V) > -\infty$. The group $G^{-\infty +}(V)$ thus admits a character $\tau: G^{-\infty +}(V) \to \GL_1(F)$ of order $p$.
Let $r(\tau) > -\infty$ be the smallest value of $r$ for which $G^{r+}(V) \subseteq \ker(\tau)$, and choose $\tau$ to minimize $r(\tau)$.

By Tannaka-Krein duality for $G^{-\infty +}(V)$, we may choose nonnegative integers $g,h$ such that $\tau$ occurs in the action of $G^{-\infty +}(V)$ on $(V^\dual)^{\otimes g} \otimes V^{\otimes h}$.
Then $\tau$ also occurs in the action of $G^{-\infty +}(V)$ on some irreducible subquotient $W$ of $(V^\dual)^{\otimes g} \otimes V^{\otimes h}$.

Since  $G^{-\infty +}(V)$ is a finite group, its action on $W$ is completely reducible and thus admits an isotypical decomposition. Since $W$ is irreducible, all of its isotypical components must correspond to conjugates of $\tau$ by the action of $G(V)$ on its normal subgroup $G^{-\infty +}(V)$. In particular, each of these conjugates $\tau'$ is a character of order $p$ with $r(\tau') = r(\tau)$.

It follows that $r(W) = r(\tau')$. By property (i) of Remark~\ref{R:finite Tannakian}, we have $r(W^\dual \otimes W) < r(W)$;
however, the irreducible representations of $G^{-\infty +}(V)$ appearing in $W^\dual \otimes W$ are characters of order dividing $p$,
so by our minimization of $r(\tau)$ these characters must be trivial.
That is, $r(W^\dual \otimes W) = -\infty$, which implies that $W$ is $\tau$-isotypical.
\end{proof}

\subsection{A lemma on local fields}
\label{subsec:local fields}

We introduce an auxiliary calculation concerning local fields in positive characteristic.
This is needed for the study of solvable differential modules at type 4 points (\S \ref{subsec:solvable}).
We use without comment some basic facts about higher ramification of local fields, for which see
\cite[Chapter~3]{kedlaya-book} for a brief summary or
\cite{serre-local-fields} for a complete treatment.

\begin{hypothesis} \label{H:local fields}
Throughout \S\ref{subsec:local fields}, assume that $p>0$
and let $k$ be an algebraically closed field of characteristic $p$.
\end{hypothesis}

\begin{defn}
Let $N$ be the pro-unipotent pro-algebraic group over $k$ whose $k$-points are identified
with the $t$-adically continuous $k$-linear automorphisms $\psi$ of $k((t))$ fixing $t$ modulo $t^2$.
The group $N$ is filtered by the pro-algebraic subgroups
\[
N_m = \ker(N \to \Aut(k \llbracket t \rrbracket/t^{m+1})) \qquad (m=1,2,\dots)
\]
for which $N_1 = N$ and each successive quotient $N_m/N_{m+1}$ is isomorphic to the additive group (though not canonically).
We will write $N_t$ and $N_{m,t}$ instead of $N$ and $N_m$ when it is necessary to specify the series variable
$t$ in the notation.
(The analogous construction with $k = \FF_p$ is sometimes called the \emph{Nottingham group}.)
\end{defn}

\begin{hypothesis} \label{H:local fields2}
For the remainder of \S\ref{subsec:local fields}, let $m$ be a positive integer,
and let $\Gamma_m$ be a copy of the additive group over $k$ equipped with a homomorphism $\Gamma_m \to N_m$ 
of pro-algebraic groups over $k$
such that the composition $\Gamma_m \to N_m \to N_m/N_{m+1}$ is surjective and separable.
\end{hypothesis}

\begin{example}
The key case of Hypothesis~\ref{H:local fields2} for our intended applications is the one in which
$m=1$ and $\Gamma_m$ is the group of translations $t^{-1} \mapsto t^{-1} + c$. However, we will need
the full generality of Hypothesis~\ref{H:local fields2} in order to make certain inductive arguments
in towers of field extensions.
\end{example}

\begin{lemma} \label{L:local fields2}
Let $E$ be a $\ZZ/p\ZZ$-extension of $k((t))$ equipped with an extension of the action of $\Gamma_m$.
\begin{enumerate}
\item[(a)]
The ramification number $e$ of $E$ is a positive integer no greater than $m$ and not divisible by $p$.
\item[(b)]
Put $m' = (m-e)p + e$.
For any $k$-linear homeomorphism $E \cong k((u))$, the action of $\Gamma_m$ on $E$ induces a homomorphism
$\Gamma_m \to N_{m',u}$ of pro-algebraic groups such that the composition $\Gamma_m \to N_{m',u} \to N_{m',u}/N_{m'+1,u}$ is surjective
and separable.
\end{enumerate}
\end{lemma}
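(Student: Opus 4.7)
The plan is to prove both parts by expanding the action of $\Gamma_m$ on a uniformizer of $E$ via Hasse derivatives and then balancing valuations. First I would record the structural input: since $k$ is algebraically closed of characteristic $p$, every nontrivial $\ZZ/p\ZZ$-extension $E/k((t))$ is wildly totally ramified, so by Artin-Schreier theory it has a unique ramification break $e \geq 1$ coprime to $p$, and the different satisfies $v_E(\mathcal{D}_{E/k((t))}) = (p-1)(e+1)$ via $\sum_{i \geq 0}(|G_i|-1)$. Fix a uniformizer $u$ of $E$ and write $t = \phi(u) \in k[[u]]$, so that $v_u(\phi) = p$ and $v_u(\phi'(u)) = (p-1)(e+1)$. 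Since $\Gamma_m \cong \GG_a$ is unipotent, the extended action lands in $N_u$, giving $\sigma_\gamma(u) = u + \lambda_\gamma$ with $\ell := v_u(\lambda_\gamma) \geq 2$.

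The core step will be the Hasse-derivative expansion
\[
\sigma_\gamma(t) - t = \sum_{k \geq 1} \phi^{[k]}(u) \lambda_\gamma^k.
\]
Only $k = 1$ and $k = p$ can achieve the leading valuation: $v_u(\phi^{[1]}(u) \lambda_\gamma) = (p-1)(e+1) + \ell$ and $v_u(\phi^{[p]}(u) \lambda_\gamma^p) = p\ell$ (using $\phi^{[p]}(0) = 1$), while a short Lucas-theorem estimate exploiting the next non-$p$-divisible exponent $p(e+1) - e$ of $\phi$ shows that for $2 \leq k \leq p-1$ one has $v_u(\phi^{[k]}(u) \lambda_\gamma^k) > \min((p-1)(e+1) + \ell,\, p\ell)$. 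On the other side, $\sigma_\gamma \in N_{m,t}$ forces $v_u(\sigma_\gamma(t) - t) \geq p(m+1)$, with equality and leading coefficient $c(\gamma)$ achieved for generic $\gamma$, where $\gamma \mapsto c(\gamma)$ is a separable surjective $p$-polynomial by hypothesis.

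For part (a) I would argue by contradiction: if $m < e$, the balance forces $\ell = m+1 < e+1$, so $\phi^{[p]}(u) \lambda_\gamma^p$ alone dominates and yields $\lambda_{m+1}(\gamma)^p = c(\gamma)$. As an identity of $p$-polynomials in $\gamma$, this puts $c(\gamma)$ in $k[\gamma^p]$, making the coefficient of $\gamma$ vanish and contradicting separability. Hence $m \geq e$; together with $e \geq 1$ and $\gcd(e, p) = 1$, this proves (a).

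For part (b), with $m \geq e$ the same balancing yields $\ell = m' + 1 = (m-e)p + e + 1$, placing the image in $N_{m', u}$. When $m > e$ only $\phi^{[1]}(u) \lambda_\gamma$ dominates, so $\lambda_{m'+1}(\gamma)$ is a nonzero scalar multiple of $c(\gamma)$ and inherits surjectivity and separability. In the boundary case $m = e$ both leading terms compete, producing the Artin-Schreier relation
\[
c' \cdot \lambda_{m'+1}(\gamma) + \lambda_{m'+1}(\gamma)^p = c(\gamma),
\]
where $c'$ is the leading coefficient of $\phi'(u)$; since the extended action is pro-algebraic, $\lambda_{m'+1}$ is a $p$-polynomial in $\gamma$, and matching coefficients of $\gamma$ forces the leading coefficient $c' b_0 = a_0 \neq 0$, so $\lambda_{m'+1}$ is separable (and nonzero, hence surjective over $k$). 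The hard part will be this boundary case $m = e$, where the two leading Hasse contributions genuinely mix and the pro-algebraicity of the action is essential for extracting $\lambda_{m'+1}$ as a polynomial in $\gamma$.
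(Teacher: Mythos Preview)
Your proposal is correct and takes a genuinely different route from the paper's. The paper writes $E$ explicitly as an Artin--Schreier extension $k((t))[z]/(z^p-z-x)$ and works with the generator $z$: for (a) it observes that $\psi_c(x)$ and $x$ have the same image in $\coker(\varphi-1,\,k((t)))$ and derives the polynomial identity \eqref{eq:local fields1} entirely within $k((t))$; for (b) it picks the explicit uniformizer $u = z^r t^s$ with $-re+ps=1$ and computes $\psi_c(u)$ by hand. You instead fix an arbitrary uniformizer $u$, expand $\sigma_\gamma(t)-t = \sum_{k\geq 1}\phi^{[k]}(u)\lambda_\gamma^k$ via Hasse derivatives, and balance $u$-valuations using the different formula $v_u(\phi'(u)) = (p-1)(e+1)$. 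Your approach treats both parts and both sub-cases ($m>e$ versus $m=e$) uniformly, and it applies at once to any choice of uniformizer, whereas the paper reduces (b) to a single clever choice of $u$. The trade-off is that your argument leans on the pro-algebraicity of the \emph{extended} action on $E$ (you need $\gamma \mapsto \lambda_{m'+1}(\gamma)$, and in (a) $\gamma \mapsto \lambda_{m+1}(\gamma)$, to be honest polynomials in order to force $P_{m+1}\in k[\gamma^p]$ or to read off separability); by contrast the paper's proof of (a) only uses that each individual $\psi_c$ preserves $E$, with the contradiction coming from the separability of $P_{m+1}$ inside $k((t))$. In the boundary case $e=m$ the two proofs converge on essentially the same Artin--Schreier relation (your $c'\lambda + \lambda^p = c(\gamma)$ is the paper's $d - d^p = eaP_{m+1}(c)$), and both need pro-algebraicity of the extension to extract separability of the leading coefficient as a function of $\gamma$.
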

\begin{proof}
Let $\varphi$ denote the $p$-power Frobenius endomorphism of $k((t))$.
Write $E$ as an Artin-Schreier extension $k((t))[z]/(z^p - z - x)$
with the $t$-adic valuation of $x$ as large as possible. We then have
$x = at^{-e} + \cdots$ for some nonzero $a \in k$, where $e$ is the ramification number of $E$.
In particular, $e$ is a positive integer not divisible by $p$ (it cannot be $0$ because $k$ has been
assumed to be algebraically closed).

For each $c \in k$, the element $\psi_c \in \Gamma_m$ corresponding to $c$ has the property that
$\psi_c(x)$ defines the same Artin-Schreier extension of $k((t))$ as does $x$,
and so the elements $x$ and $\psi_c(x)$ must generate the same $\FF_p$-subspace of $\coker(\varphi-1, k((t)))$.
Since $x$ and $\psi_c(x)$ both have the form $at^{-e} + \cdots$ and $e$ is not divisible by $p$,
the images of $x$ and $\psi_c(x)$ in $\coker(\varphi-1, k((t)))$ must in fact coincide.

Write $\psi_c(t) = t + \sum_{i=m+1}^\infty P_i(c) t^i$ for certain polynomials
$P_i(T) \in k[T]$. Because $\Gamma_m \to N_m/N_{m+1}$ is separable, $P_{m+1}$ is not a $p$-th power.
Moreover, the map $c \mapsto P_{m+1}(c)$ must be additive in order to come from a group action.

Suppose that $e > m$, and write $x = \sum_{j \geq -e} a_j t^j$ with $a_{-e} = a$. We then have
\[
\psi_c(x)-x \equiv \sum_{j=m-e}^{-1} Q_j(c) t^j  \pmod{k \llbracket t \rrbracket}
\]
for certain polynomials $Q_j(T) \in k[T]$, and in particular $Q_{m-e}(T) = -e a P_{m+1}(T)$.
Since $\psi_c(x) - x \in \coker(\varphi-1, k((t)))$, we must have
\begin{equation} \label{eq:local fields1}
\sum_{i=0}^\infty Q_{(m-e)/p^i}(c)^{p^i} = 0 \qquad (c \in k).
\end{equation}
However, in the sum $\sum_{i=0}^\infty Q_{(m-e)/p^i}(T)^{p^i}$, the
$i=0$ term is not a $p$-th power whereas all of the other terms are.
Consequently, \eqref{eq:local fields1} asserts that a nonzero polynomial over $k$ vanishes at all $c \in k$,
a contradiction. We conclude that $e \leq m$, proving (a).

To prove (b), note that it is sufficient to check the claim for a single $k$-linear homeomorphism 
$E \cong k((u))$. We will check the claim with $u = z^r t^s$ for an arbitrary pair of integers $r,s$ satisfying
$-re + ps = 1$ (which exist because $e$ is not divisible by $p$). To begin with, we have
$z = A u^{-e} + \cdots, t = B u^p + \cdots$ for some $A,B \in k$; using the equalities
\[
u = z^r t^s, \qquad z^p = at^{-e} + \cdots,
\]
we can solve for $A$ and $B$ to obtain
\[
z= a^s u^{-e} + \cdots, \qquad t = a^{-r} u^p + \cdots.
\]
By (a), we have $m - e \geq 0$. For $c \in k$, we thus have
\begin{align*}
(\psi_c(z)-z)^p - (\psi_c(z)-z) &= \psi_c(x)-x \\
&= (\psi_c-1)(at^{-e} + \cdots) \\
&= -ea P_{m+1}(c) t^{m-e} + \cdots \in k \llbracket t \rrbracket.
\end{align*}
In case $e<m$, this implies that $\psi_c(z) = z + e a P_{m+1}(c) t^{m-e} + \cdots$.
Since the $u$-adic valuation of $(\psi_c(z)-z)/z$ is $(m-e)p + e = m'$
while the valuation of $(\psi_c(t)-t)/t$ is the strictly larger value $mp$,
we obtain
\begin{align*}
\psi_c(u) &= \psi_c(z)^r \psi_c(t)^s \\
&= z^r t^s + rea P_{m+1}(c) t^{m-e+s}z^{r-1} + \cdots \\
&= u + re P_{m+1}(c) a^{1-r(m-e)-s} u^{m'+1} + \cdots.
\end{align*}
In case $e=m$, we instead have $\psi_c(z) = z + d + \cdots$ for some $d \in k$
satisfying $d - d^p = e a P_{m+1}(c)$. Computing as before, we obtain
\[
\psi_c(u) = u + rd a^{-s} u^{m'+1} + \cdots.
\]
In both cases, we obtain (b).
\end{proof}

\begin{prop} \label{P:local fields}
Let $E$ be a finite Galois extension of $k((t))$ equipped with an extension of the action of $\Gamma_m$.
Then the ramification breaks of $E/k((t))$ in the upper numbering are all less than or equal to $m$.
\end{prop}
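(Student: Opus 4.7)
The plan is to induct on $[E:k((t))]$, reducing to two building-block cases: tame cyclic extensions of order prime to $p$, and $\ZZ/p\ZZ$-Artin--Schreier extensions (the latter controlled by Lemma~\ref{L:local fields2}); Herbrand's theorem will convert the bounds from step to step. A crucial preliminary observation: since $\Gamma_m$ is a connected pro-algebraic group over $k$ while $G := \Gal(E/k((t)))$ is finite and discrete, the conjugation morphism $\Gamma_m \to \Aut(G)$ must be constant equal to the identity. Hence $\Gamma_m$ commutes elementwise with $G$ inside $\Aut_k(E)$, and every Galois subextension of $E/k((t))$ is automatically $\Gamma_m$-stable.

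If $|G|$ is prime to $p$, then $E/k((t))$ is tame and the only upper break is $0 \leq m$, so we may assume $p \mid |G|$. Let $P \triangleleft G$ denote the wild inertia. \emph{Case~1:} $P \neq G$. Let $F = E^P$, so $F/k((t))$ is tame totally ramified of some degree $n > 1$ prime to $p$, and we may choose a uniformizer $u$ of $F$ with $u^n = t$. A direct computation using $\psi_c(t) = t + P_{m+1}(c) t^{m+1} + \cdots$ and $n \in k^\times$ yields $\psi_c(u) = u + n^{-1} P_{m+1}(c) u^{mn+1} + \cdots$, so the $\Gamma_m$-action on $F \cong k((u))$ satisfies Hypothesis~\ref{H:local fields2} at level $mn$. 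By the inductive hypothesis applied to $E/F$, the upper breaks of $E/F$ are all at most $mn$. Since $F/k((t))$ is tame of degree $n$, the Herbrand function satisfies $\psi_{F/k((t))}(v) = nv$ for $v > 0$; combined with the compatibility of lower numbering with subgroups, this shows that any upper break $v > 0$ of $G$ corresponds to the upper break $nv$ of $H := \Gal(E/F)$, so $nv \leq mn$ and hence $v \leq m$.

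\emph{Case~2:} $G = P$ is a $p$-group. Choose a normal subgroup $H \triangleleft G$ of index $p$ (available because $p$-groups have nontrivial Frattini quotient) and let $F = E^H$, a $\ZZ/p\ZZ$-extension of $k((t))$. By Lemma~\ref{L:local fields2}, the ramification number $e$ satisfies $1 \leq e \leq m$ with $p \nmid e$, and in any coordinate $F \cong k((u))$ the induced $\Gamma_m$-action satisfies Hypothesis~\ref{H:local fields2} at level $m' := (m-e)p + e$. By induction applied to $E/F$, the upper breaks of $E/F$ are at most $m'$. Since $F/k((t))$ has unique upper break $e$, one has $\psi_{F/k((t))}(v) = e + p(v-e)$ for $v \geq e$, so every upper break $v > e$ of $G$ corresponds to an upper break $e + p(v-e)$ of $H$; this forces $e + p(v-e) \leq (m-e)p + e$ and rearranges to $v \leq m$, while upper breaks $v \leq e$ trivially satisfy $v \leq e \leq m$.

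The main bookkeeping obstacle is precisely the Herbrand conversion in Case~2: the specific value $m' = (m-e)p + e$ from Lemma~\ref{L:local fields2} is engineered so that the linear inequality $e + p(v-e) \leq m'$ collapses to exactly $v \leq m$, so one has to track the upper/lower ramification conversion in the tower $k((t)) \subset F \subset E$ carefully. A minor point is that the induction is applied at parameter values $mn$ or $m'$ rather than the original $m$, which is legitimate because the proposition is stated and proved uniformly in $m$ while the induction runs on the single invariant $[E:k((t))]$.
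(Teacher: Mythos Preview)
Your proof is correct and follows essentially the same strategy as the paper's: induct on the degree, use connectedness of $\Gamma_m$ to make it commute with the Galois group, peel off a $\ZZ/p\ZZ$-layer via Lemma~\ref{L:local fields2}, and close with Herbrand's rule. Your handling of the tame part is more explicit than the paper's terse ``replacing $m$ by a multiple,'' and your choice of an arbitrary index-$p$ normal subgroup (rather than one inside the first-break subfield $F_e$) is a harmless simplification.
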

\begin{proof}
Note that $E$ is totally ramified because we assumed that $k$ is algebraically closed.
Also, by replacing $m$ by a multiple, we may reduce to the case where $E$ is totally wildly ramified.
In this case, we induct on the degree of $E$, the case $E = k((t))$ serving as a trivial base case.

Suppose $E \neq k((t))$. Let $e$ be the least ramification break of $E$ in the upper numbering,
and let $F_e$ be the corresponding subfield of $E$.
Since the definition of the ramification filtration is invariant under automorphisms of $k((t))$,
we obtain an action of $\Gamma_m$ on $F_e$. Moreover, $\Gamma_m$ acts on $H = \Gal(F_e/k((t)))$
via a discrete quotient, but the additive group has no
nontrivial discrete quotients. Consequently, if we pick any $\ZZ/p\ZZ$-subextension $F$ of $F_e$,
then $\Gamma_m$ acts on $F$.

By Lemma~\ref{L:local fields2}(a), we have $e \leq m$. 
In addition, if we put $m' = (m-e)p + e$ and choose a homeomorphism $F \cong k((u))$, 
then by Lemma~\ref{L:local fields2}(b), we obtain
a homomorphism $\Gamma_m \to N_{m',u}$ such that the composition $\Gamma_m \to N_{m',u} \to N_{m',u}/N_{m'+1,u}$ is surjective and separable. This last fact allows us to invoke the induction hypothesis,
which implies that the ramification breaks of $E/F$ in the upper numbering
are all less than or equal to $m'$. By Herbrand's rule for
transferring ramification breaks from a group to a subgroup \cite[\S IV.3]{serre-local-fields},
this in turn implies that the ramification breaks of $E/k((t))$ for the upper numbering are 
all less than or equal to $m$, as desired.
\end{proof}

\section{Differential modules over complete fields}

We next recall some definitions and results from \cite{kedlaya-book}
concerning the spectral behavior of differential modules over complete fields. We then make a few additional
calculations leading to a finiteness result concerning the Tannakian automorphism group of a differential module.

\setcounter{theorem}{0}
\begin{convention}
For a matrix over a ring equipped with a norm, we will always interpret the norm of the matrix to be the supremum norm over entries of the matrix.
\end{convention}

\subsection{Differential rings and modules}

We need some general terminology concerning differential rings and modules.

\begin{defn}
By a \emph{differential ring}, we will mean a pair $(R,d)$ in which $R$ is a commutative unital ring
and $d$ is a derivation on $R$. By a \emph{differential module} over $(R,d)$, we will mean a pair $(M,D)$
in which $M$ is a finite projective $R$-module and $D$ is a differential operator on $M$ with respect to $d$.
For example, for each nonnegative integer $n$, $R^{\oplus n}$ may be viewed as a differential operator
by setting $D(r_1,\dots,r_n) = (d(r_1),\dots,d(r_n))$; any differential module isomorphic to one of this form 
is said to be \emph{trivial}. 
We will often omit mention of $d$ and/or $D$ when they may be inferred from context.
\end{defn}

\begin{remark}
Let $M$ be a differential module over a differential ring $R$ which is freely generated by the basis
$\be_1,\dots,\be_n$. Then the action of $D$ on $M$ can be reconstructed from the matrix $N$
defined by $D(\be_j) = \sum_i N_{ij} \be_i$ (the \emph{matrix of action} of $D$ on the basis).
Any other basis $\be'_1,\dots,\be'_n$ is uniquely determined by the invertible matrix
$U$ over $R$ defined by $\be'_j = \sum_i U_{ij} \be_i$ (the \emph{change-of-basis matrix} from
the $\be_i$ to the $\be'_i$); the matrix of action of $D$ on this new basis has the form
$U^{-1} N U + U^{-1} d(U)$.
\end{remark}

\begin{defn}
The differential modules over a given differential ring form a tensor category.
For $M$ a differential module, we write $\End(M)$ as shorthand for $M^\dual \otimes M$;
there is a natural composition morphism $ - \circ - : \End(M) \otimes \End(M) \to \End(M)$.
\end{defn}

\begin{defn} \label{D:cyclic vector}
Let $(M,D)$ be a differential module of rank $n$ over a differential ring $(R,d)$. A \emph{cyclic vector} for $M$ is an element
$\bv \in M$ such that $\bv, D(\bv), \dots, D^{n-1}(\bv)$ form a basis of $M$ as an $R$-module. 
\end{defn}

\begin{lemma}[Cyclic vector theorem] \label{L:cyclic vector}
Let $(R,d)$ be a differential ring such that $R$ is a field
of characteristic $0$ and $d$ is nonzero. Then every differential module over $R$ admits a cyclic vector.
\end{lemma}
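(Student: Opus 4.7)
The plan is to first normalize the derivation so that some element of $R$ has derivative $1$, then to construct an explicit cyclic vector via the classical Deligne/Katz parametric formula.

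For the normalization, pick $r \in R$ with $d(r) \neq 0$; since $R$ is a field, $d(r)$ is a unit. Replacing $(d, D)$ by $(d(r)^{-1} d,\ d(r)^{-1} D)$ gives a derivation $d'$ on $R$ with $d'(r) = 1$ and a compatible differential operator $D'$ on $M$. An induction on $k$ shows that the $R$-spans of $\bv, D\bv, \dots, D^k\bv$ and $\bv, D'\bv, \dots, D'^k\bv$ coincide for every $k$, so cyclic vectors for $(M,D)$ and $(M, D')$ are the same. Hence I may assume there exists $x \in R$ with $d(x) = 1$.

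For the construction, fix an $R$-basis $\be_1, \dots, \be_n$ of $M$ and introduce a parameter $a$. I would consider a Deligne/Katz candidate of the shape
\[
\bv(a) \;=\; \sum_{i=1}^{n}\;\sum_{j=0}^{n-i}\, \frac{(-1)^{j}}{j!\,(n-i-j)!}\,(x-a)^{n-i-j}\,D^{j}(\be_{i}),
\]
with coefficients engineered so that, using $d(x - a) = 1$ and the Leibniz rule, massive telescoping occurs when $D$ is applied. A direct calculation then expresses each of $\bv(a), D\bv(a), \dots, D^{n-1}\bv(a)$ in the basis $\be_1, \dots, \be_n$ via an $n \times n$ coefficient matrix $A(a, x)$ whose determinant, viewed as a polynomial in $x - a$, is not identically zero. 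Granting this, I pick $a \in R$ avoiding the finitely many zeros of $\det A(a, x)$, which is possible because $R \supseteq \QQ$ is infinite; then $\bv(a)$ is a cyclic vector. Alternatively, one passes to $R(a)$, exhibits a cyclic vector of $M \otimes_R R(a)$ there, and specializes afterwards.

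The main obstacle is the explicit verification that $\det A(a, x)$ does not vanish identically. This is the combinatorial heart of the Deligne/Katz cyclic-vector theorem, and is precisely where the characteristic-zero hypothesis enters essentially: the coefficients in the formula involve factorial denominators, and in positive characteristic the entire construction would collapse. A convenient way to see the nonvanishing is to inspect the leading behavior of $\det A(a, x)$ in powers of $x - a$, where the Katz coefficients are rigged so that the iterates $\bv(a), D\bv(a), \dots, D^{n-1}\bv(a)$ reduce, modulo lower-order terms in $x - a$, to a triangular pattern aligned with the basis $\be_n, \be_{n-1}, \dots, \be_1$ and thus force the top coefficient of the determinant to be a nonzero rational number.
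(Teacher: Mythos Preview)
Your proposal is correct and follows precisely the Deligne--Katz construction that the paper's cited reference \cite[Theorem~5.4.2]{kedlaya-book} uses; the paper itself gives no argument beyond that citation, so you are in effect supplying the content of the reference. The normalization to $d(x)=1$ and the parametric candidate $\bv(a)$ with the telescoping/triangular analysis of $\det A(a,x)$ are exactly the standard steps, and your remark that the factorial denominators are where characteristic $0$ enters is on point.
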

\begin{proof}
See for instance
\cite[Theorem~5.4.2]{kedlaya-book}.
\end{proof}
\begin{cor} \label{C:cyclic vector}
Let $(R,d)$ be a differential ring such that $R$ is a domain
of characteristic $0$ and $d$ is nonzero. Then every differential
module $M$ over $(R,d)$ contains a cyclic vector for $M \otimes_R \Frac(R)$.
\end{cor}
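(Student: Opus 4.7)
The plan is to reduce directly to the field case Lemma~\ref{L:cyclic vector}. Put $K = \Frac(R)$. Since $R$ is a domain of characteristic $0$ and $d$ is nonzero, the quotient rule shows that $d$ extends uniquely to a nonzero derivation on $K$, which we continue to denote $d$. Because $M$ is finitely generated projective over the domain $R$, it is torsion-free, so the natural map $M \to M \otimes_R K$ is injective; moreover, since $\Spec R$ is connected, $M$ has constant rank $n$, and $M \otimes_R K$ is a differential module of dimension $n$ over $(K,d)$.

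By Lemma~\ref{L:cyclic vector} applied to $M \otimes_R K$, there exists a cyclic vector $\bv' \in M \otimes_R K$. Writing $\bv'$ in terms of any finite set of generators of $M$ and clearing denominators, I may choose $c \in R \setminus \{0\}$ with $\bv := c \bv' \in M$.

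It then remains to show that $\bv$ is itself a cyclic vector for $M \otimes_R K$, i.e. that $\bv, D(\bv), \dots, D^{n-1}(\bv)$ form a $K$-basis. By the Leibniz rule applied iteratively,
\[
D^k(\bv) = D^k(c \bv') = \sum_{j=0}^k \binom{k}{j} d^{k-j}(c)\, D^j(\bv') \qquad (0 \leq k \leq n-1),
\]
so the change-of-basis matrix expressing $(D^k(\bv))_{k=0}^{n-1}$ in terms of the $K$-basis $(D^j(\bv'))_{j=0}^{n-1}$ is lower triangular with all diagonal entries equal to $c \in K^\times$. This matrix is therefore invertible over $K$, and $\bv$ is a cyclic vector.

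I do not anticipate any serious obstacle: the only nontrivial points are the torsion-freeness of $M$ (needed to make sense of ``clearing denominators'' inside $M \otimes_R K$) and the constancy of the rank, both of which are immediate from the domain/projective hypotheses. The Leibniz computation is routine and provides exactly the triangularity needed to transport the cyclicity from $\bv'$ to $\bv$.
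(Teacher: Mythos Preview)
Your proof is correct and follows exactly the route the paper intends: the corollary is stated without proof, as an immediate consequence of Lemma~\ref{L:cyclic vector} applied to $M \otimes_R \Frac(R)$, after which one clears denominators to land back in $M$. Your Leibniz/triangular-matrix argument makes explicit the one point the paper leaves tacit, namely that multiplying a cyclic vector by a nonzero scalar from $R$ preserves cyclicity over $\Frac(R)$.
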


\begin{defn}
For $(M,D)$ a differential module, write $H^0(M)$ and $H^1(M)$ for $\ker(D)$ and $\coker(D)$, respectively.
Note that $H^1(M)$ may be interpreted as a Yoneda extension group.
\end{defn}

\subsection{Differential modules over fields}
\label{sec:fields}

We next review some of the theory of differential modules over completed rational function fields
(also known as \emph{fields of analytic elements}) as presented in \cite[Chapters~9--10]{kedlaya-book}.

\begin{hypothesis} \label{H:differential field}
Throughout \S\ref{sec:fields}, choose $\rho>0$, let $F_\rho$ be the completion of $K(t)$ for the $\rho$-Gauss norm,
and let $E$ be a finite tamely ramified extension of $F_\rho$.
View $F_\rho$ as a differential field for the derivation $d = \frac{d}{dt}$, which extends uniquely
to $E$.
\end{hypothesis}

\begin{defn} \label{D:intrinsic radius}
Let $(V,D)$ be a differential module over $E$.
For $V$ nonzero, let $IR(V)$ denote the \emph{intrinsic radius} of $V$ in the sense of 
\cite[Definition~9.4.7]{kedlaya-book}. That is, $\omega /(\rho IR(V))$ equals the spectral radius of $D$
as a $K$-linear endomorphism of $V$ for any $E$-Banach norm on $V$. The following properties are easily derived
(see \cite[Lemma~6.2.8]{kedlaya-book}).
\begin{enumerate}
\item[(a)]
For any $V$, $IR(V^\dual) = IR(V)$.
\item[(b)]
For any short exact sequence $0 \to V_1 \to V \to V_2 \to 0$, $IR(V) = \min\{IR(V_1), IR(V_2)\}$.
\item[(c)]
For any $V_1, V_2$, $IR(V_1 \otimes V_2) \geq \min\{IR(V_1), IR(V_2)\}$,
with equality if $IR(V_1) \neq IR(V_2)$.
\end{enumerate}

As in \cite[Definition~9.8.1]{kedlaya-book},
the multiset of \emph{intrinsic subsidiary radii} of $V$
is constructed as follows: for each Jordan-H\"older constituent $W$ of $V$, include
$IR(W)$ with multiplicity $\dim_{F_\rho}(W)$.
This multiset is invariant under arbitrary extensions of the constant field and under finite tamely ramified
extensions of $E$ \cite[Proposition~10.6.6]{kedlaya-book},
and its maximum element equals $IR(V)$. 

Let $s_1 \leq \cdots \leq s_n$ be the intrinsic subsidiary radii of $V$.
The \emph{spectral polygon} of $V$, denoted $P(V)$, is then defined to be the convex polygonal curve
starting at $(-n,0)$ and consisting of segments of width 1 and slopes $\log s_1, \dots, \log s_n$ 
in that order.
\end{defn}

\begin{defn}
Let $V$ be a nonzero differential module over $E$.
We say $V$ is \emph{pure} if its intrinsic subsidiary radii are all equal.
We say that $V$ is \emph{refined} if $IR(\End(V)) > IR(V)$; this condition implies that $IR(V) < 1$,
and also that $V$ is pure (using \cite[Lemma~9.3.4]{kedlaya-book}). Consequently, this definition of refinedness agrees with that of \cite[Definition~6.2.12]{kedlaya-book}.

We say that two refined differential modules $V_1, V_2$ over $E$
are \emph{equivalent} if $IR(V_1) = IR(V_2) < IR(V_1^\dual \otimes V_2)$.
As the terminology suggests, this is an equivalence relation \cite[Lemma~6.2.14]{kedlaya-book}.
\end{defn}
\begin{lemma} \label{L:refined comparison}
Let $V_1, V_2$ be nonzero differential modules over $F_\rho$ such that
$IR(V_1), IR(V_2) < IR(V_1^\dual \otimes V_2)$. Then $IR(V_1) = IR(V_2)$ and
\[
IR(\End(V_1)), IR(\End(V_2)) \geq IR(V_1^\dual \otimes V_2);
\]
consequently, $V_1$ and $V_2$ are both refined of the same intrinsic radius.
\end{lemma}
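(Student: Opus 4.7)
The plan is to deduce both conclusions directly from the axioms (a), (b), (c) of Definition~\ref{D:intrinsic radius}, using as the sole geometric input the fact that the identity endomorphism of $V_2$ is horizontal.

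First I would establish $IR(V_1) = IR(V_2)$ by applying (c) to $V_1^\dual \otimes V_2$, noting that $IR(V_1^\dual) = IR(V_1)$ by (a). If the two radii differed, the equality clause of (c) would force $IR(V_1^\dual \otimes V_2) = \min\{IR(V_1), IR(V_2)\}$, contradicting the hypothesis that $\min\{IR(V_1), IR(V_2)\} < IR(V_1^\dual \otimes V_2)$.

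For the bound on $IR(\End(V_1))$, I would realize $\End(V_1)$ as a subobject of $\End(V_1) \otimes \End(V_2)$ via the map $\phi \mapsto \phi \otimes I$, where $I \in \End(V_2)$ is the identity endomorphism. A direct Leibniz-rule check shows $D(I) = D_2 \circ I - I \circ D_2 = 0$, so this is a morphism of differential modules (one can even split it off as a direct summand using the normalized trace, since we are in characteristic $0$). Property (b) then gives $IR(\End(V_1)) \geq IR(\End(V_1) \otimes \End(V_2))$. Regrouping tensor factors as $(V_1^\dual \otimes V_2) \otimes (V_1 \otimes V_2^\dual)$ and noting that the second factor is the dual of the first, property (a) implies these two factors have equal intrinsic radius, so (c) yields $IR(\End(V_1) \otimes \End(V_2)) \geq IR(V_1^\dual \otimes V_2)$. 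Chaining gives the desired bound, and the argument is symmetric in $V_1$ and $V_2$.

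The final assertion that $V_1, V_2$ are refined of common intrinsic radius then drops out: by hypothesis $IR(\End(V_i)) \geq IR(V_1^\dual \otimes V_2) > IR(V_i)$, and the equality $IR(V_1) = IR(V_2)$ is the first step. There is no serious obstacle in this argument; it is formal bookkeeping with (a), (b), (c). The only genuine trick is the tensoring with $\End(V_2)$, which is what imports the factor $V_1^\dual \otimes V_2$ into the endomorphism algebra of $V_1$ and thereby lets the hypothesis on $IR(V_1^\dual \otimes V_2)$ constrain $IR(\End(V_1))$.
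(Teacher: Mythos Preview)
Your proof is correct and essentially the same as the paper's. For the second claim you and the paper use the identical idea: embed $\End(V_1)$ as a direct summand of $\End(V_1)\otimes\End(V_2)\cong (V_1^\dual\otimes V_2)^\dual\otimes(V_1^\dual\otimes V_2)$ and apply (a),(b),(c). For the first claim your contrapositive via the equality clause of (c) is slightly more direct than the paper's route (which observes that $V_2$ is a direct summand of $V_1\otimes(V_1^\dual\otimes V_2)$ and symmetrically), but the content is the same.
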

\begin{proof}
The first claim holds because $V_2$ is a direct summand of $V_1 \otimes (V_1^\dual \otimes V_2)$
and $V_1$ is a direct summand of $V_2 \otimes (V_1^\dual \otimes V_2)^\dual$.
The second claim holds because $V_1^\dual \otimes V_1$ is a direct summand of $V_1^\dual \otimes V_1 \otimes V_2^\dual \otimes V_2
\cong (V_1^\dual \otimes V_2)^\dual \otimes (V_1^\dual \otimes V_2)$. 
\end{proof}

\begin{remark} \label{R:intuition}
The intrinsic subsidiary radii of $V$ behave for many purposes like the reciprocal norms of the eigenvalues of some linear transformation associated to $V$. In this model, 
a refined differential module (resp.\ two equivalent refined modules)
over $E$ corresponds to a linear transformation (resp.\ two linear transformations) whose eigenvalues
 all have a single image in the graded ring associated to an algebraic closure of $F_\rho$.
 
For radii in the range $(0, \omega)$ (called the \emph{visible range} in \cite{kedlaya-book}),
this intuition can be made precise using cyclic vectors;
see Proposition~\ref{P:christol-dwork} below.
When $p>0$, one must use pullback and pushforward along Frobenius to access radii in the range $[\omega,1)$,
as described in \cite[Chapter~10]{kedlaya-book}. We will see these techniques in action in
\S\ref{subsec:fields2}.
\end{remark}

\begin{prop}[Christol-Dwork] \label{P:christol-dwork}
Let $V$ be a differential module over $E$ of rank $n$, let $\bv$ be a cyclic vector of $V$,
and write $D^n(\bv) = a_0 \bv + \dots + a_{n-1} D^{n-1}(\bv)$ with $a_0,\dots,a_{n-1} \in E$. Then the multiset of slopes of the spectral polygon of $V$
less than $\log \omega$ consists of $\log \omega - \log \rho + s$ 
for $s$ running over the multiset of slopes of the Newton polygon of the polynomial
$T^n - a_{n-1} T^{n-1} - \cdots - a_0 \in E[T]$ less than $\log \rho$.
\end{prop}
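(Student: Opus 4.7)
The plan is to pass to the cyclic-vector basis $\be_i = D^{i-1}(\bv)$ for $i = 1, \ldots, n$, in which $D$ takes a companion-matrix form: for $\bx = \sum x_i \be_i$, one has $D(\bx) = \sum_i d(x_i)\be_i + C\bx$, where $C$ is the companion matrix of $P(T) = T^n - a_{n-1}T^{n-1} - \cdots - a_0$. Since by Definition~\ref{D:intrinsic radius} the intrinsic radius $IR(V)$ is $\omega$ divided by $\rho$ times the spectral radius of $D$ as a $K$-linear endomorphism, and analogously for subsidiary radii, the problem reduces to reading off the spectrum of $D$ from $P$.

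The main technical step is a weighted $E$-Banach norm calculation. For each $s \in \RR$, equip $V$ with the norm $|\sum x_i \be_i|_s = \max_i |x_i| e^{-s(i-1)}$; then $\|D\|_s \leq \max(\|d\|_E, \|C\|_s)$, where $\|C\|_s = \max(e^{-s}, \max_j |a_j| e^{s(n-1-j)})$ is, up to the factor $e^{s(n-1)}$, the Gauss norm of $P$ evaluated at $|T| = e^{-s}$. By the tame ramification of $E/F_\rho$ and the explicit computation of $(d/dt)^k(t^n) = n(n-1)\cdots(n-k+1)\,t^{n-k}$, the derivation $d$ has spectral radius $\omega/\rho$. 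Optimizing over $s$ and invoking the standard nonarchimedean identity that the spectral radius of a sum of normatively separated bounded operators is the maximum of the individual spectral radii yields that the spectral radius of $D$ equals $\max(\omega/\rho, e^{-s_{\min}})$, where $s_{\min}$ is the smallest Newton slope of $P$; when this exceeds $\omega/\rho$, translating via $IR = \omega/(\rho \cdot \text{spec rad})$ produces the smallest subsidiary intrinsic radius $(\omega/\rho)e^{s_{\min}}$, which lies below $\omega$ precisely when $s_{\min} < \log\rho$, matching the claimed translation $\log IR = \log\omega - \log\rho + s_{\min}$.

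To recover \emph{all} subsidiary radii and not merely the smallest, I would factor $P = P_1 \cdots P_m$ in $E[T]$ by Newton slope and lift this commutative factorization to a factorization $P(D) = P_m(D) \cdots P_1(D)$ in the twisted polynomial ring $E\langle D\rangle$ via a noncommutative Hensel argument, thereby inducing a filtration of $V$ as a differential module whose graded pieces realize each Newton slope as their spectral scale; applying the previous step to each graded piece then yields the claimed identification of the visible portion of the spectral polygon with the Newton polygon of $P$. The main obstacle is precisely the twisted-Hensel step: because $E\langle D\rangle$ is noncommutative via $Df = fD + d(f)$, the standard commutative Newton factorization must be corrected by commutator estimates of size $\|d\|_E$, and this correction is controllable (and the Hensel lift converges) exactly when the Newton slopes are separated from the derivation scale, which is the visibility hypothesis $s < \log\rho$. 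Above visibility, the companion and derivation contributions to the spectrum cannot be cleanly disentangled from the data $(P, d)$ alone, and one must instead invoke Frobenius pushforward techniques (as in \S\ref{subsec:fields2} and \cite[Chapter~10]{kedlaya-book}) to access the invisible subsidiary radii, a task beyond the scope of this proposition.
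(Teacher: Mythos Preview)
The paper does not give its own proof here; it simply cites \cite[Corollary~6.5.4]{kedlaya-book}. Your sketch is essentially the argument carried out in that reference: one passes to the cyclic-vector basis so that $D$ becomes the companion matrix of $P$ plus the scalar action of $d$, uses a family of weighted supremum norms indexed by a real parameter to pin down the spectral radius of $D$ as the maximum of $\omega/\rho$ and the largest root norm of $P$ (this is \cite[Theorem~6.5.3]{kedlaya-book}), and then obtains the full visible portion of the spectral polygon by a Hensel-type factorization in the twisted polynomial ring $E\{D\}$ (this is \cite[\S6.4]{kedlaya-book}), the commutator corrections being controlled precisely because the relevant Newton slopes lie below the derivation scale $\log \rho$. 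So your proposal is correct and aligned with the cited proof; the only point worth tightening is your phrase ``standard nonarchimedean identity that the spectral radius of a sum of normatively separated bounded operators is the maximum of the individual spectral radii,'' which is not literally a general theorem---in the reference this step is handled concretely by the weighted-norm optimization rather than by any abstract principle about sums of operators.
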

\begin{proof}
See \cite[Corollary~6.5.4]{kedlaya-book}.
\end{proof}

\begin{cor} \label{C:christol-dwork}
For any $s < \omega$ and any positive integers $n_1,n_2,m$, 
there exists $\delta \in (s, \omega)$ for which the following statements hold.
For $i=1,2$, let $V_i$ be a differential module over $E$ of rank $n_i$ which is pure of intrinsic radius $s$.
Let $\bv_i$ be a cyclic vector of $V_i$,
write $D^{n_i}(\bv_i) = a_{0,i} \bv_i + \dots + a_{n_i-1,i} D^{n_i-1}(\bv_i)$ with
$a_{0,i},\dots,a_{n_i-1,i} \in E$,
and define the polynomial $P_i(T) = T^{n_i} - a_{n_i-1,i} T^{n_i-1} - \cdots - a_{0,i} \in E[T]$.
\begin{enumerate}
\item[(a)]
Let $P(T) \in E[T]$ be the monic polynomial of degree $n_1n_2$ with roots $\alpha_2 - \alpha_1$
where $\alpha_i$ runs over the roots of $P_i$.
Then the multiset of slopes of the spectral polygon of $V_1^\dual \otimes V_2$ less than $\log \delta$
consists of $\log \omega - \log \rho + c$ 
for $c$ running over the multiset of slopes of the Newton polygon of $P(T)$ less than $\log \delta - \log \omega + \log \rho$.
\item[(b)]
Let $Q(T) \in E[T]$ be the monic polynomial of degree $n_1^m$ with roots $\alpha_1 + \cdots + \alpha_m$
where $\alpha_i$ runs over the roots of $P_1$.
Then the multiset of slopes of the spectral polygon of $V_1^{\otimes m}$ less than $\log \omega$
consists of $\log \omega - \log \rho + s$ 
for $c$ running over the multiset of slopes of the Newton polygon of $Q(T)$ less than $\log \delta - \log \omega + \log \rho$.
\end{enumerate}
\end{cor}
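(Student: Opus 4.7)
The approach is to extend Proposition~\ref{P:christol-dwork} from a single differential module to the tensor constructions $V_1^\dual \otimes V_2$ (for part (a)) and $V_1^{\otimes m}$ (for part (b)) by comparing the associated polynomial of a cyclic vector of the tensor module to the explicit polynomial $P(T)$ (respectively $Q(T)$).

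First I would apply Proposition~\ref{P:christol-dwork} to each $V_i$ individually. Since $V_i$ is pure of intrinsic radius $s < \omega$, every one of the $n_i$ slopes of the Newton polygon of $P_i$ is forced to equal $\log s + \log \rho - \log \omega$, so every root of $P_i$ in an algebraic closure of $E$ has absolute value exactly $r := s\rho/\omega < \rho$. Consequently, every root of $P(T)$ (a difference of two such roots) or $Q(T)$ (a sum of $m$ such roots) has absolute value at most $r$. Next, I would pass to the tensor product basis of $V_1^\dual \otimes V_2$ (resp.\ $V_1^{\otimes m}$) induced from the cyclic bases $\bv_i, D(\bv_i), \dots, D^{n_i-1}(\bv_i)$ of $V_i$; in this basis the matrix of action of $D$ is $N = -A_1^T \otimes I + I \otimes A_2$ (resp.\ $N = \sum_{k=1}^m I^{\otimes(k-1)} \otimes A_1 \otimes I^{\otimes(m-k)}$), where $A_i$ is the companion matrix of $P_i$. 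By the standard eigenvalue formula for Kronecker sums, the characteristic polynomial of $N$ equals $P(T)$ (resp.\ $Q(T)$) by construction.

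The main step is to compare this matrix with the matrix of action in a cyclic basis of the tensor module. Choose a cyclic vector $\bw$ via Lemma~\ref{L:cyclic vector} and let $U$ be the change-of-basis matrix to the cyclic basis $\bw, D(\bw), \dots$. Then the matrix of action in the cyclic basis is $N' = U^{-1} N U + U^{-1} d(U)$, which is the companion matrix of the associated polynomial $\tilde P(T)$ of $\bw$. The characteristic polynomial of $U^{-1} N U$ is still $P(T)$ (resp.\ $Q(T)$), so comparing $\tilde P(T)$ with $P(T)$ reduces to controlling the perturbation $U^{-1} d(U)$. Since the spectral radius of $d$ on $E$ equals $\omega/\rho$, one expects to obtain a bound on this perturbation uniform in the data $s, n_1, n_2, m$, yielding a threshold $\tau < \log \rho$ such that the Newton polygons of $\tilde P$ and $P$ (resp.\ $Q$) coincide on slopes below $\tau$. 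Applying Proposition~\ref{P:christol-dwork} to the tensor module with cyclic vector $\bw$ then translates this agreement of Newton polygons into the desired spectral polygon identity, with $\delta := \exp(\tau + \log\omega - \log\rho) \in (s, \omega)$.

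The hard part will be establishing the uniform perturbation bound in the previous step: the change-of-basis matrix $U$ depends on the specific modules $V_i$ and the choice of $\bw$, yet the threshold $\delta$ must depend only on $s$ and the ranks. This likely requires a careful normalization of the cyclic vector that exploits the special structure of companion matrices arising from pure modules of intrinsic radius $s$, together with a perturbation estimate in the style of the proof of Proposition~\ref{P:christol-dwork} itself.
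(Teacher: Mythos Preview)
Your approach has a genuine gap precisely where you yourself flag ``the hard part'': controlling the perturbation $U^{-1}d(U)$ uniformly in terms of $s, n_1, n_2, m$ alone. The change-of-basis matrix $U$ from the tensor-product basis to a cyclic basis of $V_1^\dual \otimes V_2$ depends on the choice of cyclic vector $\bw$, and the general cyclic vector theorem (Lemma~\ref{L:cyclic vector}) gives no quantitative control on $U$ or $U^{-1}$ whatsoever. Without such control, the additive term $U^{-1}d(U)$ could swamp the Newton polygon of $P(T)$ at any slope below $\log\rho$, and the threshold $\tau$ you need would depend on the specific modules $V_i$, not just on $s$ and the ranks. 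Your final paragraph gestures at a ``careful normalization of the cyclic vector,'' but no such normalization is supplied, and there is no obvious one that would work.

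The paper's proof avoids this problem by not passing to a cyclic vector of the tensor product at all. Instead, it equips each $V_i$ with a carefully chosen supremum norm (coming from the proof of \cite[Theorem~6.5.3]{kedlaya-book}), takes the induced norm on $V_1^\dual \otimes V_2$ via the product basis, and then invokes \cite[Theorem~6.7.4]{kedlaya-book}, which computes the visible part of the spectral polygon directly from the matrix of $D$ in a well-normed basis rather than from the companion matrix of a cyclic vector. In that framework the matrix of $D$ on the tensor product is exactly your Kronecker sum, its characteristic polynomial is $P(T)$, and the norm is controlled by construction---so the uniformity in $s, n_1, n_2, m$ comes for free. The moral is that Proposition~\ref{P:christol-dwork} is a special case of a more flexible matrix-based criterion, and it is the latter, not the former, that one should apply to tensor constructions.
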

\begin{proof}
We describe only (a) in detail, as the proof of (b) is similar.
Equip $V_i$ with a norm as in the proof of 
\cite[Theorem~6.5.3]{kedlaya-book}; by enlarging $K$ if necessary, we may ensure that this norm 
is the supremum norm defined by a basis. Equip $V_1^\dual$ with the dual basis,
then equip $V_1^\dual \otimes V_2$ with the product basis and the resulting supremum norm.
The claim then follows by applying \cite[Theorem~6.7.4]{kedlaya-book} .
\end{proof}

\begin{defn}
Let $V$ be a differential module over $E$.
A \emph{spectral decomposition} of $V$ is a direct sum decomposition $V  = \bigoplus_{s \in (0,1]} V_s$ 
such that the intrinsic subsidiary radii of $V_s$ are all equal to $s$.
A \emph{refined decomposition} of $V$ is a direct sum decomposition of $V$ refining a spectral
decomposition in which $V_1$ remains whole, but each $V_s$ with $s<1$ is split into inequivalent refined
summands.
%
\end{defn}

\begin{prop} \label{P:field decomposition1}
Let $V$ be a differential module over $E$.
\begin{enumerate}
\item[(a)]
There exists a unique spectral decomposition of $V$.
\item[(b)]
A refined decomposition of $V$ is unique if it exists. Moreover,
there exists a finite tamely ramified extension $E'$ of $E$
such that $V \otimes_{E} E'$ admits a refined decomposition.
\end{enumerate}
\end{prop}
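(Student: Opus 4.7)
My plan is to separate uniqueness from existence in both parts; uniqueness follows in both cases from a single vanishing-of-horizontal-sections principle, while existence rests on the spectral decomposition machinery of \cite{kedlaya-book}, and is genuinely delicate only in (b).

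For uniqueness, the key observation is that if $W$ is a differential module over $E$ whose intrinsic subsidiary radii are all strictly less than $1$, then $H^0(W) = 0$: a nonzero horizontal section would generate a trivial subobject contributing a subsidiary radius equal to $1$, incompatible with the hypothesis. Given two spectral decompositions $V = \bigoplus V_s = \bigoplus V'_s$, for $s \neq s'$ the $(s,s')$-component of the identity map is an element of $H^0(V_s^\dual \otimes V'_{s'})$; properties (a) and (c) of Definition~\ref{D:intrinsic radius}, applied to the pure modules $V_s,V'_{s'}$ of distinct radii, force $IR(V_s^\dual \otimes V'_{s'}) = \min(s,s') < 1$, so the component vanishes and the two decompositions agree. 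The same template, applied via Lemma~\ref{L:refined comparison} to a pair of inequivalent refined summands $W_1,W_2$ of a fixed pure $V_s$ with $s<1$, yields uniqueness in (b): by the definition of equivalence, inequivalence means $IR(W_1^\dual \otimes W_2) = s < 1$, so again there are no horizontal maps.

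For existence in (a), I would induct on the number of distinct subsidiary radii appearing in $V$, at each step splitting off the pure piece corresponding to one extremal radius. In the visible range $s \in (0,\omega)$, Proposition~\ref{P:christol-dwork} converts this splitting problem into a Newton polygon factorization of the characteristic polynomial of $D$ acting on a cyclic vector; a Hensel-type lifting then promotes the polynomial factorization to a module-level direct sum. For $s \in [\omega,1)$ (relevant only if $p>0$), one pulls back along a sufficiently high power of Frobenius to enter the visible range, performs the splitting there, and pushes forward, as in \cite[Chapter~10]{kedlaya-book}.

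The main obstacle is existence in (b). Reduced by (a) to the case of a pure $V = V_s$ with $s<1$, I must produce, over some finite tamely ramified $E'/E$, a decomposition of $V \otimes_E E'$ into pairwise inequivalent refined summands. Heuristically (Remark~\ref{R:intuition}), these summands correspond to residue classes of eigenvalues of $D$ on a cyclic vector in an appropriate graded ring. Concretely, Corollary~\ref{C:christol-dwork}(a), applied to pairs of candidate refined subquotients, shows that the difference of the associated eigenvalues controls $IR$ of the Hom-bundle and therefore detects the equivalence relation; grouping the eigenvalues by residue class of their pairwise differences picks out the refined pieces. The base change $E'/E$ needed to make this grouping rational amounts to adjoining certain roots of the polynomial $P(T)$ appearing in Corollary~\ref{C:christol-dwork}(a), and this extension is tamely ramified over $E$. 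The most delicate point is the non-visible range $s\in[\omega,1)$: there one must pass through Frobenius pullback and pushforward, and verify that the tame extension chosen in the visible range descends to a tame extension of the original $E$, using the invariance of intrinsic subsidiary radii under tame extensions recorded in Definition~\ref{D:intrinsic radius}.
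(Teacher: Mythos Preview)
Your sketch is essentially the content of \cite[Theorems~10.6.2, 10.6.7]{kedlaya-book}, which is exactly what the paper cites (after a one-line restriction-of-scalars reduction from $E$ to $F_\rho$ that you omit but that causes no difficulty). So the route is the same.

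One point of care in your uniqueness argument: from $IR(V_s^\dual \otimes V'_{s'}) = \min(s,s') < 1$ you cannot immediately conclude $H^0 = 0$, because $IR$ records only the \emph{minimum} subsidiary radius, whereas your vanishing principle requires \emph{all} subsidiary radii to be strictly less than $1$. The clean fix is to bypass the tensor product entirely: a nonzero horizontal map $V_s \to V'_{s'}$ has image that is simultaneously a quotient of the pure module $V_s$ (hence pure of radius $s$, since its Jordan--H\"older constituents are among those of $V_s$) and a submodule of the pure module $V'_{s'}$ (hence pure of radius $s'$), forcing $s = s'$. The same image argument, combined with Lemma~\ref{L:refined comparison} as you already indicate, handles the refined case: the image of a nonzero horizontal map between refined modules of the same radius is equivalent to both source and target, contradicting inequivalence.
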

\begin{proof}
By restriction of scalars, we may reduce to the case $E = F_\rho$.
In this case, see \cite[Theorem~10.6.2, Theorem~10.6.7]{kedlaya-book}
for (a) and (b), respectively. 
\end{proof}
\begin{cor} \label{C:indecomposable radii}
Let $V$ be a differential module over $E$ such that $IR(V) < 1$.
\begin{enumerate}
\item[(a)] If $V$ is indecomposable, then $V$ is pure. 
\item[(b)] 
If $V \otimes_{E} E'$ is indecomposable for every finite tamely ramified extension $E'$
of $E$, then $V$ is refined.
\end{enumerate}
\end{cor}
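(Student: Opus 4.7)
The plan is to deduce both parts of the corollary directly from Proposition~\ref{P:field decomposition1} together with the observation (from Definition~\ref{D:intrinsic radius}) that the multiset of intrinsic subsidiary radii, hence the notions of purity and refinedness, are invariant under finite tamely ramified extensions of $E$.

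For (a), I would apply Proposition~\ref{P:field decomposition1}(a) to obtain the spectral decomposition $V = \bigoplus_{s \in (0,1]} V_s$. Since $V$ is indecomposable, exactly one summand $V_s$ is nonzero, which by definition of the spectral decomposition means that every intrinsic subsidiary radius of $V$ equals $s$. Thus $V$ is pure. (The hypothesis $IR(V) < 1$ is not used for this part, but is consistent with it.)

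For (b), I would pick a finite tamely ramified extension $E'$ of $E$ so that $V' := V \otimes_E E'$ admits a refined decomposition, as guaranteed by Proposition~\ref{P:field decomposition1}(b). By hypothesis $V'$ is indecomposable, so the refined decomposition reduces to a single summand. Moreover, since $IR(V) < 1$ and intrinsic subsidiary radii are preserved under tamely ramified base change, we have $IR(V') < 1$, so the summand with intrinsic radius $1$ in the spectral decomposition of $V'$ vanishes. Consequently the single remaining refined summand is one of the refined summands of some $V'_s$ with $s<1$, and so $V'$ is itself refined: $IR(\End(V')) > IR(V')$.

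Finally, to transfer refinedness back from $V'$ to $V$, I would again invoke the invariance of the intrinsic subsidiary radii multiset under finite tamely ramified extensions, applied both to $V$ and to $\End(V)$. This gives $IR(V) = IR(V')$ and $IR(\End(V)) = IR(\End(V'))$, so the inequality $IR(\End(V')) > IR(V')$ descends to $IR(\End(V)) > IR(V)$, which means $V$ is refined. I do not anticipate a serious obstacle here: the whole argument is essentially a contrapositive reading of Proposition~\ref{P:field decomposition1} combined with the tame-extension invariance of intrinsic subsidiary radii, so the only subtlety is being careful that the $V_1$ piece of the spectral decomposition is excluded by the hypothesis $IR(V)<1$, which is exactly why that hypothesis appears.
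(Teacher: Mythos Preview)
Your argument is correct and is exactly the intended one: the paper states this result as an immediate corollary of Proposition~\ref{P:field decomposition1} with no separate proof, and your derivation via the spectral and refined decompositions together with tame-extension invariance of intrinsic subsidiary radii is precisely what is meant. Your remark that $IR(V)<1$ is only needed for part~(b), to exclude the Robba summand $V_1$ from the refined decomposition, is also on point.
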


In case $p=0$, one can state an even stronger version of Corollary~\ref{C:indecomposable radii},
closely related to the classical Turrittin-Levelt-Hukuhara decomposition theorem for formal meromorphic connections (see for instance \cite[Chapter~7]{kedlaya-book}).
\begin{prop} \label{P:Turrittin analogue}
Assume $p=0$. Let $V$ be a differential module over $E$ such that $IR(V) < 1$.
If $V \otimes_{E} E'$ is indecomposable for every finite tamely ramified extension $E'$
of $E$, then there exists a differential module $W$ over $E$ of dimension $1$ such that $IR(W^\dual \otimes V) = 1$.
\end{prop}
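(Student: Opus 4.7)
The plan is to extract an ``exponential part'' of $V$ as a rank one differential module $W$ over $E$, in the spirit of the classical Turrittin--Levelt--Hukuhara decomposition available in residue characteristic zero, so that $W^\dual \otimes V$ has intrinsic radius~$1$. The main obstacle will be showing termination of an iterative construction, for which the $p=0$ hypothesis is essential.

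First, by Corollary~\ref{C:indecomposable radii}(b), the hypothesis forces $V$ to be refined; set $s := IR(V) \in (0,1)$. Since $p=0$ gives $\omega = 1$, everything takes place in the visible range, where the cyclic-vector machinery is effective. A cyclic vector of $V$ (Lemma~\ref{L:cyclic vector}) yields a polynomial $P(T) = T^n - a_{n-1} T^{n-1} - \cdots - a_0 \in E[T]$ whose roots $\alpha_1,\ldots,\alpha_n$ in $\overline{E}$ all satisfy $|\alpha_i| = 1/(s\rho)$ by Proposition~\ref{P:christol-dwork}. Applying Corollary~\ref{C:christol-dwork}(a) to $V^\dual \otimes V$ together with the refinedness inequality $IR(\End V) > s$ shows that the pairwise differences satisfy $|\alpha_i - \alpha_j| < 1/(s\rho)$, so the $\alpha_i$ share a common leading term.

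Next I would realize this leading term in $E$. Put $\alpha_0 := a_{n-1}/n = n^{-1}\sum_i \alpha_i \in E$, well-defined because $n$ is a unit in $K$ thanks to $p = 0$; the ultrametric inequality gives $|\alpha_i - \alpha_0| \leq \max_j |\alpha_i - \alpha_j| < 1/(s\rho)$. Let $W_0$ be the rank one differential module over $E$ with connection scalar $\alpha_0$. A direct computation (applying $(D+\alpha_0)^k$ to the tensor product of a cyclic vector of $V$ with a generator of $W_0^\dual$) shows that $V_1 := W_0^\dual \otimes V$ admits a cyclic vector whose annihilator polynomial is $P(T + \alpha_0)$, with roots $\alpha_i - \alpha_0$, so Proposition~\ref{P:christol-dwork} gives $IR(V_1) > IR(V)$. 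Since tensoring by a rank one module preserves indecomposability over every tame extension, $V_1$ still satisfies the hypothesis of the proposition; by Corollary~\ref{C:indecomposable radii}(b), either $IR(V_1) = 1$ (in which case $W := W_0$ works) or $V_1$ is again refined, allowing iteration.

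The hard part is to show this iteration terminates, producing a finite composite $W := W_0 \otimes W_1 \otimes \cdots \otimes W_{K-1}$ over $E$ with $IR(W^\dual \otimes V) = 1$. In residue characteristic zero, finiteness should follow from the classical Turrittin--Levelt--Hukuhara structure theorem (see \cite[Chapter~7]{kedlaya-book}), which furnishes a finite-depth irregular part for refined formal meromorphic connections, free of $p$-adic Liouville phenomena. Equivalently, one may establish the decomposition $V \otimes \overline{E} \cong L_\alpha \otimes R$ directly over an algebraic closure of $E$ with $R$ of intrinsic radius~$1$; any two valid choices of $\alpha$ differ by an element of norm at most $1/\rho$, and absolute indecomposability allows one to descend $\alpha$ to $E$ by Galois averaging (taking $\alpha_E = [E':E]^{-1}\Trace_{E'/E}(\alpha)$ over a finite Galois extension $E'/E$ containing $\alpha$), with $|\alpha - \alpha_E| \leq 1/\rho$, giving the desired rank one module over $E$.
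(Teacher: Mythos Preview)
Your twist $W_0$ coincides with the paper's $W$: both have $D$ acting by $a_{n-1}/n$, which is exactly $f/n$ where $f$ is the scalar by which $D$ acts on a generator of $\wedge^n V$ (the trace of the companion matrix is $a_{n-1}$). The divergence is in what happens next.

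The paper does \emph{not} iterate. Since $W^{\otimes n} \cong \wedge^n V$, the top exterior power $\wedge^n(W^\dual \otimes V)$ is trivial. If $IR(W^\dual \otimes V) < 1$, then $W^\dual \otimes V$ is again refined (same indecomposability argument you give), and \cite[Proposition~6.8.4]{kedlaya-book} says that in residue characteristic $0$ a refined module has the same intrinsic radius as its top exterior power. This forces $IR(W^\dual \otimes V) = IR(\wedge^n(W^\dual\otimes V)) = 1$, a contradiction. So one twist suffices.

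Your proposed iteration, by contrast, stalls immediately. The cyclic vector you exhibit for $V_1 = W_0^\dual \otimes V$ has twisted polynomial $P(T+\alpha_0)$, whose roots $\alpha_i - \alpha_0$ sum to zero; hence its $T^{n-1}$-coefficient vanishes and your recipe returns $\alpha_1 = 0$. So the ``hard part'' (termination) is not merely difficult but unreachable by the stated procedure. The appeals at the end to formal Turrittin--Levelt--Hukuhara over $E$ (which is not a formal power series field) and to Galois-averaging a descent from $\overline{E}$ are too sketchy to fill this gap; in particular you have not produced any $\alpha \in \overline{E}$ with $IR(L_\alpha^\dual \otimes V) = 1$ to average. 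The clean fix is exactly the paper's: recognize that your $W_0$ already trivializes the determinant, and invoke \cite[Proposition~6.8.4]{kedlaya-book} to finish in one step.
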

\begin{proof}
Put $n = \dim_E(V)$. Let $\bv$ be a generator of $\wedge^n V$ and define $f \in E$ by the formula 
$D(\bv) = f\bv$. Let $W$ be the differential module
of dimension 1 over $E$ on the generator $\bw$ for which $D(\bw) = (f/n) \bw$;
then $W^{\otimes n} \cong \wedge^n V$, so $\wedge^n (W^\dual \otimes V)$ is trivial.
If $IR(W^\dual \otimes V) < 1$, then by Corollary~\ref{C:indecomposable radii}, $W^\dual \otimes V$
would be refined; however, \cite[Proposition~6.8.4]{kedlaya-book} would then imply that
$IR(W^\dual \otimes V) = IR(\wedge^n (W^\dual \otimes V)) = 1$, a contradiction.
Hence $IR(W^\dual \otimes V) = 1$ as desired.
\end{proof}

\subsection{More on refined modules}
\label{subsec:fields2}

Proposition~\ref{P:Turrittin analogue} gives a fairly precise description of the indecomposable differential
modules over finite tamely ramified extensions of $F_\rho$ in case $p=0$. We next turn to the situation where $p>0$,
in which case things are more complicated.

\begin{hypothesis}
Throughout \S\ref{subsec:fields2},
retain Hypothesis~\ref{H:differential field},
but assume in addition that $p>0$. Let $\mu_p$ denote the group of $p$-th roots of unity in some algebraic closure
of $K$.
\end{hypothesis}

We recall the basic formalism of Frobenius pullback and pushforward, as in \cite[Chapter~10]{kedlaya-book}.
\begin{defn}
For each $\zeta \in \mu_p$, the $K$-linear substitution $t \mapsto \zeta t$ induces a continuous automorphism $\zeta^*$ of
$E(\mu_p)$. Let $E'$ be the fixed subfield of $E(\mu_p)$ under the group generated by $\Gal(E(\mu_p)/E)$
and the automorphisms $\zeta^*$ for $\zeta \in \mu_p$; we may then view $E'$
as a differential field for the derivation $d = \frac{d}{dt^p}$, and thus define the intrinsic radius
of a nonzero differential module $(V', D')$ over $E'$ so that $\rho^p/(\omega IR(V'))$ equals the spectral radius of $D'$.

For $m=0,\dots,p-1$, let $(W_m,D')$ denote the differential module over $E'$ on the single generator $\bv$
given by $D'(\bv) = (m/p) t^{-p} \bv$. By Proposition~\ref{P:christol-dwork},
$IR(W_m) = \omega^p$ for $m\neq 0$
(see also \cite[Definition~10.3.3]{kedlaya-book}).

For $(V',D')$ a differential module over $E'$, define the differential module $\varphi^* V'$
over $E$ to have underlying module $V' \otimes_{E'} E$ and derivation given by
$D = D' \otimes pt^{p-1}$.

For $(V,D)$ a differential module over $E$, define the differential module $\varphi_* V$
over $E'$ to have underlying module $V$ and derivation given by $D' = p^{-1} t^{1-p} D$.
\end{defn}

\begin{lemma} \label{L:weak pullback}
For any nonzero differential module $V'$ over $E'$,
$IR(\varphi^* V') \geq \min\{IR(V')^{1/p}, p IR(V')\}$.
\end{lemma}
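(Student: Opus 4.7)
My plan is to prove the inequality by pulling back a formal horizontal Taylor series of $V'$ along the Frobenius substitution $s = t^p$ and tracking its radius of convergence in the $t$-variable. The intrinsic radius is defined so that the Dwork--Robba Taylor expansion of a horizontal section at a generic point converges on a disc of that radius times the relevant Gauss radius, so obtaining any formal horizontal section of $\varphi^*V'$ with convergence radius at least $R$ will force $IR(\varphi^*V') \ge R/\rho$.

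First I would pass to a complete extension of $E$ containing a generic point $t_0$ of the disc $|t|\le\rho$ (e.g.\ the completed residue field at the Gauss point of that disc), and set $s_0 := t_0^p$, which is a generic point of $|s|\le\rho^p$ in $E'$. Dwork--Robba produces a formal horizontal section $\bw(s) = \sum_{k\ge 0}\bw_k(s-s_0)^k$ of $V'$ based at $s_0$ whose radius of convergence in $(s-s_0)$ equals $\rho^p\,IR(V')$. In any $E'$-basis of $V'$ one has $D' = \partial_s + N'$ and $D = \partial_t + pt^{p-1}N'$ on $\varphi^*V'$, so the chain rule $\partial_t = pt^{p-1}\partial_s$ gives $D\bw(t^p) = pt^{p-1}(D'\bw)(t^p) = 0$. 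Thus $\bw(t^p)$ is a formal horizontal section of $\varphi^*V'$ based at $t_0$, and its radius of convergence in $u := t-t_0$ furnishes the desired lower bound.

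Next I would Newton-polygon-analyze the change of variable. Writing $t^p - t_0^p = \prod_{\zeta \in \mu_p}(u - (\zeta-1)t_0)$ exhibits the roots in $u$ as $0$ and $(\zeta-1)t_0$ for $\zeta\ne 1$, each of norm $\omega\rho$ (since $|\zeta-1| = \omega$ for a primitive $p$-th root of unity). Consequently
\[
|t^p - t_0^p| = \begin{cases} |p|\rho^{p-1}|u| & \text{if } |u|\le \omega\rho, \\ |u|^p & \text{if } |u| > \omega\rho. \end{cases}
\]
Imposing $|t^p - t_0^p| < \rho^p\,IR(V')$ reduces, in the first regime, to $|u| < p\rho\,IR(V')$ (using $|p|^{-1} = p$) and, in the second, to $|u| < \rho\,IR(V')^{1/p}$. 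Unifying the two regimes yields convergence of $\bw(t^p)$ throughout $|u| < \rho\cdot\min\{p\,IR(V'),\,IR(V')^{1/p}\}$, hence $IR(\varphi^*V') \ge \min\{IR(V')^{1/p},\,p\,IR(V')\}$.

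The main obstacle is justifying the ``generic horizontal section with exact convergence radius $\rho^p\,IR(V')$'' step: one must select the extension of $E$ large enough (the completed residue field at the Gauss point, possibly further enlarged to trivialize $V'$ locally) and invoke the Dwork--Robba correspondence between spectral norms and convergence of Taylor expansions. Fortunately both the spectral norm of $D'$ and the intrinsic radius $IR(V')$ are invariant under isometric scalar extension, so an inequality derived in the enlarged field transfers back to $E$ via $IR(\varphi^*V') = \omega/(\rho\cdot|D|_{\mathrm{sp}})$. Once the generic-fibre setup is in place, the Newton-polygon calculation above is entirely routine, and the two cases of the $\min$ correspond transparently to the two asymptotic regimes of $|t^p - t_0^p|$ as a function of $|u|$.
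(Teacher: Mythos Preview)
Your approach is correct and is essentially the argument behind \cite[Lemma~10.3.2]{kedlaya-book}, which is all the paper invokes here: interpret $IR$ via the generic radius of convergence of horizontal sections, pull back along $s=t^p$, and read off the radius in $u=t-t_0$ from the Newton polygon of $t^p-t_0^p$. One small correction: the sentence ``obtaining any formal horizontal section of $\varphi^*V'$ with convergence radius at least $R$ will force $IR(\varphi^*V')\ge R/\rho$'' is not quite right---a single horizontal section only bounds one subsidiary radius, not $IR$. You need a full basis of horizontal sections of $V'$ at $s_0$ (which Dwork--Robba provides on the disc of radius $\rho^p IR(V')$); their pullbacks remain linearly independent since they specialize at $u=0$ to a basis of the fibre, and then the transfer theorem gives the bound on $IR(\varphi^*V')$. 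With that adjustment your argument goes through as written.
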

\begin{proof}
See \cite[Lemma~10.3.2]{kedlaya-book}
\end{proof}

\begin{prop} \label{P:antecedent}
Let $V$ be a nonzero differential module over $E$ such that $IR(V) > \omega$.
Then there exists a unique differential module $V'$ over $E'$ 
(called the \emph{Frobenius antecedent} of $V$)
such that
$IR(V') > \omega^p$ and $\varphi^* V' \cong V$; moreover, this module satisfies
$IR(V') = IR(V)^p$.
\end{prop}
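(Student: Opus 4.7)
The plan is to prove existence and uniqueness separately and then extract the radius formula as a byproduct of the construction.

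For uniqueness, suppose $V_1'$ and $V_2'$ are two antecedents. By property (c) of Definition~\ref{D:intrinsic radius}, $IR(V_1'^{\dual} \otimes V_2') \geq \min\{IR(V_1'), IR(V_2')\} > \omega^p$. An isomorphism $\varphi^* V_1' \cong \varphi^* V_2'$ corresponds to a $D$-horizontal element of $\varphi^*(V_1'^{\dual} \otimes V_2')$, so it suffices to prove that for any differential module $M'$ over $E'$ with $IR(M') > \omega^p$, the canonical map $H^0(M') \to H^0(\varphi^* M')$ is a bijection. The inclusion is obvious; for surjectivity, a horizontal section of $\varphi^* M'$ extends via its Taylor series to a flat section on a disc of radius strictly greater than $\omega \rho$ about any generic point, and such a series can be reorganized into a power series in $t^p$, yielding descent to a section of $M'$. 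The $\mu_p$-equivariance step (to land in the fixed subfield $E'$ rather than $E(\mu_p)$) is automatic since horizontal sections over $E$ are tautologically $\Gal(E(\mu_p)/E)$-invariant, and the $\mu_p$-action shifts Taylor coefficients in a way compatible with reorganization in $t^p$.

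For existence, I would construct $V'$ as the maximal large-radius summand of the Frobenius pushforward $\varphi_* V$, which has rank $p \cdot \dim_E V$ over $E'$. The key computation is that for any $X$ over $E'$, there is a natural isomorphism
\[
\varphi_* \varphi^* X \cong \bigoplus_{m=0}^{p-1} X \otimes_{E'} W_m,
\]
derived from the $E'$-basis $1, t, \dots, t^{p-1}$ of $E$ together with the formula $D' = p^{-1} t^{1-p} D$ rewritten in this basis. Since $IR(W_m) = \omega^p$ for $m \neq 0$ while $IR(W_0) = 1$, the hypothesis $IR(V) > \omega$ (which, after the analogous translation for $V'$, places us strictly above $\omega^p$) allows one to isolate a single summand of the spectral decomposition of $\varphi_* V$ (Proposition~\ref{P:field decomposition1}(a)) whose intrinsic radius is too large to arise from a $W_m$-twist with $m \neq 0$. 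This summand is forced to be $V'$. Finally, the radius formula $IR(V') = IR(V)^p$ is read off by tracking spectral radii through the identity $D = D' \otimes p t^{p-1}$: the factor $p t^{p-1}$ has norm $\omega^{p-1} \rho^{p-1}$, and substituting this into the normalizations of Definition~\ref{D:intrinsic radius} produces the claimed exponent relation, provided the algebraic (multiplicative) contribution dominates the derivational one, which is precisely the content of $IR(V) > \omega$.

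The main obstacle will be the existence step, specifically pinning down $V'$ as a canonical summand of $\varphi_* V$ rather than just as some abstract descent. The $\mu_p$-descent in the definition of $E'$ makes the combinatorics slightly more intricate than a naive Galois-descent argument, and one must be careful that the hypothesis $IR(V) > \omega$ ensures a uniform gap between the intended summand and all $W_m$-twisted companions so that Proposition~\ref{P:field decomposition1} produces a clean separation. Once this is done, verifying that the extracted summand satisfies $\varphi^* V' \cong V$ should reduce to the full-faithfulness underlying the uniqueness argument.
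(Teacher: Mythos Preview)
The paper gives no self-contained argument here, only a citation to \cite[Theorem~10.4.2]{kedlaya-book}. There the construction proceeds by direct descent: since $|\zeta-1| = \omega$ for primitive $\zeta \in \mu_p$ while $IR(V) > \omega$ makes the Taylor isomorphism for $t \mapsto \zeta t$ converge, one obtains a semilinear $\mu_p$-action on $V \otimes_E E(\mu_p)$ whose invariants (combined with the Galois descent already built into the definition of $E'$) yield $V'$. Your uniqueness paragraph is essentially the full-faithfulness shadow of this, though the phrase ``reorganized into a power series in $t^p$'' hides the actual mechanism. A transparent version: write $v = \sum_{i=0}^{p-1} m_i \otimes t^i$ with $Dv = 0$; matching coefficients over the $E'$-basis $1, t, \dots, t^{p-1}$ of $E$ forces $D'(m_0)=0$ and $D'(m_i) = -(i/p)\,t^{-p} m_i$ for $1 \leq i \leq p-1$, so a nonzero $m_i$ would embed a copy of $W_{p-i}$ into $M'$, contradicting $IR(M') > \omega^p$.

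Your existence argument via $\varphi_* V$ is a genuinely different route, but both obstacles you flag are real and unresolved. First, to know that $\varphi_* V$ has exactly $n$ subsidiary radii strictly greater than $\omega^p$ you are implicitly invoking Proposition~\ref{P:descendant}, which in this paper and in \cite{kedlaya-book} is placed \emph{after} the antecedent theorem, and whose proof in the range $s_i > \omega$ uses the antecedent together with Lemma~\ref{L:push pull}(c); you would need an independent radius calculation for $\varphi_* V$. Second, the reduction of ``$\varphi^* V' \cong V$'' to full faithfulness does not go through as stated: the Hom-space $\Hom_E(\varphi^* V', V)$ is not of the form $H^0(\varphi^* M')$, because $V$ is not yet known to be a pullback. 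One can repair this by showing that the adjunction map $\alpha\colon \varphi^* V' \to V$ is injective (hence bijective by a dimension count): since morphisms respect spectral decompositions, $(\varphi_* \alpha)_{>\omega^p}\colon V' \to V'$ is the identity, whence $\varphi_*(\ker\alpha)$ has no summand of radius $> \omega^p$; but $\ker\alpha \subseteq \varphi^* V'$ has $IR > \omega$ by Lemma~\ref{L:weak pullback}, so a nonzero kernel would force such a summand --- and here again one needs the as-yet-unproved direction of Proposition~\ref{P:descendant}.
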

\begin{proof}
See \cite[Theorem~10.4.2]{kedlaya-book}.
\end{proof}

\begin{prop} \label{P:descendant}
Let $V$ be a differential module over $E$ with intrinsic subsidiary radii $s_1, \dots, s_n$.
Then the intrinsic subsidiary radii of $\varphi_* V$ (called the \emph{Frobenius descendant} of $V$) comprise the multiset
\[
\bigcup_{i=1}^n \begin{cases} \{s_i^p\} \cup \{ \omega^p \, \mbox{($p-1$ times)}\} & \mbox{if } s_i > \omega \\
\{p^{-1} s_i \, \mbox{($p$ times)}\} & \mbox{if } s_i \leq \omega.
\end{cases}
\]
\end{prop}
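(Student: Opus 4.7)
The plan is to reduce to the pure case via the spectral decomposition, then dichotomize according to whether the intrinsic radius lies strictly above $\omega$ (where a Frobenius antecedent exists and the analysis proceeds by a tensor decomposition) or not (where one must compute spectral radii directly and use a cyclic vector to track subsidiary radii). By Proposition~\ref{P:field decomposition1}(a) I would write $V = \bigoplus_s V_s$ with $V_s$ pure of radius $s$; since $\varphi_* V$ has the same underlying set as $V$, and $D' = p^{-1} t^{1-p} D$ preserves each $V_s$, the decomposition descends to $\varphi_* V = \bigoplus_s \varphi_* V_s$, so it suffices to treat the case where $V$ is pure of radius $s$ and rank $n$.

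For $s > \omega$, I would apply Proposition~\ref{P:antecedent} to obtain a Frobenius antecedent $V'$ over $E'$ with $\varphi^* V' \cong V$ and $IR(V') = s^p$. A direct computation of $\varphi_* E$ in the basis $1, t, \ldots, t^{p-1}$ (on which $d/dt^p$ acts diagonally by $t^k \mapsto (k/p)t^{-p} \cdot t^k$) gives $\varphi_* E \cong \bigoplus_{m=0}^{p-1} W_m$, so the natural isomorphism $\varphi_* \varphi^* V' \cong V' \otimes_{E'} \varphi_* E$ yields $\varphi_* V \cong \bigoplus_m V' \otimes_{E'} W_m$. Property (c) of Definition~\ref{D:intrinsic radius}, applied with $IR(V') = s^p > \omega^p = IR(W_m)$ for $m \neq 0$, then produces the claimed multiset: $s^p$ with multiplicity $n$ (from $m=0$) and $\omega^p$ with multiplicity $n(p-1)$.

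For $s \leq \omega$, no antecedent exists and I would compute spectral radii directly. Writing $D' = fD$ with $f = p^{-1} t^{1-p}$ (so $|f| = p\rho^{1-p}$), repeated use of the Leibniz rule gives $(fD)^n = f^n D^n + \sum_{k<n} c_{n,k} D^k$, with lower-order terms controlled by $|d(f)/f| = \rho^{-1}$. Since $|D|_{\mathrm{sp}} = \omega/(\rho s) \geq 1/\rho$ when $s \leq \omega$, the top term dominates and $|D'|_{\mathrm{sp}} = p\omega/(\rho^p s)$, yielding top subsidiary radius $p^{-1} s$. To upgrade from the top radius to the full multiset I would pick a cyclic vector $\bv$ for $V$ (Lemma~\ref{L:cyclic vector}) and apply Proposition~\ref{P:christol-dwork} on both sides: over $E$ to read off the $s_i$ from the Newton slopes of the associated polynomial $P(T) \in E[T]$, and over $E'$ to convert this into the polynomial $P'(T) \in E'[T]$ of degree $np$ governing $\varphi_* V$, using $D = f^{-1} D'$ and the extension degree $[E:E']=p$ to track how slopes transform.

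The hard part will be the final Newton polygon comparison: showing that each slope of $P$ contributes exactly $p$ copies of the predicted slope in $P'$, with no spurious slopes (in particular no slopes yielding radius $\omega^p$, which legitimately appear only in the regime $s > \omega$). The saving fact is that $p^{-1} s \leq p^{-1}\omega = \omega^p$ when $s \leq \omega$, so all predicted slopes fall strictly within the visible range over $E'$ and the Christol--Dwork criterion applies cleanly to certify the multiplicities.
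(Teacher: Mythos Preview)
The paper does not prove this statement; it cites \cite[Theorem~10.5.1]{kedlaya-book}. So there is no in-paper argument to compare against, and your proposal has to stand on its own.

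Your reduction to the pure case and your handling of the regime $s>\omega$ are correct. The antecedent $V'$ exists, the projection formula $\varphi_* \varphi^* V' \cong V' \otimes_{E'} \varphi_* E \cong \bigoplus_{m} V' \otimes W_m$ holds, and once one observes that $V'$ inherits purity from $V$ (a nontrivial spectral splitting of $V'$ would pull back via $IR(\varphi^*(\cdot)) = IR(\cdot)^{1/p}$ to a nontrivial splitting of $V$), the multiset drops out of Definition~\ref{D:intrinsic radius}(c).

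The regime $s \leq \omega$ has a genuine gap. Your spectral-radius estimate correctly yields $IR(\varphi_* V) = p^{-1}s$, i.e.\ the \emph{smallest} subsidiary radius. But the proposed upgrade via cyclic vectors does not work: a $D$-cyclic vector $\bv$ for $V$ over $E$ is typically \emph{not} $D'$-cyclic for $\varphi_* V$ over $E'$. Already for $V = E$ of rank~$1$ one has $D'(1)=0$, so $1$ generates only a rank-$1$ $E'$-submodule of the rank-$p$ module $\varphi_* E$. There is therefore no polynomial $P'$ of degree $np$ attached to $\bv$, and the Newton-polygon comparison you sketch has nothing to act on. Choosing a fresh cyclic vector for $\varphi_* V$ over $E'$ is possible but severs the link to $P$, so the ``conversion'' step as written is undefined.

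One clean repair is to bound the remaining subsidiary radii from above by pulling back. After adjoining $\mu_p$ to $K$, Galois descent for $E/E'$ gives $\varphi^* \varphi_* V \cong \bigoplus_{\zeta \in \mu_p} (\zeta^* V,\, \zeta^{-1}D)$, each summand pure of radius $s$. Hence every Jordan--H\"older constituent $W$ of $\varphi_* V$ has $\varphi^* W$ pure of radius $s$, and Lemma~\ref{L:weak pullback} gives $s \geq \min\{IR(W)^{1/p},\, p\,IR(W)\}$. For $s \leq \omega$ this forces $IR(W) \leq p^{-1}s$; combined with $IR(\varphi_* V)=p^{-1}s$, purity follows.
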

\begin{proof}
See \cite[Theorem~10.5.1]{kedlaya-book}.
\end{proof}

\begin{lemma} \label{L:push pull}
\begin{enumerate}
\item[(a)]
For $V$ a differential module over $E$, there are
canonical isomorphisms 
\[
\iota_m: (\varphi_* V) \otimes W_m \cong \varphi_* V
\qquad (m=0,\dots,p-1).
\]
\item[(b)]
For $V$ a differential module over $E$, 
a submodule $U$ of $\varphi_* V$ has the form $\varphi_* X$ for some differential submodule $X$
of $V$ if and only if $\iota_m(U \otimes W_m) = U$
for $m=0,\dots,p-1$.
\item[(c)]
For $V'$ a differential module over $E'$, there is a canonical
isomorphism
\[
\varphi_* \varphi^* V' \cong \bigoplus_{m=0}^{p-1} (V' \otimes W_m).
\]
\end{enumerate}
\end{lemma}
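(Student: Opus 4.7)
The plan is to establish the projection formula together with the triviality of the Frobenius pullback of $W_m$, then deduce the explicit form of $\iota_m$, and finally derive (b) as an instance of descent along the inclusion $E' \hookrightarrow E$. I would begin with (c), which is essentially a bookkeeping exercise. Since $E$ is free over $E'$ of rank $p$ with basis $1, t, \dots, t^{p-1}$, the pullback decomposes as $\varphi^* V' = V' \otimes_{E'} E = \bigoplus_{m=0}^{p-1} V' \otimes t^m$ as an $E'$-module. Computing the descendant derivation $D' = p^{-1} t^{1-p} D$ on a general element $v \otimes t^m$ via $D(v \otimes t^m) = p t^{p-1} D'_{V'}(v) \otimes t^m + m t^{m-1} v \otimes 1$ and moving the scalar $t^{-p} \in E'$ across the tensor shows that each summand $V' \otimes t^m$ is $D'$-stable and that the induced action carries $v \otimes t^m$ to $D'_{V'}(v) \otimes t^m + (m/p) t^{-p} v \otimes t^m$. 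This is precisely the formula defining the action on $V' \otimes W_m$, yielding (c).

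For (a), I would check the projection formula $\varphi_*(V) \otimes_{E'} W \cong \varphi_*(V \otimes_E \varphi^* W)$ by a direct Leibniz-rule comparison on both sides; the underlying $E'$-modules agree through the chain $V \otimes_{E'} W = V \otimes_E (E \otimes_{E'} W) = V \otimes_E \varphi^* W$, and the factor $pt^{p-1}$ in the definition of $D$ on $\varphi^* W$ cancels with the factor $p^{-1} t^{1-p}$ in the definition of $D'$ on $\varphi_*$ so that the derivations match. Applied with $W = W_m$, this reduces (a) to showing that $\varphi^* W_m$ is trivial over $E$: its generator $\bw$ satisfies $D(\bw) = m t^{-1} \bw$, and the section $t^{-m} \bw$ is horizontal. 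Tracing the chain of canonical isomorphisms (and using that the trivialization sends $\bw \mapsto t^m$) identifies $\iota_m$ explicitly with the map $v \otimes \bv \mapsto t^m v$, where $\bv$ denotes the prescribed generator of $W_m$.

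Given this explicit description, (b) reduces to an elementary observation about descent: an $E'$-submodule $U$ of the underlying module $V$ is an $E$-submodule if and only if $tU \subseteq U$, which is in turn equivalent to $t^m U = U$ for all $m \in \{0, \dots, p-1\}$ (since $t$ is invertible in $E$ and $E = \bigoplus_{m=0}^{p-1} E' t^m$). Under $\iota_m$, the condition $\iota_m(U \otimes W_m) = U$ translates to $t^m U = U$, so the stability conditions in (b) are exactly the conditions for $U \subseteq V$ to be an $E$-submodule. Because $U$ is assumed to be stable under $D' = p^{-1} t^{1-p} D$, the identity $D = p t^{p-1} D'$ combined with $t$-stability shows that $U$ is also $D$-stable, hence equals $\varphi_* X$ for the differential submodule $X \subseteq V$ with underlying set $U$; the converse is immediate since $\varphi_* X$ is by construction an $E$-submodule. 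The main obstacle is the explicit calculation identifying $\iota_m$ with multiplication by $t^m$; once that identification is in hand, (b) follows by unwinding definitions.
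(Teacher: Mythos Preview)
Your argument is correct and complete. The paper itself does not supply a proof here but merely cites \cite[Lemma~10.3.6(a,b,c)]{kedlaya-book}; your computation (decomposing $E = \bigoplus_m E' t^m$, checking the projection formula, trivializing $\varphi^* W_m$ via $t^{-m}\bw$, and reading off $\iota_m$ as multiplication by $t^m$) is exactly the standard verification found in that reference, so there is nothing to compare.
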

\begin{proof}
See \cite[Lemma~10.3.6(a,b,c)]{kedlaya-book}.
\end{proof}

\begin{lemma} \label{L:boundary radius}
Let $V'$ be an indecomposable differential module over $E'$ of intrinsic radius $\omega^p$
such that $IR(\varphi^* V') > \omega$.
Then there exists a unique $m \in \{0,\dots,p-1\}$ such that $IR(V' \otimes W_m) > \omega^p$.
\end{lemma}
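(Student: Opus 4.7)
My plan is to embed everything into $\varphi_*\varphi^* V' \cong \bigoplus_{m=0}^{p-1} V' \otimes W_m$ (Lemma~\ref{L:push pull}(c)), use Proposition~\ref{P:descendant} to count how many subsidiary radii of this object exceed $\omega^p$, and use the indecomposability of $V'$ to force all such radii to concentrate in a single summand.

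I would first observe that, by Corollary~\ref{C:indecomposable radii}(a), the indecomposable module $V'$ with $IR(V') = \omega^p < 1$ is pure, so its $n := \dim_{E'} V'$ subsidiary radii are all equal to $\omega^p$. The rank-one modules $W_m$ for $m \neq 0$ have intrinsic radius $\omega^p$ as well, so property (c) of the intrinsic radius forces every subsidiary radius of $V' \otimes W_m$ to be $\geq \omega^p$. Now, applying Proposition~\ref{P:descendant} to $\varphi^* V'$ describes the multiset of subsidiary radii of $\varphi_*\varphi^* V'$: each subsidiary radius $s$ of $\varphi^* V'$ contributes either one copy of $s^p$ with $p{-}1$ copies of $\omega^p$ (if $s > \omega$) or $p$ copies of $p^{-1}s \leq \omega^p$ (if $s \leq \omega$, noting $p^{-1}\omega = \omega^p$). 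Since the multiset on the $\bigoplus_m V' \otimes W_m$ side is bounded below by $\omega^p$, every such $s$ must actually satisfy $s \geq \omega$. In particular, the number of subsidiary radii of $\bigoplus_m V' \otimes W_m$ that strictly exceed $\omega^p$ equals the number of subsidiary radii of $\varphi^* V'$ strictly exceeding $\omega$; this is at least $1$ by the hypothesis $IR(\varphi^* V') > \omega$, and at most $n$. As the $V' = V' \otimes W_0$ summand contributes no radii above $\omega^p$, some $V' \otimes W_m$ with $m \in \{1,\dots,p-1\}$ must do so, yielding existence.

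For uniqueness I would use the spectral decomposition (Proposition~\ref{P:field decomposition1}(a)) to write $V' \otimes W_m = A_m \oplus B_m$, where $A_m$ collects the spectral summands of radius $>\omega^p$ and $B_m$ those of radius $=\omega^p$. Tensoring this decomposition with $W_m^\dual$ and using that $W_m \otimes W_m^\dual$ is canonically trivial gives $V' \cong (A_m \otimes W_m^\dual) \oplus (B_m \otimes W_m^\dual)$. By indecomposability of $V'$, one of these summands must vanish; if $A_m \neq 0$, then necessarily $B_m = 0$, so $V' \otimes W_m = A_m$ is pure with all $n$ subsidiary radii strictly above $\omega^p$. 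If two distinct indices $m_1, m_2 \in \{1,\dots,p-1\}$ both gave $A_{m_i} \neq 0$, the total count of subsidiary radii exceeding $\omega^p$ in $\bigoplus_m V' \otimes W_m$ would be at least $2n$, contradicting the bound $\leq n$ established in the previous paragraph. Hence exactly one $m$ works.

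The main delicate point is the uniqueness step rather than the counting: one must leverage indecomposability through the twist by $W_m^\dual$, and the argument only works because a nontrivial spectral summand $A_m \neq 0$ forces the \emph{entire} summand $V' \otimes W_m$ to have all radii above $\omega^p$ (ruling out a partial distribution of large radii across several $m$). Once this rigidity is in hand, the dimension count closes everything off cleanly.
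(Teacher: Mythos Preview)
Your argument is correct. The existence half is essentially the paper's proof: both use Lemma~\ref{L:push pull}(c) to write $\varphi_*\varphi^* V' \cong \bigoplus_m V'\otimes W_m$, invoke Proposition~\ref{P:descendant} to produce a subsidiary radius above $\omega^p$ in some summand, and then use indecomposability (the paper via Corollary~\ref{C:indecomposable radii}(a) applied to the indecomposable $V'\otimes W_m$; you via the spectral splitting $A_m\oplus B_m$ and untwisting by $W_m^\vee$) to upgrade this to $IR(V'\otimes W_m)>\omega^p$. Note incidentally that since $IR$ is the \emph{smallest} subsidiary radius, the hypothesis $IR(\varphi^*V')>\omega$ already forces all $n$ radii of $\varphi^*V'$ above $\omega$, so your count is exactly $n$, not merely between $1$ and $n$; your weaker bookkeeping still suffices.

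The uniqueness arguments differ. You count: each good $m$ contributes all $n$ of its radii above $\omega^p$, and there are only $n$ such radii in total, so at most one $m$ can work. The paper instead observes directly that if $IR(V'\otimes W_{m_1})>\omega^p$ and $m_2\neq m_1$, then $V'\otimes W_{m_2}\cong (V'\otimes W_{m_1})\otimes W_{m_2-m_1}$ with $IR(W_{m_2-m_1})=\omega^p$, so the equality case of property~(c) in Definition~\ref{D:intrinsic radius} forces $IR(V'\otimes W_{m_2})=\omega^p$. The paper's route is a one-liner and avoids the global radius count; yours extracts a bit more structural information (that the good summand absorbs exactly the $n$ large radii) at the cost of a longer argument.
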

\begin{proof}
By Proposition~\ref{P:descendant}, at least one of the intrinsic subsidiary radii of $\varphi_* \varphi^* V'$
is greater than $\omega^p$.
By Lemma~\ref{L:push pull}(c), we have $\varphi_* \varphi^* V' \cong \bigoplus_{m=0}^{p-1} (V'
\otimes W_m)$, so for some $m$, at least one of the intrinsic subsidiary radii of $V' \otimes W_m$
is greater than $\omega^p$.
Since $V' \otimes W_m$ is indecomposable, this implies $IR(V' \otimes W_m)  > \omega^p$
by Corollary~\ref{C:indecomposable radii}.
This proves the existence of $m$; uniqueness holds because $IR(W_m) = \omega^p$ for $m \neq 0$.
\end{proof}
\begin{cor} \label{C:boundary radius}
Let $V'$ be a nonzero differential module over $E'$ of intrinsic radius $\omega^p$
such that $IR(\varphi^* V') > \omega$.
Then there exists a unique direct sum decomposition $V' = \bigoplus_{m=0}^{p-1} V'_m$ such that
$IR(V'_m \otimes W_m) > \omega^p$ for $m=0,\dots,p-1$.
\end{cor}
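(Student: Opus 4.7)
The plan is to reduce to the setting of Lemma~\ref{L:boundary radius} by combining the spectral decomposition with a further decomposition into indecomposable summands. First, I invoke Proposition~\ref{P:field decomposition1}(a) to write $V' = \bigoplus_s V'_s$ with $V'_s$ pure of intrinsic radius $s$; since $IR$ is the minimum over summands and $IR(V') = \omega^p$, every $s$ satisfies $s \geq \omega^p$. I group the summands as $V' = V'_{\omega^p} \oplus V'_{\mathrm{high}}$ with $V'_{\mathrm{high}} = \bigoplus_{s > \omega^p} V'_s$; this part will be absorbed into the would-be $V'_0$. Property~(b) of Definition~\ref{D:intrinsic radius} applied to $\varphi^* V' = \varphi^* V'_{\omega^p} \oplus \varphi^* V'_{\mathrm{high}}$ then yields $IR(\varphi^* V'_{\omega^p}) > \omega$.

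Next, I pick any decomposition $V'_{\omega^p} = \bigoplus_i U_i$ into indecomposable summands; each $U_i$ inherits purity of intrinsic radius $\omega^p$, and $IR(\varphi^* U_i) \geq IR(\varphi^* V'_{\omega^p}) > \omega$ by the same property. Lemma~\ref{L:boundary radius} then provides a unique $m(i) \in \{0,\dots,p-1\}$ with $IR(U_i \otimes W_{m(i)}) > \omega^p$, and $m(i) \neq 0$ because $U_i \otimes W_0 \cong U_i$ has intrinsic radius $\omega^p$. Setting $V'_0 := V'_{\mathrm{high}}$ and $V'_m := \bigoplus_{i : m(i) = m} U_i$ for $m \neq 0$ produces a decomposition $V' = \bigoplus_m V'_m$; the bound $IR(V'_m \otimes W_m) > \omega^p$ for $m \neq 0$ follows from the lemma together with property~(b) applied to $V'_m \otimes W_m = \bigoplus_i U_i \otimes W_m$, while for $m=0$ it holds because every subsidiary radius of $V'_{\mathrm{high}}$ exceeds $\omega^p$.

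For uniqueness, let $V' = \bigoplus_m \tilde V'_m$ be any such decomposition, and fix $m$. I consider the spectral decomposition of $V' \otimes W_m = \bigoplus_{m'} \tilde V'_{m'} \otimes W_m$. For $m' \neq m$, I rewrite $\tilde V'_{m'} \otimes W_m \cong (\tilde V'_{m'} \otimes W_{m'}) \otimes (W_{m'}^{\dual} \otimes W_m)$; the second factor is a one-dimensional module whose derivation scales the generator by $((m-m')/p) t^{-p}$, and since $m - m' \not\equiv 0 \pmod{p}$ its intrinsic radius is $\omega^p$. Combined with the hypothesis that every Jordan-H\"older constituent of $\tilde V'_{m'} \otimes W_{m'}$ has intrinsic radius $> \omega^p$, the equality clause of property~(c) forces each Jordan-H\"older constituent of $\tilde V'_{m'} \otimes W_m$ to have intrinsic radius exactly $\omega^p$. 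Hence $\tilde V'_m \otimes W_m$ is precisely the sum of those spectral pieces of $V' \otimes W_m$ of intrinsic radius $> \omega^p$, which by Proposition~\ref{P:field decomposition1}(a) is uniquely determined; tensoring by $W_m^{\dual}$ then determines $\tilde V'_m$.

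The main obstacle is the bookkeeping in the uniqueness step, where one has to identify $\tilde V'_m \otimes W_m$ with the stratum of $V' \otimes W_m$ of intrinsic radius $> \omega^p$. This relies on the equality clause of property~(c) of Definition~\ref{D:intrinsic radius}, which applies precisely because the two tensor factors involved have distinct intrinsic radii ($> \omega^p$ versus $= \omega^p$). Existence, given Lemma~\ref{L:boundary radius}, is otherwise routine.
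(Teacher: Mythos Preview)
Your proof is correct and is essentially the argument the paper has in mind: the corollary is stated without proof, and the intended derivation is exactly to split $V'$ into indecomposable summands, apply Lemma~\ref{L:boundary radius} to each summand of intrinsic radius $\omega^p$, and group by the resulting index $m$, while any summand with $IR > \omega^p$ lands in $V'_0$. Your uniqueness argument via the spectral decomposition of $V' \otimes W_m$ is the cleanest way to phrase it and matches how the paper uses such decompositions elsewhere.
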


\begin{lemma} \label{L:boundary radius2}
Let $V'_1, V'_2$ be nonzero differential modules over $E'$ of intrinsic radius $\omega^p$
such that $V'_1$ is refined, $V'_2$ is indecomposable, and $IR(\varphi^* ((V'_1)^\dual \otimes V'_2)) > \omega$.
Then there exists a unique $m \in \{0,\dots,p-1\}$ such that $IR((V'_1)^\dual \otimes V'_2 \otimes W_m) > \omega^p$.
\end{lemma}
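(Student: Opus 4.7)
The plan is to split into two cases depending on whether $IR((V'_1)^\dual \otimes V'_2)$ exceeds $\omega^p$ or equals $\omega^p$; in the main case, I would reduce uniqueness to that of Lemma~\ref{L:boundary radius} by finding separate ``Frobenius directions'' for $V'_1$ and $V'_2$ and recombining them.

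In the easy case $IR((V'_1)^\dual \otimes V'_2) > \omega^p$, Lemma~\ref{L:refined comparison} implies that $V'_1$ and $V'_2$ are equivalent refined, so $(V'_1)^\dual \otimes V'_2$ is itself refined (hence pure) of intrinsic radius $> \omega^p$. Then $m=0$ keeps the tensor above $\omega^p$, while for $m \neq 0$ the relation $IR(W_m) = \omega^p \neq IR((V'_1)^\dual \otimes V'_2)$ forces $IR((V'_1)^\dual \otimes V'_2 \otimes W_m) = \omega^p$ by property~(c) of Definition~\ref{D:intrinsic radius}, yielding both existence and uniqueness.

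In the main case $IR((V'_1)^\dual \otimes V'_2) = \omega^p$, all subsidiary radii are $\geq \omega^p$ with maximum $\omega^p$, so $(V'_1)^\dual \otimes V'_2$ is pure of intrinsic radius $\omega^p$; Corollary~\ref{C:boundary radius} then produces a decomposition $(V'_1)^\dual \otimes V'_2 = \bigoplus_m U_m$ with $IR(U_m \otimes W_m) > \omega^p$ whenever $U_m \neq 0$. A direct twist computation, using that $U_{m'} \otimes W_m = (U_{m'} \otimes W_{m'}) \otimes W_{m-m'}$ has intrinsic radius $\omega^p$ for $m' \neq m$ by property~(c), shows that $IR((V'_1)^\dual \otimes V'_2 \otimes W_m) > \omega^p$ holds exactly when $U_m \neq 0$. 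So both existence and uniqueness reduce to the assertion that at most one $U_m$ is nonzero.

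To establish this, I would apply Lemma~\ref{L:boundary radius} separately to $V'_1$ (after passing to a tamely ramified extension in which $V'_1$ splits as a sum of equivalent refined indecomposable summands, all of which share a single direction $m_1$ by Galois-equivariance) and to $V'_2$ indecomposable (yielding a unique direction $m_2$). The required hypotheses $IR(\varphi^* V'_i) > \omega$ are obtained as follows: for $V'_1$ refined, Lemma~\ref{L:weak pullback} applied to $\End V'_1$ (which has $IR > \omega^p$) combined with Lemma~\ref{L:refined comparison} applied to $\varphi^* V'_1$ against itself forces $IR(\varphi^* V'_1) > \omega$; for $V'_2$, the main hypothesis $IR(\varphi^*((V'_1)^\dual \otimes V'_2)) > \omega$ together with $IR(\varphi^* V'_1) > \omega$ forces $IR(\varphi^* V'_2) > \omega$ via property~(c). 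A final twist calculation then identifies the unique nonzero $U_m$ as $U_{m_2 - m_1 \bmod p}$. The main obstacle is this last assembly: carefully verifying both pullback inequalities, and producing a well-defined single direction $m_1$ for the merely refined (not necessarily indecomposable) module $V'_1$ through the Galois-descent argument over the tamely ramified extension.
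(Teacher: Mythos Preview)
Your Case~1 is fine (the claim that $(V'_1)^\dual \otimes V'_2$ is itself refined is neither justified nor needed; all you actually use is $IR((V'_1)^\dual \otimes V'_2) > \omega^p$ together with property~(c) of Definition~\ref{D:intrinsic radius}).

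In Case~2 there is a genuine gap. Your derivation of $IR(\varphi^* V'_1) > \omega$ does not go through: Lemma~\ref{L:weak pullback} applied to $\End V'_1$ yields only $IR(\End(\varphi^* V'_1)) > \omega$, and Lemma~\ref{L:refined comparison} with $V_1 = V_2 = \varphi^* V'_1$ has tautological conclusions --- it does not bound $IR(\varphi^* V'_1)$ above the $\omega$ already supplied by Lemma~\ref{L:weak pullback}. Without $IR(\varphi^* V'_1) > \omega$ you cannot feed $V'_1$ (or its indecomposable summands over a tame extension) into Lemma~\ref{L:boundary radius}, and your bootstrap to $IR(\varphi^* V'_2) > \omega$ collapses with it, since that step uses $IR(\varphi^* V'_1) > \omega$ as input. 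Two smaller slips: $(V'_1)^\dual \otimes V'_2$ need not be pure of radius $\omega^p$ (some subsidiary radii may exceed $\omega^p$), though Corollary~\ref{C:boundary radius} does not require purity; and ``holds exactly when $U_m \neq 0$'' should read ``exactly when $U_m$ is the sole nonzero summand,'' which is indeed what your next sentence aims at.

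The paper avoids the need for separate directions $m_1, m_2$ altogether. After decomposing $(V'_1)^\dual \otimes V'_2 = \bigoplus_m X_m$ via Corollary~\ref{C:boundary radius}, it contracts with $V'_1$ to obtain an inclusion $V'_2 \hookrightarrow \bigoplus_m (V'_1 \otimes X_m)$. Indecomposability of $V'_2$ then forces $V'_2 \subseteq V'_1 \otimes X_m$ for a single $m$, whence $(V'_1)^\dual \otimes V'_2 \otimes W_m$ sits inside $\End(V'_1) \otimes (X_m \otimes W_m)$; refinedness of $V'_1$ and $IR(X_m \otimes W_m) > \omega^p$ give existence directly. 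Uniqueness follows from property~(c) and $IR(W_{m'-m}) = \omega^p$ for $m' \neq m$, exactly as in your Case~1. This contraction trick is the missing idea: it exploits the indecomposability of $V'_2$ against the full decomposition of $(V'_1)^\dual \otimes V'_2$, rather than trying to lift $V'_1$ and $V'_2$ separately through Frobenius.
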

\begin{proof}
By Corollary~\ref{C:boundary radius}, we have a decomposition $(V'_1)^\dual \otimes V'_2 = \bigoplus_{m=0}^{p-1} X_m$ such that
$IR(X_m \otimes W_m) > \omega^p$ for $m=0,\dots,p-1$. Contracting with $V'_1$ produces an inclusion
$V'_2 \to \bigoplus_{m=0}^{p-1} (V_1' \otimes X_m)$; since $V'_2$ is indecomposable,
we have $V'_2 \subseteq V_1' \otimes X_m$ for some $m$.
Therefore 
\begin{align*}
IR((V'_1)^\dual \otimes V'_2 \otimes W_m) &\geq IR((V'_1)^\dual \otimes V'_1 \otimes X_m \otimes W_m) \\
&\geq \min\{IR((V'_1)^\dual \otimes V'_1), IR(X_m \otimes W_m)\} \\
& > \omega^p.
\end{align*}
Again, $m$ is unique because $IR(W_m) = \omega^p$ for $m \neq 0$.
\end{proof}

\begin{remark} \label{R:refined construction}
Let $V$ be a refined differential module over $E$ 
of intrinsic radius $\omega$ such that $\varphi_* V$ admits a refined decomposition
$\bigoplus_i X_i$.
By Lemma~\ref{L:push pull}(a), there are canonical isomorphisms
$\psi_m: (\varphi_* V) \otimes W_m \cong \varphi_* V$ for $m=0,\dots,p-1$;
we may view these as an action of $\ZZ/p\ZZ$ on $\varphi_* V$, which induces an action on the collection of the
$X_i$. Since $IR(W_m) = \omega^p$ for $m \neq 0$, any two distinct $X_i$ in the same orbit are refined and pairwise inequivalent. By Lemma~\ref{L:push pull}(b), the $X_i$ in a single orbit constitute the pushforward
of a direct summand of $V$.
\end{remark}

\begin{lemma}\label{L:descend refined}
Let $V$ be a refined differential module over $E$ 
of intrinsic radius $s \geq \omega$. Then 
for some finite unramified extension $E_1'$ of $E'$,
there exists a refined differential module $V'$ over $E_1'$
of intrinsic radius $s^p$ such that $\varphi^* V' \cong V \otimes_{E'} E'_1$.
\end{lemma}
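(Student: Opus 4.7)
The plan is to split the argument into two cases according to whether $s > \omega$ or $s = \omega$.

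In the first case, $s > \omega$, I would take $E_1' = E'$ and apply Proposition~\ref{P:antecedent} directly to produce a Frobenius antecedent $V'$ over $E'$ with $\varphi^* V' \cong V$ and $IR(V') = s^p > \omega^p$. By the uniqueness clause of Proposition~\ref{P:antecedent}, the antecedent construction commutes with tensor products and duals, so $\End V'$ is the Frobenius antecedent of $\End V$. Since $V$ is refined, $IR(\End V) > s$, and hence $IR(\End V') = IR(\End V)^p > s^p = IR(V')$, proving $V'$ is refined.

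In the second case, $s = \omega$, I would argue via the Frobenius descendant. By Proposition~\ref{P:descendant}, $\varphi_* V$ is pure of intrinsic radius $\omega^p$, and by Proposition~\ref{P:field decomposition1}(b) it admits a refined decomposition $\varphi_* V \otimes_{E'} E_1' = \bigoplus_i X_i$ over some finite extension $E_1'$ of $E'$. The key observation is that by Lemma~\ref{L:push pull}(a), the isomorphisms $\iota_m : \varphi_* V \otimes W_m \cong \varphi_* V$ endow $\varphi_* V$ with a $\ZZ/p\ZZ$-action that permutes the refined summands $X_i$; indeed, for $m \neq 0$ we have $IR(W_m) = \omega^p$, so by property (c) of the intrinsic radius applied to the refined $X_i$, each $X_i \otimes W_m$ is inequivalent to $X_i$. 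This $\ZZ/p\ZZ$-symmetry should force $E_1'$ to be unramified over $E'$, since the $W_m$-twists are themselves defined over $E'$.

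Having the refined decomposition, Remark~\ref{R:refined construction} tells us that the $X_i$ in a single $\ZZ/p\ZZ$-orbit form the pushforward of a direct summand of $V$. Handling each indecomposable summand of $V$ separately and reassembling at the end (the summands of the refined $V$ being mutually equivalent, their counterparts over $E_1'$ will be equivalent by pulling back $V_\alpha^\vee \otimes V_\beta$), I may assume $V$ is indecomposable, so that the orbit is of size $p$, of the form $\{X, X \otimes W_1, \ldots, X \otimes W_{p-1}\}$. Setting $V' = X$, Lemma~\ref{L:push pull}(c) gives
\[
\varphi_* \varphi^* V' \cong \bigoplus_m (V' \otimes W_m) \cong \varphi_* V \otimes_{E'} E_1',
\]
and I would deduce $\varphi^* V' \cong V \otimes_E E_1$ (with $E_1 = E \cdot E_1'$) from the adjunction map $\varphi^* V' \to V \otimes_E E_1$ induced by the inclusion $V' \hookrightarrow \varphi_* V \otimes_{E'} E_1'$, which must be an isomorphism by dimension count together with the indecomposability of both sides. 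The main obstacle is simultaneously arranging for $E_1'$ to be unramified and verifying this last isomorphism; both depend on exploiting the rigidity afforded by the $\ZZ/p\ZZ$-symmetry on refined modules at the boundary radius $\omega$.
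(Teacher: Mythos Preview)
Your overall strategy matches the paper's proof closely: antecedent for $s>\omega$, and for $s=\omega$ pass to the Frobenius descendant, take a refined summand, and use adjunction. The obstacles you flag are exactly the points where work is needed, and your proposed resolutions are not yet complete; let me indicate how the paper handles them.

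\textbf{Unramifiedness of $E_1'$.} The $\ZZ/p\ZZ$-symmetry alone does not force the extension to be unramified; you need a Galois descent. Take $E_1'/E'$ Galois tamely ramified (as furnished by Proposition~\ref{P:field decomposition1}(b)). The Galois group $G=\Gal(E_1'/E')$ permutes the refined summands $X_i$, and since the $W_m$ are defined over $E'$ this Galois action commutes with the $\ZZ/p\ZZ$-action. With $V$ indecomposable the $X_i$ form a single $\ZZ/p\ZZ$-orbit, so the $G$-action on $\{X_i\}$ factors through a homomorphism $G\to\ZZ/p\ZZ$. Replace $E_1'$ by the fixed field of the kernel: this is a tamely ramified extension of $E'$ of degree dividing $p$, hence unramified.

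\textbf{The adjunction map is an isomorphism.} Indecomposability of source and target plus a dimension count is not sufficient (a nonzero map of indecomposables of equal rank need not be an isomorphism). The paper instead pushes the adjunction map $\varphi^* V'\to V\otimes E_1$ forward and rewrites the source via Lemma~\ref{L:push pull}(c) as $\bigoplus_m V'\otimes W_m$. Each composite $V'\otimes W_m\hookrightarrow\varphi_*\varphi^*V'\to\varphi_*V\otimes E_1'$ is injective, and for $m\neq m'$ the refined modules $V'\otimes W_m$ and $V'\otimes W_{m'}$ are inequivalent, so their images share no nonzero submodule. Hence $\varphi_*\varphi^*V'\to\varphi_*V\otimes E_1'$ is injective, and bijective by dimension count.

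\textbf{Reassembly in the decomposable case.} Your parenthetical claim that the $V'_\alpha$ are automatically equivalent because $IR(V_\alpha^\vee\otimes V_\beta)>\omega$ does not work as stated: knowing $\varphi^*((V'_\alpha)^\vee\otimes V'_\beta)$ has $IR>\omega$ only gives (via Corollary~\ref{C:boundary radius}) that $IR((V'_\alpha)^\vee\otimes V'_\beta\otimes W_m)>\omega^p$ for some $m$, not that $m=0$. The paper fixes a base summand $V'_0$ and invokes Lemma~\ref{L:boundary radius2} to find, for each $i$, an $m_i$ with $IR((V'_0)^\vee\otimes V'_i\otimes W_{m_i})>\omega^p$, then takes $V'=\bigoplus_i V'_i\otimes W_{m_i}$.
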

\begin{proof}
In case $s > \omega$, we may take $V'$ to be the Frobenius antecedent of $V$
(Proposition~\ref{P:antecedent}); we thus assume $s =\omega$ hereafter.
Suppose first that $V$ is indecomposable.
By Proposition~\ref{P:descendant}, the intrinsic subsidiary radii of $\varphi_* V$
are all equal to $\omega^p$. We may thus apply Proposition~\ref{P:field decomposition1} to produce a finite Galois
tamely ramified extension $E'_1$ of $E'$
such that $\varphi_* V \otimes_{E'} E'_1$ admits a refined decomposition $\bigoplus_i X_i$.

Define an action of $\ZZ/p\ZZ$ on the collection of the $X_i$ as in Remark~\ref{R:refined construction}.
Since we assumed $V$ is indecomposable, it follows that the $X_i$ form a single orbit
under $\ZZ/p\ZZ$.

The group $G = \Gal(E'_1/E')$ also acts on the set of the $X_i$; since $W_m$ is defined over $E'$,
this action defines a homomorphism $G \to \ZZ/p\ZZ$. 
We may replace $E'_1$ with the fixed field of the kernel of this homomorphism;
this field has tame degree over $E'$ dividing $p$ and so must be unramified.

Let $V'$ be any of the $X_i$. By adjunction, the inclusion $V' \to \varphi_* V \otimes_{E'} E'_1$ corresponds to
a map $\varphi^* V' \to V  \otimes_{E'} E'_1$. Pushing forward gives a new map $\varphi_* \varphi^* V' \to \varphi_* V  \otimes_{E'} E'_1$;
using Lemma~\ref{L:push pull} again, we may rewrite the left side as 
$\bigoplus_{m=0}^{p-1} (V' \otimes W_m)$ and match up the actions of $\ZZ/p\ZZ$.
This shows that 
each composition $V' \otimes W_m \to \varphi_* \varphi^* V' \to \varphi_* V  \otimes_{E'} E'_1$ is injective;
since distinct terms $V' \otimes W_m$ cannot have isomorphic submodules (because they are refined and inequivalent),
the map $\varphi_* \varphi^* V' \to \varphi_* V  \otimes_{E'} E'_1$ must be injective. By counting dimensions, this map is also surjective;
hence $\varphi^* V' \to V  \otimes_{E'} E'_1$ is also bijective. This proves the claim in case $V$ is indecomposable.

For general $V$ (still assuming $s = \omega$), we may split $V$ as a direct sum $\bigoplus_{i=0}^r V_i$ of indecomposable summands.
For some $E'_1$, by the previous arguments there exist differential modules $V'_i$ over $E'_1$
which are refined of intrinsic radius $s^p$ such that $\varphi^* V'_i \cong V_i  \otimes_{E'} E'_1$.
By Lemma~\ref{L:boundary radius2}, for each $i$ we can find $m_i \in \{0,\dots,p-1\}$
such that $IR((V'_0)^\dual \otimes V'_i \otimes W_{m_i})> \omega^p$.
We may thus take $V' = \bigoplus_{i=0}^r V'_i \otimes W_{m_i}$.
\end{proof}

\begin{lemma} \label{L:push forward refined decomposition}
Let $V$ be a pure differential module over $E$ 
of intrinsic radius $s \geq \omega$ such that $\varphi_* V$ admits a refined decomposition.
Group summands in this decomposition according to their $\ZZ/p\ZZ$-orbit as per
Remark~\ref{R:refined construction}. Then the resulting decomposition descends to a refined
decomposition of $V$.
\end{lemma}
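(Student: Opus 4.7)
The plan is to exploit the $\ZZ/p\ZZ$-action on the refined decomposition $\varphi_* V = \bigoplus_i X_i$ given by the isomorphisms $\psi_m\colon (\varphi_* V)\otimes W_m \cong \varphi_* V$ of Lemma~\ref{L:push pull}(a), together with the adjunction $\varphi^*\dashv\varphi_*$, to identify each summand $V_\alpha$ of $V$ as a Frobenius pullback of an orbit representative. First, every orbit has size exactly $p$: if $X_i\otimes W_m\cong X_i$ for some $m\neq 0$, then $\End(X_i)\otimes W_m$ would contain a nonzero horizontal section, but refinedness gives $IR(\End X_i) > IR(X_i) \geq \omega^p = IR(W_m)$, and property~(c) of $IR$ then forces $IR(\End(X_i)\otimes W_m) = \omega^p < 1$, ruling this out. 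For each orbit $\alpha$ with a chosen representative $X_{i_0}$, Lemma~\ref{L:push pull}(c) identifies the orbit sum with $\varphi_*\varphi^* X_{i_0}$, so by Lemma~\ref{L:push pull}(b), $\varphi_* V_\alpha = \varphi_*\varphi^* X_{i_0}$; adjunction then produces a map $\varphi^* X_{i_0}\to V_\alpha$ that is an isomorphism by the dimension count $\dim_E \varphi^* X_{i_0} = \dim_{E'} X_{i_0} = \tfrac{1}{p}\dim_{E'}\varphi_* V_\alpha = \dim_E V_\alpha$.

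By Proposition~\ref{P:descendant} applied to the pure $V$ of radius $s\geq\omega$, the radii of the $X_i$ all lie in $\{s^p,\omega^p\}$, and the cyclic orbit under tensoring with the $W_m$ contains a representative of maximal radius $s^p$. Choosing $X_{i_0}$ to be such a maximal representative, refinedness of $X_{i_0}$ gives $IR(\End X_{i_0}) > s^p \geq \omega^p$. Since $\End V_\alpha = \varphi^*\End X_{i_0}$ by tensor compatibility, Proposition~\ref{P:antecedent} yields $IR(\End V_\alpha) = IR(\End X_{i_0})^{1/p} > s$; combined with the purity of $V_\alpha$ inherited as a summand of $V$, this makes $V_\alpha$ refined of radius $s$.

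For inequivalence of $V_\alpha, V_\beta$ with $\alpha\neq\beta$, take maximal-radius representatives $X\in\alpha, Y\in\beta$, so $V_\alpha^\dual\otimes V_\beta = \varphi^*(X^\dual\otimes Y)$. The module $X^\dual\otimes Y$ is refined, hence pure, of radius $s^p$: inequivalence of $X,Y$ gives $IR(X^\dual\otimes Y) = s^p$, while $\End(X^\dual\otimes Y) = \End(X)\otimes\End(Y)$ has $IR > s^p$. When $s>\omega$, Proposition~\ref{P:antecedent} immediately gives $IR(V_\alpha^\dual\otimes V_\beta) = (s^p)^{1/p} = s$, so $V_\alpha,V_\beta$ are inequivalent. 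When $s=\omega$, suppose for contradiction that $IR(\varphi^*(X^\dual\otimes Y)) > \omega$; then Proposition~\ref{P:antecedent} provides an antecedent $M'$ with $IR(M') > \omega^p$, and Lemma~\ref{L:push pull}(c) yields $\bigoplus_m M'\otimes W_m \cong \bigoplus_m(X^\dual\otimes Y)\otimes W_m$ as $E'$-modules. The right-hand side is pure of radius $\omega^p$---each summand $X^\dual\otimes(Y\otimes W_m)$ is refined of radius $\omega^p$ by the same purity argument, since $X$ and $Y\otimes W_m$ are distinct refined summands of $\varphi_* V$ lying in the different orbits $\alpha$ and $\beta$, hence inequivalent---while the left-hand side has subsidiary radii in $M'\otimes W_0 = M'$ strictly exceeding $\omega^p$, a contradiction.

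The chief technical obstacle will be the $s=\omega$ case of inequivalence, where Frobenius antecedent theory cannot be applied to $X^\dual\otimes Y$ directly (its intrinsic radius equals $\omega^p$ rather than exceeding it), forcing the roundabout comparison of $E'$-module decompositions via Lemma~\ref{L:push pull}(c).
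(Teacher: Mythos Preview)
Your approach differs from the paper's: rather than enlarging $E$ so that $V$ already admits a refined decomposition and then pushing forward to identify the refined summands of $\varphi_* V$ (which is what the paper does for $s = \omega$, invoking Lemma~\ref{L:descend refined}), you work directly from the refined decomposition of $\varphi_* V$ and descend. This is more direct and avoids the base change, though the identification $V_\alpha \cong \varphi^* X_{i_0}$ via ``the dimension count'' needs one more step: you must check that the adjunction map is injective or surjective, for instance by observing that the pushforward of its image is $\iota_m$-stable and contains $X_{i_0}$, hence equals the full orbit sum (compare the injectivity argument in the proof of Lemma~\ref{L:descend refined}). Also, in ruling out orbit fixed points, the conclusion $IR(\End(X_i)\otimes W_m) = \omega^p < 1$ does not by itself exclude a horizontal section; you need that all subsidiary radii of $\End(X_i)$ exceed $\omega^p$, so that $\End(X_i)\otimes W_m$ is \emph{pure} of radius $\omega^p$ and hence has no trivial subquotient.

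There is however a genuine problem with your uniform treatment of $s \geq \omega$. When $s > \omega$, write $V = \varphi^* V'$ via Proposition~\ref{P:antecedent} with $V' = \bigoplus_{j=1}^r V'_j$ the refined decomposition of the antecedent. Then the refined summands of $\varphi_* V \cong \bigoplus_m V' \otimes W_m$ are $V'_1, \dots, V'_r$ (at radius $s^p$) together with the entire $V' \otimes W_1, \dots, V' \otimes W_{p-1}$ (at radius $\omega^p$): for $m \neq 0$ and $j \neq k$, the modules $V'_j \otimes W_m$ and $V'_k \otimes W_m$ are \emph{equivalent} because $IR((V'_j)^\dual \otimes V'_k) = s^p > \omega^p$, so they merge into a single refined summand. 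Consequently, when $r \geq 2$ the assignment $X_i \mapsto \iota_1(X_i \otimes W_1)$ does not permute the refined summands (it sends $V'_j$ into a proper submodule of $V' \otimes W_1$), your orbit count $r + (p-1)$ is not a multiple of $p$, and the identification of the orbit sum with $\varphi_* \varphi^* X_{i_0}$ collapses. The paper avoids all of this by first reducing the case $s > \omega$ to $s = \omega$ via antecedents; you should do the same.
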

\begin{proof}
We may use Proposition~\ref{P:antecedent} to check the claim when $s > \omega$, so we may assume
$s = \omega$ hereafter.
The claim may be checked after enlarging $E$, so by Proposition~\ref{P:field decomposition1} we may
ensure that $V$ itself admits a refined decomposition $\bigoplus_i V_i$.
After enlarging $E$ again, by Lemma~\ref{L:descend refined}
we may ensure that each $V_i$ can be written as $\varphi^* V'_i$ for some
refined differential module $V'_i$ over $E'$.
By Lemma~\ref{L:push pull}(c), we then have $\varphi_* V_i \cong \bigoplus_{m=0}^{p-1} (V'_i \otimes W_m)$.
For $i,j$ distinct and $m \in \{0,\dots,p-1\}$,
we cannot have $IR((V'_i)^\dual \otimes V'_j \otimes W_m) > \omega^p$
or else Proposition~\ref{P:descendant} would imply $IR(V_i^\dual \otimes V_j) > \omega$.
It follows that the $V'_i \otimes W_m$ are refined and pairwise inequivalent, so they form the
refined decomposition of $\varphi_* V$. This proves the claim.
\end{proof}

\begin{prop} \label{P:refined power}
Let $V$ be a refined differential module over $E$.
Then $IR(V^{\otimes p}) > IR(V)$.
\end{prop}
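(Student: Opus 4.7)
The plan is to prove this by cases on $s := IR(V)$ compared to $\omega$: I will handle the sub-boundary range $s < \omega$ by direct computation with a cyclic vector, and reduce the boundary case $s = \omega$ and the super-boundary case $s > \omega$ to it via Frobenius manipulations. The main obstacle will be the boundary case, since neither the cyclic-vector method (which requires the visible range $s < \omega$) nor Proposition~\ref{P:antecedent} (which requires $IR(V) > \omega$) applies directly; the descent Lemma~\ref{L:descend refined} is exactly the tool needed to bridge this gap.

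For the sub-boundary case $s < \omega$, I will choose a cyclic vector $\bv$ of $V$ via Lemma~\ref{L:cyclic vector}, producing a monic polynomial $P_1(T) \in E[T]$ whose roots $\alpha_1, \dots, \alpha_n$ (in an algebraic closure of $E$) all share a common $\rho$-Gauss valuation $c$, since $V$ is pure. Interpreting the refined condition $IR(\End(V)) > s$ via Corollary~\ref{C:christol-dwork}(a) applied to $V^\dual \otimes V$ shows that every nonzero difference $\alpha_j - \alpha_i$ has valuation strictly greater than $c$. Applying Corollary~\ref{C:christol-dwork}(b) with $V_1 = V$ and $m = p$, each root of the resulting polynomial $Q(T)$ is a sum $\sigma = \alpha_{i_1} + \cdots + \alpha_{i_p}$, which I rewrite as $\sigma = p\alpha_{i_1} + \sum_{k=2}^{p}(\alpha_{i_k} - \alpha_{i_1})$. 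Since $v(p) > 0$, the term $p\alpha_{i_1}$ has valuation exceeding $c$; each difference term does as well by refinedness; and the ultrametric inequality transfers this to $\sigma$. Thus every subsidiary radius of $V^{\otimes p}$ strictly exceeds $s$, proving the result in this case.

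For the super-boundary case $s > \omega$, I will use Proposition~\ref{P:antecedent} to produce a Frobenius antecedent $V'$ over $E'$ with $IR(V') = s^p$; since $\End(V) \cong \varphi^* \End(V')$ and $IR(\End(V')) \geq IR(V') = s^p > \omega^p$, the uniqueness clause of Proposition~\ref{P:antecedent} forces $IR(\End(V')) = IR(\End(V))^p > s^p$, so $V'$ is again refined. Inducting on the least $k \geq 1$ with $s^{p^k} \leq \omega$, with the sub-boundary case providing the base step at $k = 1$, I obtain $IR((V')^{\otimes p}) > s^p > \omega^p$; one more application of Proposition~\ref{P:antecedent} to $V^{\otimes p} = \varphi^*((V')^{\otimes p})$ then yields $IR(V^{\otimes p}) = IR((V')^{\otimes p})^{1/p} > s$. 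The boundary case $s = \omega$ proceeds analogously but uses Lemma~\ref{L:descend refined} in place of Proposition~\ref{P:antecedent}: after a finite unramified base change $E_1/E$, it produces $V \otimes_E E_1 \cong \varphi^* V'$ with $V'$ refined of intrinsic radius $\omega^p$, to which the sub-boundary case applies; propagating back via the same pullback formula and using invariance of intrinsic radii under unramified extensions gives $IR(V^{\otimes p}) > \omega$.
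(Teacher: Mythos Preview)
Your overall strategy matches the paper's: establish the visible-range case $s<\omega$ via Corollary~\ref{C:christol-dwork}, then bootstrap to $s\geq\omega$ by descending through Frobenius and pulling back. The paper organizes this as a single induction on $m$ with $IR(V)<\omega^{p^{-m}}$, using Lemma~\ref{L:descend refined} uniformly for all $s\geq\omega$ rather than splitting off the antecedent case $s>\omega$; this avoids your slightly awkward ordering in which the base step $k=1$ of the super-boundary induction can land on $s^p=\omega$ and thus needs the boundary case you only prove afterward.

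There is, however, a real gap in your pullback step. You write ``one more application of Proposition~\ref{P:antecedent} to $V^{\otimes p}=\varphi^*((V')^{\otimes p})$ then yields $IR(V^{\otimes p})=IR((V')^{\otimes p})^{1/p}$.'' But Proposition~\ref{P:antecedent} takes as \emph{hypothesis} that $IR(V^{\otimes p})>\omega$; it does not assert that $\varphi^*$ of a module with $IR>\omega^p$ automatically has $IR>\omega$. As written you are assuming part of the conclusion. The paper closes this gap by invoking Lemma~\ref{L:weak pullback} instead: from $IR((V')^{\otimes p})>IR(V)^p$ one gets
\[
IR(V^{\otimes p}) \;\geq\; \min\bigl\{IR((V')^{\otimes p})^{1/p},\; p\,IR((V')^{\otimes p})\bigr\},
\]
and since $IR(V)\geq\omega$ both terms on the right strictly exceed $IR(V)$ (for the second, $p\,IR(V)^{p}\geq IR(V)$ precisely when $IR(V)\geq\omega$). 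This is the step you need; once you have it, your appeal to the uniqueness in Proposition~\ref{P:antecedent} would then be legitimate but also unnecessary. The same remark applies to your boundary case, where ``the same pullback formula'' should again be Lemma~\ref{L:weak pullback}.
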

\begin{proof}
It is sufficient to prove that for each nonnegative integer $h$, the claim holds when $IR(V) < \omega^{p^{-h}}$.
For $h=0$, this follows by Corollary~\ref{C:christol-dwork}(a,b) with the parameter $m$ in (b) taken
to be $p$. Given this assertion for some $h$,
we may check it for $h+1$ by forming a module $V'$ as in Lemma~\ref{L:descend refined},
applying the known case to deduce that $IR((V')^{\otimes p}) > IR(V') = IR(V)^p$,
then observing that $\varphi^* ((V')^{\otimes p}) = V^{\otimes p}$
and invoking Lemma~\ref{L:weak pullback}
to deduce that $IR(V^{\otimes p}) > IR(V)$.
\end{proof}

When $V$ has dimension 1, we can prove an even stronger assertion.
\begin{lemma} \label{L:hop1}
Let $V$ be a differential module over $E$ of dimension $1$.
Then
\[
\min\{\omega, IR(V^{\otimes p})\} = \min\{\omega, p IR(V)\}.
\]
\end{lemma}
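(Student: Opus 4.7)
The plan is to reduce the statement to a direct application of Proposition~\ref{P:christol-dwork}. Fix any basis $\bv$ of $V$ and write $D(\bv) = f\bv$ for some $f \in E$; then $V^{\otimes p}$ is also of rank~$1$, with basis $\bv^{\otimes p}$ satisfying $D(\bv^{\otimes p}) = pf\bv^{\otimes p}$ by the Leibniz rule for the connection on a tensor product. The case $f = 0$ is immediate: both $V$ and $V^{\otimes p}$ are then trivial, so $IR(V) = IR(V^{\otimes p}) = 1$ and both minima equal $\omega$. Assume $f \neq 0$ henceforth.

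For $f \neq 0$, I apply Proposition~\ref{P:christol-dwork} both to $V$ (with cyclic vector $\bv$ and polynomial $T - f$) and separately to $V^{\otimes p}$ (with cyclic vector $\bv^{\otimes p}$ and polynomial $T - pf$). Each polynomial has a single root, so the output is the same dichotomy in both cases: either the module is in the visible range $IR < \omega$, in which case $IR$ is an explicit decreasing function of the absolute value of that root, or $IR \geq \omega$. The two thresholds differ by a factor of $p$ because $|pf|_E = |p|_K \cdot |f|_E = p^{-1}|f|_E$, and whenever both modules land in the visible range the two Christol-Dwork formulas combine to yield exactly $IR(V^{\otimes p}) = p \cdot IR(V)$.

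The proof then concludes with a three-way case split on $r := |f|_E$. First, if $r$ is below the smaller threshold, both $IR(V)$ and $IR(V^{\otimes p})$ are at least $\omega$, so both minima collapse to $\omega$. Second, if $r$ is above the larger threshold, both modules are visible and the identity $IR(V^{\otimes p}) = p \cdot IR(V)$ gives the equality of minima outright. Third, in the intermediate range $V$ is visible but $V^{\otimes p}$ is not; here one must verify $p\cdot IR(V) \geq \omega$, which follows directly from the upper endpoint of the middle range (the bound on $r$ is exactly what makes $p\cdot IR(V)$ saturate at $\omega$). In all three regimes, the desired equality of minima holds.

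I anticipate no real obstacle. The argument is elementary once Proposition~\ref{P:christol-dwork} is in hand, and deliberately avoids Frobenius antecedents or descendants (in contrast to the proof of Proposition~\ref{P:refined power}, which needed Lemma~\ref{L:descend refined} and Lemma~\ref{L:weak pullback}). The only point requiring care is the interplay between the factor $|p|_K = p^{-1}$ appearing inside $|pf|_E$ and the real number $p$ appearing in $p\cdot IR(V)$; these combine cleanly precisely because the Christol-Dwork formula for rank~$1$ in the visible range is proportional to $|f|_E^{-1}$, so scaling $f$ by $|p|_K$ scales $IR$ by $|p|_K^{-1} = p$. Once this scaling identity is observed, the rest is a matter of confirming the three boundary cases.
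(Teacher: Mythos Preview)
Your proposal is correct and is precisely the unpacking of the paper's one-line proof, which says only ``This is immediate from Proposition~\ref{P:christol-dwork}.'' Your three-way case split on $|f|_E$ relative to the thresholds $1/\rho$ and $p/\rho$ is exactly what that sentence means in detail, and the scaling observation $|pf|_E = p^{-1}|f|_E$ together with the rank-$1$ Christol--Dwork formula $IR = \omega/(\rho|f|_E)$ in the visible range is all that is needed.
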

\begin{proof}
This is immediate from Proposition~\ref{P:christol-dwork}.
\end{proof}

\begin{lemma} \label{L:hop2}
Let $V$ be a differential module over $E$ of dimension $1$ such that $\omega^p \leq IR(V) \leq \omega$.
Then $IR(V^{\otimes p}) \geq IR(V)^{1/p}$.
\end{lemma}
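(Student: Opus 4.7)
The plan is to split the proof according to the size of $t := IR(V^{\otimes p})$, using the hypothesis to control a Frobenius descent of $V^{\otimes p}$ in the hard case.

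Write $V = E\bv$ with $D(\bv) = f\bv$. The hypothesis $\omega^p \leq IR(V) \leq \omega$ forces $|pf| \leq 1/\rho$: when $IR(V) < \omega$, Proposition~\ref{P:christol-dwork} applied to the polynomial $T - f$ gives $|f| = \omega/(\rho \cdot IR(V)) \leq \omega^{1-p}/\rho$, hence $|pf| = \omega^{p-1}|f| \leq 1/\rho$; when $IR(V) = \omega$, $|f| \leq 1/\rho$ already. Applying Proposition~\ref{P:christol-dwork} to $V^{\otimes p}$ then shows $t \geq \omega$. If moreover $t \geq \omega^{1/p}$, then $IR(V)^{1/p} \leq \omega^{1/p} \leq t$ and we are done.

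Assume instead $\omega \leq t < \omega^{1/p}$. Then $V^{\otimes p}$ is one-dimensional with $IR < 1$, hence refined, and has intrinsic radius at least $\omega$, so Lemma~\ref{L:descend refined} provides an unramified extension $E_1'$ of $E'$ and a one-dimensional refined module $W' = E_1'\bw$ over $E_1'$ with $D'(\bw) = g_1'\bw$, $IR(W') = t^p$, and $\varphi^* W' \cong V^{\otimes p} \otimes_E \tilde E$ where $\tilde E = E \otimes_{E'} E_1'$. The extension $\tilde E/E$ is unramified, hence preserves intrinsic radii and Gauss norms. Since $IR(W') = t^p < \omega$, Proposition~\ref{P:christol-dwork} applied over $E_1'$ (whose Gauss norm satisfies $|t^p| = \rho^p$) yields $|g_1'| = \omega/(\rho^p t^p)$, so
\[
|g_1' p t^{p-1}| = \omega^{p-1}\rho^{p-1} \cdot \frac{\omega}{\rho^p t^p} = \frac{\omega^p}{\rho t^p} > \frac{1}{\rho},
\]
the strict inequality coming from $t < \omega^{1/p}$.

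Comparing the actions of $D$ on the distinguished generators on each side of $\varphi^* W' \cong V^{\otimes p} \otimes_E \tilde E$ produces a relation $pf = g_1' p t^{p-1} - d(h)/h$ for some $h \in \tilde E^{\times}$. The key auxiliary estimate is that $|d(h)/h| \leq 1/\rho$ for every $h \in \tilde E^{\times}$: the one-dimensional module with derivation $d(h)/h$ is trivial via the gauge transformation $\bu \mapsto h^{-1}\bu$, so it has intrinsic radius $1$, and Proposition~\ref{P:christol-dwork} then precludes $|d(h)/h| > 1/\rho$. Combined with $|g_1' p t^{p-1}| > 1/\rho$, the ultrametric inequality forces $|pf| = |g_1' p t^{p-1}| = \omega^p/(\rho t^p)$, whence $|f| = \omega/(\rho t^p)$ and Proposition~\ref{P:christol-dwork} gives $IR(V) = \omega/(\rho|f|) = t^p$. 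Thus $t = IR(V)^{1/p}$, even with equality, in this regime. The main obstacle is guarding against a cancellation between $g_1' p t^{p-1}$ and $d(h)/h$ in the ultrametric step; the case split at $\omega^{1/p}$ is engineered precisely so that Christol-Dwork over $E_1'$ forces $|g_1' p t^{p-1}|$ to strictly exceed the maximal possible norm $1/\rho$ of a logarithmic derivative, after which the remainder is routine ultrametric arithmetic.
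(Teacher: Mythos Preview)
Your argument breaks at the displayed inequality
\[
|g_1'\, p\, t^{p-1}| = \frac{\omega^p}{\rho\, t^p} > \frac{1}{\rho}.
\]
This would require $t^p < \omega^p$, i.e.\ $t < \omega$; but you have already established $t \geq \omega$, so in fact $\omega^p/(\rho t^p) \leq 1/\rho$ throughout your second case. The hypothesis $t < \omega^{1/p}$ only gives $t^p < \omega$, hence $\omega^p/(\rho t^p) > \omega^{p-1}/\rho = 1/(p\rho)$, which is too weak. Since the bound $|d(h)/h| \leq 1/\rho$ is sharp (take $h = t$), you cannot separate $g_1' p t^{p-1}$ from $d(h)/h$ ultrametrically, and the conclusion $|pf| = |g_1' p t^{p-1}|$ does not follow.

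The paper's proof avoids Frobenius descent entirely and is much more elementary. After normalizing $\rho = 1$, write $D(\bv) = n\bv$ with $|n| \leq \omega/s$ (so $|pn| = \omega^{p-1}|n| \leq \omega^p/s \leq 1$), and estimate the operator norm of $D^p$ on $V^{\otimes p}$ directly: expanding $D^p(a\bv^{\otimes p})$ by Leibniz and using $|d|_E \leq 1$ and $|d^p|_E \leq p^{-1}$, every term is bounded by $(\omega^p/s)|a|$. Hence $|D^p| \leq \omega^p/s$, so $|D|_{\mathrm{sp}} \leq \omega/s^{1/p}$ and $IR(V^{\otimes p}) \geq s^{1/p}$. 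Your descent strategy is more elaborate but, as written, does not close; the direct spectral estimate is both shorter and robust at the boundary $s = \omega$.
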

\begin{proof}
By enlarging $K$ and rescaling, we may reduce to the case $\rho = 1$.
Put $d = \frac{d}{dt}$ and $s = IR(V)$.
Choose a generator $\bv$ of $V$ and write $D(\bv) = n \bv$ with $n \in E$.
By Proposition~\ref{P:christol-dwork}, $\left| n \right| = \omega/s$.
The differential module $V^{\otimes p}$ is generated by $\bv^{\otimes p}$ and
$D(\bv^{\otimes p}) = pn \bv^{\otimes p}$.
Since $\left|d\right|_E = 1$ and $\left| pn \right| = p^{-1} \omega/s = \omega^p/s \leq 1$, for any $a \in E$ we have $D^p(a \bv^{\otimes p}) = b \bv^{\otimes p}$ 
for some $b \in E$ with $\left| b - d^p(a) \right| \leq p^{-1} (\omega/s) \left| a \right|$.
Since $\left|d^p\right|_E = p^{-1} \leq p^{-1} \omega/s$, we conclude that the operator norm of $D^p$
on $V$ is at most $p^{-1} \omega/s = \omega^p/s$, so the spectral norm of $D$ on $V$ is at most $\omega/s^{1/p}$. This implies the desired inequality.
\end{proof}

\begin{prop} \label{P:rank 1 power}
Let $V$ be a differential module over $E$ of dimension $1$ such that $IR(V) < 1$.
Then $IR(V^{\otimes p}) \geq \min\{IR(V)^{1/p}, p IR(V)\}$.
\end{prop}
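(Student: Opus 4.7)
The plan is to reduce everything to the case $IR(V) \leq \omega$ by passing to Frobenius antecedents, and in that case to invoke Lemmas~\ref{L:hop1} and \ref{L:hop2}. Define the auxiliary function $f(x) = \min\{x^{1/p}, p x\}$ on $(0,1)$. A direct calculation shows that $f(x) = p x$ for $x \leq \omega^p$ and $f(x) = x^{1/p}$ for $x \geq \omega^p$ (the two expressions agreeing at $x = \omega^p$ with common value $\omega$); in particular, $f$ is continuous and strictly increasing. The goal becomes $IR(V^{\otimes p}) \geq f(IR(V))$.

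I first dispatch the case $IR(V) \leq \omega$ directly. If $IR(V) \leq \omega^p$, then $p \cdot IR(V) \leq \omega$, so Lemma~\ref{L:hop1} gives
\[
\min\{\omega, IR(V^{\otimes p})\} = \min\{\omega, p \cdot IR(V)\} = p \cdot IR(V) = f(IR(V)),
\]
whence $IR(V^{\otimes p}) \geq f(IR(V))$. If $\omega^p < IR(V) \leq \omega$, then $f(IR(V)) = IR(V)^{1/p}$, and Lemma~\ref{L:hop2} gives $IR(V^{\otimes p}) \geq IR(V)^{1/p}$ directly.

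For $IR(V) > \omega$, I proceed by induction on the smallest nonnegative integer $n$ with $IR(V)^{p^n} \leq \omega$ (which exists because $IR(V) < 1$); the preceding paragraph handles the base case $n = 0$. For $n \geq 1$, Proposition~\ref{P:antecedent} produces a Frobenius antecedent $V'$ over $E'$ with $\varphi^* V' \cong V$ and $IR(V') = IR(V)^p$; the analogous integer for $V'$ equals $n-1$, so the inductive hypothesis gives $IR((V')^{\otimes p}) \geq f(IR(V')) = f(IR(V)^p) = IR(V)$, where the last equality uses $IR(V)^p > \omega^p$. Since Frobenius pullback is a tensor functor on differential modules, $V^{\otimes p} \cong \varphi^*((V')^{\otimes p})$, and Lemma~\ref{L:weak pullback} applied to $(V')^{\otimes p}$ yields $IR(V^{\otimes p}) \geq f(IR((V')^{\otimes p})) \geq f(IR(V))$ by monotonicity of $f$, completing the induction.

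The main obstacle is purely organizational: one must keep the thresholds $\omega^p, \omega, \omega^{1/p}, \ldots$ straight and verify monotonicity of $f$, so that the inductive bound on $IR((V')^{\otimes p})$ feeds correctly through Lemma~\ref{L:weak pullback} to produce the desired bound on $IR(V^{\otimes p})$. Beyond this, the argument is a clean interplay between the ``visible range'' statements (Lemmas~\ref{L:hop1}, \ref{L:hop2}) and the Frobenius descent/pullback machinery already developed in the paper.
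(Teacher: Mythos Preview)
Your proof is correct and follows essentially the same strategy as the paper: handle the visible range directly via Lemmas~\ref{L:hop1} and~\ref{L:hop2}, then climb out of it by repeatedly taking Frobenius antecedents and applying Lemma~\ref{L:weak pullback}. The only organizational difference is that you absorb the boundary case $IR(V)=\omega$ into the base case via Lemma~\ref{L:hop2}, so your induction step only ever needs $IR(V)>\omega$ and can invoke Proposition~\ref{P:antecedent} directly; the paper instead places $IR(V)=\omega$ in the inductive range and cites the more general Lemma~\ref{L:descend refined} (which for $s>\omega$ just unwinds to Proposition~\ref{P:antecedent} anyway). Your packaging via the monotone function $f$ is a clean way to push the bound through the pullback step.
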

\begin{proof}
The claim is trivial if $IR(V) = 1$, so we may assume $IR(V) < 1$.
If $IR(V) \leq \omega^p$, then $\min\{IR(V)^{1/p}, p IR(V)\} = p IR(V)$, 
and in this case the claim follows from Lemma~\ref{L:hop1}.
To complete the proof, it suffices to check the claim in case $\omega^{p^{-h+1}} \leq IR(V) < \omega^{p^{-h}}$ for some nonnegative integer $h$. We prove this by induction on $h$, with the base case $h=0$
following from Lemma~\ref{L:hop2}.
Given the claim for $h-1$, we may deduce the claim for $h$ by forming
$V'$ as in Lemma~\ref{L:descend refined} (after enlarging $E$ if necessary),
applying the induction hypothesis to $V'$,
and then applying Lemma~\ref{L:weak pullback}.
\end{proof}

We are now ready to deduce a finiteness theorem for Tannakian automorphism groups.
\begin{theorem} \label{T:finite levels fields}
Let $V$ be a differential module over $E$.
Let $[V]$ be the Tannakian category of differential modules over $E$ generated by $V$.
Let $\omega$ be the fibre functor on $[V]$ which extracts underlying $E$-vector spaces.
Let $G$ be the automorphism group of $\omega$. 
For $s<1$, let $G^s$ be the subgroup of $G$ acting trivially on $\omega(W)$ for every $W \in [V]$
with $IR(W) > s$. Then $G^s$ is a finite $p$-group.
\end{theorem}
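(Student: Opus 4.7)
The plan is to apply Remark~\ref{R:finite Tannakian} and Proposition~\ref{P:finite Tannakian} to the Tannakian category $[V]$ with the assignment $r(W) := -\log IR(W)$ for $IR(W) < 1$ and $r(W) := -\infty$ for $IR(W) = 1$. Conditions (a), (b), (c) of Proposition~\ref{P:finite Tannakian} are immediate from Definition~\ref{D:intrinsic radius}. For any $s \in (0,1)$, the $G^s$ of the theorem is contained in $G^{-\infty+}$ (its defining constraint is strictly stronger, since $\{W : IR(W) > s\}$ properly contains $\{W : IR(W) = 1\}$), so it suffices to show that $G^{-\infty+}$ is a finite $p$-group; by the closing observation of Remark~\ref{R:finite Tannakian}, this follows once the Remark's conditions (i), (ii), (iii) are verified with $n$ always a power of $p$.

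For (ii), Proposition~\ref{P:refined power} gives the claim directly with $n = p$. For (iii), applied to a one-dimensional $V$ with $IR(V) < 1$, I would alternate two operations to produce $k$ with $IR(V^{\otimes p^k}) = 1$: if $IR(V) > \omega$, replace $V$ by its Frobenius antecedent (Proposition~\ref{P:antecedent}), sending $IR(V)$ to $IR(V)^p$; if $IR(V) \leq \omega$, apply $(\cdot)^{\otimes p}$, which by Lemma~\ref{L:hop1} multiplies $IR$ by $p$ up to the threshold $\omega$. Iterating, and using Lemma~\ref{L:weak pullback} to propagate the eventual equality $IR = 1$ back through Frobenius pullbacks, gives the desired $k$.

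The main obstacle is condition (i), which requires a decomposition $V = \bigoplus V_i$ with $r(V_i^\dual \otimes V_i) < r(V)$. The spectral decomposition of Proposition~\ref{P:field decomposition1}(a) handles summands of intrinsic radius strictly greater than $IR(V)$, but the pure summand at intrinsic radius $IR(V)$ may fail to split into refined pieces over $E$. I would bypass this by verifying condition (d) of Proposition~\ref{P:finite Tannakian} directly: the $G^{r+}$-invariant subspace of a tensor construction of $V$ corresponds to the maximal subobject of spectral radius at least $e^{-r}$, and its pure part at spectral radius exactly $e^{-r}$ admits a refined decomposition over a finite tame extension $E'$ of $E$ by Proposition~\ref{P:field decomposition1}(b); grouping these refined summands into $\Gal(E'/E)$-orbits produces a direct-sum decomposition defined over $E$ whose pieces are $E$-irreducible, hence $G^r/G^{r+}$-isotypical as representations over $E$. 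Condition (e) follows from (ii) by the same exponent-$p$ argument applied to each refined class, and (f) from (iii) by noting that a one-dimensional $W$ with $IR(W^{\otimes p^k}) = 1$ has $G^{-\infty+}$-character killed by $p^k$. Proposition~\ref{P:finite Tannakian} then gives the finiteness and $p$-group property of $G^{-\infty+}$.
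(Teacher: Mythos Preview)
Your reduction to showing that $G^{-\infty+}$ is a finite $p$-group is a genuine error: that group need not be finite. Remark~\ref{R:bad tannakian example} gives the counterexample---for $D(\bv) = \lambda t^{-1}\bv$ with $\lambda \in K \setminus \QQ_p$, one has $IR(V^{\otimes n}) < 1$ for every $n$, so $\bigcup_{s<1} G^s \cong \QQ_p/\ZZ_p$. Your argument for condition~(iii) therefore cannot succeed; there is no $k$ with $IR(V^{\otimes p^k}) = 1$ in general. (The antecedent maneuver you describe is also confused as written: passing to the Frobenius antecedent \emph{lowers} $IR$, and Lemma~\ref{L:weak pullback} does not turn an eventual inequality $IR < 1$ into an equality.)

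The theorem only asserts finiteness of $G^{s_0}$ for each fixed $s_0 < 1$, and the correct way to extract this from Proposition~\ref{P:finite Tannakian} is to choose the assignment $r$ with a cutoff: set $r(W) = -\log IR(W)$ when $IR(W) \leq s_0$ and $r(W) = -\infty$ when $IR(W) > s_0$. Then the $G^{-\infty+}$ of that proposition is exactly the theorem's $G^{s_0}$, and condition~(iii) of Remark~\ref{R:finite Tannakian} becomes the statement that for one-dimensional $W$ there exists $n$ with $IR(W^{\otimes n}) > s_0$. This \emph{does} follow by iterating Proposition~\ref{P:rank 1 power}, since $\min\{IR^{1/p}, p\,IR\}$ pushes $IR$ strictly toward $1$. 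Conditions~(i) and~(ii) go through as before. For condition~(i), the paper's device of passing to the direct limit of categories over all finite tamely ramified extensions of $E$ is cleaner than your attempt to verify~(d) directly over $E$: in the limit category Proposition~\ref{P:field decomposition1}(b) gives refined decompositions outright, and the groups $G^r$ only undergo a base extension.
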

\begin{proof}
Instead of working with differential modules over $E$, we work with the direct limit
of the categories of differential modules over all finite tamely ramified extensions of $E$;
this does not change the groups $G^s$ except for a base extension.
In this larger category, 
we may apply Proposition~\ref{P:finite Tannakian} using Remark~\ref{R:finite Tannakian}:
conditions (i), (ii), (iii) of the remark may be verified using
Proposition~\ref{P:field decomposition1}(b),
Proposition~\ref{P:refined power},
Proposition~\ref{P:rank 1 power}, respectively.
\end{proof}

\begin{remark} \label{R:bad tannakian example}
The group $\bigcup_{s<1} G^s$ need not be finite in general. For example,
if $V$ is free on one generator $\bv$ and $D(\bv) = \lambda t^{-1} \bv$ for some
$\lambda \in K \setminus \QQ_p$, then $IR(V^{\otimes n}) < 1$ for all positive integers $n$
\cite[Example~9.5.2]{kedlaya-book} and so $\bigcup_{s<1} G^s \cong \QQ_p/\ZZ_p$.

In order to obtain finiteness for some class of differential modules, 
one must impose additional hypotheses to ensure that
when $V$ is of dimension 1, there exists a nonnegative integer $m$ for which $IR(V^{\otimes p^m}) = 1$.
For an example of such hypotheses, see Theorem~\ref{T:finite levels}.
\end{remark}

\begin{remark} \label{R:Tannakian easy}
If we assume $p=0$ but otherwise set notation as in Theorem~\ref{T:finite levels fields},
then the group $\bigcup_{s<1} G^s$ becomes a torus, as one may deduce easily from
Proposition~\ref{P:Turrittin analogue}.
\end{remark}

\section{Differential modules over discs and annuli}
\label{sec:rings discs annuli}

We next continue in the vein of \cite{kedlaya-book}, treating differential modules on discs and annuli.
In this section, we maintain continuity with \cite{kedlaya-book}
by phrasing everything in the language of modules over rings of convergent power series.
Starting in \S\ref{sec:Berkovich discs annuli},
we will switch to the language of Berkovich spaces in order to articulate more precise and general results.

\subsection{Rings of convergent power series}

We first introduce the relevant rings of convergent power
series on a disc or annulus, modifying the notation somewhat from that used in \cite[Chapter~8]{kedlaya-book}.

\begin{defn} \label{D:annulus ring}
For $\rho \in [0, +\infty)$, let $\left|\cdot\right|_\rho$ denote the $\rho$-Gauss seminorm on $K[t]$,
defined by the formula $\left| \sum_n c_n t^n \right|_\rho = \max\{\left|c_n\right| \rho^n\}$.
For $I$ a subinterval of $[0, +\infty)$, let
$R_I$ denote the Fr\'echet completion of $K[t]$ (if $0 \in I$) or $K[t,t^{-1}]$ (if $0 \notin I$)
for the seminorms $\left|\cdot\right|_\rho$ for $\rho \in I$.
View $R_I$ as a differential ring for the derivation $\frac{d}{dt}$.
We will occasionally write $R_{I,K}$ instead of  $R_I$ when it is necessary to specify $K$.
\end{defn}

\begin{remark}
Let us briefly recall how the rings $R_I$ appear in the notation of \cite{kedlaya-book}.
\begin{itemize}
\item If $I = [0, \beta]$, then $R_I$ appears as $K \langle t/\beta \rangle$, the ring of analytic functions on
the closed disc $\left|t\right| \leq \beta$.
\item If $I = [0, \beta)$, then $R_I$ appears as $K \{ t/\beta \}$, the ring of analytic functions on
the open disc $\left|t\right| < \beta$.
\item If $I = [\alpha, \beta]$ with $\alpha > 0$, 
then $R_I$ appears as $K \langle \alpha/t, t/\beta \rangle$, the ring of analytic functions on
the closed annulus $\alpha \leq \left|t\right| \leq \beta$.
\item If $I = (\alpha, \beta)$ with $\alpha > 0$, 
then $R_I$ appears as $K \{ \alpha/t, t/\beta \}$, the ring of analytic functions on
the open annulus $\alpha < \left|t\right| < \beta$.
\end{itemize}
\end{remark}

\begin{remark}
Suppose that $I$ is a closed interval. Then $R_I$ is an affinoid algebra for the norm
$\left|\cdot\right|_I = \sup\{\left|\cdot\right|_\rho: \rho \in I\}$. By the log-convexity of $\left|\cdot\right|_\rho$ 
\cite[Proposition~8.2.3]{kedlaya-book} (see also Lemma~\ref{L:approximate using distance}),
one has $\left|\cdot\right|_{[0,\beta]} = \left|\cdot\right|_\beta$ and $\left|\cdot\right|_{[\alpha,\beta]} = \max\{\left|\cdot\right|_\alpha, \left|\cdot\right|_\beta\}$ for $\alpha>0$.
In addition, the ring $R_I$ is a principal ideal domain \cite[Proposition~8.3.2]{kedlaya-book},
so the underlying module of any differential module over $R_I$ is automatically finite free.

Now let $I$ be arbitrary. In this case, $R_I$ is a \emph{Fr\'echet-Stein algebra}
in the sense of \cite[Section~3]{schneider-teitelbaum};
this means that every coherent sheaf on the associated analytic space is generated by its module of global sections.
Moreover, any coherent locally free sheaf of rank $n$ is uniformly finitely generated (because exactly $n$
generators are needed over any closed disc or annulus), and so corresponds to a finite projective module over $R_I$
by \cite[Proposition~2.1.15]{kpx} or \cite[Corollary~2.2.5]{bellovin}.
\end{remark}

\begin{defn}
For $x \in \RR$, let $\langle x \rangle$ denote the distance from $x$ to the nearest integer, that is,
$\langle x \rangle = \min\{x - \lfloor x \rfloor, -x - \lfloor -x \rfloor\}$. We will frequently use the fact that for $m$ a positive integer,
$m \langle x/m \rangle$ is the distance from $x$ to the nearest multiple of $m$.
\end{defn}

\begin{lemma} \label{L:approximate using distance}
Choose $\eta > 1$ and $\alpha, \alpha', \beta, \beta' \in [0, +\infty)$ such that
\[
\alpha' < \beta', \qquad \alpha' = \alpha \eta, \qquad \beta' = \beta/\eta.
\]
Choose a positive integer $m$, an element $h \in \ZZ$, and an element $f \in R_{[\alpha,\beta]}$
whose terms all have exponents
congruent to $h$ modulo $m$.
\begin{enumerate}
\item[(a)]
Put $h' = m \langle h/m \rangle$. Then
\[
\left|f\right|_{[\alpha', \beta']}
\leq
\eta^{-h'} \left|f\right|_{[\alpha, \beta]}.
\]
\item[(b)]
Assume $h=0$. Let $f_0$ be the constant coefficient of $f$. Then
\[
\left|f-f_0\right|_{[\alpha', \beta']} \leq
\eta^{-m} \left|f\right|_{[\alpha, \beta]}.
\]
\end{enumerate}
\end{lemma}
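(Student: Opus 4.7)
The plan is to reduce everything to single-monomial estimates. Expand $f = \sum_n c_n t^n$ where the sum ranges over integers $n \equiv h \pmod m$; since the series converges in $R_{[\alpha,\beta]}$, we have $|c_n|\rho^n \to 0$ for every $\rho \in [\alpha,\beta]$ so that $|g|_\rho = \max_n |c_n|\rho^n$ for any such $g$. By log-convexity of $\rho \mapsto |\cdot|_\rho$ (\cite[Proposition~8.2.3]{kedlaya-book}, cited in the remark preceding the lemma), the Gauss seminorm on the interval $[\alpha',\beta']$ is the maximum of its values at the endpoints (or just the value at $\beta'$ if $\alpha'=0$), so it suffices to bound the single-term contributions at $\rho = \alpha'$ and $\rho = \beta'$.

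The monomial calculation is then immediate. For $n \geq 0$, since $\alpha' \leq \beta'$, the larger of $|c_n|(\alpha')^n$ and $|c_n|(\beta')^n$ is attained at $\beta' = \beta/\eta$ and equals $\eta^{-n}|c_n|\beta^n \leq \eta^{-n}|f|_{[\alpha,\beta]}$; for $n < 0$, the larger is attained at $\alpha' = \eta\alpha$ and equals $\eta^{n}|c_n|\alpha^n = \eta^{-|n|}|c_n|\alpha^n \leq \eta^{-|n|}|f|_{[\alpha,\beta]}$. In either case the monomial contributes at most $\eta^{-|n|}|f|_{[\alpha,\beta]}$ to $|f|_{[\alpha',\beta']}$, so taking a supremum gives
\[
|f|_{[\alpha',\beta']} \leq \max_n \eta^{-|n|} |f|_{[\alpha,\beta]},
\]
where the maximum runs over $n$ with $c_n \neq 0$.

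It remains to bound $|n|$ from below. For (a), the hypothesis $n \equiv h \pmod m$ forces $|n| \geq h'$, since $h' = m\langle h/m \rangle$ is by definition the minimal absolute value of an integer in the residue class $h \pmod m$; combined with $\eta > 1$ this yields $\eta^{-|n|} \leq \eta^{-h'}$ and completes (a). For (b), when $h=0$ the nonconstant terms are supported on \emph{nonzero} multiples of $m$, so $|n| \geq m$; the same argument applied to $f - f_0$, with $h'$ replaced by $m$, yields the claim. I foresee no substantive obstacle: the proof is essentially a bookkeeping exercise, and the only point requiring care is the elementary residue-class identity $|n| \geq h'$, which is immediate from the definition of $\langle \cdot \rangle$.
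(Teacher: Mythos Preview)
Your proof is correct and follows exactly the same approach as the paper, which simply states that both assertions reduce at once to the case $f = t^n$ for $n \in h + m\ZZ$, where the claim is evident. You have merely spelled out this monomial reduction in detail; the key observation in both arguments is that $|t^n|_{[\alpha',\beta']} = \eta^{-|n|}|t^n|_{[\alpha,\beta]}$ together with the lower bound $|n| \geq h'$ (resp.\ $|n| \geq m$ for nonzero $n$ in part~(b)).
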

\begin{proof}
Both assertions reduce at once to the case $f = t^n$ for some $n \in h + m \ZZ$, for which the claim is evident.
\end{proof}

We will also need the construction of rings of analytic elements.
\begin{defn}
Let $J$ be the closure of $I$.
Let $R_I^{\an}$ be the Fr\'echet completion of the ring of rational functions in $K(t)$
with no poles in the region $\left|t\right| \in I$ 
for the norms $\left|\cdot \right|_\rho$ for $\rho \in J$. 
This is called the ring of \emph{analytic elements} in the region $\left|t\right| \in I$;
it is a principal ideal domain \cite[Proposition~8.5.2]{kedlaya-book}.
\begin{itemize}
\item
If $I$ is closed, then $R_I^{\an}  = R_I$.
\item
If $I = [0, \beta)$, then $R_I^{\an}$ appears in \cite{kedlaya-book} as $K \llbracket t/\beta \rrbracket_{\an}$.
\item If $I = (\alpha, \beta)$ with $\alpha > 0$, 
then $R_I$ appears in \cite{kedlaya-book} as $K \llbracket \alpha/t, t/\beta \rrbracket_{\an}$.
\end{itemize}
\end{defn}

\subsection{The Robba condition}
\label{sec:Robba}

We now introduce a special class of differential modules over annuli; this class is closely related
to the class of 
\emph{regular} meromorphic differential modules on a Riemann surface.

\begin{hypothesis} \label{H:robba}
Throughout \S\ref{sec:Robba}, 
let $I$ be an open subinterval of $[0, +\infty)$
and let $M$ be a differential module of rank $n$ over $R_I$ for the derivation $t \frac{d}{dt}$.
For $\rho \in I \setminus \{0\}$, put $M_\rho = M \otimes_{R_I} F_\rho$;
for $J$ a closed subinterval of $I$ of positive length, put $M_J = M \otimes_{R_I} R_J$.
\end{hypothesis}

\begin{defn}
We say that $M$ satisfies the \emph{Robba condition} if $IR(M_\rho) = 1$ for all $\rho \in I - \{0\}$.
In this case, we may define an action of the multiplicative group $1 + \gothm_K$ on $M$ by the formula
\[
\lambda(\bv) = \sum_{i=0}^\infty (\lambda - 1)^i \binom{D}{i}(\bv) \qquad
(\lambda \in 1 + \gothm_K, \bv \in M),
\]
since the Taylor series on the right is guaranteed to converge.
(Note that this formula is given incorrectly in \cite[Definition~13.5.2]{kedlaya-book}; it differs from the analogous formula in
\cite[Definition 5.8.1]{kedlaya-book} because the latter is adapted to differential modules for the derivation $\frac{d}{dt}$.)
We may also interpret the action of $\lambda \in 1+\gothm_K$ as an isomorphism $\lambda^*(M) \cong M$,
where $\lambda^*$ is the pullback along the substitution $t \mapsto \lambda t$.
\end{defn}

\begin{example} \label{exa:Robba}
For $\lambda \in K$, let $M_\lambda$ denote the differential module over $R_I$ on a single generator $\bv$ satisfying $D(\bv) = \lambda \,dv$. If $p=0$, then $M_\lambda$ satisfies the Robba condition
whenever $\left|\lambda\right| \leq 1$, and is trivial if and only if $\lambda \in \ZZ$.
By contrast, if $p>0$, then $M_\lambda$ satisfies the Robba condition if and only if
$\lambda \in \ZZ_p$ \cite[Example~9.5.2]{kedlaya-book}, and is again trivial if and only if
$\lambda \in \ZZ$ \cite[Proposition~9.5.3]{kedlaya-book}.
\end{example}

\begin{defn} \label{D:exponent1}
For $A$ a finite multisubset of $\gotho_{K^{\alg}}$,
we say $A$ is \emph{prepared} if no two elements $a_1, a_2$ of $A$ 
have the property that $\left|a_1-a_2-m\right| < 1$ for some nonzero integer $m$.
For $A,B$ two finite multisubsets of $\gotho_{K^{\alg}}$ of the same cardinality $n$,
we say $A$ and $B$ are \emph{equivalent} if there exist orderings $a_1,\dots,a_n$
and $b_1,\dots,b_n$ of $A$ and $B$, respectively, such that $a_i - b_i \in \ZZ$ for $i=1,\dots,n$;
this indeed defines an equivalence relation.
\end{defn}

\begin{defn}
We say $M$ is of \emph{cyclic type} if $\End(M)$ satisfies the Robba condition.
For example, if there exists a differential module $N$ over $R_I$ of positive rank such that
$N^\dual \otimes M$ satisfies the Robba condition, then $M$ is of cyclic type by Lemma~\ref{L:refined comparison}.
Note that the tensor product of modules of cyclic type is again of cyclic type.
\end{defn}

\begin{lemma} \label{L:projector}
Suppose that $M$ is of cyclic type. For each $\lambda \in 1+\gothm_K$,
view the Taylor isomorphism $T_\lambda: \lambda^*(\End(M)) \cong \End(M)$ as a horizontal element of 
\begin{align*}
\lambda^*(M^\dual \otimes M) \otimes (M^\dual \otimes M) &\cong
\lambda^*(M^\dual) \otimes \lambda^*(M) \otimes M^\dual \otimes M \\
&\cong \lambda^*(M) \otimes M^\dual \otimes \lambda^*(M^\dual) \otimes M \\
&\cong (\lambda^*(M^\dual) \otimes M)^\dual \otimes \lambda^*(M^\dual) \otimes M \\
&\cong \End(\lambda^*(M^\dual) \otimes M).
\end{align*}
Then the corresponding endomorphism of $\lambda^*(M^\dual) \otimes M$ is a projector of rank $1$.
\end{lemma}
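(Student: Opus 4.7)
The main point is that $T_\lambda$ is not merely linear but is an $R$-algebra isomorphism $\lambda^*(\End M) \cong \End M$: composition of endomorphisms is a morphism of differential modules $\End M \otimes \End M \to \End M$, so the Taylor formula $T_\lambda(\bv) = \sum_{i \geq 0} \tfrac{(\lambda-1)^i}{i!} D^i(\bv)$ is multiplicative via the Leibniz rule ($D^i(fg) = \sum_{j+k=i}\binom{i}{j} D^j(f) D^k(g)$ reorganizes into $T_\lambda(f) T_\lambda(g)$). In particular, $T_\lambda$ sends $\lambda^*(\mathrm{id}_M)$ to $\mathrm{id}_M$, since $\mathrm{id}_M$ is a horizontal section of $\End M$.

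My plan is to verify the claim pointwise. After base change to the completion $F_\rho$ at some $\rho \in I$ and further to its algebraic closure, $\End M$ becomes a matrix algebra $M_n$ over a field $F$, and $T_\lambda$ becomes an $F$-algebra automorphism. By Skolem--Noether, this is inner: there exists an invertible $g \in \Hom(\lambda^* M, M)$ with $T_\lambda(f) = g f g^{-1}$. Writing the conjugation as a tensor product of linear maps, one has $T_\lambda = g^{-\vee} \otimes g$ under the decomposition $\Hom(\lambda^*(\End M), \End M) = \Hom(\lambda^*(M^\vee), M^\vee) \otimes \Hom(\lambda^*(M), M) = N^\vee \otimes N$, where $N = \lambda^*(M^\vee) \otimes M$. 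Under the identification chain of the statement, this matches the image of $T_\lambda$ in $\End N$ precisely.

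From the simple-tensor presentation, rank $1$ is immediate: the endomorphism $n \mapsto g^{-\vee}(n) \cdot g$ has image $R \cdot g$, a rank-$1$ sub-module of $N$. Since this holds at each point, with a local $g$ which is never zero because $T_\lambda$ is an isomorphism, the image is a rank-$1$ locally free submodule of $N$ globally. The idempotency $\phi_\lambda^2 = \phi_\lambda$ then reduces to a single scalar identity, namely that the natural pairing of $g^{-\vee} \in N^\vee$ against $g \in N$ equals $1$; this normalization is pinned down by $T_\lambda(\lambda^*(\mathrm{id}_M)) = \mathrm{id}_M$ together with the self-duality convention on $\End M$ used in the displayed chain of identifications, which is exactly what fixes the scale of the inner-automorphism representative $g$.

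The main obstacle is the bookkeeping through the four identifications --- distribution of $\lambda^*$ across tensor products, the cyclic reshuffling of tensor factors, the natural duality $(\lambda^*(M^\vee) \otimes M)^\vee \cong \lambda^*(M) \otimes M^\vee$, and the convention $\End(N) = N^\vee \otimes N$ --- making them compatible with the self-duality $(\End M)^\vee \cong \End M$ implicitly used when viewing $T_\lambda$ as an element of $\lambda^*(\End M) \otimes \End M$ rather than $\Hom(\lambda^*(\End M), \End M)$. Once these conventions are threaded through consistently, the simple-tensor decomposition $g^{-\vee} \otimes g$ inherited from any inner-automorphism representative delivers both the rank-$1$ image and the idempotency, and these pointwise conclusions yield the global statement because each is a closed condition preserved under extension of scalars.
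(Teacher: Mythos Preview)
Your approach via Skolem--Noether is genuinely different from the paper's and correctly delivers the rank-$1$ property: once $T_\lambda$ is inner, writing it as the simple tensor $(g^{-1})^\dual \otimes g$ shows that the corresponding endomorphism $\Phi$ of $N = \lambda^*(M^\dual) \otimes M$ has pointwise rank exactly $1$. The paper instead argues by continuity in $\lambda$: the trace of $\Phi$ is an analytic function of $\lambda$ taking integer values, hence constant, and at $\lambda = 1$ one identifies $\Phi$ explicitly as the map $f \mapsto \Trace(f)\cdot \mathrm{id}_M$ on $\End(M)$. Your route has the advantage of working pointwise for each fixed $\lambda$, without needing the deformation back to $\lambda=1$.

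There is, however, a slip in your idempotency step. The pairing $\langle (g^{-1})^\dual, g \rangle$ is invariant under rescaling $g$, since $(g^{-1})^\dual$ scales by the reciprocal factor; so no normalization of $g$ can pin it down, and in particular the condition $T_\lambda(\lambda^*(\mathrm{id}_M)) = \mathrm{id}_M$ does not do the work you claim. A direct computation identifies the pairing with $\Trace(g^{-1}\circ g) = \Trace(\mathrm{id}_{\lambda^*(M)}) = \rank(M)$. Thus one actually obtains $\Phi^2 = n\Phi$ with $n = \rank(M)$, and the genuine projector is $n^{-1}\Phi$ (harmless since $K$ has characteristic $0$). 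The conclusion that $\Phi$ has a rank-$1$ image --- which is all that is used downstream in Theorem~\ref{T:cyclic type} --- is unaffected, but the idempotency as literally stated requires this extra factor.
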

\begin{proof}
The construction of the Taylor isomorphism on modules satisfying the Robba condition is functorial,
so the diagram
\[
\xymatrix@C3cm{
\lambda^*(\End(M)) \otimes \lambda^*(\End(M)) \ar^{- \circ -}[r] \ar^{T_\lambda \otimes T_\lambda}[d]
& \lambda^*(\End(M)) \ar^{T_\lambda}[d] \\
\End(M) \otimes \End(M) \ar^{- \circ -}[r] & \End(M)
}
\]
commutes. From this, it follows formally that the endomorphism of $\lambda^*(M^\dual) \otimes M$
is a projector. The trace of this projector is an analytic function of $\lambda$, but is also equal
to the rank of the projector and so always belongs to $\{0,\dots,\rank(M)\}$. It is thus a constant function;
moreover, the constant value must equal 1 because for
 $\lambda = 1$, the endomorphism of $\lambda^*(M^\dual) \otimes M \cong \End(M)$ in question
is the projector onto the trace component. This proves the claim.
\end{proof}

\subsection{The Robba condition: residue characteristic $0$}
\label{sec:Robba2a}

We continue to study the Robba condition
in the case of residue characteristic 0. The methods used are familiar, but the
exact result seems to be inexplicably missing from the literature.

\begin{hypothesis}
Throughout \S\ref{sec:Robba2a}, 
retain Hypothesis~\ref{H:robba},
but also assume that $p=0$ and that $M$ satisfies the Robba condition.
\end{hypothesis}

\begin{defn}
An \emph{exponent} for $M$ is a finite multisubset of $\gotho_{K^{\alg}}$ such that
$M[t^{-1}] \otimes_K K^{\alg}$ admits a basis on which $D$ acts via a matrix over $\gotho_{K^{\alg}}$
with multiset of eigenvalues equal to $A$.
\end{defn}

\begin{lemma} \label{L:shearing}
Assume that $0 \in I$ and that the eigenvalues of $D$ on $M/tM$ belong to $\gotho_{K^{\alg}}$.
Then there exists a differential module $M'$ over $R_I$ with $M[t^{-1}] \cong M'[t^{-1}]$ such that 
the eigenvalues of $D$ on $M'/tM'$ belong to $\gotho_{K^{\alg}}$ and are prepared.
\end{lemma}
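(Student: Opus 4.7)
My approach is the classical shearing transformation argument. Extend scalars to $K^{\alg}$; the construction will be Galois-equivariant and hence descends to $K$. On the multiset $A$ of eigenvalues of $D$ on $(M/tM) \otimes_K K^{\alg}$, define $a \sim a'$ iff $|a - a' - m| < 1$ for some $m \in \ZZ$. Since $p = 0$ forces $|m - m'| = 1$ for distinct integers, the integer $m$ (when it exists) is unique; so $\sim$ is an equivalence relation, and each class $C$ carries a canonical integer-valued grading $m_C : C \to \ZZ$, defined up to a global shift.

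The key substantive step is to decompose $M \otimes_{R_I} R_{I,K^{\alg}}$ as a $D$-stable direct sum $\bigoplus_{C,k} M_{C,k}$ indexed by pairs (cluster $C$, value $k$ of $m_C$), such that the eigenvalues of $D$ on $M_{C,k}/tM_{C,k}$ all reduce to a single fixed residue in $\kappa_{K^{\alg}}$. Across distinct clusters, and also for distinct values of $m_C$ inside one cluster, the corresponding eigenvalue residues are \emph{distinct}: within a cluster they differ by nonzero integers, and in residue characteristic $0$ such integers have absolute value $1$; across clusters, $|a - a'| \geq 1$ by definition of $\sim$. Hence the characteristic polynomial of $D$ on $(M/tM) \otimes K^{\alg}$ factors modulo $\gothm_{K^{\alg}}$ into pairwise coprime factors indexed by $(C, k)$, and the induced decomposition of $(M/tM) \otimes K^{\alg}$ into $D$-stable subspaces lifts by Hensel's lemma through successive $t$-adic approximation (using the Fr\'echet--Stein structure of $R_I$ and finite projectivity of $M$) to the desired decomposition of $M \otimes R_{I,K^{\alg}}$.

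With this decomposition, perform the shearing: for each cluster $C$ set $k_{\min}(C) = \min\{k : M_{C,k} \neq 0\}$ and form
\[
M' = \bigoplus_{C,k} t^{k - k_{\min}(C)} M_{C,k} \subset (M \otimes_{R_I} R_{I,K^{\alg}})[t^{-1}].
\]
Then $M'[t^{-1}] = M[t^{-1}]$ since $t$ is a unit over $R_I[t^{-1}]$, and $M'$ is $D$-stable since $D(t^j \bv) = j t^j \bv + t^j D(\bv)$ lies in $t^j M_{C,k}$ whenever $\bv \in M_{C,k}$. The eigenvalues of $D$ modulo $t$ on the summand $t^{k - k_{\min}(C)} M_{C,k}$ are shifted from the original ones by $-(k - k_{\min}(C))$; after shearing, all eigenvalues within a single cluster share a common residue modulo $\gothm_{K^{\alg}}$. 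Pairs within a cluster then differ by elements of $\gothm_{K^{\alg}}$, so $|a - a' - m| = |m| = 1$ for any nonzero integer $m$ by the ultrametric; pairs across clusters continue to satisfy $|a - a' - m| \geq 1$ for all $m \in \ZZ$. Hence the multiset is prepared, and Galois descent (using Galois-invariance of the cluster partition) produces $M'$ over $R_I$ with $K$-coefficients. The main obstacle is the Hensel-lifting step yielding $\bigoplus_{C,k} M_{C,k}$: it critically exploits $p = 0$ to guarantee that integer translates give distinct residues, and the analogous separation fails in positive residue characteristic, leading to the $p$-adic Liouville phenomena that the paper treats in later sections.
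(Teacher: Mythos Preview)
Your Hensel-lifting step has a genuine gap: the $D$-stable decomposition of $(M/tM)\otimes K^{\alg}$ into $(C,k)$-blocks does not in general lift to a $D$-stable decomposition of $M\otimes R_{I,K^{\alg}}$. Extending a $D$-equivariant projector from $M/t^jM$ to $M/t^{j+1}M$ amounts to solving an equation governed by the operator $X\mapsto [D_0,X]+jX$ on the off-diagonal part of the $K$-linear endomorphisms of $M/tM$; its eigenvalues there are $\lambda-\mu+j$ with $\lambda,\mu$ in distinct blocks, and when $\lambda-\mu$ is a nonzero integer---precisely the situation between two levels $k\neq k'$ of one cluster $C$---this operator is singular and the lift can be obstructed. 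Concretely, for $a\in\gotho_K$ take $M$ free on $e_1,e_2$ with $D(e_1)=ae_1$ and $D(e_2)=te_1+(a+1)e_2$; this satisfies the Robba condition, the mod-$t$ eigenvalues $a,a+1$ sit at adjacent levels of a single cluster, yet any $D$-stable rank-one complement to $R_Ie_1$ would be generated by some $e_2+ge_1$ with $g-tg'=t$, and this equation has no solution even in $K\llbracket t\rrbracket$. So your one-shot formula $M'=\bigoplus_{C,k} t^{k-k_{\min}(C)}M_{C,k}$ rests on a decomposition that need not exist.

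The paper's argument avoids this by never decomposing $M$ itself. It decomposes only the fibre $M/tM$ into Galois-orbit blocks (pure linear algebra over a field), singles out one block, and replaces $M$ by the sublattice $M'\subset M$ of elements whose image in $M/tM$ misses that block; this shifts that orbit's eigenvalues by $1$ with no lifting required. Iterating---with the two trace observations guaranteeing that shifts are Galois-consistent and that the process terminates---produces a prepared eigenvalue multiset.
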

\begin{proof}
This is an example of the use of \textit{shearing transformations} \cite[Proposition~7.3.10]{kedlaya-book}.
Split $M/tM$ as a direct sum in which each summand consists of the generalized eigenspaces for a single
Galois orbit of eigenvalues for the action of $D$. If we consider the differential submodule $M'$ of $M$
consisting of those elements whose images in $M/tM$ project to zero in a particular summand, the eigenvalues of $D$
on $M'/tM'$ are the same as on $M/tM$ except that one Galois orbit has been shifted by 1.

It thus suffices to establish the existence of a sequence of shifts having the desired property.
This follows from the following two observations (both of which require $p=0$).
\begin{enumerate}
\item[(a)]
If $\lambda_1, \lambda_1' \in \kappa_K^{\alg}$ are Galois conjugate,
$\lambda_2, \lambda'_2 \in \kappa_K^{\alg}$ are Galois conjugate,
and $\lambda_1 - \lambda_2, \lambda'_1 - \lambda'_2 \in \ZZ$, then
$\lambda_1 - \lambda_2 =  \lambda'_1 - \lambda'_2$. 
(This follows by taking traces from some finite extension of $\kappa_K$ containing $\lambda_1, \lambda_1', \lambda_2, \lambda_2'$.)
\item[(a)]
If $\lambda, \lambda' \in \kappa_K^{\alg}$ are Galois conjugate and differ by an integer, then they are equal.
(This follows from (a) by taking $\lambda_1 = \lambda'_1 = \lambda_2 = \lambda$, $\lambda'_2 = \lambda'$.)
\end{enumerate}
\end{proof}

\begin{lemma} \label{L:sheared solve}
Assume that $0 \in I$ and the eigenvalues of $D$ on $M/tM$ belong to $\gotho_{K^{\alg}}$ and are prepared.
Then there exists a basis of $M$ on which $D$ acts via a matrix over $\gotho_K$.
\end{lemma}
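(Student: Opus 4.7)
The plan is to construct a change-of-basis matrix $U \in \GL_n(R_I)$ with $U(0) = I$ such that, in the new basis, $D$ acts by the constant matrix $N_0 := N(0)$. This is equivalent to solving the matrix differential equation $D(U) = U N_0 - NU$ on $R_I$, where $N \in \mathrm{Mat}_n(R_I)$ denotes the matrix of $D$ in the original basis.

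First I would arrange that $N_0 \in \mathrm{Mat}_n(\gotho_K)$: the multiset of eigenvalues of $N_0$ lies in $\gotho_{K^{\alg}}$ by hypothesis and is Galois-stable, so its characteristic polynomial has coefficients in $\gotho_K$, and an appropriate basis of $M/tM$ (e.g.\ rational canonical form) lifted to $M$ realizes $N_0 \in \mathrm{Mat}_n(\gotho_K)$. Using the Robba condition together with $p=0$, I would further normalize so that $|N|_\rho \le 1$ for every $\rho \in I$; concretely, replace the basis by one adapted to a $D$-stable $\gotho_{R_I}$-lattice in $M$, whose existence is guaranteed by Robba in residue characteristic $0$. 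Writing $U = I + \sum_{k \ge 1} U_k t^k$ and expanding the ODE order by order yields the Sylvester-type recursion
\[
T_k(U_k) = -N_k - \sum_{j=1}^{k-1} N_{k-j}\, U_j, \qquad T_k(X) := k X + [N_0, X],
\]
for each $k \ge 1$. The eigenvalues of $T_k$ on $\mathrm{Mat}_n$ are the numbers $k + a - a'$ with $a, a'$ eigenvalues of $N_0$; preparedness forces $|a - a' - m| \ge 1$ for every nonzero integer $m$, and $|k| = 1$ (residue characteristic zero) then gives $|k + a - a'| = 1$ uniformly in $k \ge 1$. Since $T_k$ has matrix over $\gotho_K$ whose determinant is a product of these norm-$1$ scalars and hence a unit in $\gotho_K$, Cramer's rule provides $\|T_k^{-1}\|_{\mathrm{op}} \le 1$ in the sup norm, so the recursion determines $U$ uniquely as a formal power series over $\gotho_K$.

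The crux is convergence: I would show $U \in \mathrm{Mat}_n(R_I)$ by proving, through strong induction on $k$, the inequality $|U_k|\,\rho^k \le 1$ for every $\rho \in I$. The base case $|U_0| = 1$ is immediate. For the inductive step, $|N_{k-j}|\,\rho^{k-j} \le 1$ (from the normalization $|N|_\rho \le 1$) and $|U_j|\,\rho^{j} \le 1$ (induction hypothesis) combine under the ultrametric inequality to give $|N_{k-j} U_j|\,\rho^k \le 1$, and similarly $|N_k|\,\rho^k \le 1$; composition with $T_k^{-1}$ preserves the bound. Hence $|U|_\rho \le 1$ throughout $I$, so $U \in \mathrm{Mat}_n(R_I)$, and since $U(0) = I$ is invertible we obtain $U \in \GL_n(R_I)$, proving the lemma. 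The main technical obstacle is the entry-wise normalization $|N|_\rho \le 1$ in the first step: without it, the naive Cauchy-product estimate loses a geometric factor at each stage and only yields convergence on a proper subdisc of $R_I$; with it in place, preparedness ensures the Sylvester operators $T_k$ preserve the integral lattice isomorphically and the ultrametric induction closes.
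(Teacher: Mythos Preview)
Your Sylvester recursion and its integrality analysis are correct: once a basis with $|N|_\rho \le 1$ for every $\rho \in I$ is in hand, preparedness does force $|k + a - a'| = 1$ for all $k \ge 1$, each $T_k$ becomes an isomorphism of $\mathrm{Mat}_n(\gotho_K)$, and the induction closes cleanly.

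The gap is the normalization step. You assert that a $D$-stable integral lattice in $M$ exists, ``guaranteed by Robba in residue characteristic~$0$,'' but you give no argument and cite nothing. The Robba condition bounds only the \emph{spectral radius} of $D$ on each $M_\rho$; it says nothing directly about the operator norm of $D$ in any particular basis. Converting that spectral bound into the existence of a basis with $|N|_\rho \le 1$ on the entire disc is precisely what the lemma is about (indeed, the conclusion is the stronger statement that $N$ can be taken constant in $\gotho_K$). So the argument is circular at exactly the point you yourself flag as ``the main technical obstacle.''

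The paper's proof avoids this circularity by a different mechanism. It starts from an \emph{arbitrary} lift $\be_1,\dots,\be_n$ of a basis of $M/tM$ and forms approximants
\[
\be_{i,m} = \left(\prod_{j=1}^m P(D-j)Q_j(D)\right)\be_i,
\]
where $P$ is the characteristic polynomial of $N_0$ and $Q_j \in \gotho_K[T]$ inverts $P(T-j)$ modulo $P(T)$ (preparedness guarantees $Q_j$ exists with integral coefficients). These converge $t$-adically to the formal solution by construction. The Robba condition is invoked only to show that for each fixed $\rho$ the sequence $\{\be_{i,m}\}_m$ is bounded in $|\cdot|_\rho$; a bounded, $t$-adically convergent sequence then converges in $|\cdot|_{\rho'}$ for every $\rho' < \rho$ by Lemma~\ref{L:approximate using distance}(b), and letting $\rho$ vary over $I$ gives convergence on all of $R_I$. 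In short, Robba enters as a growth estimate fed into a disc-shrinking argument, not as a source of an integral lattice up front.
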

\begin{proof}
Let $P(T) \in \gotho_K[T]$ be the characteristic polynomial of the action of $D$ on $M/tM$. 
Since the roots of $P$ are prepared,
for each positive integer $j$ there exists a unique polynomial $Q_j(T) \in \gotho_K[T]$ of degree at most $n-1$
such that $P(T-j) Q_j(T) \equiv 1 \pmod{P(T)}$.

It is straightforward to check (see for example \cite[Proposition~7.3.6]{kedlaya-book}) that
there exists a basis of $M \otimes_{R_I} K \llbracket t \rrbracket$ on which $D$ acts via a matrix over
$\gotho_K$. We may reconstruct this basis by starting with any elements $\be_1,\dots,\be_n \in M$
which lift a basis of $M/tM$ and forming the $t$-adic limits of the sequences
\[
\be_{i,m} = \left(\prod_{j=1}^m P(D-j) Q_j(D) \right)\be_i \qquad (i=1,\dots,n; m=1,2,\dots).
\]
For any given $\rho \in I - \{0\}$, these sequences are bounded for the norm induced by $|\cdot|_\rho$
using a basis of $M_{[0,\rho]}$ (because $M$ satisfies the Robba condition);
since these sequences also converge $t$-adically, they converge under
$|\cdot|_{\rho'}$ for any $\rho' \in (0, \rho)$
by Lemma~\ref{L:approximate using distance}(b) (with $m=1$).
This proves the existence of the desired basis.
\end{proof}

\begin{lemma} \label{L:char 0 solve}
Assume that for some $\rho \in I$, $M$ admits a 
basis $\be_1,\dots,\be_n$ on which $D$ acts via a matrix $N = \sum_{i \in \ZZ} N_i t^i$
with $|N_0| \leq 1$ and $|N-N_0|_{\rho} < 1$ for all $\rho \in I$.
Then there exists a basis of $M$ on which $D$ acts via a matrix over $\gotho_K$.
\end{lemma}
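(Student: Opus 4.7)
The plan is to construct $U \in \GL_n(R_I)$ satisfying $U^{-1}NU + U^{-1}t\tfrac{d}{dt}(U) = N_0$, which yields the conclusion since $N_0 \in \gotho_K^{n\times n}$ by hypothesis. Writing $U = I + V$ with $V \in R_I^{n\times n}$ having zero constant term in its Laurent expansion, this identity is equivalent to the cohomological equation
\[
L(V) = -(N-N_0)(I+V), \qquad L\Bigl(\sum_{i \neq 0} V_i t^i\Bigr) := \sum_{i \neq 0}\bigl(iV_i + [N_0, V_i]\bigr)t^i.
\]

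First I would reduce to the case where the multiset of eigenvalues of $N_0$ in $\gotho_{K^{\alg}}$ is prepared in the sense of Definition~\ref{D:exponent1}. When $0 \in I$, this is essentially the content of Lemma~\ref{L:shearing}. When $0 \notin I$, one can pass to a finite extension of $K$ to split generalized eigenspaces of $N_0$, apply integer shearings $\be_i \mapsto t^{k_i}\be_i$ chosen Galois-equivariantly to shift eigenvalues, and descend at the end. A direct computation of how shearing reindexes the Laurent coefficients shows that the strict hypothesis $|N-N_0|_\rho < 1$ for all $\rho \in I$ is preserved, possibly at the cost of a preliminary block-triangularization of $N_0$.

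Once preparation is in force, the $K$-linear operator $L_i$ on $n\times n$ matrices has eigenvalues $\lambda - \mu + i$ for $\lambda, \mu \in \mathrm{spec}(N_0)$; since $p = 0$, every nonzero integer has norm $1$ in $K$, and preparation guarantees that none of these eigenvalues lies in $\gothm_{K^{\alg}}$. Hence $|L_i^{-1}| \leq 1$ for every $i \neq 0$, and $L^{-1}$ has norm $\leq 1$ on zero-constant-term Laurent matrices. The equation $V = -L^{-1}\bigl((N-N_0)(I+V)\bigr)$ is then a contraction on the unit ball of the $\rho$-completion of $K[t,t^{-1}]^{n\times n}$ for each $\rho \in I$, and Banach's fixed-point theorem furnishes a unique solution $V(\rho)$ with $|V(\rho)|_\rho \leq |N-N_0|_\rho < 1$. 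By uniqueness the $V(\rho)$ patch to a single element $V \in R_I^{n\times n}$; then $U = I + V$ is invertible in $\GL_n(R_I)$ and realizes the required change of basis.

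The main obstacle will be the preparation step, specifically verifying in the annulus case ($0 \notin I$) that the integer shearings preserve the strict bound $|N-N_0|_\rho < 1$, and descending the resulting basis from a finite extension of $K$ back to $K$ via Galois invariants. The cohomological-equation step is then a routine contraction-mapping argument once $|L_i^{-1}| \leq 1$ is secured.
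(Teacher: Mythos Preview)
Your overall strategy---prepare $N_0$, then contract---matches the paper's. The gap is in the contraction step when $0\notin I$. You seek $U=I+V$ (with $V$ having zero constant term) solving $L(V)=-(N-N_0)(I+V)$; but $L(V)$ has zero constant term by construction, whereas on an annulus the product $(N-N_0)V$ contributes $\sum_{i\neq 0}N_iV_{-i}$ in degree $0$. The equation is therefore overdetermined, and your map $V\mapsto -L^{-1}\bigl((N-N_0)(I+V)\bigr)$ is not even defined without first projecting away the constant part---after which its fixed point no longer produces a \emph{constant} matrix of action. In short, one cannot in general conjugate $N$ to $N_0$ exactly; only to some nearby constant matrix in $\gotho_K^{n\times n}$.

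The paper deals with this by a Newton-type iteration in which the constant term is allowed to drift: at stage $l$ one solves the linearized problem $N_{l,0}-N_l=X_lN_0-N_0X_l+D(X_l)$ for the nonconstant part only, conjugates by $I+X_l$, and shows the new nonconstant part has norm $\leq\epsilon^{l+2}$. The constant terms $N_{l,0}$ then converge in $\gotho_K^{n\times n}$. Your argument can be repaired along the same lines, but the single fixed-point step as written does not close. Separately, the paper's preparation step is simpler than the one you sketch: it applies the shearing of Lemma~\ref{L:shearing} directly over $\kappa_K$ (trivial norm), where the generalized eigenspace decomposition already block-diagonalizes $\overline{N_0}$, and lifts the resulting change of basis via elementary matrices in the PID $\kappa_K[t^{\pm}]$---no finite extension of $K$ or descent is needed, and preservation of $|N-N_0|_\rho<1$ is immediate since the whole manipulation is visible modulo $\gothm_K$.
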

\begin{proof}
By applying Lemma~\ref{L:shearing} with $K$ replaced by $\kappa_K$ (equipped with the trivial norm)
and using the fact that $\kappa_K[t^{\pm}]$ is a principal ideal domain (so every
invertible square matrix over it factors as a product of elementary matrices),
we may ensure that the eigenvalues of $N_0$ are prepared.
In this case, for any nonzero $i \in \ZZ$, the eigenvalues of the linear operator 
$X \mapsto \overline{N_0} X - X \overline{N_0} + iX$ on $n \times n$ matrices over $\kappa_K$ are all nonzero
(because each of them has the form $\lambda - \lambda' + i$
for some eigenvalues $\lambda, \lambda'$ of $\overline{N_0}$). Consequently,
this linear operator is invertible; it follows that for any $n \times n$ matrix $X$ over $K$ and any nonzero $i \in \ZZ$,
$\left|N_0 X - X N_0 + iX\right| = \left|X\right|$.

We next produce a sequence $U_0,U_1,\dots$ of invertible matrices over $R_I$ such that
$\left|U_l-I_n\right|_\rho < 1$ for all $l \in \{0,1,\dots\}$ and $\rho \in I$. Start with $U_0 = I_n$. Given 
$U_l$ for some $l$, put $N_l = U_l^{-1} N U_l + U_l^{-1} D(U_l)$. Write $N_l = \sum_{i \in \ZZ} N_{l,i} t^i$
and apply the previous paragraph to construct $X_l$ so that $\left|X_l\right|_\rho = \left|N_l - N_{l,0}\right|_\rho$ for all $\rho \in I$
and $N_{l,0} - N_{l} = X_l N_0 - N_0 X_l + D(X_l)$.
Then put $V_l = I_n + X_l$ and $U_{l+1} = U_l V_l$; note that $N_{l+1} = V_l^{-1} N_l V_l + V_l^{-1} D(V_l)$.

Suppose that for $\rho \in I$ and $\epsilon > 0$, we have
$\left|N-N_0\right|_\rho \leq \epsilon$ and $\left|N_l-N_{l,0}\right|_\rho \leq \epsilon^{l+1}$.
We then have
$\left|V_l-I_n\right|_\rho \leq \epsilon^{l+1}$, so
\[
\left|N_{l+1} - N_l + X_l N_0 - N_0 X_l - D(X_l)\right|_\rho \leq \epsilon^{l+2}.
\]
However, the matrix on the left side is exactly $N_{l+1} - N_{l,0}$,
so we must have $\left|N_{l+1} - N_{l+1,0}\right|_\rho \leq \epsilon^{l+2}$.

{}From the previous paragraph, it follows that the $U_l$ converge to an invertible matrix $U$ over $R_I$.
The elements $\be'_1,\dots,\be'_n$ of $M$ given by $\be'_j = \sum_i U_{ij} \be_i$ then form a basis with the desired
property.
\end{proof}

\begin{theorem} \label{T:CM char 0}
Assume that $p=0$ and that $M$ satisfies the Robba condition.
\begin{enumerate}
\item[(a)]
There exists a Galois-invariant exponent for $M$.
\item[(b)]
Any two exponents of $M$ are equivalent.
\end{enumerate}
\end{theorem}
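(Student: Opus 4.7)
My overall strategy is to produce, after base change to $K^\alg$, a basis of $M$ on which $D$ acts via a constant matrix $N \in \gotho_{K^\alg}^{n \times n}$, whose multiset of eigenvalues will be the desired exponent. Galois-invariance in (a) is automatic because $M$ and $D$ are defined over $K$: the Galois action on $K^\alg$ permutes the bases constructed from the rational procedure below and therefore leaves invariant the resulting multiset of eigenvalues. The main engine is Lemma~\ref{L:char 0 solve}, which produces the required constant-matrix basis as soon as one has \emph{some} basis with matrix $N = \sum_i N_i t^i$ satisfying $|N_0| \leq 1$ and $|N-N_0|_\rho < 1$ for all $\rho \in I$.

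\textbf{Producing the good basis for (a).} To construct such a basis, I would choose a cyclic vector $\bv$ for $M \otimes_{R_I} \Frac(R_I)$ (Corollary~\ref{C:cyclic vector}), writing $D$ in companion form with respect to a monic polynomial $P(T) = T^n - a_{n-1} T^{n-1} - \cdots - a_0$. The Robba condition $IR(M_\rho) = 1$, via the Christol--Dwork theorem (Proposition~\ref{P:christol-dwork}) applied at each $\rho$, forces the Newton polygon of $P$ at $\rho$ to have all slopes in a prescribed range, which translates to suitable bounds on $|a_i|_\rho$; in particular $a_i \in R_I$, and the companion matrix is already nearly constant in a controlled sense. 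Passing to a slightly shrunken sub-interval exhausting $I$ and applying Lemma~\ref{L:approximate using distance}(b) with $m=1$ (to separate constant coefficients from positive- and negative-order terms and upgrade $\leq$ to strict $<$), one obtains a basis satisfying the hypothesis of Lemma~\ref{L:char 0 solve}. That lemma then delivers a basis on which $D$ acts via a constant matrix in $\gotho_{K^\alg}^{n \times n}$, giving the desired exponent.

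\textbf{Uniqueness for (b).} Suppose $A$, $B$ are two exponents, corresponding to bases of $M[t^{-1}] \otimes_K K^\alg$ on which $D$ acts via constant matrices $N_A, N_B \in \gotho_{K^\alg}^{n \times n}$ with eigenvalue multisets $A$, $B$. The change-of-basis matrix $U$ (invertible over the appropriate Laurent ring) satisfies $N_A U + D(U) = U N_B$; expanding $U = \sum_{k \in \ZZ} U_k t^k$ and using $D(t^k) = k t^k$ gives the Sylvester system $N_A U_k - U_k N_B + k U_k = 0$ for each $k$. The operator $X \mapsto N_A X - X N_B + kX$ on matrix space has eigenvalues $\{a - b + k : a \in A, b \in B\}$, so $U_k \neq 0$ forces $k = b - a$ for some matching pair. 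Invertibility of $U$ then forces the matching to exhaust both $A$ and $B$, producing a bijection between them under which corresponding elements differ by an integer, i.e.\ the equivalence of Definition~\ref{D:exponent1}. The main obstacle of the whole proof is producing the basis satisfying the hypotheses of Lemma~\ref{L:char 0 solve} uniformly over the open interval $I$: combining Proposition~\ref{P:christol-dwork} and Lemma~\ref{L:approximate using distance} to get the strict inequality $|N - N_0|_\rho < 1$ at \emph{every} $\rho \in I$ (rather than merely pointwise on closed subintervals) requires careful Newton polygon and norm bookkeeping, and is where any hidden subtleties in the passage from the companion form to the normalized form will appear.
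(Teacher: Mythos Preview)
Your plan for (a) matches the paper's core strategy---cyclic vector, Christol--Dwork bound, then Lemma~\ref{L:char 0 solve}---but there is a real gap exactly where you place the ``main obstacle,'' and it is not a matter of bookkeeping. The cyclic vector $\bv$ is only cyclic for $M \otimes_{R_I} \Frac(R_I)$; the elements $\bv, D(\bv), \dots, D^{n-1}(\bv)$ need not be an $R_I$-basis of $M$, and the companion coefficients $a_i$ need not lie in $R_I$. The Christol--Dwork bound does give $|a_i|_\rho \leq 1$ for every $\rho \in I$, but that does not force $a_i \in R_I$: for instance $t/(t-c)$ with $|c| \in I$ has $\rho$-Gauss norm at most $1$ for every $\rho$, yet lies outside $R_I$. (Concretely, take $M = R_I^2$ with $D = t\,d/dt$ acting by the matrix with $(2,1)$-entry $t-c$ and zeros elsewhere; this satisfies the Robba condition, and the cyclic vector $e_1$ produces $a_1 = t/(t-c)$.) So you cannot feed the companion matrix into Lemma~\ref{L:char 0 solve} over all of $I$.

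The paper does not attempt this. It first restricts to a closed subinterval $J$ on which the cyclic vector \emph{does} give a basis of $M_J$ (the cokernel is killed by a nonzero element of $R_J$ whose Newton slopes are discrete; shrink $J$ to avoid them), then shrinks $J$ once more via Lemma~\ref{L:approximate using distance}(b) to obtain $|N - N_0|_J < 1$. Lemma~\ref{L:char 0 solve} then produces the constant-matrix basis for $M_J$ only. The step missing from your outline is the extension from $M_J$ back to $M$ over all of $I$: the paper carries this out by shearing the constant matrix to have prepared eigenvalues (Lemma~\ref{L:shearing}) and then invoking the convergence argument of Lemma~\ref{L:sheared solve}. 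Your Sylvester-equation argument for (b) is correct and is just an explicit unpacking of the paper's one-line appeal to the fact that $M_\lambda$ is trivial if and only if $\lambda \in \ZZ$.
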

\begin{proof}
Apply Corollary~\ref{C:cyclic vector}
to choose $\bv \in M$ which is a cyclic vector for $M \otimes_{R_I} \Frac(R_I)$.
For any closed subinterval $J$ of $I$ of positive length, the quotient of $M_J$
by the span of $\bv, D(\bv), \dots, D^{n-1}(\bv)$ is killed by some nonzero element of $R_J$; 
since the slopes of the Newton polygon of this element form a discrete subset of $J$, we can shrink $J$
so as to force this element to become a unit. That is, we may choose $J$ so that $\bv, D(\bv), \dots, D^{n-1}(\bv)$ form a basis of $M_J$. 

Let $N$ be the matrix of action of $D$ on the basis $\bv, D(\bv), \dots, D^{n-1}(\bv)$ of $M_J$.
By Proposition~\ref{P:christol-dwork}, we have
$\left|N\right|_J \leq 1$. In particular, if we write $N = \sum_{i \in \ZZ} N_i t^i$, then $\left|N_0\right| \leq 1$. Since $J$ has positive
length and $\left|N-N_0\right|_J \leq 1$, by shrinking $J$ and applying Lemma~\ref{L:approximate using distance}(b)
(with $m=1$)
we may ensure that $\left|N-N_0\right|_J < 1$. We may then apply
Lemma~\ref{L:char 0 solve} to obtain the conclusion of (a) for $M_J$. We may then use
Lemma~\ref{L:shearing} and Lemma~\ref{L:sheared solve} to extend the convergence from $J$ to $I$.
This yields (a). 
Given (a), (b) follows from the fact that $M_\lambda$ is trivial if and only if $\lambda \in \ZZ$.
\end{proof}

\subsection{The Robba condition: residue characteristic $p>0$}
\label{sec:Robba2}

When $p>0$, the structure of modules satisfying the Robba condition is more complicated; it is best understood
using the Christol-Mebkhout theory of $p$-adic exponents. Here we follow and refine
the exposition in \cite[Chapter~13]{kedlaya-book}.

\begin{hypothesis}
Throughout \S\ref{sec:Robba2},
retain Hypothesis~\ref{H:robba},
but also assume that $p>0$ and that $M$ satisfies the Robba condition. 
\end{hypothesis}

\begin{defn}
We say $a \in \ZZ_p$ is a \emph{$p$-adic Liouville number} if $a \notin \ZZ$ and
\begin{equation} \label{eq:p-adic Liouville number}
\liminf_{m \to \infty} \frac{p^m}{m} \left\langle \frac{a}{p^m} \right\rangle < +\infty.
\end{equation}
Otherwise, we say $a$ is a \emph{$p$-adic non-Liouville number}.

For $A$ a multisubset of $\ZZ_p$, we say that $A$ is \emph{$p$-adic non-Liouville} if it contains
no $p$-adic non-Liouville number. We say that $A$ has \emph{$p$-adic non-Liouville differences}
if the \emph{difference multiset}
of $A$, defined as
\[
A-A = \{a_1 -a_2: a_1, a_2 \in A\},
\]
is $p$-adic non-Liouville.
\end{defn}

\begin{defn}
Let $A = \{a_1,\dots,a_n\}$ and $B = \{b_1,\dots,b_n\}$ be two finite multisubsets of $\ZZ_p$ of the same cardinality $n$. We say that $A$ and $B$ are \emph{weakly equivalent} if there exist a constant $c>0$
and a sequence $\sigma_1, \sigma_2, \dots$ of permutations of $\{1,\dots,n\}$
such that
\[
p^m \left\langle \frac{a_{\sigma_m(i)} - b_{i}}{p^m} \right\rangle  \leq cm \qquad (m=1,2,\dots; \,i =1,\dots,n).
\]
This is evidently an equivalence relation.
Note that $A,B$ are weakly equivalent if they are equivalent 
in the sense of Definition~\ref{D:exponent1}; the converse is false in general (see \cite[Example~13.4.6]{kedlaya-book})
but is true for $n=1$ (see Corollary~\ref{C:singleton weakly equivalent} below).
\end{defn}

All of the key properties of weak equivalence can be expressed in terms of the following construction.
\begin{defn}
Let $A,A_1,\dots,A_k$ be multisubsets  of $\ZZ_p$
such that $A$ is the multiset union of $A_1, \dots, A_k$. We say that $A_1,\dots,A_k$ form an \emph{integer partition}
(resp.\ a \emph{Liouville partition})
of $A$ if there do not exist distinct values $g,h \in \{1,\dots,k\}$ and elements
$a_g \in A_g, a_h \in A_h$ such that $a_g - a_h$ is an integer (resp.\ an integer or a $p$-adic Liouville number).
This implies in particular that $A_g$ and $A_h$ are disjoint, so $A_1,\dots,A_k$ 
form a partition of $A$.

Note that $A$ always admits a maximal integer partition, namely the partition into $\ZZ$-cosets.
This partition is a Liouville partition if and only if $A$ has $p$-adic non-Liouville differences.
\end{defn}

\begin{prop} \label{P:weakly equivalent}
Let $A$ be a finite multisubset of $\ZZ_p$ and let $A_1,\dots,A_k$ be a Liouville partition of $A$.
\begin{enumerate}
\item[(a)]
Let $B_1, \dots, B_k$ be multisubsets of $\ZZ_p$ such that $B_g$ is weakly equivalent to $A_g$
for $g=1,\dots,k$. Then $B_1,\dots,B_k$ form a Liou\-ville partition of $B = B_1 \cup \cdots \cup B_k$;
in particular, $B_1,\dots,B_k$ are pairwise disjoint.
\item[(b)]
Suppose $B$ is a multisubset of $\ZZ_p$ weakly equivalent to $A$.
Then $B$ admits a Liouville partition $B_1,\dots,B_k$ such that $B_g$ is weakly equivalent to $A_g$
for $g=1,\dots,k$.
\end{enumerate}
\end{prop}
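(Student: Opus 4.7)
The plan for (a) is to argue by contradiction. Suppose some $b_g \in B_g$ and $b_h \in B_h$ with $g \neq h$ have $b_g - b_h$ equal to an integer or a $p$-adic Liouville number. Either trivially (integer case) or from the $\liminf$ definition (Liouville case), there exists an infinite set $T$ of positive integers and a constant $c_0$ with $p^m \langle (b_g - b_h)/p^m \rangle \leq c_0 m$ for all $m \in T$. Using the given weak equivalences between $B_g$ and $A_g$ and between $B_h$ and $A_h$, for each $m \in T$ I pick elements $a_g^{(m)} \in A_g$ and $a_h^{(m)} \in A_h$ satisfying $p^m \langle (a_g^{(m)} - b_g)/p^m \rangle, p^m \langle (a_h^{(m)} - b_h)/p^m \rangle \leq c_1 m$. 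Since $A_g, A_h$ are finite, pigeonhole extracts an infinite $T' \subseteq T$ on which $a_g^{(m)} = a'_g$ and $a_h^{(m)} = a'_h$ are constant. The subadditivity of $x \mapsto p^m \langle x/p^m \rangle$ then yields on $T'$
\[
p^m \langle (a'_g - a'_h)/p^m \rangle \leq (2 c_1 + c_0) m,
\]
so $a'_g - a'_h$ is itself integer or Liouville, contradicting the Liouville-partition hypothesis on $A_1, \ldots, A_k$. Pairwise disjointness of the $B_g$ follows by taking $b_g = b_h$, since $0 \in \ZZ$.

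For (b), first apply pigeonhole to the sequence $(\sigma_m)$ of permutations witnessing weak equivalence of $A$ and $B$: since the symmetric group on $n$ letters is finite, some $\pi$ equals $\sigma_m$ on an infinite subsequence $S$ of positive integers. Define
\[
B_g := \{b_i : a_{\pi(i)} \in A_g\},
\]
a candidate partition of $B$ with $|B_g| = |A_g|$. For $m \in S$, the restriction of $\pi$ to $I_g^* := \{i : a_{\pi(i)} \in A_g\}$ provides the weak-equivalence matching of $B_g$ with $A_g$. To extend this to all $m \geq 1$, one argues that for each $i$ the sequence $(\sigma_m(i))$ is eventually confined to a single block $I_g := \{j : a_j \in A_g\}$ of the Liouville partition; once this is known, $\sigma_m$ can be modified within blocks (picking any bijection $I_g^* \to I_g$ on each block $g$) to produce, for every $m$, a permutation realizing the weak-equivalence bound of $B_g$ against $A_g$, possibly with an enlarged constant. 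With $B_g$ weakly equivalent to $A_g$ established, part (a) immediately yields that $B_1, \ldots, B_k$ is a Liouville partition of $B$.

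The hard part is the confinement argument underlying (b). If the sequence $(\sigma_m(i))$ visited two distinct Liouville parts along disjoint infinite subsequences, pigeonhole over the finite set $A$ would fix indices $j_g \in I_g$ and $j_h \in I_h$ with $g \neq h$ and $b_i$ weakly approximated by $a_{j_g}$ on some infinite $T_g$ and by $a_{j_h}$ on a disjoint infinite $T_h$. The direct triangle inequality of (a) does not close because $T_g$ and $T_h$ are disjoint and $\ZZ$ together with the $p$-adic Liouville numbers is not known to be closed under differences. Resolving this requires an additional pigeonhole—for example, on the joint permutation values over a bounded window of consecutive $m$, or on cross-information coming from indices $i'$ matched within blocks already known to be stable—to produce an infinite common subsequence on which both bounds hold, so the argument of (a) can be invoked directly.
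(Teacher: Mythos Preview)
Your argument for (a) is correct and is essentially the paper's; the extra pigeonhole to pin down constant $a'_g,a'_h$ is unnecessary (the paper instead observes that the Liouville-partition hypothesis gives, for any $c>0$, a uniform $m_0(c)$ with $p^m\langle (a_g-a_h)/p^m\rangle>(3c+1)m$ for \emph{every} cross-block pair and every $m\ge m_0$, so a single large $m$ already yields the contradiction), but it does no harm.

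Part (b), however, has a genuine gap, which you yourself flag: the confinement of $(\sigma_m(i))$ to a single block is not proved, only gestured at. The missing idea is much simpler than any further pigeonhole. Use the monotonicity $p^m\langle x/p^m\rangle\le p^{m+1}\langle x/p^{m+1}\rangle$ (any multiple of $p^{m+1}$ is a multiple of $p^m$), so the weak-equivalence bound at level $m+1$ descends to level $m$: $p^m\langle (a_{\sigma_{m+1}(i)}-b_i)/p^m\rangle\le c(m+1)$. Combining with the level-$m$ bound via the triangle inequality gives
\[
p^m\left\langle \frac{a_{\sigma_m(i)}-a_{\sigma_{m+1}(i)}}{p^m}\right\rangle\le (2c+1)m,
\]
and the uniform lower bound from the Liouville-partition hypothesis on $A$ then forces $a_{\sigma_m(i)}$ and $a_{\sigma_{m+1}(i)}$ to lie in the same block for all $m\ge m_0(c)$. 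Hence $\sigma_m^{-1}\circ\sigma_{m+1}$ preserves the partition for large $m$, the block of $\sigma_m(i)$ stabilizes, and $B_g$ can be defined via this stable block. This is the paper's route; no common subsequence is needed, and your worry that integers-plus-Liouville numbers are not closed under differences never arises, because the comparison is between two elements of $A$ at the \emph{same} level $m$, not between elements of $B$ along disjoint subsequences.
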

\begin{proof}
By the conditions on $A$, for each $c>0$, there exists $m_0 = m_0(c)$ such that 
for all $m \geq m_0$, $g, h \in \{1,\dots,k\}$ with $g \neq h$, $a_g \in A_g$, $a_h \in A_h$,
\begin{equation}  \label{eq:weakly equivalent}
p^m \left\langle \frac{a_g - a_h}{p^m} \right\rangle > (3c+1)m.
\end{equation}
Assume now the hypotheses of (a). Suppose by way of contradiction that there exist 
$g, h \in \{1,\dots,k\}$ with $g \neq h$, $b_g \in B_g$, $b_h \in B_h$ such that
$b_g - b_h$ is an integer or a $p$-adic Liouville number.
Then there exists $c>0$ such that for each $m$, on one hand
\[
p^m \left\langle \frac{b_g - b_h}{p^m} \right \rangle \leq cm
\]
and on the other hand there exist $a_g \in A_g$, $a_h \in A_h$ such that
\[
p^m \left\langle \frac{a_g - b_g}{p^m} \right \rangle,
p^m \left\langle \frac{a_h - b_h}{p^m} \right \rangle \leq cm.
\]
But then
\[
p^m \left\langle \frac{a_g - a_h}{p^m} \right \rangle \leq 3cm,
\]
which combined with \eqref{eq:weakly equivalent} yields the desired contradiction.

Assume now the hypotheses of (b). Label the elements of $A$ and $B$ as $a_1,\dots,a_n$
and $b_1,\dots,b_n$, respectively.
Then there exists $c>0$ such that for each $m$, there exists a permutation 
$\sigma_m$ of $\{1,\dots,n\}$ such that
\[
p^m \left\langle \frac{a_{\sigma_m(i)} - b_{i}}{p^m} \right\rangle  \leq cm \qquad (i =1,\dots,n).
\]
In particular,
\[
p^m \left\langle \frac{a_{\sigma_m(i)} - a_{\sigma_{m+1}(i)}}{p^m} \right\rangle  \leq (2c+1)m \qquad (i =1,\dots,n),
\]
which by \eqref{eq:weakly equivalent}
yields that for $m \geq m_0(c)$, $\sigma_m^{-1} \circ \sigma_{m+1}$ must respect the partition of $A$.
Define $B_1,\dots,B_k$ so that $B_g$ consists of those $b_i$ for which $a_{\sigma_{m)}(i)} \in A_g$
for $m \geq m_0(c)$;
by the above argument, $B_g$ is weakly equivalent to $A_g$.
By (a), $B_1,\dots,B_k$ is a Liouville partition of $B$, as desired.
\end{proof}

\begin{cor} \label{C:transmit integer}
Let $A,B$ be two finite multisubsets of $\ZZ_p$ which are weakly equivalent.
Then $A$ contains an integer or a $p$-adic non-Liouville number if and only if $B$ does.
\end{cor}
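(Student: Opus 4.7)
The plan is to use Proposition~\ref{P:weakly equivalent}(b) in combination with an induction on $|A|$. By the symmetry of the weak-equivalence relation it suffices to show one direction: assuming $A$ contains an integer or $p$-adic non-Liouville number $a$ (recall that an integer is automatically non-Liouville), we must exhibit such an element in $B$.

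I would first form the Liouville partition of $A$ obtained by taking equivalence classes for the transitive closure of the relation ``$x - y \in \ZZ$ or $x - y$ is a $p$-adic Liouville number''. By construction, two elements in distinct classes have a difference which is neither integer nor Liouville, so this is a genuine Liouville partition. By Proposition~\ref{P:weakly equivalent}(b), $B$ admits a matching Liouville partition, with each part weakly equivalent to the corresponding part of $A$. Since the distinguished element $a$ lies in a single class, it suffices to treat that pair, and so by induction on $|A|$ we reduce to the case where $A$ is itself a single such equivalence class (equivalently, $A$ admits no nontrivial Liouville partition).

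For this base case, let $C \subseteq A$ denote the $\ZZ$-coset of $\ZZ_p$ which meets $A$ in $a$; every element of $C$ is then an integer or non-Liouville, and the integer differences within $C$ are bounded by some $N$. Pigeonholing on the finite set of possible restrictions $\sigma_m|_C$ of the weak-equivalence permutations to $C$-indices, one obtains an infinite subsequence on which some $b_i \in B$ is matched consistently to elements of $C$ via a fixed bijection. Combining the uniform bound $p^m \langle (a_{\sigma_m(i)} - b_i)/p^m \rangle \leq cm$ with the integer-bounded shifts inside $C$ yields $p^m \langle (a - b_i)/p^m \rangle \leq cm + N$ along that subsequence. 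Upgrading this to a bound valid for all $m$ and invoking the singleton estimate that $p^m \langle x/p^m \rangle \leq c' m$ for all $m$ forces $x \in \ZZ$ (the content of Corollary~\ref{C:singleton weakly equivalent}), one concludes $a - b_i \in \ZZ$, whence $b_i \in a + \ZZ$ is non-Liouville.

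The hard part will be the uniform upgrade in the base case: promoting the subsequence estimate from pigeonhole to a bound valid for every $m$ requires delicate combinatorial control over how the permutations $\sigma_m$ can interleave $C$- and non-$C$-matchings. One auxiliary tool is Proposition~\ref{P:weakly equivalent}(a), which transfers the Liouville-irreducibility from $A$ back to $B$, letting one exclude refinements of the matching in which $b_i$'s excursions outside $C$ would be incompatible with the chain of Liouville links connecting the $\ZZ$-cosets of $A$.
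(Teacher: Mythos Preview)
There is a typo in the statement (and in the first line of the paper's own proof): it should read ``integer or $p$-adic \emph{Liouville} number'', not ``non-Liouville''. Since integers are already non-Liouville, the phrase as written collapses to a redundancy; more tellingly, the characterization the paper actually uses --- that $\{0\}$ and $A$ fail to form a Liouville partition of $\{0\} \cup A$ --- is by definition equivalent to some $a \in A$ having $0 - a$ an integer or a \emph{Liouville} number, and the subsequent uses of the corollary (e.g.\ in Corollary~\ref{C:weakly equivalent2}) confirm the Liouville reading. You took the statement at face value, which sent you after a different and harder target.

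With the correction in place, the paper's proof is a one-line trick you are missing: adjoin the singleton $\{0\}$ to both multisets. The condition ``$A$ contains an integer or Liouville number'' is exactly the failure of $\{0\}, A$ to be a Liouville partition of $\{0\} \cup A$; since $\{0\}$ is weakly equivalent to itself and $A$ is weakly equivalent to $B$, Proposition~\ref{P:weakly equivalent}(a) transfers this failure directly to $\{0\} \cup B$. Your route instead tries to pin a specific $b_i \in B$ to the $\ZZ$-coset of $a$, but the pigeonhole step only yields the bound $p^m \langle (a-b_i)/p^m \rangle \leq cm + N$ along an infinite \emph{subsequence} of $m$'s. From a subsequence you can only conclude that the $\liminf$ is finite, i.e.\ that $a - b_i$ is an integer \emph{or} Liouville; in the Liouville case you learn nothing about $b_i$, so the argument stalls exactly where you say it does. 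The ``uniform upgrade'' you flag is a genuine gap, and closing it would amount to reproving Proposition~\ref{P:weakly equivalent} by hand. The adjoin-$\{0\}$ trick sidesteps the induction, the base case, and all of the combinatorics.
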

\begin{proof}
Note that $A$ contains an integer or $p$-adic non-Liouville number if and only if
$\{0\}$ and $A$ fail to form a Liouville partition of $\{0\} \cup A$. 
The claim thus follows by applying Proposition~\ref{P:weakly equivalent}(a)
to $\{0\} \cup A$ and $\{0\} \cup B$.
\end{proof}

The following corollary reproduces \cite[Lemma~13.4.3]{kedlaya-book}.
\begin{cor} \label{C:singleton weakly equivalent}
For $a,b \in \ZZ_p$, the singleton multisets $\{a\}, \{b\}$ are weakly equivalent if and only if 
$a- b \in \ZZ$.
\end{cor}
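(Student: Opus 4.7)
The plan is to prove the two implications separately, with Corollary~\ref{C:transmit integer} carrying the hard direction. For the easy direction, assume $a - b \in \ZZ$ and set $d = a - b$. Then for every $m \geq 1$, the quantity $p^m \langle d/p^m \rangle$ is the distance from $d$ to the nearest integer multiple of $p^m$, and since $0$ is itself such a multiple, this distance is at most $|d|$. Thus the defining inequality for weak equivalence of $\{a\}$ and $\{b\}$ is satisfied with any constant $c \geq |d|$ (for $m \geq 1$).

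For the converse, assume $\{a\}, \{b\}$ are weakly equivalent and set $d = a - b$, so that $p^m \langle d/p^m \rangle \leq cm$ for some $c > 0$ and all $m \geq 1$. Using the identity permutation, this same estimate says equivalently that the singletons $\{0\}$ and $\{d\}$ are weakly equivalent. Since $\{0\}$ contains the integer $0$, Corollary~\ref{C:transmit integer} then forces $\{d\}$ to contain either an integer or a $p$-adic non-Liouville number. If $d \in \ZZ$ we are done; otherwise $d$ is $p$-adic non-Liouville, which (as $d \notin \ZZ$) means by the negation of the defining condition \eqref{eq:p-adic Liouville number} that $\liminf_m p^m \langle d/p^m \rangle / m = +\infty$. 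This directly contradicts the bound $p^m \langle d/p^m \rangle / m \leq c$ valid for all $m \geq 1$. Hence $d \in \ZZ$.

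I do not foresee any serious obstacle: the essential content, namely the transfer of the ``integer-or-non-Liouville'' property across weak equivalence, is already packaged in Corollary~\ref{C:transmit integer}, and the uniform pointwise estimate built into the definition of weak equivalence for singletons is visibly incompatible with the defining $\liminf = +\infty$ condition of the remaining non-Liouville alternative.
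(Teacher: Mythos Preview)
Your proof is correct and takes essentially the same approach as the paper's. Both arguments reduce to comparing $\{0\}$ with $\{d\}$ for $d=a-b$, and both combine the same two ingredients: the uniform bound $p^m\langle d/p^m\rangle\le cm$ coming from weak equivalence, and Corollary~\ref{C:transmit integer}. The only difference is the order: the paper first uses the bound to conclude that $d$ is an integer or a $p$-adic Liouville number and then invokes Corollary~\ref{C:transmit integer} to exclude the Liouville case, whereas you first invoke Corollary~\ref{C:transmit integer} and then use the bound to exclude the remaining case.
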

\begin{proof}
By translating both $a$ and $b$, we may assume $b=0$. If $a \in \ZZ$, then $\{a\}$ and $\{0\}$ are equivalent and hence weakly equivalent.
Conversely, if $\{a\}$ and $\{0\}$ are weakly equivalent, then 
$a$ satisfies \eqref{eq:p-adic Liouville number} and so must be either an integer or a $p$-adic Liouville
number, but the latter case is ruled out by Corollary~\ref{C:transmit integer}.
\end{proof}

The following corollary reproduces \cite[Proposition~13.4.5]{kedlaya-book}.
\begin{cor} \label{C:weakly equivalent}
Let $A,B$ be two finite multisubsets of $\ZZ_p$ which are weakly equivalent.
Suppose that $A$ has $p$-adic non-Liouville differences.
Then $A$ and $B$ are equivalent.
\end{cor}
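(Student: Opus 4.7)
The plan is to reduce to a single $\ZZ$-coset using the Liouville partition afforded by the hypothesis on $A$, and then extract a singleton weak equivalence for each element of $B$ to invoke Corollary~\ref{C:singleton weakly equivalent}. Since $A$ has $p$-adic non-Liouville differences, the partition $A = A_1 \sqcup \cdots \sqcup A_k$ into $\ZZ$-cosets is a Liouville partition. Proposition~\ref{P:weakly equivalent}(b) then supplies a Liouville partition $B = B_1 \sqcup \cdots \sqcup B_k$ with $B_g$ weakly equivalent to $A_g$ for each $g$. Because the equivalence relation of Definition~\ref{D:exponent1} is compatible with disjoint unions, it suffices to prove that $A_g$ is equivalent to $B_g$ for each $g$.

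Fix $g$, choose a representative $\alpha_g$ of the $\ZZ$-coset containing $A_g$, and write $A_g = \{a_1, \ldots, a_n\}$ with $a_i = \alpha_g + n_i$, $n_i \in \ZZ$; set $N = \max_i |n_i|$ and $B_g = \{b_1, \ldots, b_n\}$, and let $c>0$ together with a sequence of permutations $\sigma_m \in S_n$ witness the weak equivalence of $A_g$ with $B_g$. For a fixed index $i$ and an arbitrary positive integer $m$, the defining bound produces an integer $k_m$ with $|k_m| \leq cm$ such that $a_{\sigma_m(i)} - b_i \equiv k_m \pmod{p^m}$; rewriting, $b_i - \alpha_g \equiv n_{\sigma_m(i)} - k_m \pmod{p^m}$, and the integer $n_{\sigma_m(i)} - k_m$ has absolute value at most $N + cm$. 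Hence $p^m \langle (b_i - \alpha_g)/p^m \rangle \leq N + cm$ for every $m$, and after enlarging the constant to absorb $N$ (and handle the finitely many small $m$ trivially via $p^m \langle \cdot/p^m \rangle \leq p^m/2$), the singleton multisets $\{b_i - \alpha_g\}$ and $\{0\}$ are weakly equivalent.

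Corollary~\ref{C:singleton weakly equivalent} then forces $b_i - \alpha_g \in \ZZ$. Since $i$ was arbitrary, $B_g$ lies in the $\ZZ$-coset $\alpha_g + \ZZ$ already containing $A_g$; as $|A_g| = |B_g|$ by definition of weak equivalence, any bijection between them witnesses the equivalence of $A_g$ and $B_g$, and assembling over $g$ produces the desired equivalence of $A$ and $B$. The one point to be careful about is that the weak-equivalence hypothesis holds for \emph{every} $m$ rather than only a subsequence, which is exactly what allows the bounded integer correction $n_{\sigma_m(i)}$ to be absorbed uniformly in $m$; a pigeonhole argument that fixed a single $\sigma$ would only give the singleton bound for infinitely many $m$, and hence could not directly invoke Corollary~\ref{C:singleton weakly equivalent}. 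Thus the main (mild) obstacle is resisting the temptation to pigeonhole and instead keeping the per-$m$ witness $\sigma_m$ in play throughout.
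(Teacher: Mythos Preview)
Your proof is correct and follows essentially the same route as the paper: partition $A$ into $\ZZ$-cosets (a Liouville partition by hypothesis), transfer this to $B$ via Proposition~\ref{P:weakly equivalent}(b), and then within each coset extract a singleton weak equivalence for every element of $B_g$ to feed into Corollary~\ref{C:singleton weakly equivalent}. The paper's version is terser (it simply translates so that $A \subseteq \ZZ$ and asserts the singleton weak equivalence without writing out the $N + cm$ bound), but the content is the same.
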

\begin{proof}
By partition $A$ into $\ZZ$-cosets and applying Proposition~\ref{P:weakly equivalent}(b),
we may reduce to the case where $A$ is a multisubset of $\ZZ$. In this case, for each $b \in B$, the 
singleton multisets $\{0\}$ and $\{b\}$ are weakly equivalent,
so Corollary~\ref{C:singleton weakly equivalent} implies that $b \in \ZZ$. This proves the claim.
\end{proof}

\begin{cor} \label{C:weakly equivalent2}
Let $A,B$ be two finite multisubsets of $\ZZ_p$ which are weakly equivalent.
Suppose that $A$ is $p$-adic non-Liouville. Then there exist Liouville partitions $A_1, A_2$ of $A$ and $B_1, B_2$ of $B$ satisfying
the following conditions.
\begin{enumerate}
\item[(a)]
The multisets $A_1, B_1$ consist entirely of integers.
\item[(b)]
The multisets $A_2, B_2$ are weakly equivalent and contain no integers or $p$-adic Liouville numbers.
\end{enumerate}
In particular, $B$ is also $p$-adic non-Liouville.
\end{cor}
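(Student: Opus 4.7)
The plan is to produce the desired partition of $A$ directly from the non-Liouville hypothesis, then transport it to $B$ via Proposition~\ref{P:weakly equivalent}(b), and finally tidy up the membership of $B_1, B_2$ using Corollary~\ref{C:weakly equivalent} and the partition property.

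Concretely, I would first let $A_1$ be the submultiset of elements of $A$ that lie in $\ZZ$, and let $A_2 = A \setminus A_1$. Then I would verify that $A_1, A_2$ form a Liouville partition of $A$: for $a_1 \in A_1 \subseteq \ZZ$ and $a_2 \in A_2$, the difference $a_1 - a_2$ is an integer iff $a_2 \in \ZZ$ (which is excluded), and is a $p$-adic Liouville number iff $a_2$ is (since translation by an integer preserves the Liouville/non-Liouville status), which is also excluded because $A$ is assumed $p$-adic non-Liouville. Applying Proposition~\ref{P:weakly equivalent}(b) to the weak equivalence $A \sim B$ then yields a Liouville partition $B = B_1 \cup B_2$ with $B_g$ weakly equivalent to $A_g$.

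Next I would establish (a). Since $A_1 \subseteq \ZZ$, the difference multiset $A_1 - A_1$ consists of integers and is therefore trivially $p$-adic non-Liouville. Corollary~\ref{C:weakly equivalent} applied to the weak equivalence $A_1 \sim B_1$ gives that $A_1$ and $B_1$ are equivalent in the sense of Definition~\ref{D:exponent1}, hence $B_1 \subseteq \ZZ$ as well. For (b), weak equivalence of $A_2$ and $B_2$ is built into the construction; it remains to check that $B_2$ contains neither integers nor $p$-adic Liouville numbers. This follows from the fact that $B_1, B_2$ is a Liouville partition: for any $b_2 \in B_2$ and any $b_1 \in B_1 \subseteq \ZZ$, neither $b_1 - b_2$ is an integer nor is it a $p$-adic Liouville number, which (since $b_1 \in \ZZ$) forces $b_2 \notin \ZZ$ and $b_2$ to be $p$-adic non-Liouville.

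The concluding sentence is then immediate: $B_1 \subseteq \ZZ$ contains no $p$-adic Liouville numbers (integers are non-Liouville by definition) and $B_2$ contains no $p$-adic Liouville numbers by (b), so $B = B_1 \cup B_2$ is $p$-adic non-Liouville. I do not anticipate a genuine obstacle here; the argument is essentially a bookkeeping exercise, and the only point requiring care is the verification that the natural partition of $A$ into integer and non-integer parts is already a \emph{Liouville} partition, which is precisely where the non-Liouville hypothesis on $A$ is used.
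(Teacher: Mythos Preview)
Your approach is essentially the same as the paper's: split $A$ into its integer and non-integer parts, check that the non-Liouville hypothesis makes this a Liouville partition, push it to $B$ via Proposition~\ref{P:weakly equivalent}(b), and then use Corollary~\ref{C:weakly equivalent} to show $B_1 \subseteq \ZZ$. The one genuine gap is in your treatment of $B_2$. You argue that $B_2$ contains no integer or $p$-adic Liouville number by picking $b_1 \in B_1$ and using that $b_1 - b_2$ is neither an integer nor Liouville. But this requires $B_1 \neq \emptyset$; if $A$ happens to contain no integers then $A_1 = B_1 = \emptyset$ and your argument says nothing about $B_2 = B$.

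The paper closes this case (and in fact handles $B_2$ uniformly) by invoking Corollary~\ref{C:transmit integer} directly on the weak equivalence $A_2 \sim B_2$: since $A_2$ contains no integer or $p$-adic Liouville number, neither does $B_2$. This is exactly the ``add a $0$ and use the Liouville partition $\{0\}, A_2$'' trick that your argument is implicitly trying to run with $b_1$ in place of $0$; once you see that, the fix is a one-liner. With that patch your proof is correct and matches the paper's.
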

\begin{proof}
Partition $A$ into  two parts $A_1, A_2$ so that $A_1$ consists precisely of the integers appearing in $A$;
by hypothesis, this is a Liouville partition of $A$.
By Proposition~\ref{P:weakly equivalent}(b), $B$ admits a Liouville partition $B_1, B_2$ in which
$B_i$ is weakly equivalent to $A_i$ for $i=1,2$. Since $A_1$ consists only of integers, by 
Corollary~\ref{C:weakly equivalent}, $B_1$ also consists only of integers.
Since $A_2$ does not contain any integer or $p$-adic Liouville number, neither does $B_2$ by Corollary~\ref{C:transmit integer}.
This proves the desired results.
\end{proof}

\begin{cor} \label{C:weakly equivalent3}
Let $A$ be a finite multisubset of $\ZZ_p$ such that $A-A$ is weakly equivalent to a
$p$-adic non-Liouville multiset. Then $A$ has $p$-adic non-Liouville differences.
\end{cor}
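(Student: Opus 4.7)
The proof should be essentially immediate from Corollary~\ref{C:weakly equivalent2}. The definition unfolds as follows: to say $A$ has $p$-adic non-Liouville differences means exactly that the multiset $A-A$ is $p$-adic non-Liouville, i.e.\ contains no $p$-adic Liouville number. So the statement to prove is simply: if a multisubset of $\ZZ_p$ is weakly equivalent to a $p$-adic non-Liouville multiset, then it is itself $p$-adic non-Liouville.

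The plan is therefore to apply Corollary~\ref{C:weakly equivalent2} directly, taking the role of $A$ in that corollary to be the given $p$-adic non-Liouville multiset, and the role of $B$ to be $A-A$. The last sentence of the conclusion of Corollary~\ref{C:weakly equivalent2} asserts exactly that $B$ is $p$-adic non-Liouville, which translates to our desired conclusion that $A-A$ contains no $p$-adic Liouville numbers, i.e.\ $A$ has $p$-adic non-Liouville differences.

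There is no real obstacle here; weak equivalence is a symmetric relation (as noted in the definition), so the hypothesis that $A-A$ is weakly equivalent to some $p$-adic non-Liouville set is symmetric to the hypothesis of Corollary~\ref{C:weakly equivalent2}. The only thing worth remarking on is that one is essentially using the machinery of Proposition~\ref{P:weakly equivalent}(b) (through its derivation of Corollary~\ref{C:weakly equivalent2}) to transfer the non-Liouville property across weak equivalence, which in turn rests on the quantitative incompatibility \eqref{eq:weakly equivalent} between Liouville partitions and the weak-equivalence bound $cm$.
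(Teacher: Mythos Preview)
Your proposal is correct and matches the paper's own proof essentially verbatim: the paper simply invokes Corollary~\ref{C:weakly equivalent2} to conclude that $A-A$ is $p$-adic non-Liouville, which is the definition of $A$ having $p$-adic non-Liouville differences. Your additional remarks about symmetry of weak equivalence and the underlying role of Proposition~\ref{P:weakly equivalent} are accurate but not needed for the argument.
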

\begin{proof}
By Corollary~\ref{C:weakly equivalent2}, $A-A$ is $p$-adic non-Liouville.
\end{proof}

\begin{defn} \label{D:exponent}
Recall that we are assuming that $M$ satisfies the Robba condition.
Let $J$ be a closed subinterval of $I$ of positive length.
We say that the multisubset $A = \{a_1,\dots,a_n\}$ of $\ZZ_p$ is an \emph{exponent}
for $M$ over $J$ if there exist elements $\bv_{m,A,j} \in M_J[t^{-1}]$ for $m=1,2,\dots$ and $j=1,\dots,n$ satisfying the following
conditions. (For this definition, we fix an ordering of $A$, but this choice is manifestly immaterial.)
\begin{enumerate}
\item[(a)]
For all $m,j$, for all $\zeta \in K^{\alg}$ with $\zeta^{p^m} = 1$, 
we have $\zeta^* (\bv_{m,A,j}) = \zeta^{a_j} \bv_{m,A,j}$
as an equality in $M_J[t^{-1}] \otimes_{K} 
K(\zeta)$.
\item[(b)]
For some (and hence any) basis $\be_1,\dots,\be_n$ of $M_J$,
there exists $k>0$ such that the $n \times n$ matrices $S_{m,A}$ over $R_J$ 
defined by $\bv_{m,A,j} = \sum_i (S_{m,A})_{ij} \be_i$ are invertible and satisfy
\[
\left|S_{m,A}\right|_J, \left|S_{m,A}^{-1}\right|_J
\leq p^{km}\qquad (m=1,2,\dots).
\]
\end{enumerate}
Note that if $A$ is an exponent for $M$ over $J$, then so is any multiset equivalent to $A$
(but not necessarily any multiset weakly equivalent to $A$).
\end{defn}

\begin{remark} \label{R:change hypothesis}
In \cite[Definition~13.5.2]{kedlaya-book}, the hypotheses on the matrix $S_{m,A}$ are slightly different:
it is assumed that $S_{m,A}$ is invertible and satisfies
$\left|S_{m,A}\right|_J \leq p^{km}$ and $\left|\det(S_{m,A})\right|_J \geq 1$.
It is easy to see that this hypothesis is equivalent to the one given in
Definition~\ref{D:exponent} modulo
rescaling the vectors $\bv_{m,A,j}$ and rechoosing the constant $k$; we may thus safely quote results from
\cite{kedlaya-book} in what follows.
\end{remark}

\begin{example} \label{exa:rank 1 Robba}
As noted in Example~\ref{exa:Robba},
for any $\lambda \in \ZZ_p$, the differential module $M_\lambda$
generated by a single element $\bv$ satisfying $D(\bv) =\lambda \bv$ satisfies the Robba condition
\cite[Example~9.5.2]{kedlaya-book}. This module admits the singleton multiset $\{\lambda\}$ as an exponent.
\end{example}

\begin{remark} \label{R:exponent functoriality}
If $M_1, M_2$ are two differential modules over $R_I$ for the derivation $t \frac{d}{dt}$
admitting respective exponents $A_1, A_2$ over some $J$, we then have the following.
\begin{enumerate}
\item[(a)] If there exists an exact sequence $0 \to M_1 \to M \to M_2 \to 0$ of differential modules over $R_I$, then $M$ admits the multiset union $A_1 \cup A_2$ as an exponent over $J$.
\item[(b)] The differential module $M_1 \otimes M_2$ admits the multiset$ A_1 + A_2 = \{a_i + a_j: a_i \in A_1, a_j \in A_2\}$
as an exponent over $J$.
\item[(c)] The differential module $M_1^\dual$ admits the multiset $-A_1 = \{-a: a \in A_1\}$ as an exponent over $J$.
\end{enumerate}
\end{remark}

\begin{remark} \label{R:contains 0}
If $0 \in I$, then it is straightforward to check the following by
imitating the proof of \cite[Theorem~13.2.2]{kedlaya-book}.
\begin{enumerate}
\item[(a)]
Let $A$ be the set of eigenvalues of $D$ on $M/tM$. Then $A$ belongs to $\ZZ_p^n$ and is an exponent for $M$.
\item[(b)]
Any Liouville partition of $A$ corresponds to a unique direct sum decomposition of $M$.
\item[(c)]
If $A$ is a multisubset of $\lambda + \ZZ$, then there exists another differential module $M'$ over $R_I$ with $M[t^{-1}] \cong M'[t^{-1}]$
such that $D$ acts on $M'/tM'$ via a matrix with all eigenvalues equal to $\lambda$.
(This again follows from the use of shearing transformations as in Lemma~\ref{L:shearing}.)
\item[(d)]
If $A$ is a multisubset of $\{\lambda\}$,
then there exists a basis of $M$
on which $D$ acts via a matrix over $K$ with all eigenvalues equal to $\lambda$.
\end{enumerate}
We may thus safely assume $0 \notin I$ in what follows.
\end{remark}

\begin{theorem} \label{T:exponents}
\begin{enumerate}
\item[(a)]
For any closed subinterval $J$ of $I$ of positive length not containing $0$,
there exists an exponent for $M$ over $J$.
\item[(b)]
Any two exponents for $M$ (possibly over different intervals) are weakly equivalent.
\end{enumerate}
\end{theorem}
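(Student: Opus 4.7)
The plan is to construct exponents via the Taylor-series action of the finite cyclic groups $\mu_{p^m} \subset 1+\gothm_K$ on $M_J \otimes_K K(\mu_{p^m})$, and to compare two such constructions using the interplay between $\zeta$-equivariance and Lemma~\ref{L:approximate using distance}.

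For part (a), first invoke Corollary~\ref{C:cyclic vector} to produce a cyclic vector for $M \otimes_{R_I} \Frac(R_I)$ and, after shrinking $J$, ensure that it generates $M_J$; by Proposition~\ref{P:christol-dwork} the associated companion matrix of $t\frac{d}{dt}$ has norm at most $1$. The Robba condition then guarantees convergence of the Taylor series $\zeta^*(\bv) = \sum_i (\zeta-1)^i D^i(\bv)/i!$ on $M_J \otimes_K K(\mu_{p^m})[t^{-1}]$, giving a linear action of $\mu_{p^m}$. Since this finite cyclic group has characters indexed by $\ZZ/p^m\ZZ$, one obtains a decomposition of $M_J \otimes_K K(\mu_{p^m})[t^{-1}]$ into character eigenspaces, yielding a multiset $A_m \subset \ZZ/p^m\ZZ$ of cardinality $n$. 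The projectors onto character spaces take the form $\pi_a = p^{-m} \sum_{\zeta^{p^m}=1} \zeta^{-a} \zeta^*$; their norms can be bounded using the spectral estimate on $D$, producing basis vectors $\bv_{m,A_m,j}$ with $|S_{m,A_m}|_J, |S_{m,A_m}^{-1}|_J \leq p^{km}$ for a constant $k$ depending only on $M$ and $J$. Compatibility $A_{m+1} \equiv A_m \pmod{p^m}$ from restriction of characters along $\mu_{p^m} \hookrightarrow \mu_{p^{m+1}}$, combined with compactness in $\ZZ_p^n/S_n$, produces an exponent $A \subset \ZZ_p$ lifting the family $(A_m)$.

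For part (b), let $A$ be an exponent over $J$ and $B$ an exponent over $J'$. Restricting both to a common closed subinterval $J'' \subset J \cap J'$ of positive length (with a chain of overlapping intervals handling the case $J \cap J' = \emptyset$, since $I$ is connected), the change-of-basis matrix $T_m$ from $(\bv_{m,B,j})$ to $(\bv_{m,A,i})$ has entries $(T_m)_{ij} \in R_{J''} \otimes_K K(\mu_{p^m})$ satisfying $\zeta^*((T_m)_{ij}) = \zeta^{b_j - a_i} (T_m)_{ij}$ for all $\zeta \in \mu_{p^m}$. Consequently $(T_m)_{ij}$ is supported on $t$-exponents congruent to $b_j - a_i$ modulo $p^m$, so for any $\eta > 1$ and any slightly shrunk subinterval $J'''$, Lemma~\ref{L:approximate using distance}(a) gives the decay bound $|(T_m)_{ij}|_{J'''} \leq \eta^{-p^m\langle(b_j - a_i)/p^m\rangle} |(T_m)_{ij}|_{J''}$. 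Combining these decay bounds with the polynomial-in-$p^m$ bounds $|T_m|_{J''}, |T_m^{-1}|_{J''} \leq p^{k''m}$ (inherited from those on $S_{m,A}$ and $S_{m,B}$) forces, via a Hadamard-type estimate applied to $\det(T_m)$, the existence of a permutation $\sigma_m$ of $\{1, \dots, n\}$ such that $p^m\langle(b_{\sigma_m(i)} - a_i)/p^m\rangle \leq cm$ for a constant $c$ independent of $m$; this is precisely the definition of weak equivalence.

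The main obstacle lies in (a): for a primitive $p^m$-th root $\zeta$ one has $|\zeta - 1| = p^{-1/((p-1)p^{m-1})}$, so $|\zeta - 1|/\omega > 1$, meaning the Taylor series defining $\zeta^*$ does not converge from the spectral radius estimate on $D$ alone. Its convergence requires the stronger fine control on the iterates $D^i/i!$ afforded by the Robba condition (as opposed to merely the hypothesis $IR > \omega$), and extracting the claimed polynomial-in-$p^m$ bounds on $S_{m,A_m}^{\pm 1}$ requires delicate estimates of the sort developed in the Christol--Mebkhout theory and recorded in \cite[Chapter~13]{kedlaya-book}. A secondary technical point in (b) is that the norm hypotheses must be transferable across restriction of intervals without spoiling the growth exponent; this is straightforward, but needs to be verified carefully.
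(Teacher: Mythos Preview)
Your sketch is essentially the Christol--Mebkhout construction that the paper cites directly from \cite[Theorems~13.5.5 and 13.5.6]{kedlaya-book}, so the overall approach is the same; the paper's own proof consists almost entirely of those citations. Two minor points of comparison. First, the cyclic-vector step in (a) is not needed and, as you have written it, forces you to shrink $J$, so you only establish existence over the smaller interval rather than the given one; the construction in \cite{kedlaya-book} works directly with any basis of the free $R_J$-module $M_J$, with the Robba condition supplying the growth control on $D^i/i!$ that you flag as the main obstacle. Second, for (b) when the two intervals are disjoint, the paper avoids your chain of overlapping subintervals by instead choosing a single closed \emph{super}interval $J \supseteq J_1 \cup J_2$ inside $I$, applying (a) to produce an exponent over $J$, and then invoking the same-interval uniqueness \cite[Theorem~13.5.6]{kedlaya-book} twice; this is cleaner and makes no appeal to transitivity through intermediate intervals.
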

\begin{proof}
For (a), see \cite[Theorem~13.5.5]{kedlaya-book}.
For (b), let $J_1, J_2$ be two closed subintervals of $I$ of positive length not containing $0$.
Let $A_1, A_2$ be exponents for $M$ over $J_1, J_2$. If $J_1 = J_2$, we may apply \cite[Theorem~13.5.6]{kedlaya-book}
to deduce that $A_1$ is weakly equivalent to $A_2$. Otherwise, 
let $J$ be a third such interval containing both $J_1$ and $J_2$.
By (a), there exists an exponent $A$ for $M$ over $J$, which then restricts to an exponent for $M$
over $J_1$ and over $J_2$. By \cite[Theorem~13.5.6]{kedlaya-book} again,
$A$ is weakly equivalent to both $A_1$ and $A_2$, so $A_1$ and $A_2$ are weakly equivalent to each other.
\end{proof}

\begin{remark}
In case $M$ admits a basis (e.g., if $K$ is spherically complete), the proofs of \cite[Theorem~13.5.5, Theorem~13.5.6]{kedlaya-book} show that the exponent $A$ and the elements $\bv_{m,A,j}$ can be chosen uniformly in $J$. We will not need to use this fact in this paper.
\end{remark}

\begin{defn} \label{D:non-Liouville exponents}
We say that $M$ has \emph{$p$-adic non-Liouville exponents} if for some 
closed subinterval $J$ of $I$ of positive length not containing $0$, $M$ admits an exponent $A$ over $J$
which is $p$-adic non-Liouville. By Theorem~\ref{T:exponents}(b) and Corollary~\ref{C:weakly equivalent2},
this implies that every exponent of $M$ (over every $J$) is $p$-adic non-Liouville.

We say that $M$ has \emph{$p$-adic non-Liouville exponent differences} if $\End(M)$ has $p$-adic non-Liouville
exponents. For alternate characterizations, see Lemma~\ref{L:exponent differences} below.
\end{defn}

\begin{lemma} \label{L:exponent differences}
The following conditions are equivalent.
\begin{enumerate}
\item[(a)]
The module $M$ has $p$-adic non-Liouville exponent differences.
\item[(b)]
Some exponent of $M$ has $p$-adic non-Liouville differences.
\item[(c)]
Every exponent of $M$ has $p$-adic non-Liouville differences.
\end{enumerate}
Moreover, when these conditions hold, then any two exponents of $M$ are equivalent (not just weakly equivalent).
\end{lemma}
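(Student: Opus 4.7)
The plan is to prove the equivalences in the cycle $(c) \Rightarrow (b) \Rightarrow (a) \Rightarrow (c)$, and then use the equivalence between any two exponents of $M$ to deduce the moreover statement.

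First I would dispense with $(c) \Rightarrow (b)$: by Theorem~\ref{T:exponents}(a), $M$ admits an exponent over any suitable $J$, so (c) trivially implies (b). Next, for $(a) \Leftrightarrow (b)$, I would use Remark~\ref{R:exponent functoriality}: if $A$ is any exponent of $M$, then $-A + A = A - A$ is an exponent of $M^\dual \otimes M = \End(M)$, and the fact that $\End(M)$ has $p$-adic non-Liouville exponents is independent of the choice of exponent by Definition~\ref{D:non-Liouville exponents} (which in turn relies on Theorem~\ref{T:exponents}(b) and Corollary~\ref{C:weakly equivalent2}). So (a) just says $A - A$ is $p$-adic non-Liouville for some (equivalently any) exponent $A$ of $M$, which is exactly (b) (equivalently (c)).

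The main step, and the one requiring a small calculation, is $(b) \Rightarrow (c)$. Given exponents $A, A'$ of $M$, Theorem~\ref{T:exponents}(b) only gives weak equivalence of $A$ and $A'$, whereas (b) is a statement about the difference multisets. The observation I would use is that weak equivalence of $A$ and $A'$ with permutations $\sigma_m$ and constant $c$ lifts to weak equivalence of $A - A$ and $A' - A'$: the product permutation $\sigma_m \times \sigma_m$ matches pairs, and the elementary estimate
\[
p^m \left\langle \frac{(a_{\sigma_m(i)} - a_{\sigma_m(j)}) - (a'_i - a'_j)}{p^m} \right\rangle \leq 2cm
\]
follows from the triangle inequality for $\langle \cdot \rangle$. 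Once $A - A$ and $A' - A'$ are seen to be weakly equivalent, Corollary~\ref{C:weakly equivalent2} transfers the $p$-adic non-Liouville property from $A - A$ to $A' - A'$, giving (c).

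Finally, for the moreover clause, assume the equivalent conditions hold and let $A, A'$ be two exponents of $M$. They are weakly equivalent by Theorem~\ref{T:exponents}(b), and by (c), $A$ has $p$-adic non-Liouville differences, so Corollary~\ref{C:weakly equivalent} upgrades weak equivalence to equivalence. The only technical point to watch is the passage from weak equivalence of exponents to weak equivalence of their difference multisets, but this is just the triangle-inequality bookkeeping indicated above; everything else is direct invocation of prior results.
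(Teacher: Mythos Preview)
Your proof is correct and follows essentially the same approach as the paper. The paper proves the cycle $(c)\Rightarrow(b)\Rightarrow(a)\Rightarrow(c)$: for $(a)\Rightarrow(c)$ it takes any exponent $A$ of $M$ and a non-Liouville exponent $B$ of $\End(M)$, notes that $A-A$ is an exponent of $\End(M)$ (Remark~\ref{R:exponent functoriality}) hence weakly equivalent to $B$ (Theorem~\ref{T:exponents}(b)), and then invokes Corollary~\ref{C:weakly equivalent3}. Your second paragraph does the same thing, just packaged via the sentence in Definition~\ref{D:non-Liouville exponents} that already records the ``some $\Leftrightarrow$ every'' step for exponents of $\End(M)$.

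One small organizational point: your third paragraph, the direct $(b)\Rightarrow(c)$ via lifting weak equivalence from $A,A'$ to $A-A,A'-A'$, is correct (subadditivity of $\langle\cdot\rangle$ gives exactly the bound you wrote) but is redundant once you have run the argument through $\End(M)$ in the second paragraph. The paper does not include this direct lift; it is a valid alternative route, but you should be aware that your second paragraph already closes the loop, so the third paragraph is an independent second proof of $(b)\Rightarrow(c)$ rather than ``the main step.'' The moreover clause is handled identically in both.
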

\begin{proof}
By Theorem~\ref{T:exponents}(a), (c) implies (b). 
By Remark~\ref{R:exponent functoriality},
(b) implies (a).

Suppose now that (a) holds. Let $A$ be any exponent for $M$, and let $B$ be an exponent for $\End(M)$
which is $p$-adic non-Liouville.
By Remark~\ref{R:exponent functoriality}, $A - A$ is an exponent for $\End(M)$, so
by Theorem~\ref{T:exponents}(b), $A-A$ and $B$ are weakly equivalent.
By Corollary~\ref{C:weakly equivalent3}, $A$ has $p$-adic non-Liouville differences, yielding (c).
Moreover, if $A'$ is another exponent for $M$, then $A$ and $A'$ are weakly equivalent by
Theorem~\ref{T:exponents}(b), so $A$ and $A'$ are equivalent by Corollary~\ref{C:weakly equivalent}.
\end{proof}

The primary structure theorem for differential modules satisfying the Robba condition is the decomposition
theorem of Christol-Mebkhout; see for instance \cite[Theorem~13.6.1]{kedlaya-book} and the errata to \cite{kedlaya-book}.
Here, we divide the statement into two parts in order to clarify the exposition and strengthen one of the two parts.
One of the two parts, which by itself is sufficient for many applications,
is the following structure theorem for modules admitting a singleton exponent.
\begin{theorem} \label{T:CM part 1}
Suppose that $M$ admits an exponent identically equal to some $\lambda \in \ZZ_p$.
Then for any closed subinterval $J$ of $I$ of positive length,
$M_J$ admits a basis on which $D$ acts via
a matrix over $K$ whose eigenvalues are all equal to $\lambda$.
\end{theorem}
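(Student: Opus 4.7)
The plan is to reduce to the case $\lambda = 0$ and then extract the desired basis by a limiting procedure exploiting the $\mu_{p^m}$-invariance of the vectors provided by the definition of exponent.

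First I would twist by a rank-$1$ module: by Example~\ref{exa:rank 1 Robba} and Remark~\ref{R:exponent functoriality}(b,c), the module $M \otimes M_\lambda^\dual$ has exponent identically $0$, and a basis $\be_1, \dots, \be_n$ of $(M \otimes M_\lambda^\dual)_J$ on which $D$ acts via a nilpotent matrix $N \in K^{n\times n}$ yields, upon tensoring with the generator $\bw$ of $M_\lambda$ (satisfying $D\bw = \lambda \bw$), a basis $\be_j \otimes \bw$ of $M_J$ on which $D$ acts via $N + \lambda I$. So I may assume $\lambda = 0$.

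Next I would choose a closed subinterval $J' \subseteq I$ of positive length with $J \subset \inte(J')$ and invoke Definition~\ref{D:exponent} over $J'$ to obtain vectors $\bv_{m,0,j} \in M_{J'}[t^{-1}]$ that are $\mu_{p^m}$-invariant under the Taylor action, with change-of-basis matrix $S_{m,0}$ satisfying $|S_{m,0}|_{J'}, |S_{m,0}^{-1}|_{J'} \leq p^{km}$. Since $D$ commutes with every $\zeta^*$, the matrix $N^{(m)}$ of $D$ on $\bv_{m,0,1}, \dots, \bv_{m,0,n}$ has entries in the $\mu_{p^m}$-invariant subring of $R_{J'}[t^{-1}]$, which consists of Laurent series in $t^{p^m}$. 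The formula $N^{(m)} = (DS_{m,0} + S_{m,0}N_0)S_{m,0}^{-1}$ (where $N_0$ is the matrix of $D$ in a fixed basis of $M_{J'}$), combined with the growth bound, gives $|N^{(m)}|_{J'} \leq C p^{2km}$ for a constant $C$ independent of $m$. Applying Lemma~\ref{L:approximate using distance}(b) with invariance level $p^m$ then yields $|N^{(m)} - N^{(m)}_0|_J \leq \eta^{-p^m}\, C p^{2km}$ for some $\eta > 1$ depending only on $J \subset J'$, which tends to $0$ super-geometrically in $m$.

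In the third step I would convert this approximate constant form into an exact one, modeled on Lemma~\ref{L:char 0 solve}. After a preliminary shearing step in the spirit of Lemma~\ref{L:shearing} (using integer powers of $t$, which are units in $R_J$ since $0 \notin J$) to prepare the eigenvalues of $N^{(m)}_0$, a Newton-style iteration absorbs the small residue $N^{(m)} - N^{(m)}_0$ into a convergent change of basis, producing a basis of $M_J$ on which $D$ acts via some matrix $\tilde N \in K^{n\times n}$. Triangularizing $\tilde N$ over $K^{\alg}$ exhibits $M_J \otimes K^{\alg}$ as a successive extension of rank-$1$ submodules isomorphic to $M_{\mu_j}$, where $\mu_1, \dots, \mu_n$ are the eigenvalues of $\tilde N$; by Example~\ref{exa:rank 1 Robba} and Remark~\ref{R:exponent functoriality}(a), the multiset $\{\mu_1, \dots, \mu_n\}$ is an exponent of $M_J$, so Theorem~\ref{T:exponents}(b) forces this multiset to be weakly equivalent to $\{0, \dots, 0\}$, whence Corollary~\ref{C:singleton weakly equivalent} implies each $\mu_j \in \ZZ$. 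A final shearing, multiplying each generalized-eigenspace basis vector of $\tilde N$ by a suitable integer power of $t$, shifts all eigenvalues to $0$, giving the desired basis.

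The main obstacle is the convergence of the Newton-style iteration, specifically a small-denominators problem: the correction in homogeneity degree $i$ requires inverting the operator $X \mapsto N^{(m)}_0 X - X N^{(m)}_0 + i X$ on $K^{n \times n}$, whose eigenvalues take the form $\mu_l - \mu_k + i$ with $\mu_k, \mu_l$ eigenvalues of $N^{(m)}_0$ and $i$ a nonzero multiple of $p^m$. The preliminary shearing must ensure the eigenvalues of $N^{(m)}_0$ are arranged so that no pairwise difference is itself a nonzero multiple of $p^m$; then these inverses have norms bounded independently of $i$, and the super-geometric smallness $\eta^{-p^m}$ of the residue dominates, guaranteeing convergence on $J$.
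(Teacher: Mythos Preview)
Your reduction to $\lambda = 0$ via twisting by $M_\lambda^\dual$ is correct and matches the paper. The observation that the matrix $N^{(m)}$ of $D$ in the basis $\bv_{m,0,j}$ is a Laurent series in $t^{p^m}$, and hence has small non-constant part on the inner interval, is also on the right track. The gap is in your resolution of the small-divisors obstacle.

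The claim that, after shearing, ``these inverses have norms bounded independently of $i$'' is false when $p>0$. Already in the diagonal case $l=k$ (which is unavoidable, since a matrix always has repeated eigenvalues in the adjoint action), the operator reduces to multiplication by $i$, and for $i$ ranging over nonzero multiples of $p^m$ one has $|i|_p^{-1} \geq p^m$ with no upper bound (take $i = p^{m+j}$, $j \to \infty$). No shearing changes this. For the off-diagonal terms the situation is worse: you have no a priori control on the eigenvalues $\mu_k$ of $N^{(m)}_0$ beyond $|\mu_k| \leq C p^{2km}$, so there is nothing preventing some $\mu_l - \mu_k$ from lying in $p^m \ZZ_p$ and being badly approximable by elements of $p^m\ZZ$, in which case $|\mu_l - \mu_k + i|^{-1}$ can grow faster than any polynomial in $|i|_\infty$. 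Thus the linearized equation need not even be solvable in $R_J$, and your iteration modeled on Lemma~\ref{L:char 0 solve} (which works precisely because $|i|=1$ for all nonzero integers when $p=0$) does not get off the ground.

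The paper's proof sidesteps this entirely. Rather than fixing one large $m$ and trying to gauge $N^{(m)}$ to a constant (which requires inverting $D + \mathrm{ad}(N^{(m)}_0)$), it lets $m \to \infty$ and shows that the change-of-basis matrices $S_{m,A}$ themselves, after correction by constant matrices $R_m \in \GL_n(K)$, converge. The key point is that the transition matrix $S_{m,A}^{-1} S_{m+1,A}$ is a series in $t^{p^m}$ (both bases are $\mu_{p^m}$-fixed), so Lemma~\ref{L:approximate using distance}(b) makes it close to its constant term $T_{m,0}$ on the inner interval; one simply takes $R_{m+1} = R_m T_{m,0}$. No differential operator is ever inverted, so no small divisors appear. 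The limit $U = \lim S_{m,A} R_m^{-1}$ then gives a basis on which $D$ acts by a constant matrix, because $D$ commutes with all $\zeta^*$ and the limiting basis is fixed by every $\mu_{p^m}$.
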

\begin{proof}
We may assume $0 \notin I$ thanks to Remark~\ref{R:contains 0}.
By replacing $M$ with its twist $M_\lambda^\dual \otimes M$, we may
reduce the theorem to the special case $\lambda=0$. 
Let $J$ be any closed subinterval of $I$ of positive length;
by Lemma~\ref{L:exponent differences}, the zero $n$-tuple is an exponent for $M$
over $J$. Choose $\eta > 1$ and $\alpha, \alpha', \beta, \beta' \in I$ such that
$\alpha' < \beta'$, $\alpha' = \alpha \eta$, $\beta' = \beta/\eta$,
and $J \subseteq [\alpha', \beta']$.
Fix a basis of $M_J$ and define the matrices $S_{m,A}$ as in Definition~\ref{D:exponent}
for $A = \{0,\dots,0\}$.
Choose $\lambda \in (0,1)$ and $c>0$ so that $p^{10k} \eta^{-c} \leq \lambda$,
then choose $m_0 >0$ so that $p^{m} > cm$ for all $m \geq m_0$.
We will construct invertible matrices $R_m$ over $K$ for $m \geq m_0$ such that $R_{m_0} = I_n$
and
\[
\left|I_n - R_m S_{m,A}^{-1} S_{m+1,A} R_{m+1}^{-1}\right|_\rho \leq \lambda^m \qquad
(\rho \in [\alpha', \beta'], m \geq m_0).
\]
This will imply that for $m \geq m_0$ and $\rho \in [\alpha', \beta']$,
$\left|I_n - S_{m_0,A}^{-1} S_{m,A} R_m^{-1}\right|_\rho < 1$
and $\left|S_{m_0,A}^{-1} S_{m,A} R_m^{-1} - S_{m_0,A}^{-1} S_{m+1,A} R_{m+1}^{-1}\right|_\rho \leq \lambda^m$.
Consequently, the sequence $S_{m_0,A}^{-1} S_{m,A} R_m^{-1}$ for $m = m_0, m_0+1, \dots$ will converge to an invertible matrix $U$
over $R_{[\alpha',\beta']}$
such that $S_{m_0,A} U$ is the change-of-basis matrix to a basis of
$M_{[\alpha',\beta']}$ of the desired form. This will complete the proof.

The construction of the $R_m$ proceeds recursively as follows. Given $R_{m_0}, \dots, R_{m}$, we first verify that
\[
\left|R_{m}\right|, \left|R_{m}^{-1}\right| \leq p^{2km}.
\]
This is clear for $m = m_0$, so we may assume $m > m_0$. Choose any $\rho \in [\alpha', \beta']$.
As noted above, we have $\left|I_n - S_{m_0,A}^{-1} S_{m,A} R_{m}^{-1}\right|_\rho < 1$, so $\left|S_{m_0,A}^{-1} S_{m,A} R_{m}^{-1}\right|_\rho= \left|R_{m} S_{m,A}^{-1} S_{m_0,A}\right|_\rho = 1$.
We then deduce the claim from the bound $\left|S_{m,A}\right|_\rho, \left|S_{m,A}^{-1}\right|_\rho \leq p^{km}$.

Next, put $T_m = R_{m} S_{m,A}^{-1} S_{m+1,A}$; we then have
\[
\left|T_m\right|_{[\alpha,\beta]}, \left|T_m^{-1}\right|_{[\alpha,\beta]} \leq p^{4km+k}.
\]
Let $T_{m,0}$ be the constant coefficient of $T_{m}$. Since $T_m$ is a series in $t^{p^m}$,
Lemma~\ref{L:approximate using distance}(b) implies
\[
\left|T_m - T_{m,0}\right|_{[\alpha', \beta']} \leq p^{4km+k} \eta^{-p^m}.
\]
We may now take $R_{m+1} = T_{m,0}$, because
\begin{align*}
\left|I_n - R_{m+1} T_{m}^{-1}\right|_{[\alpha', \beta']} & \leq \left|T_m^{-1}\right|_{[\alpha', \beta']}  \cdot \left|T_m - T_{m,0}\right|_{[\alpha', \beta']} \\
&\leq p^{8km+2k} \eta^{-p^m} \\
&< p^{10km} \eta^{-cm} \leq \lambda^m < 1
\end{align*}
and so $\left|I_n - T_{m} R_{m+1}^{-1}\right|_{[\alpha', \beta']} \leq \lambda^m$. 
This completes the construction of the $R_m$ and thus the proof.
\end{proof}

\begin{remark}
Theorem~\ref{T:CM part 1} is sufficient to recover the full Christol-Mebkhout decomposition theorem in the case
of a differential module admitting an exponent contained in $\ZZ_p \cap \QQ$, by pulling back along the map 
$t \mapsto t^m$ for a suitably divisible integer $m \in \ZZ$.
\end{remark}

The second part is a splitting theorem for modules admitting an exponent with $p$-adic non-Liouville differences.
This may be generalized as follows.
\begin{theorem} \label{T:CM part 2}
Suppose that $M$ admits an exponent $A$ 
admitting the Liouville partition $A_1,\dots,A_k$. Then 
for any closed subinterval $J$ of $I$ of positive length,
there exists a unique direct sum decomposition $M_J = M_1 \oplus \cdots \oplus M_k$ such that for
$g=1,\dots,k$, $M_g$ admits an exponent over $J$ weakly equivalent to $A_g$.
\end{theorem}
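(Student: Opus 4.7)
The plan is to prove uniqueness using the Liouville partition structure on exponents, and existence by constructing idempotent horizontal endomorphisms $\Pi_g \in \End(M_J)$ as Fr\'echet limits of level-$m$ spectral projectors built from the eigenvector data $\bv_{m,A,j}$ of Definition \ref{D:exponent}.

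For \emph{uniqueness}, suppose $M_J = \bigoplus_g M_g = \bigoplus_h M_h'$ are two such decompositions and for $g \neq h$ consider the horizontal composition $\phi_{gh}: M_g \hookrightarrow M_J \twoheadrightarrow M_h'$, with image $N$. By Remark \ref{R:exponent functoriality}(a) applied to the exact sequences $0 \to \ker \phi_{gh} \to M_g \to N \to 0$ and $0 \to N \to M_h' \to \mathrm{coker}\,\phi_{gh} \to 0$, together with Theorem \ref{T:exponents}(b), any exponent $B$ of $N$ is a sub-multiset of some exponent of $M_g$ (weakly equivalent to $A_g$) and of some exponent of $M_h'$ (weakly equivalent to $A_h$). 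A pigeonhole argument on the permutations implementing these weak equivalences produces, for each $b \in B$, elements $a \in A_g$, $a' \in A_h$ and an infinite subsequence of $m$'s along which $p^m \langle (a-b)/p^m \rangle$ and $p^m \langle (a'-b)/p^m \rangle$ are both $O(m)$; the triangle inequality then gives $p^m \langle (a-a')/p^m \rangle = O(m)$ on that subsequence, so $a - a'$ is an integer or a $p$-adic Liouville number, contradicting the Liouville partition of $A$. Hence $B$ is empty, $\phi_{gh} = 0$, $M_g \subseteq M_g'$, and symmetry yields equality.

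For \emph{existence}, reduce to $0 \notin I$ by Remark \ref{R:contains 0}. Fix $J \subseteq [\alpha', \beta'] \subset [\alpha, \beta] \subset I$ with $\alpha' = \alpha\eta$ and $\beta' = \beta/\eta$ for some $\eta > 1$, a basis of $M_{[\alpha,\beta]}$, and the matrices $S_m = S_{m,A}$ from Definition \ref{D:exponent}, with columns ordered so that those indexed by $a_j \in A_g$ form a contiguous block. Let $E_g$ be the diagonal block-indicator and set $\Pi_{m,g} = S_m E_g S_m^{-1}$. Setting $T_m = S_m^{-1} S_{m+1}$, the eigenvector property of the $\bv$'s gives $\zeta^*(T_m)_{ij} = \zeta^{a_j - a_i}(T_m)_{ij}$ for every $\zeta$ with $\zeta^{p^m} = 1$; hence $(T_m)_{ij}$ is a Laurent series supported on integers $n$ satisfying $n \equiv a_j - a_i \pmod{p^m}$ in $\ZZ_p$. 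For off-block $(i,j)$ the Liouville partition hypothesis forces $|n| \geq p^m \langle (a_j - a_i)/p^m \rangle$, which grows superlinearly in $m$, and Lemma \ref{L:approximate using distance}(a) then bounds these entries on $[\alpha',\beta']$ by $p^{2km}\eta^{-p^m \langle (a_j - a_i)/p^m \rangle}$, tending to zero rapidly. Since $[T_m, E_g]$ consists only of off-block entries, the identity
\[
\Pi_{m+1,g} - \Pi_{m,g} = S_m [T_m, E_g] T_m^{-1} S_m^{-1}
\]
yields convergence of $\Pi_g := \lim_m \Pi_{m,g}$ in $\End(M_{[\alpha',\beta']})$ to an idempotent of rank $|A_g|$.

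For \emph{horizontality and exponents}, write $N_m = S_m^{-1} N S_m + S_m^{-1} d(S_m)$ for the matrix of $D$ in the basis $\bv_{m,A,j}$, so that $[D,\Pi_{m,g}] = S_m [N_m, E_g] S_m^{-1}$. An identical eigenvector argument yields $\zeta^*(N_m)_{ij} = \zeta^{a_j - a_i}(N_m)_{ij}$, so the off-block entries of $N_m$ decay superexponentially on $[\alpha',\beta']$; combined with the polynomial bound $|N_m|_{[\alpha,\beta]} = O(p^{2km})$ from the boundedness of $d$ on $R_{[\alpha,\beta]}$, we conclude $[D,\Pi_g] = 0$. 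Hence $M_g := \Pi_g(M_J)$ is a horizontal direct summand of $M_J$, and the vectors $\Pi_g \bv_{m,A,j}$ for $a_j \in A_g$ supply the data of Definition \ref{D:exponent} exhibiting an exponent on $M_g$ weakly equivalent to $A_g$. The main obstacle is this horizontality verification: since $\Pi_{m,g}$ is only invariant under the action of $\zeta^*$ for $\zeta^{p^m} = 1$ rather than under $D$, one cannot extract $D$-invariance of the limit from a density or formal limit argument, but must instead prove the sharp quantitative decay of $[N_m, E_g]$ directly.
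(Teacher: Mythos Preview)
Your argument is correct and follows the same strategy as the paper's: uniqueness via the exponent combinatorics of the Liouville partition, and existence via convergence of the level-$m$ projectors $\Pi_{m,g}$ built from the eigenbasis $\bv_{m,A,j}$, with the decisive decay coming from Lemma~\ref{L:approximate using distance}(a) applied to the off-block entries (supported on exponents $\equiv a_j - a_i \pmod{p^m}$). The paper reduces to $k=2$ and estimates $(\Pi_m - \Pi_{m+1})(\bv_{m,A,j})$ directly rather than through your commutator identity, and for uniqueness uses the intersection $M_g \cap M_h'$ together with Proposition~\ref{P:weakly equivalent}(a) rather than the image of the cross-projection and a bare pigeonhole; these are cosmetic differences. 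One small point you glossed over that the paper handles explicitly: the exponent $A$ in the hypothesis is only given over \emph{some} interval, so before fixing $[\alpha,\beta]$ one should invoke Theorem~\ref{T:exponents} and Proposition~\ref{P:weakly equivalent}(b) to transport the Liouville partition to an exponent over $[\alpha,\beta]$.

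The one place your write-up adds content is the explicit horizontality check $[D,\Pi_{m,g}] = S_m[N_m,E_g]S_m^{-1}$; the paper simply asserts that the limit ``is forced to be a projector defining the desired splitting.'' Your direct estimate is valid, but your remark that horizontality ``cannot'' be obtained from a limit argument is slightly too pessimistic: each $\Pi_{m,g}$ commutes with the $\mu_{p^m}$-action (it projects eigenvectors to eigenvectors along eigenvectors), so the limit $\Pi_g$ commutes with all of $\mu_{p^\infty}$; since the Taylor action $\lambda \mapsto \sum_n (\lambda-1)^n D^n/n!$ is analytic in $\lambda-1$ and $\{\zeta-1:\zeta\in\mu_{p^\infty}\}$ accumulates at $0$, the identity theorem forces $[\Pi_g,D^n]=0$ for all $n$. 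Either route fills the gap.
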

\begin{proof}
We may assume $0 \notin I$ thanks to Remark~\ref{R:contains 0}.
We first verify uniqueness. 
Suppose to the contrary that there is a second decomposition $M_J = M'_1 \oplus \cdots \oplus M'_k$ of the desired
form for which there exist $g \neq h$ such that $M_{gh} = M_g \cap M'_h$ is nonzero.
Apply Theorem~\ref{T:exponents}(a) to produce exponents $B_1, B_2, B_3$ of
$M_{gh}$, $M_g/M_{gh}$, $M'_h/M_{gh}$.
By Remark~\ref{R:exponent functoriality} and Theorem~\ref{T:exponents}(b), $B_1 \cup B_2$ 
is weakly equivalent to $A_g$ and $B_1 \cup B_3$
is weakly equivalent to $A_h$.
We then obtain the desired contradiction by applying Proposition~\ref{P:weakly equivalent}(a).

We next verify existence. To simplify notation, we may reduce
to the case $k=2$. 
Let $J$ be any closed subinterval of $I$ of positive length;
by Theorem~\ref{T:exponents}(a,b) and Proposition~\ref{P:weakly equivalent}(b),
$M$ admits an exponent $A$ over $J$ of the specified form.
Choose an ordering $A = \{a_1,\dots,a_n\}$.
Choose $\eta > 1$ and $\alpha, \alpha', \beta, \beta' \in I$ such that
$\alpha' < \beta'$, $\alpha' = \alpha \eta$, $\beta' = \beta/\eta$,
and $J \subseteq [\alpha', \beta']$.
Fix a basis of $M_J$ and define the matrices $S_{m,A}$ as in Definition~\ref{D:exponent}.
Choose $\lambda \in (0,1)$ and $c>0$ so that $p^{9k} \eta^{-c} \leq \lambda$.
By hypothesis, there exists $m_0 > 0$ such that for $m \geq m_0$, 
$b_1 \in A_1$, $b_2 \in A_2$,
the congruence $h \equiv b_1 - b_2 \pmod{p^m}$ forces $\left|h\right| \geq cm$.

Let $\Pi_m$ be the projector onto the submodule of $M_J$ generated by
$\bv_{m,A,i}$ for those $i$ for which $a_i \in A_1$; then
\[
\left|\Pi_m\right|_{[\alpha, \beta]} \leq p^{2km}.
\]
For those $j$ for which $a_j \in A_1$, write 
$(\Pi_m - \Pi_{m+1})(\bv_{m,A,j}) = \sum_i a_{m,i} \bv_{m+1,A,i}$,
so that
\[
\left|a_{m,i}\right|_{[\alpha, \beta]}  \leq p^{4km+2k}.
\]
Since $(\Pi_m - \Pi_{m+1})(\bv_{m,A,j}) = (1 - \Pi_{m+1})(\bv_{m,A,j})$,
we have $a_{m,i} = 0$ when $a_i \in A_1$. On the other hand, when $a_i \in A_2$,
the coefficient of $t^h$ in $a_{m,i}$ can only be nonzero if $h \equiv a_i - a_j \pmod{p^m}$;
this implies
\[
\left|a_{m,i}\right|_{[\alpha', \beta']} \leq p^{4km+2k} \eta^{-cm} \qquad (m \geq m_0)
\]
by Lemma~\ref{L:approximate using distance}(a), and so 
\[
\left|(\Pi_m - \Pi_{m+1})(\bv_{m,A,j})\right|_{[\alpha', \beta']} \leq p^{5km+2k}\eta^{-cm} \qquad (m \geq m_0).
\]
Similarly, for those $j$ for which $a_j \in A_2$,
\[
\left|(\Pi_m - \Pi_{m+1})(\bv_{m,A,j})\right|_{[\alpha', \beta']} \leq p^{5km+3k}\eta^{-cm} \qquad (m \geq m_0)
\]
and so
\[
\left|(\Pi_m - \Pi_{m+1})\right|_{[\alpha', \beta']} \leq p^{6km+3k}\eta^{-cm} \leq \lambda^m \qquad (m \geq m_0).
\]
Therefore the $\Pi_m$ converge to an endomorphism of $M_J$, which is forced to be a projector defining the desired splitting.
\end{proof}

We may put Theorem~\ref{T:CM part 1} and Theorem~\ref{T:CM part 2} to separate integer exponents
from $p$-adic non-Liouville exponents.
\begin{cor} \label{C:cm1}
Suppose that $M$ has $p$-adic non-Liouville exponents. Then there exists a unique direct sum decomposition
$M \cong M_1 \oplus M_2$ with the following properties.
\begin{enumerate}
\item[(a)]
The module $M_1[t^{-1}]$ admits a basis on which $D$ acts via a nilpotent matrix over $K$. In particular, $M_1[t^{-1}]$
is unipotent (i.e., it is a successive extension of trivial differential modules over $R_I[t^{-1}]$).
\item[(b)]
No exponent of $M_2$ contains an integer or a $p$-adic Liouville number. 
\end{enumerate}
\end{cor}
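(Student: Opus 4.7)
The plan is to build the decomposition $M_1 \oplus M_2$ directly from Theorem~\ref{T:CM part 2} applied to the integer/non-integer partition of an exponent, and then analyze the two summands separately using the machinery already developed in \S\ref{sec:Robba2}.

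First I would pick an exponent $A$ for $M$, which is $p$-adic non-Liouville by hypothesis, and split it into $A_1 = A \cap \ZZ$ and $A_2 = A \setminus \ZZ$. The first task is to check that this is a Liouville partition: if $a_1 \in A_1$ and $a_2 \in A_2$, then $a_1 - a_2$ is not an integer (since $a_2$ is not), and it is not $p$-adic Liouville either (otherwise $a_2 = a_1 - (a_1 - a_2)$ would be Liouville, contradicting the non-Liouville property of $A$). Applying Theorem~\ref{T:CM part 2} on each closed subinterval $J \subseteq I$ of positive length (we may exclude $0$ from $I$ thanks to Remark~\ref{R:contains 0}) produces a decomposition $M_J = M_{1,J} \oplus M_{2,J}$ in which $M_{i,J}$ has an exponent weakly equivalent to $A_i$. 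The uniqueness clause in Theorem~\ref{T:CM part 2} forces these decompositions to agree on overlaps, so by the Fr\'echet-Stein structure of $R_I$ they glue to a direct sum decomposition $M = M_1 \oplus M_2$ over $R_I$; global uniqueness also follows from the local uniqueness.

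For $M_2$, pick any exponent $B$ over some $J$; it is weakly equivalent to $A_2$, which is $p$-adic non-Liouville and contains no integers. Corollary~\ref{C:weakly equivalent2}, applied with $A = A_2$ (so that the integer-part $A_1'$ of the decomposition in that corollary is empty), forces the integer-part $B_1$ of $B$ to be empty as well, and $B$ to be $p$-adic non-Liouville. Hence no exponent of $M_2$ contains an integer or a $p$-adic Liouville number, giving (b).

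For $M_1$, let $B$ be any exponent over $J$, so $B$ is weakly equivalent to $A_1 \subseteq \ZZ$. Integer multisets have $p$-adic non-Liouville differences, so Corollary~\ref{C:weakly equivalent} upgrades this to an honest equivalence, forcing $B \subseteq \ZZ$ with $|B| = |A_1| =: n$. By Definition~\ref{D:exponent1}, $B$ is then equivalent to $\{0\}^n$, so (by the remark at the end of Definition~\ref{D:exponent}) $\{0\}^n$ is also an exponent of $M_{1,J}$. Applying Theorem~\ref{T:CM part 1} with $\lambda = 0$ yields a basis of $M_{1,J}$ on which $D$ acts via a nilpotent matrix over $K$, which in particular makes $M_{1,J}$ unipotent.

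The remaining step is to promote the local bases to a global basis of $M_1[t^{-1}]$. The cleanest route is via unipotency: on each $J$, the kernel of $D$ in $M_{1,J}$ is a trivial differential submodule whose formation is functorial, and the quotient again has an integer exponent, so induction on rank together with Fr\'echet-Stein gluing writes $M_1[t^{-1}]$ as a successive extension of trivial differential modules over $R_I[t^{-1}]$. Once $M_1[t^{-1}]$ is known to be unipotent globally, a global basis adapted to the unipotent filtration produces the desired matrix representation with $D$ acting nilpotently over $K$. The main obstacle I anticipate is precisely this globalization step: Theorem~\ref{T:CM part 1} is stated only on closed subintervals, and the local nilpotent bases need not match on overlaps, so one must rely on the rigidity of the unipotent structure (ultimately on the fact that $H^0(D)$ and the extension classes patch coherently) to obtain a genuinely global basis.
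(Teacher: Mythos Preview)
Your proof is correct and follows the same route as the paper: split via Theorem~\ref{T:CM part 2} using the integer/non-integer Liouville partition of a non-Liouville exponent, then handle the summands with Theorem~\ref{T:CM part 1} and Corollary~\ref{C:transmit integer} (your use of Corollary~\ref{C:weakly equivalent2} amounts to the same thing). The globalization concern you raise for (a) is legitimate and your unipotent-filtration sketch is the right resolution; the paper's proof elides this step entirely (and its citation of Theorem~\ref{T:CM part 2} for (a) appears to be a typo for Theorem~\ref{T:CM part 1}).
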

\begin{proof}
We obtain the splitting $M \cong M_1 \oplus M_2$ using Theorem~\ref{T:CM part 2}.
We then obtain (a) using Theorem~\ref{T:CM part 2}
and (b) using Corollary~\ref{C:transmit integer}.
\end{proof} 

We recover as a corollary
the original decomposition theorem of Christol-Mebkhout, as stated in \cite[Theorem~13.6.1]{kedlaya-book}.

\begin{cor} \label{C:cm2}
Fix a set $S$ of coset representatives of $\ZZ$ in $\ZZ_p$.
Suppose that $M$ has $p$-adic non-Liouville exponent differences. 
Then there exists a unique direct sum
decomposition $M \cong \bigoplus_{\lambda \in S} N_\lambda$ in which $N_\lambda$ admits a basis on which $D$
acts via a matrix over $K$ with all eigenvalues equal to $\lambda$. In particular, $N_\lambda$
is isomorphic to a successive extension of copies of $M_\lambda$.
\end{cor}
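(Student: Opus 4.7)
The plan is to combine the two halves of the refined Christol--Mebkhout decomposition established in this subsection: first use Theorem~\ref{T:CM part 2} to split $M$ according to the $\ZZ$-cosets appearing in its exponent, then apply Theorem~\ref{T:CM part 1} (after a rank-one twist) to furnish each summand with the desired basis.

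First I would fix an exponent $A$ for $M$; by Lemma~\ref{L:exponent differences}, the hypothesis that $M$ has $p$-adic non-Liouville exponent differences guarantees that $A$ itself has $p$-adic non-Liouville differences. Partitioning $A$ into cosets of $\ZZ$ in $\ZZ_p$ (indexed by $\lambda \in S$) yields multisets $A_\lambda$, and this is a Liouville partition of $A$: for distinct $\lambda, \lambda' \in S$ and any $a \in A_\lambda$, $a' \in A_{\lambda'}$, the difference $a - a'$ lies in $A - A$ and is neither an integer (distinct cosets) nor a $p$-adic Liouville number (by hypothesis). Theorem~\ref{T:CM part 2} then yields, on each closed subinterval $J$ of $I$ of positive length, a unique decomposition $M_J \cong \bigoplus_{\lambda \in S} N_{\lambda,J}$ in which $N_{\lambda,J}$ admits an exponent weakly equivalent to $A_\lambda$; by the uniqueness clause these local decompositions glue to a global decomposition $M \cong \bigoplus_{\lambda \in S} N_\lambda$. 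Since $A_\lambda$ is a sub-multiset of $A$, it inherits $p$-adic non-Liouville differences, so Corollary~\ref{C:weakly equivalent} promotes the weak equivalence to a genuine equivalence, forcing each exponent of $N_\lambda$ to be a multiset of $\lambda + \ZZ$.

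Next I would twist by the rank-one module $M_\lambda^\dual$. By Example~\ref{exa:rank 1 Robba} together with Remark~\ref{R:exponent functoriality}(b),(c), the module $M_\lambda^\dual \otimes N_\lambda$ has an exponent consisting entirely of integers, which is equivalent to the zero multiset and hence is itself an exponent identically equal to $0$. Applying Theorem~\ref{T:CM part 1} with parameter $0$ produces a basis of $M_\lambda^\dual \otimes N_\lambda$ on which $D$ acts via a nilpotent matrix over $K$; tensoring back with $M_\lambda$ gives a basis of $N_\lambda$ on which $D$ acts via a matrix over $K$ with all eigenvalues equal to $\lambda$. A further change of basis over $K$ placing the nilpotent part in Jordan form then exhibits $N_\lambda$ as a successive extension of copies of $M_\lambda$.

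Uniqueness of the decomposition follows from the uniqueness clause in Theorem~\ref{T:CM part 2}, since the partition of an exponent into $\ZZ$-cosets is intrinsic. The main technical obstacle is the passage from local assertions (over closed subintervals of positive length, as produced by Theorems~\ref{T:CM part 1} and \ref{T:CM part 2}) to the global assertions over $R_I$: for the decomposition this is handled by the uniqueness clause, while for the basis one invokes the Fr\'echet-Stein property of $R_I$ together with the uniqueness of local bases up to $\GL_n(K)$-conjugation on overlaps, handling the case $0 \in I$ directly via Remark~\ref{R:contains 0}(c),(d).
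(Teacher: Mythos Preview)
Your proposal is correct and uses essentially the same ingredients as the paper's proof: the splitting theorem (Theorem~\ref{T:CM part 2}) to separate $\ZZ$-cosets, followed by the structure theorem (Theorem~\ref{T:CM part 1}) after a rank-one twist to produce the desired basis on each summand. The only organizational difference is that the paper phrases the argument as an induction on $\rank(M)$ via Corollary~\ref{C:cm1} (peeling off one coset at a time), whereas you split off all cosets at once; your version is slightly more explicit about the local-to-global gluing, which the paper leaves implicit.
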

\begin{proof}
We induct on $\rank(M)$. By twisting, we can force $M$ to admit an exponent containing $0$.
We may then split $M$ using Corollary~\ref{C:cm1} and continue.
\end{proof}

\begin{cor} \label{C:p-th power trivial}
If $M^{\otimes p}$ is unipotent (resp.\ trivial), then so is $M$.
\end{cor}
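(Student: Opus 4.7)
The plan is to reduce to an exponent analysis followed by a short linear algebra calculation. Using Remark~\ref{R:contains 0} we may assume $0 \notin I$, and set $n = \rank(M)$.

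For the unipotent case, let $A = \{a_1, \dots, a_n\}$ be an exponent of $M$ (Theorem~\ref{T:exponents}(a)). By iterated application of Remark~\ref{R:exponent functoriality}(b), the $p$-fold sum multiset $A^{+p}$ of size $n^p$, whose elements are the sums $a_{i_1} + \cdots + a_{i_p}$ with each $i_k \in \{1,\dots,n\}$, is an exponent of $M^{\otimes p}$. Since $M^{\otimes p}$ is unipotent, Remark~\ref{R:exponent functoriality}(a) applied along its filtration by trivial modules gives $\{0\}^{n^p}$ as an exponent as well. By Theorem~\ref{T:exponents}(b), $A^{+p}$ is weakly equivalent to $\{0\}^{n^p}$, and hence is $p$-adic non-Liouville by Corollary~\ref{C:weakly equivalent2}. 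The weak equivalence also yields a constant $c$ such that $p^m \langle a/p^m \rangle \leq cm$ for every $a \in A^{+p}$ and every $m$, so $\liminf_m \frac{p^m}{m} \langle a/p^m \rangle \leq c < \infty$; combined with the non-Liouville property, this forces $a \in \ZZ$. Thus $A^{+p} \subseteq \ZZ$; taking diagonal tuples shows $pa_i \in \ZZ$, so $a_i \in \ZZ$ for each $i$, i.e.\ $A \subseteq \ZZ$.

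Now $A \subseteq \ZZ$ gives $M$ $p$-adic non-Liouville exponent differences (Lemma~\ref{L:exponent differences}), so Corollary~\ref{C:cm2} with $0 \in S$ produces $M \cong \bigoplus_{\lambda \in S} N_\lambda$. The exponent of $N_\lambda$ is $\{\lambda\}^{\rank N_\lambda}$, so the inclusion $A \subseteq \ZZ$ kills every summand with $\lambda \neq 0$, leaving $M = N_0$, equipped with a basis on which $D$ acts via a nilpotent matrix $N$ over $K$. Conjugating $N$ into strict upper triangular form over $K$ realizes $M$ as a successive extension of trivial modules; that is, $M$ is unipotent.

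For the trivial case, the unipotent case reduces us to $M$ unipotent with basis $\be_1, \dots, \be_n$ on which $D$ acts via a nilpotent matrix $N$ over $K$. Let $V = K^n$; on the induced tensor basis of $M^{\otimes p}$, $D$ acts via $N_{\mathrm{tot}} = \sum_{k=1}^p I^{\otimes k-1} \otimes N \otimes I^{\otimes p-k} \in \End(V^{\otimes p})$. Solving $D c = 0$ in $R_I^{n^p}$ yields formal solutions $\exp(-N_{\mathrm{tot}} \log t) c_0$; since $\log t \notin R_I$, such a solution lies in $R_I^{n^p}$ only when $N_{\mathrm{tot}} c_0 = 0$, so $H^0(M^{\otimes p}) \cong \ker N_{\mathrm{tot}}$. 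Triviality of $M^{\otimes p}$ of rank $n^p$ then forces $N_{\mathrm{tot}} = 0$. If $N \neq 0$, choose $v \in V$ with $Nv \neq 0$; nilpotence of $N$ makes $v, Nv$ linearly independent, so $N_{\mathrm{tot}}(v^{\otimes p}) = \sum_{k=1}^p v^{\otimes k-1} \otimes Nv \otimes v^{\otimes p-k}$ is a nonzero sum of $p$ distinct basis vectors in any extension of $\{v, Nv\}$ to a basis of $V$. This contradiction gives $N = 0$, so $M$ is trivial. The main obstacle is the step extracting $A^{+p} \subseteq \ZZ$ from the weak equivalence with $\{0\}^{n^p}$, which requires marrying the quantitative weak equivalence bound with the non-Liouville transfer of Corollary~\ref{C:weakly equivalent2}.
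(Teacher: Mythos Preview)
Your proof is correct and follows essentially the same approach as the paper. Both arguments use exponent analysis to handle the unipotent case (forming the $p$-fold sum exponent of $M^{\otimes p}$, comparing it with the zero exponent, and deducing $pa \in \ZZ$ hence $a \in \ZZ$ for each $a \in A$), and both reduce the trivial case to showing that the induced nilpotent operator $N_{\mathrm{tot}}$ on $V^{\otimes p}$ vanishes only when $N = 0$.

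The differences are cosmetic. For the unipotent case, the paper invokes Lemma~\ref{L:exponent differences} directly (the zero exponent has non-Liouville differences, so any two exponents of $M^{\otimes p}$ are genuinely equivalent, hence $A^{+p} \subseteq \ZZ$), whereas you take the slightly longer route through Corollary~\ref{C:weakly equivalent2} plus an explicit $\liminf$ bound; and the paper concludes via Corollary~\ref{C:cm1} rather than Corollary~\ref{C:cm2}. For the trivial case, the paper deduces $N_{\mathrm{tot}} = 0$ from the uniqueness (up to conjugacy) of the constant nilpotent matrix representing $M^{\otimes p}$, whereas you compute $H^0(M^{\otimes p}) \cong \ker N_{\mathrm{tot}}$ directly. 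Your $\exp(-N_{\mathrm{tot}} \log t)$ phrasing is informal since $\log t \notin R_I$, but the conclusion is easily made rigorous by expanding $c = \sum_m c_m t^m$ and noting that $(N_{\mathrm{tot}} + m)c_m = 0$ forces $c_m = 0$ for $m \neq 0$ because $N_{\mathrm{tot}}$ is nilpotent. Your final linear-independence argument for $N_{\mathrm{tot}}(v^{\otimes p}) \neq 0$ is exactly what the paper's terse ``this is only possible if $N$ is itself zero'' is pointing at.
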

\begin{proof}
We first prove the unipotent case. Apply Theorem~\ref{T:exponents}(a) to construct an exponent $A$ for $M$.
By Remark~\ref{R:exponent functoriality}, the $p$-fold sum $A + \cdots + A$ is an exponent for $M^{\otimes p}$,
as by assumption is the zero tuple. Since the latter has $p$-adic non-Liouville differences,
Lemma~\ref{L:exponent differences} implies that $A + \cdots + A$ is equivalent (not just weakly equivalent) to 0.
In particular, $pa \in \ZZ$ for each $a \in A$; since $a \in \ZZ_p$, this is only possible when $a \in \ZZ$ for each $a \in A$. By Corollary~\ref{C:cm1}, $M$ is unipotent.

Suppose now that $M^{\otimes p}$ is trivial. By the previous paragraph, $M$ admits a basis on which $D$ acts via a nilpotent matrix $N$ over $K$; note that the conjugacy class of the matrix $N$ is uniquely determined by $M$
because $1$ is nonzero in the cokernel of $t\frac{d}{dt}$ on $R_I$.
In particular, since $M^{\otimes p}$ is trivial, the $p$-th tensor power of $N$ must be zero, but this is only possible if $N$ is itself zero.
Consequently, $M$ itself must be trivial.
\end{proof}

\begin{remark}
It is known that Theorem~\ref{T:CM part 2} does not extend to integer partitions. For example, if $\lambda$ is a 
$p$-adic Liouville number, then there exist nontrivial extensions of $M_0$ by $M_\lambda$, each of which 
admits $\{0,\lambda\}$ as an exponent by Remark~\ref{R:exponent functoriality}. 
In fact, we expect (although we have no examples)
that one can even find irreducible differential modules
of rank greater than 1 satisfying the Robba condition.
\end{remark}

\subsection{Frobenius antecedents and descendants}
\label{sec:antecedents descendants}

The construction of Frobenius antecedents and descendants can be generalized to differential modules
over power series. We record here some key facts from \cite[Chapter~10]{kedlaya-book} which we will use.

\begin{hypothesis} \label{H:Frobenius}
Throughout \S\ref{sec:discs annuli},
assume $p>0$,
fix a subinterval $I$ of $[0, +\infty)$,
and take $R = R_I$ or $R = R_I^{\an}$.
Let $(M,D)$ be a differential module of rank $n$ over $(R, \frac{d}{dt})$.
\end{hypothesis}

\begin{defn} \label{D:global Frobenius descendant}
Let $I^p$ be the subinterval of $[0, +\infty)$ consisting of $\gamma^p$ for all $\gamma \in I$.
In case $R = R_I$ (resp.\ $R = R_I^{\an})$, let $R'$ be a copy of $R_{I^p}$ (resp.\ $R_{I^p}^{\an}$)
in the variable $t^p$, identified with a subring of $R$. We may then view $(R', \frac{d}{dt^p})$ as a differential ring.

If $0 \notin I$, we may form the \emph{Frobenius descendant} $\varphi_* M$ as in \cite[Definition~10.3.4]{kedlaya-book}; that is, $\varphi_* M$ is a copy of $M$ viewed as an $R'$-module equipped with the derivation $D' = p^{-1} t^{1-p} D$.
For any $\rho \in I$, $(\varphi_* M) \otimes_{R'} F'_\rho$ may be naturally identified with the Frobenius descendant of $M_\rho$.
\end{defn}

\begin{prop} \label{P:global Frobenius antecedent}
Suppose that $f_i(M,r) < r - \log \omega$ for all $r \in -\log I$. 
 Then there exists a unique (up to unique isomorphism)
differential module $M'$ over $(R', \frac{d}{dt^p})$ such that for $\rho \in I \setminus \{0\}$,
$M' \otimes_{R'} F'_\rho$ is the Frobenius antecedent of $M_\rho$.
\end{prop}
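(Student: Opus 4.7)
The plan is to construct $M'$ as a canonical $R'$-direct summand of the Frobenius descendant $\varphi_* M$, exploiting the fact that the hypothesis forces the intrinsic subsidiary radii in $\varphi_* M$ to split cleanly at every Gauss point.

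First, form the Frobenius descendant $\varphi_* M$ over $R'$ as in Definition~\ref{D:global Frobenius descendant}. The hypothesis $f_i(M,r) < r - \log \omega$ says precisely that $IR(M_\rho) > \omega$ for every $\rho \in I \setminus \{0\}$. By Proposition~\ref{P:descendant}, the multiset of intrinsic subsidiary radii of $(\varphi_* M) \otimes_{R'} F'_{\rho^p}$ then consists of $IR(M_\rho)^p > \omega^p$ with multiplicity $n$, together with $\omega^p$ with multiplicity $(p-1)n$. Proposition~\ref{P:field decomposition1}(a) therefore canonically splits $(\varphi_* M)_{\rho^p} = M'_\rho \oplus N_\rho$ with $M'_\rho$ of rank $n$ whose subsidiary radii all strictly exceed $\omega^p$, and $N_\rho$ of rank $(p-1)n$ all of whose subsidiary radii equal $\omega^p$.

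Next, I globalize this pointwise decomposition. Restrict to a closed subinterval $J \subseteq I$ of positive length with $0 \notin J$. The hypothesis, together with continuity of the intrinsic radius along $J$, yields a uniform spectral gap $\inf_{\rho \in J} IR(M_\rho) > \omega$, hence a uniform gap between the two groups of subsidiary radii of $\varphi_* M$ throughout $J$. Using this gap, one constructs by iterative approximation a canonical projector $\Pi_J$ of rank $n$ on $\varphi_* M_J$ realizing the pointwise splittings: beginning from an approximate projector (produced, say, from the field-level antecedent at a single point of $J$ and extended by a power-series expansion), one corrects it iteratively, the spectral gap guaranteeing convergence in the Banach norm on $R'_{J^p}$ in the same style as the estimates in the proofs of Theorem~\ref{T:CM part 1} and Theorem~\ref{T:CM part 2}. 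By canonicity the $\Pi_J$ agree on intersections of subintervals and assemble into a global $R'$-linear projector $\Pi$ on $\varphi_* M$; set $M' = \Pi(\varphi_* M)$, which is then a finite projective differential module of rank $n$ over $R'$.

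To conclude, the inclusion $M' \hookrightarrow \varphi_* M$ corresponds by adjunction to a morphism of differential modules $\varphi^* M' \to M$. At each $\rho \in I \setminus \{0\}$, $M'_{\rho^p}$ is characterized as the unique spectral summand of $(\varphi_* M)_{\rho^p}$ with $IR > \omega^p$, which by the uniqueness clause of Proposition~\ref{P:antecedent} must coincide with the Frobenius antecedent of $M_\rho$; hence $\varphi^* M' \to M$ is an isomorphism at every Gauss point, and thus globally by a standard Fr\'echet--Stein argument (or directly, since both sides are finite projective of the same rank and the map is an isomorphism after base change to each $F_\rho$). Uniqueness of $M'$ follows similarly: two candidates $M'_1, M'_2$ give rise to canonical horizontal isomorphisms $(M'_1)_{\rho^p} \cong (M'_2)_{\rho^p}$ at every Gauss point via Proposition~\ref{P:antecedent}, and since the intrinsic radius of $\Hom(M'_1, M'_2)$ exceeds $\omega^p$ everywhere, these patch to a global horizontal isomorphism. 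The main obstacle is the construction of the global projector $\Pi$: while the pointwise spectral decompositions are canonical and visibly continuous, turning them into an $R'$-linear projector requires a uniform norm estimate that genuinely uses the hypothesis via a strict, uniform-on-compacts spectral gap; once this estimate is established, the remainder of the argument is essentially formal.
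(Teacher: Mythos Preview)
Your approach via the Frobenius descendant is a legitimate alternative to the paper's citation of \cite[Theorem~10.4.4]{kedlaya-book}, but it differs from that reference and has one genuine gap.

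The standard construction in \cite{kedlaya-book} does not pass through $\varphi_* M$ at all: instead, one uses the hypothesis $IR(M_\rho) > \omega$ to show that the Taylor series for the substitutions $t \mapsto \zeta t$ with $\zeta \in \mu_p$ converge on $M$, giving a $\mu_p$-action whose fixed submodule is $M'$. This Galois-descent construction works uniformly over $R$ and handles the disc and annulus cases simultaneously, with no iterative approximation needed.

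Your route has a real obstruction when $0 \in I$: as Definition~\ref{D:global Frobenius descendant} explicitly notes, the Frobenius descendant $\varphi_* M$ requires $t^{1-p} \in R$, hence $0 \notin I$. So your construction does not even begin in the disc case, and you do not explain how to extend $M'$ across the origin. This can be repaired (construct $M'$ on the punctured disc and then argue separately that it extends), but as written it is a gap.

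When $0 \notin I$, your strategy is correct, though the ``iterative approximation'' for the projector is unnecessarily vague. A cleaner route: by Proposition~\ref{P:descendant}, the first $(p-1)n$ subsidiary radii of $(\varphi_* M)_{\rho^p}$ are all equal to $\omega^p$, so $F_{(p-1)n}(\varphi_* M, r')$ is visibly affine in $r'$ and strictly separated from the remaining $n$ radii. Hence Proposition~\ref{P:decompose annulus} (or its analogue over $R_I^{\an}$) applies directly to $\varphi_* M$ and yields the global splitting without any ad hoc convergence argument. A minor correction: the large radii of the descendant are not $n$ copies of $IR(M_\rho)^p$ but rather $s_1(M_\rho)^p, \dots, s_n(M_\rho)^p$; this does not affect the argument since all that matters is that they exceed $\omega^p$.
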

\begin{proof}
See \cite[Theorem~10.4.4]{kedlaya-book}.
\end{proof}

We will also need to consider ``off-center Frobenius descendants'' as in \cite[\S 10.8]{kedlaya-book}.
\begin{defn}
Suppose that $R = R_{[0,1]}^{\an}$.
Let $R''$ be a copy of $R_{[0,1]}^{\an}$ in the variable $u$.
For $\rho \in (0,1]$, let $F''_\rho$ be a copy of $F_\rho$ in the variable $u$, so that $R''$ maps to $F''_\rho$.
Choose $\lambda \in K$ with $\left| \lambda \right| = 1$,
then identify $R''$ with a subring of $R$ by identifying $u$ with $(t+\lambda)^p - \lambda^p$.
\end{defn}

\begin{prop} \label{P:off-center descendant}
Suppose that $R = R_{[0,1]}^{\an}$.
Let $\psi_* M$ be a copy of $M$ viewed as a differential module over $(R'', \frac{d}{du})$. Then for $\rho \in (\omega,1]$, the multiset consisting of the intrinsic subsidiary radii of $(\psi_* M) \otimes_{R''} F''_{\rho^p}$
is the union of the multisets
\[
\begin{cases} \{s^p\} \cup \{ \omega^p \rho^{-p} \, \mbox{($p-1$ times)}\} & \mbox{if } s  > \omega\rho \\
\{p^{-1} s \rho^{1-p} \, \mbox{($p$ times)}\} & \mbox{if } s  \leq \omega/\rho
\end{cases}
\]
for $s$ running over the intrinsic subsidiary radii of $M_\rho$.
\end{prop}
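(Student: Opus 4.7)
The statement is local in $\rho$, so I fix $\rho \in (\omega,1]$ and work entirely with the base change to $F''_{\rho^p}$ along the inclusion $F''_{\rho^p} \hookrightarrow F_\rho$ induced by $u \mapsto (t+\lambda)^p - \lambda^p$. The goal is to prove an analog of Proposition~\ref{P:descendant} for this non-standard Frobenius-type extension.

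The first step is to pin down the geometry of the embedding. Expanding
\[
u = (t+\lambda)^p - \lambda^p = p\lambda^{p-1} t + \textstyle\binom{p}{2}\lambda^{p-2}t^2 + \cdots + t^p
\]
and using $|p| = \omega^{p-1}$ together with $|\lambda| = 1$, one checks that $|u|_\rho = \max\{p^{-1}\rho^i : 1 \le i < p\} \cup \{\rho^p\} = \rho^p$ precisely when $\rho > \omega$. A standard log-convexity argument (in the spirit of \cite[Proposition~8.2.3]{kedlaya-book}) upgrades this to the assertion that the restriction of $|\cdot|_\rho$ from $K(t)$ to $K(u)$ is exactly the $\rho^p$-Gauss norm in $u$, so $F''_{\rho^p} \hookrightarrow F_\rho$ is an isometric extension of degree $p$, with $t+\lambda$ a primitive element satisfying $(T+\lambda)^p = \lambda^p + u$.

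The second step is to carry out the spectral analysis. The two derivations are related by $\frac{d}{du} = (p(t+\lambda)^{p-1})^{-1}\frac{d}{dt}$, where $|p(t+\lambda)^{p-1}|_\rho = p^{-1}$ for $\rho \in (\omega,1]$. Now I would mimic the proof of Proposition~\ref{P:descendant} (cf.\ \cite[Theorem~10.5.1]{kedlaya-book}) in this twisted context. For each intrinsic subsidiary radius $s$ of $M_\rho$ with $s > \omega\rho$, I would produce a Frobenius antecedent with respect to the embedding $u \mapsto (t+\lambda)^p - \lambda^p$, adapting Proposition~\ref{P:antecedent}; pushing this antecedent forward and applying an off-center version of Lemma~\ref{L:push pull}(c) gives the contribution $\{s^p\} \cup \{\omega^p\rho^{-p}\,(p-1\text{ times})\}$. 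For $s \le \omega/\rho$, I would argue directly that the spectral radius of $\frac{d}{du}$ on the corresponding isotypical summand is $\omega \rho^{-p}/(p^{-1}s\rho^{1-p})$, yielding $p$ equal radii $p^{-1}s\rho^{1-p}$ by the usual unramified cyclotomic tensoring trick.

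The main obstacle is bookkeeping the rescaling factors. The substitution $s = t+\lambda$ formally turns $u = (t+\lambda)^p - \lambda^p$ into the standard Frobenius $u + \lambda^p = s^p$, but the $\rho$-Gauss point in $t$ lives at the off-center location $-\lambda$ in $s$-coordinates, where $|s|_\rho = 1$ rather than $\rho$. This single discrepancy is the source of every $\rho$-power appearing in the formula: the factor $\omega^p\rho^{-p}$ (in place of the standard $\omega^p$) reflects that $|t+\lambda|_\rho = 1$, and the factor $\rho^{1-p}$ in $p^{-1}s\rho^{1-p}$ (in place of the standard $p^{-1}s$) reflects the twist in the derivation $p(t+\lambda)^{p-1} \cdot \frac{d}{du}$. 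The delicate point is to verify that the antecedent construction still works in the range $s > \omega\rho$ (not merely $s > \omega$), which amounts to observing that the Taylor expansion of horizontal sections converges over a disc of radius controlled by $s\rho^{-1}$ in the $s$-variable rather than $s$ in the $t$-variable; this is precisely the content of the off-center calculations in \cite[\S 10.8]{kedlaya-book}, which one should be able to cite directly once the setup above is in place.
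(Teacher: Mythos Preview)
Your proposal is correct in spirit and ultimately lands on the same citation as the paper, namely \cite[\S 10.8]{kedlaya-book}, but the paper's proof is far shorter: it simply rescales $t \mapsto \lambda^{-1} t$ (and correspondingly $u \mapsto \lambda^{-p} u$), which is harmless since $|\lambda| = 1$, to reduce to the case $\lambda = 1$ and then cites \cite[Theorem~10.8.3]{kedlaya-book} directly. You instead attempt an additive translation $s = t + \lambda$ and then re-derive the bookkeeping of \S 10.8 in the general $\lambda$ case; this works, but everything you sketch in your second and third paragraphs is exactly what Theorem~10.8.3 already proves, so once you notice the multiplicative rescaling there is nothing left to do. In short: your route is sound but redundant --- the single missed observation is that a unit rescaling of the coordinate collapses the general case to the one already handled in the book.
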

\begin{proof}
By rescaling $t$, we may reduce to the case where $\lambda = 1$. In this case, see
\cite[Theorem~10.8.3]{kedlaya-book}.
\end{proof}

\subsection{Variation of intrinsic radii}
\label{sec:discs annuli}

We now consider differential modules not necessarily satisfying the Robba condition,
with an eye towards the variation of the intrinsic subsidiary radii. The results we report here are taken from \cite[Chapters~11--12]{kedlaya-book}; starting in \S \ref{sec:Berkovich discs annuli}, we will see how to make more definitive
statements in the language of Berkovich spaces. To facilitate this transition, we record a couple of important
direct corollaries of the results of \cite{kedlaya-book}.

\begin{hypothesis} \label{H:discs annuli modules}
Throughout \S\ref{sec:discs annuli},
fix a subinterval $I$ of $[0, +\infty)$,
and let $M$ be a differential module of rank $n$ over $(R_I^{\an}, \frac{d}{dt})$.
\end{hypothesis}

\begin{defn}
For $\rho \in I \setminus \{0\}$, put $M_\rho = M \otimes_{R_I^{\an}} F_\rho$.
For $r \in -\log I$ and $i=1,\dots,n$, define $f_i(M,r)$ so that the list of intrinsic subsidiary radii of $M_{e^{-r}}$ in increasing
order is
\[
\exp(r-f_1(M,r)), \dots, \exp(r-f_n(M,r)).
\]
Put $F_i(M,r) = f_1(M,r) + \cdots + f_n(M,r)$.
As observed in Definition~\ref{D:intrinsic radius}, the functions $f_i$ and $F_i$ are invariant under enlargement
of the constant field $K$.
\end{defn}

\begin{prop} \label{P:variation}
For $i=1,\dots,n$, we have the following.
\begin{enumerate}
\item[(a)] (Linearity)
The functions $f_i(M,r)$ and $F_i(M,r)$ are continuous and piecewise affine.
Moreover, these functions assume only finitely many different slopes
over any closed subinterval of $-\log I$ (even if $0 \in I$).
\item[(b)] (Integrality)
If $i=n$ or $f_i(M,r_0) > f_{i+1}(M,r_0)$, then 
the slopes of
$F_i(M,r)$ in some neighborhood of $r_0$ belong to $\ZZ$.
Consequently, the slopes of each $f_i(M,r)$ and $F_i(M,r)$ belong to
$\frac{1}{1} \ZZ \cup \cdots \cup \frac{1}{n} \ZZ$.
\item[(c)] (Subharmonicity)
Suppose that $K$ is algebraically closed, $\alpha < 1 < \beta$, and $f_i(M,0) > 0$.
Let $s_{\infty,i}(M)$ and
$s_{0,i}(M)$ be the left and right slopes of 
$F_i(M,r)$ 
at $r=0$. For $\overline{\mu} \in \kappa_K^\times$,
choose any $\mu \in \gotho_K$ lifting $\overline{\mu}$,
let $T_\mu$ denote the substitution $t \mapsto t + \mu$,
and let $s_{\overline{\mu},i}(M)$ be the right slope of
$F_i(T^*_\mu(M), r)$ at $r=0$. Then
\[
s_{\infty,i}(M) \leq \sum_{\overline{\mu} \in \kappa_K} 
s_{\overline{\mu},i}(M),
\]
with equality if either $i=n$ and $f_n(M,0) > 0$,
or $i<n$ and $f_i(M,0) > f_{i+1}(M,0)$.
\item[(d)] (Monotonicity)
Suppose that  $0 \in I$. Then for any point $r_0$ where $f_i(M,r_0) > r_0$,
the slopes of $F_i(M,r)$ are nonpositive in some neighborhood of $r_0$.
\item[(e)] (Convexity)
The function $F_i(M,r)$ is convex.
\end{enumerate}
\end{prop}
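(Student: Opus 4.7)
The plan is to deduce all five items from corresponding results in \cite[Chapters~11--12]{kedlaya-book} via a routine translation between $R_I^{\an}$ and the rings of convergent power series used in the book. Since the $f_i(M,r)$ and $F_i(M,r)$ are invariant under enlargement of the constant field, we may assume $K$ is algebraically closed throughout. All assertions except the finiteness clause of (a) are local in $r$, so at those points we may work over $R_J^{\an}$ for $J$ a small closed subinterval of $-\log I$, in which case $R_J^{\an} = R_J$ is an affinoid ring; the finiteness clause of (a) then reduces to local piecewise linearity plus a compactness argument on each closed subinterval.

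For (a), the continuity and local piecewise affinity of $F_i(M,r)$ (whence of each $f_i$), together with the bound on the number of slopes, is the content of the convergence polygon results of \cite[Chapter~11]{kedlaya-book}. For (b), under the hypothesis $i=n$ or $f_i(M,r_0) > f_{i+1}(M,r_0)$, one may locally separate off a rank-$i$ subquotient corresponding to the top $i$ radii using Proposition~\ref{P:field decomposition1}(a) fibrewise. After iterating the Frobenius descendant construction of Definition~\ref{D:global Frobenius descendant} enough times to bring these radii into the visible range (using Proposition~\ref{P:global Frobenius antecedent} and Proposition~\ref{P:descendant} to track the effect on slopes), one invokes Proposition~\ref{P:christol-dwork} to identify $F_i(M,r)$ locally with a Gauss valuation of the constant term of the characteristic polynomial of a cyclic vector. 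This valuation has integer slope in $r$, giving the first assertion; the stated denominator bound follows because the sum of any $j \leq i$ of the subsidiary radii is controlled by a sub-Newton polygon of at most $j$ vertices, so the slope of each $f_i$ individually has denominator dividing some integer in $\{1,\dots,n\}$.

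Parts (c), (d), (e) again translate directly to \cite[Chapter~11]{kedlaya-book}. Subharmonicity (c) is proved by applying the Christol--Dwork description of (b) to the translates $T^*_{\mu}$ and comparing to the off-center Frobenius descendant of Proposition~\ref{P:off-center descendant}, which expresses the Newton polygon of a translated module in terms of the original intrinsic radii; the equality clause uses the separation hypothesis to ensure the top $i$ subsidiary radii correspond to the same sub-bundle before and after translation. Monotonicity (d) reflects the fact that in the regime $f_i(M,r_0) > r_0$, a rank-$i$ subsheaf of local horizontal sections extends past the disc $\{|t| \leq e^{-r_0}\}$, forcing $F_i$ to have nonpositive slope there; this may alternatively be extracted from (c) by observing that the subharmonicity inequality, specialized to a disc, degenerates to a monotonicity statement once one accounts for the fact that all nonzero residue classes give equivalent translates. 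Convexity (e) follows formally from (c): on an annulus the left and right slopes of $F_i$ at any interior point satisfy the subharmonicity inequality with only two terms in the sum, and this is the defining inequality for convexity.

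The main obstacle will be bookkeeping: ensuring that each of the cited results in \cite[Chapters~11--12]{kedlaya-book}, typically stated for a particular range of radii or for $R_I$ in place of $R_I^{\an}$, assembles into the uniform statement above, and in particular that no special behavior intrudes at the endpoint $0 \in I$ (which is handled because closed subintervals of $I$ of positive length not containing $0$ are affinoid slices on which $R_I^{\an}$ and $R_I$ agree). No new mathematical input beyond \cite[Chapters~11--12]{kedlaya-book} is anticipated.
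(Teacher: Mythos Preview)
Your proposal is essentially the same approach as the paper's: the paper's entire proof is the single line ``See \cite[Theorem~11.3.2]{kedlaya-book},'' and you likewise reduce everything to results from \cite[Chapters~11--12]{kedlaya-book}. Your additional commentary on \emph{how} the book proves each part is not needed here and is in places imprecise (for instance, (e) in the book is established directly via Newton polygons and cyclic vectors rather than as a formal consequence of (c), and your heuristic for (d) via extending horizontal sections is not the actual argument), but the overall strategy---cite the book---is exactly what the paper does.
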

\begin{proof}
See \cite[Theorem~11.3.2]{kedlaya-book}.
\end{proof}

\begin{cor} \label{C:right slope 1}
Suppose that $0 \in I$ and $f_1(M,r_0) = r_0$ for some $r_0 \in -\log I$.
Then $f_1(M,r) = r$ for all $r \geq r_0$.
\end{cor}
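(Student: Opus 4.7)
The plan is to propagate the equality $f_1(M,r) = r$ to the right by combining the continuity, piecewise linearity, and monotonicity properties recorded in Proposition~\ref{P:variation}. Since each intrinsic subsidiary radius $\exp(r - f_i(M,r))$ lies in $(0,1]$, one has $f_i(M,r) \geq r$ for all $i$ and $r$, and since $f_1$ is by definition the largest of the $f_i$, the hypothesis $f_1(M,r_0)=r_0$ forces $f_i(M,r_0) = r_0$ for every $i$.

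I would then set
\[
r_1 = \sup\bigl\{r \in -\log(I \setminus \{0\}) : r \geq r_0 \text{ and } f_1(M,r') = r' \text{ for all } r' \in [r_0, r]\bigr\}.
\]
By continuity (Proposition~\ref{P:variation}(a)), $f_1(M,r_1) = r_1$. The goal is to show that $r_1$ is the right endpoint of $-\log(I\setminus\{0\})$, which will establish the claim.

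Suppose for contradiction that $r_1$ is interior to $-\log(I \setminus \{0\})$. By the piecewise affine structure (also Proposition~\ref{P:variation}(a)), there exists $\delta > 0$ such that $f_1(M,\cdot)$ is affine with some slope $m$ on $(r_1, r_1 + \delta)$; since $f_1(M,r_1) = r_1$, this forces $f_1(M,r) = r_1 + m(r - r_1)$ on that interval. Three cases are then exhaustive: if $m = 1$, then $f_1(M,r) = r$ holds on all of $[r_0, r_1 + \delta]$, contradicting the maximality of $r_1$; if $m < 1$, then $f_1(M,r) < r$ on $(r_1, r_1 + \delta)$, contradicting $f_1 \geq r$; if $m > 1$, then $f_1(M,r) > r$ throughout $(r_1, r_1 + \delta)$, so Proposition~\ref{P:variation}(d) (Monotonicity), applied to $F_1 = f_1$, forces the slopes of $f_1$ to be nonpositive near every such $r$, contradicting $m > 1 > 0$.

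There is no real obstacle here; the argument is essentially a bookkeeping exercise in the properties already collected. The one subtlety worth highlighting is the identification $F_1 = f_1$, which is exactly what makes the monotonicity clause (d) directly applicable with $i = 1$ to dispose of the case $m > 1$.
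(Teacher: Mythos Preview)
Your proof is correct and follows essentially the same strategy as the paper: both combine the lower bound $f_1(M,r) \geq r$ with piecewise affinity and the monotonicity clause Proposition~\ref{P:variation}(d) (applied via $F_1 = f_1$) to pin the right slope at $1$. The only structural difference is that the paper invokes convexity (Proposition~\ref{P:variation}(e)) to argue that all slopes to the right of $r_0$ are at least $1$ and then rules out $>1$ by monotonicity, whereas you replace the convexity step with a supremum argument and a local trichotomy on the slope at $r_1$; your route is marginally more economical in that it does not need (e), but the content is the same. (Your opening remark that $f_1(M,r_0)=r_0$ forces all $f_i(M,r_0)=r_0$ is correct but unused.)
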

\begin{proof}
By Proposition~\ref{P:variation}(a,d,e), the function $f_1(M,r)$ is piecewise affine and convex everywhere, and 
nonincreasing wherever it is greater than $r$. Since $f_1(M,r_0) = r_0$ and $f_1(M,r) \geq r$ everywhere,
all of the slopes of $f_1(M,r)$ for $r \geq r_0$ must be at least 1. However, none of them can be strictly
greater than 1 because this would force $f_1(M,r) > r$ for some $r$, and then $f_1(M,r)$ would be forced to
be nonincreasing. This proves the claim.
\end{proof}

\begin{cor} \label{C:constant initial}
Suppose that $0 \in I$ and for some $r_0 \in -\log I$, some $r_1 > r_0$,
and some $j \in \{0,\dots,n\}$,
the functions $f_1(M,r), \dots, f_j(M,r)$ are equal to some constant value $c$ for $r \in (r_0,r_1)$.
Then
\[
f_i(M,r) = \max\{r,c\} \qquad (r > r_0; \,i=1,\dots,j).
\]
\end{cor}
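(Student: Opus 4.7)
The plan is to combine the convexity of each $F_i(M,r)$ from Proposition~\ref{P:variation}(e), the monotonicity bound in Proposition~\ref{P:variation}(d), and the base case Corollary~\ref{C:right slope 1}, in order to propagate the hypothesis $f_1(M,r) = \cdots = f_j(M,r) = c$ from the interval $(r_0, r_1)$ out to all $r > r_0$. Since $f_i(M,r) \geq r$ is automatic, the constant $c$ satisfies $c \geq r$ for every $r \in (r_0, r_1)$, so $c \geq r_1 > r_0$; each partial sum $F_i(M,r) = f_1(M,r) + \cdots + f_i(M,r)$ is then identically $ic$, and in particular has slope $0$, on $(r_0, r_1)$.

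The main step is to show that $f_i(M,r) = c$ for all $i \leq j$ and all $r \in (r_0, r^*]$, where $r^*$ is the supremum of those $r \geq r_1$ in $-\log I$ for which $f_i(M,\cdot) \equiv c$ on $[r_1, r]$ for every $i \leq j$. Suppose $c > r^*$ and $r^* < \sup(-\log I)$. Then by continuity $f_i(M, r^*) = c > r^*$ for $i \leq j$, so Proposition~\ref{P:variation}(d) forces the slope of each $F_i$ to be nonpositive in a two-sided neighborhood of $r^*$, while convexity combined with $F_i$ being constant just to the left of $r^*$ forces its right slope at $r^*$ to be $\geq 0$. These two constraints pin the right slopes at $0$ in an interval to the right of $r^*$, so each $F_i$ remains equal to $ic$ there; telescoping via $f_1 = F_1$ and $f_i = F_i - F_{i-1}$ gives $f_i \equiv c$ for all $i \leq j$ on a strictly larger interval, contradicting the maximality of $r^*$. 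Hence either $c \leq r^*$, or $r^* = \sup(-\log I)$.

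If $r^* = \sup(-\log I)$, we are done: $f_i \equiv c$ on $(r_0, r^*]$ and $c \geq r$ throughout, so $f_i(M,r) = c = \max\{r, c\}$. Otherwise $c \leq r^*$, and since $c \geq r_1$ we may evaluate at $r = c$ to obtain $f_1(M, c) = c$. Corollary~\ref{C:right slope 1} then gives $f_1(M, r) = r$ for all $r \geq c$ in $-\log I$, and the sandwich $r \leq f_i(M,r) \leq f_1(M,r) = r$ yields $f_i(M, r) = r$ for every $i \leq j$ and $r \geq c$. Compatibility with $f_i \equiv c$ on $(r_0, r^*]$ forces $r^* = c$, and assembling the two regimes gives $f_i(M, r) = \max\{r, c\}$ for all $r > r_0$ in $-\log I$, as required.

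The chief subtlety will be ensuring, at the migrating endpoint $r^*$, that the monotonicity bound (a symmetric neighborhood condition that requires $r^*$ to be interior) and convexity (which controls only the ordering of one-sided slopes) combine correctly to freeze the right slope at $0$; once this pinning argument is in place, the remaining work is routine bookkeeping on the three pieces of the domain carved out by $r_0$, $c$, and $\sup(-\log I)$.
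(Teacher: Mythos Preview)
Your proof is correct and uses the same ingredients as the paper (Proposition~\ref{P:variation}(d,e) and Corollary~\ref{C:right slope 1}), but the organization differs. The paper proceeds by induction on $i$: assuming $f_1,\dots,f_{i-1}$ already equal $\max\{r,c\}$, on the region where $f_i > r$ the induction hypothesis makes $F_{i-1}$ constant, so $f_i = F_i - F_{i-1}$ inherits convexity and nonincreasingness directly from $F_i$; this pins $f_i \equiv c$ on $(r_0,c]$, after which Corollary~\ref{C:right slope 1} handles $r \geq c$. You instead treat all $i \leq j$ simultaneously via a supremum argument on the interval of constancy, working with the $F_i$ throughout and telescoping back to the $f_i$ only at the end. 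Both are clean; the paper's induction is a little shorter, while your version keeps the argument phrased in terms of the $F_i$, which are what Proposition~\ref{P:variation}(d,e) actually controls.

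One small point: since $0 \in I$, the set $-\log I$ is unbounded above, so your case $r^* = \sup(-\log I)$ is vacuous (it would force $f_i(M,r) = c$ for all $r > r_0$, contradicting $f_i(M,r) \geq r$). Your stated justification there (``$c \geq r$ throughout'') would fail if that case arose, but since it cannot, the argument is unaffected; you could simply drop the case.
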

\begin{proof}
We prove that the claim holds for $f_1,\dots,f_i$ by induction on $i$, with base case $i=0$.
Given the induction hypothesis for $i-1$,
note that since $f_i(M,r) \geq r$ for all $r > r_0$, we must have $c > r_0$.
By Proposition~\ref{P:variation}(d,e) and the induction hypothesis, the function $f_i(M,r)$ is convex everywhere and nonincreasing wherever it is greater
than $r$. It follows that $f_i(M,r) = c$ for $r \in (r_0, c]$.
By Corollary~\ref{C:right slope 1}, we then have $f_i(M,r) = r$ for $r \geq c$.
\end{proof}

\begin{prop} \label{P:rational intercepts}
For $i=1,\dots,n$, on any interval where $f_i(M,r)$ is affine, it has the form $ar+b$ for some
$a \in \QQ$ and some $b$ in the divisible closure of $\log |K^\times|$.
\end{prop}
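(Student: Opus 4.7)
By Proposition \ref{P:variation}(a,b), $f_i(M,r)$ is already continuous, piecewise affine, with slopes in $\bigcup_{j=1}^n \frac{1}{j}\ZZ$; only the claim about intercepts is new. Note that $\log\omega$ lies in the divisible closure $\Lambda$ of $\log|K^\times|$: trivially when $p=0$, and when $p>0$ the identity $\log\omega = -\log p/(p-1)$ exhibits $\log\omega$ (and hence $\log p$) as a $\QQ$-multiple of $\log|p| \in \log|K^\times|$ under the standard normalization. Fix an affine piece of $f_i(M,r)$ on an open interval $(r_0,r_1) \subseteq -\log I$; since an affine function is determined by its values on any subinterval, it suffices to verify that the intercept lies in $\Lambda$ on an arbitrarily small closed subinterval $J \subset (r_0,r_1)$.

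Assume first that $f_i(M,r) > r - \log\omega$ on $J$, so the relevant subsidiary radius is strictly below $\omega$. By Corollary \ref{C:cyclic vector}, choose $\bv \in M$ cyclic for $M \otimes_{R_I^{\an}} \Frac(R_I^{\an})$. After shrinking $J$ (harmlessly), the elements $\bv, D\bv, \ldots, D^{n-1}\bv$ form a basis of $M_J$ over $R_J$, yielding a polynomial $P(T) = T^n - \sum_{j=0}^{n-1} a_j T^j \in R_J[T]$. By Proposition \ref{P:christol-dwork}, the spectral polygon slope $\log s = r - f_i(M,r)$ corresponds (when $<\log\omega$) to the Newton polygon slope $c(r) = \log s - \log\omega + \log\rho = -f_i(M,r) - \log\omega$ of $P$ at $\rho = e^{-r}$. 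Expanding each $a_j = \sum_n c_{j,n} t^n$ with $c_{j,n} \in K$, we have $\log|a_j|_\rho = \max_n (\log|c_{j,n}| - nr)$, a maximum of finitely many affine functions with $\ZZ$-slopes and intercepts in $\log|K^\times|$. After a final restriction of $J$ on which the dominant monomial in each $a_j$ and the active edge of the Newton polygon are both fixed, $c(r)$ reduces to a single $\QQ$-linear combination of the $\log|a_j|_\rho$'s (of the form $(\log|a_k|_\rho - \log|a_j|_\rho)/(j-k)$, or $\log|a_j|_\rho/(n-j)$ for edges abutting the leading term), which is affine with rational slope and intercept in $\Lambda$. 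Consequently $f_i(M,r) = -c(r) - \log\omega$ has the required form.

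In the non-visible range $f_i(M,r) \leq r - \log\omega$ on $J$, we reduce to the visible case by iterated Frobenius descent. Applying $\varphi_*$ from Definition \ref{D:global Frobenius descendant}, Proposition \ref{P:descendant} says the subsidiary radius in question becomes either $s_i^p$ (when $s_i > \omega$, giving $f_\ell(\varphi_*M, pr) = p\, f_i(M,r)$) or $p^{-1} s_i$ (when $s_i \leq \omega$, giving $f_\ell(\varphi_*M, pr) = (p-1)r + f_i(M,r) + \log p$). Either relation is $\QQ$-affine in $r$ with shift constant in $\Lambda$, so the property ``rational slope and $\Lambda$-intercept'' transfers between $M$ and $\varphi_*M$. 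Iterating this descent finitely many times drives all subsidiary radii strictly below $\omega$, thereby reducing to the visible case. The main obstacle is the bookkeeping: among the $p$-fold multiplicity of subsidiary radii of $\varphi_*M$ produced by Proposition \ref{P:descendant}, one must identify which one tracks the original affine piece of $f_i(M,r)$. This is handled by shrinking $J$ enough that the multiplicity pattern of subsidiary radii of each iterate $\varphi_*^k M$ is locally constant on the appropriate subinterval, which is permissible by Proposition \ref{P:variation}(a).
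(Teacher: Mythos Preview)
Your argument is correct and is essentially the standard proof (the paper merely cites \cite[Corollary~11.8.2]{kedlaya-book}, which proceeds along the same lines: cyclic vector plus Christol--Dwork in the visible range, Frobenius descent to reduce the non-visible range to the visible one). A few points of exposition are slightly loose but do not affect validity. First, $\log\omega\in\Lambda$ holds not because of a ``standard normalization'' but intrinsically: the spectral radius of $d/dt$ on $F_\rho$ is $|p|^{1/(p-1)}/\rho$, so $\omega=|p|^{1/(p-1)}$ and $\log\omega=\tfrac{1}{p-1}\log|p|\in\Lambda$ since $|p|\in|K^\times|$. Second, $\log|a_j|_\rho=\sup_n(\log|c_{j,n}|-nr)$ is a supremum over infinitely many terms, not finitely many; what you need (and what is true) is that on any closed bounded interval only finitely many terms can dominate, so the function is piecewise affine and a further shrink makes a single monomial dominant. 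Third, the claim that iterated pushforward ``drives all subsidiary radii strictly below $\omega$'' is false as stated, since any radius equal to $1$ stays at $1$; but you only need the particular radius tracking $f_i$ to drop below $\omega$, and after shrinking $J$ so that $s_i$ is bounded away from $1$ on $J$ (the case $s_i\equiv 1$ being trivial), finitely many pushforwards accomplish this.
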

\begin{proof}
See \cite[Corollary~11.8.2]{kedlaya-book}.
\end{proof}

\begin{prop} \label{P:decompose disc}
Suppose that $0 \in I$ and that for some $i \in \{1,\dots,n-1\}$ and some $\gamma \in I \setminus \{0\}$, 
the following conditions hold.
\begin{enumerate}
\item[(a)]
The function $F_i(M,r)$ is constant for $r < -\log \gamma$.
\item[(b)]
We have $f_i(M,r) > f_{i+1}(M,r)$ for $r < -\log \gamma$.
\end{enumerate}
Then $M$ admits a unique direct sum decomposition separating the first $i$ intrinsic subsidiary radii of $M_\rho$
for all $\rho > \gamma$.
\end{prop}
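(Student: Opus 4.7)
The plan is to handle uniqueness and existence separately, with existence further split between the annular region $\rho > \gamma$ and the extension across the inner disc.

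For uniqueness, Proposition~\ref{P:field decomposition1}(a) equips each $M_\rho$ (with $\rho \in I$, $\rho > \gamma$) with a canonical spectral decomposition. Hypothesis (b) supplies a strict gap between the $i$-th and $(i+1)$-th subsidiary radii, so this decomposition admits a canonical coarsening $M_\rho = U_\rho \oplus V_\rho$ in which $U_\rho$ accounts for the first $i$ radii. Any global decomposition of the desired form must restrict to $U_\rho \oplus V_\rho$ at every such $\rho$, and since $M$ is torsion-free over the Fr\'echet-Stein algebra $R_I^{\an}$, two such global decompositions whose restrictions agree at any one such $\rho$ must coincide.

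For the annular part of existence, on any closed subinterval $J \subset -\log I$ lying entirely to the left of $-\log \gamma$, hypothesis (b) enables an application of the standard decomposition-by-subsidiary-radii theorem over closed subannuli from \cite[Chapter~12]{kedlaya-book}, producing a splitting $M_J = M_{1,J} \oplus M_{2,J}$ that separates the first $i$ radii. Uniqueness forces compatibility on overlaps, assembling these into a decomposition $M|_{R_{I'}^{\an}} = M_1' \oplus M_2'$ over the open annular region $I' = I \cap (\gamma, \sup I]$.

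The heart of the proof is to extend this decomposition across the inner disc, which is where the constancy hypothesis (a) becomes essential. Let $\pi_1$ denote the projector of the annular decomposition, viewed as a horizontal section of $\End(M)|_{R_{I'}^{\an}}$. The goal is to extend $\pi_1$ to a horizontal global section of $\End(M)$ over all of $R_I^{\an}$. Constancy of $F_i(M, r)$ for $r < -\log \gamma$ controls the spectral behavior of $\End(M)_\rho$ via the tensor structure: it forces the summand of $\End(M)_\rho$ carrying $\pi_1$ to have intrinsic radius bounded away from those of the other summands on $\rho > \gamma$, with multiplicities and norms remaining uniformly bounded as $\rho \to \gamma^+$. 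Combining this uniform bound with analyticity on $I'$ and horizontality then allows $\pi_1$ to extend analytically across the puncture, and idempotency propagates to yield the desired global direct sum decomposition of $M$. The main obstacle is precisely this extension step: without the constancy input, $\pi_1$ could in principle develop an essential singularity as $\rho \to \gamma^+$, and ruling this out requires converting the constancy of $F_i$ into a quantitative uniform growth bound on $\pi_1$ via the explicit spectral formulas of Proposition~\ref{P:christol-dwork} and Corollary~\ref{C:christol-dwork}.
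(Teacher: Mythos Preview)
The paper does not prove this statement; it simply cites \cite[Theorem~12.5.1]{kedlaya-book}. So there is no in-paper argument to compare against, only the external reference.

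Your outline is reasonable in shape, and you correctly identify the crux: extending the annular projector $\pi_1$ across the origin. But the justification you give for that step does not close. A horizontal section of $\End(M)$ over the open annulus $I' = (\gamma, \sup I]$ is represented, in any basis of $M$, by a matrix whose entries lie in $R_{I'}^{\an}$, i.e.\ Laurent series. Showing that $\pi_1$ has uniformly bounded $\rho$-Gauss norm as $\rho \to \gamma^+$ is \emph{not} enough to conclude that the negative-index coefficients vanish: a Laurent series $\sum_{n \in \ZZ} c_n t^n$ can have $|c_n|\rho^n$ uniformly bounded for $\rho$ in an open interval and still have infinitely many nonzero negative terms. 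So ``uniform growth bound plus horizontality'' does not give a removable-singularity theorem here, and your appeal to Proposition~\ref{P:christol-dwork} and Corollary~\ref{C:christol-dwork} does not supply the missing ingredient, since those results compute subsidiary radii from a cyclic vector and say nothing directly about the analytic continuation of spectral projectors.

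The argument in \cite[Theorem~12.5.1]{kedlaya-book} avoids this trap by working over closed discs from the start rather than trying to extend from an annulus. One first uses Frobenius antecedents and descendants to reduce to the visible range, then builds the decomposition over each closed disc $R_{[0,\rho]}$ for $\rho > \gamma$ via explicit cyclic-vector computations; the constancy of $F_i(M,r)$ is exactly what guarantees that the closed-disc decomposition theorem applies. Uniqueness then glues these over varying $\rho$. If you want to salvage your route, the cleanest fix is to invoke the closed-disc decomposition over a single $R_{[0,\rho_0]}$ with $\rho_0 > \gamma$ (this is where constancy is genuinely used), and then glue it to your annular decomposition by uniqueness; but at that point you are essentially reproducing the book's argument.
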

\begin{proof}
See \cite[Theorem~12.5.1]{kedlaya-book}.
\end{proof}

\begin{cor} \label{C:decompose disc}
Suppose that $I = [0, \beta]$ or $I = [0,\beta)$ for some $\beta>0$ and put $r_0 = -\log \beta$.
Suppose that for some $i \in \{0,\dots,n\}$ and some $r_1 > r_0$,
the following conditions hold.
\begin{enumerate}
\item[(a)]
For $j=1,\dots,i$, the function $f_j(M,r)$ is constant for $r \in (r_0,r_1)$.
\item[(b)]
If $i<n$, then $\liminf_{r \to r_0^+} f_{i+1}(M,r) = r_0$.
\end{enumerate}
Then $M$ admits a direct sum decomposition $M_0 \oplus M_1 \oplus \cdots$ such that
\[
f_1(M_0,r) = \cdots = f_{\rank(M_0)}(M_0,r) = r \qquad (r > r_0)
\]
and for each $k>0$, there is a constant $c_k > r_0$ such that
\[
f_1(M_k,r) = \cdots = f_{\rank(M_k)}(M_k,r) = \max\{r, c_k\} \qquad (r > r_0).
\]
In particular, for $j=i+1,\dots,n$, $f_j(M,r) = r$ for $r > r_0$.
\end{cor}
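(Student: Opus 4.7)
The plan is to separate $M$ into two parts: a summand $M_0$ on which every $f_j$ equals $r$, and a complement $N$ that further decomposes into pieces $M_k$, one for each distinct constant value appearing among $f_1(M,\cdot),\dots,f_i(M,\cdot)$ on $(r_0,r_1)$. Write $c'_j = f_j(M,r)$ for $r \in (r_0, r_1)$ and $j = 1, \dots, i$; these are constants with $c'_1 \geq \cdots \geq c'_i$, and taking $r \to r_1$ in $f_j(M,r) \geq r$ gives $c'_j \geq r_1 > r_0$. The first key observation is that when $i < n$,
\[
\lim_{r \to r_0^+} f_{i+1}(M,r) = r_0.
\]
Indeed, $F_i(M,\cdot)$ is constant on $(r_0, r_1)$ while $F_{i+1}(M,\cdot)$ is convex by Proposition~\ref{P:variation}(e), so $f_{i+1} = F_{i+1} - F_i$ is convex on $(r_0, r_1)$; any convex function on an open interval has a one-sided boundary limit, and by hypothesis~(b) this limit is $\liminf = r_0$.

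With this limit in hand, I apply Proposition~\ref{P:decompose disc} with parameter $i$ and $\gamma = e^{-r_2}$ for some $r_2 \in (r_0, r_1]$ close enough to $r_0$ that $f_{i+1}(M,r) < c'_i$ on $(r_0, r_2)$. Condition~(a) holds because $F_i$ is constant there; condition~(b) holds because $f_i = c'_i$ strictly exceeds $f_{i+1}$. This yields a direct sum decomposition $M = N \oplus M_0$ over $R_I^{\an}$ in which $N$ carries the $i$ smallest radii and $M_0$ the remaining $n-i$ radii of $M_\rho$ for $\rho > \gamma$.

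The main technical step is to show $f_j(M_0, r) = r$ for every $j$ and every $r > r_0$. Let $h = f_1(M_0,\cdot)$; by construction, $\liminf_{r\to r_0^+} h(r) = r_0$ while $h(r) \geq r$. Suppose toward contradiction that the open set $E = \{r \in -\log I : h(r) > r\}$ is nonempty, and let $(a,b)$ be the connected component of $E$ containing some $r^* \in E$. On $(a,b)$, Proposition~\ref{P:variation}(d) applied to $M_0$ forces the slope of $F_1(M_0,\cdot) = h$ to be nonpositive, so $h$ is nonincreasing on $(a,b)$. If $a > r_0$, then $a \in -\log I$ and continuity gives $h(a) = a$, so $h(r) \leq a < r$ on $(a,b)$, contradicting $r \in E$. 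If $a = r_0$, then $\lim_{r \to r_0^+} h(r) = \sup_{(r_0,b)} h \geq h(r^*) > r^* > r_0$, contradicting $\liminf = r_0$. Hence $h(r) = r$ throughout $-\log I$; since $f_1(M_0,\cdot) \geq f_j(M_0,\cdot) \geq r$ for all $j$, every $f_j(M_0,\cdot)$ equals $r$.

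It remains to decompose $N$. Let $c_1 > c_2 > \cdots > c_\ell$ be the distinct values among $c'_1,\dots,c'_i$, occurring with multiplicities $m_1,\dots,m_\ell$ (so $\sum_k m_k = i$). Apply Proposition~\ref{P:decompose disc} iteratively to $N$ and its successive summands, splitting off at the $k$-th stage the block whose first $m_1+\cdots+m_k$ radii equal $c_1,\dots,c_k$: condition~(a) follows from constancy of the corresponding $F$-function on $(r_0, r_2)$, and condition~(b) from $c_k > c_{k+1}$. This produces $N = N_1 \oplus \cdots \oplus N_\ell$ with $\rank N_k = m_k$ and $f_j(N_k, r) = c_k$ on $(r_0, r_2)$ for $j = 1, \dots, m_k$; Corollary~\ref{C:constant initial} then upgrades this to $f_j(N_k, r) = \max\{r, c_k\}$ for all $r > r_0$. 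Relabeling $N_k$ as $M_k$ yields the required decomposition, and the final assertion is immediate since at most $\sum_k m_k = i$ of the values $f_j(M,r)$ can exceed $r$.
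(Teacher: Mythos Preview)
Your proof is correct and uses the same ingredients as the paper's (Proposition~\ref{P:decompose disc}, Corollary~\ref{C:constant initial}, and the monotonicity/convexity in Proposition~\ref{P:variation}), but the order of splitting is reversed. The paper proceeds by induction on $i$: it peels off the block $M_1$ carrying the largest constant value $c_1$ first, verifies directly via a convexity sandwich that $f_j(M_1,r)=\max\{r,c_1\}$ for all $r>r_0$, and then applies the inductive hypothesis to the complement $M_2$ (the base case $i=0$ being exactly your ``main technical step'' argument that $f_1=r$ forces all $f_j=r$). You instead split off $M_0$ first in a single application of Proposition~\ref{P:decompose disc} at index $i$, prove $f_j(M_0,\cdot)\equiv r$ by the same monotonicity contradiction, and then decompose $N$ block by block from the top. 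Your organisation is arguably cleaner because it isolates the single nontrivial analytic step (the $M_0$ argument) rather than burying it as a base case, and it avoids having to re-verify the corollary's hypotheses for the inductive complement; the paper's version, on the other hand, never needs to invoke Proposition~\ref{P:decompose disc} at the extreme index $i$.

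One small omission: your first application of Proposition~\ref{P:decompose disc} requires $i\in\{1,\dots,n-1\}$, so you should note that when $i=0$ one takes $N=0$, $M_0=M$ directly (your $M_0$ argument then applies with $h=f_1(M,\cdot)$ and hypothesis~(b) giving $\liminf h=r_0$), and when $i=n$ one takes $M_0=0$, $N=M$.
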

\begin{proof}
We induct on $i$.
Suppose first that $i=0$. In this case, 
we cannot have $f_1(M,r_1) > r_1$ for any $r_1 > r_0$, as then
Proposition~\ref{P:variation}(d) would imply $f_1(M,r) > r_1$ for all $r \in (r_0,r_1)$
and hence $\liminf_{r \to r_0^+} f_1(M,r) \geq r_1$, violating condition (b).
Hence for all $r > r_0$ and all $j$, we have $r = f_1(M,r) \geq f_j(M,r) \geq r$,
proving the claim with $M = M_0$.

Suppose next that $i>0$. Let $c_1$ be the constant value of $f_1(M,r)$ for $r \in (r_0,r_1)$.
By Corollary~\ref{C:constant initial}, we have $f_1(M,r) = \max\{r,c_1\}$ for $r > r_0$.
Let $m$ be the largest value for which $f_1(M,r) = f_m(M,r)$
for $r$ in some right neighborhood $(r_0,r_1)$ of $r_0$. 
Split $M$ as $M_1 \oplus M_2$ as per Proposition~\ref{P:decompose disc}
so that $M_1$ accounts for the first $m$ intrinsic subsidiary radii of $M_\rho$ for $\rho > e^{-r_1}$.
For $r \geq c_1$, for all $j$ we have $r \leq f_j(M_1,r) \leq f_1(M_1,r) \leq f_1(M,r)= r$ and so
$f_j(M_1,r) = r$. By Proposition~\ref{P:variation}(d),
$F_{\rank(M_1)}(M_1,r)$ is convex; since it agrees with the constant function
$\rank(M_1) c_1$ for $r \in (r_0,r_1)$ and for $r=c_1$, we must have
$F_{\rank(M_1)}(M_1,r) = \rank(M_1) c_1$ for $r \in (r_0,c_1]$.
Since in addition $f_j(M_1,r) \leq f_1(M_1,r) \leq f_1(M,r) = \max\{r,c_1\}$, we must have
$f_j(M_1,r) = c_1$ for $r \in (r_0,c_1]$.
Thus $M_1$ has all of the desired properties, so we may apply the induction hypothesis to $M_2$ to prove the claim.
\end{proof}

\begin{prop} \label{P:decompose annulus}
Suppose that $I = (\alpha,\beta)$ for some $\alpha,\beta>0$
and that for some $i \in \{1,\dots,n-1\}$, the following conditions hold.
\begin{enumerate}
\item[(a)]
The function $F_i(M,r)$ is affine for $r \in -\log I$.
\item[(b)]
We have $f_i(M,r) > f_{i+1}(M,r)$ for $r \in -\log I$.
\end{enumerate}
Then 
$M$ admits a unique direct sum decomposition separating the first $i$ intrinsic subsidiary radii of $M_\rho$
for every $\rho \in I$. 
\end{prop}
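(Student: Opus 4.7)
The plan is to establish uniqueness and existence separately, reducing the latter to a closed-subannulus version that mirrors the proof of Proposition~\ref{P:decompose disc}.

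For uniqueness, observe that any decomposition $M = M_1 \oplus M_2$ with $M_1$ accounting for the first $i$ intrinsic subsidiary radii at every $\rho \in I$ is determined pointwise: at each $\rho$, the submodule $(M_1)_\rho \subset M_\rho$ must coincide with the (unique, by the gap in condition (b)) spectral summand from Proposition~\ref{P:field decomposition1}(a) for the $i$ largest radii. Since $R_I^{\an}$ is a Fr\'echet--Stein algebra, a finite projective direct summand of $M$ is determined by its collection of images under the maps $M \to M_\rho$, so the decomposition is unique.

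For existence, I would first prove the analogous statement with $I$ replaced by an arbitrary closed subinterval $J = [\alpha',\beta'] \subset I$ of positive length. Over $R_J$, conditions (a) and (b) are inherited, and I would imitate the proof of Proposition~\ref{P:decompose disc}: construct a projector $\Pi \in \End(M_J)$ onto the desired $M_1^J$ as a limit of endomorphisms built from the spectral projections at individual $F_\rho$, using the gap condition (b) to ensure that the spectral decomposition is pointwise well-defined, and the affineness of $F_i$ from (a) (a refinement of the ``constancy'' hypothesis of Proposition~\ref{P:decompose disc}, valid in the absence of a distinguished endpoint) to drive uniform convergence of the iteration across $J$.

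To globalize, I would exhaust $I$ by a nested sequence of closed subintervals $J_1 \subset J_2 \subset \cdots$ with $\bigcup_k J_k = I$, producing decompositions over each $R_{J_k}$. Uniqueness applied on each overlap $J_k \cap J_{k+1}$ forces the projectors $\Pi_k \in \End(M_{J_k})$ to cohere, so they assemble (via the Fr\'echet--Stein property of $R_I^{\an}$ and the identification of coherent locally free sheaves with finite projective modules) to a projector $\Pi \in \End(M)$, yielding the global decomposition $M = \Pi(M) \oplus (1-\Pi)(M)$.

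The main obstacle is the closed-subannulus existence step: the proof of Proposition~\ref{P:decompose disc} exploits the one-sided ``anchoring'' provided by constancy of $F_i$ near $r_0$, whereas on a closed subannulus there is no natural anchor. The hypothesis that $F_i$ is globally affine on $-\log I$ should compensate by supplying a uniform quantitative version of the convergence estimate underlying Proposition~\ref{P:decompose disc}, so that the successive-approximation construction of the projector converges uniformly in $\rho \in J$ regardless of where it is initiated. Once this is carried out, the rest (uniqueness and gluing) is essentially formal.
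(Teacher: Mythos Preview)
The paper does not prove this proposition; it simply cites \cite[Theorem~12.4.2]{kedlaya-book}. So there is no in-paper argument to compare against, only the referenced one.

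Your high-level structure (uniqueness from the pointwise spectral decomposition, existence on closed subannuli, then glue) matches the architecture of the cited proof. But two points deserve comment.

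First, the step you flag as the ``main obstacle'' is essentially the entire theorem, and your description of how to handle it is not how the argument actually goes. One does not build the projector as a limit of ``spectral projections at individual $F_\rho$'': those pointwise projectors live in different rings and there is no evident interpolation mechanism. The actual proof over a closed subannulus proceeds by (i) using Frobenius antecedents/descendants (Propositions~\ref{P:global Frobenius antecedent} and~\ref{P:descendant}) to reduce to the case where the relevant subsidiary radii lie in the visible range $(0,\omega)$, and then (ii) choosing a cyclic vector (Corollary~\ref{C:cyclic vector}) and factoring the associated twisted polynomial via a Hensel-type approximation lemma. The affineness of $F_i$ and the gap $f_i > f_{i+1}$ enter precisely as the hypotheses that make this factorization converge uniformly over the closed interval. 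Your sketch does not touch any of this machinery, so as written it is a plan rather than a proof.

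Second, a technical point on the gluing: the module $M$ lives over $R_I^{\an}$ (Hypothesis~\ref{H:discs annuli modules}), not over $R_I$, and $R_I^{\an}$ for an open interval is a Banach PID, not a Fr\'echet--Stein algebra. A compatible family of projectors over the $R_{J_k}$ glues naturally to a projector over $R_I = \varprojlim R_{J_k}$, which is a larger ring than $R_I^{\an}$. To land back in $\End_{R_I^{\an}}(M)$ you need to know that the projector is actually an analytic element; in the cited proof this comes for free because the construction over each closed subannulus already produces matrices with entries approximable by rational functions (via the cyclic-vector presentation), but your sketch gives no reason for this.
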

\begin{proof}
See \cite[Theorem~12.4.2]{kedlaya-book}.
\end{proof}

\subsection{Decompositions over open annuli}
\label{subsec:annuli refined}

We now embark on a deeper analysis of differential modules over open annuli than is found in \cite{kedlaya-book},
concentrating on spectral decompositions and on properties of refined modules. For the latter, we
incorporate some ideas of Xiao \cite{xiao-thesis,xiao-refined}. 
\begin{hypothesis}
Throughout \S\ref{subsec:annuli refined}, continue to retain Hypothesis~\ref{H:discs annuli modules},
but assume further that $n>0$ and $I = (\alpha,\beta)$ for some $\alpha, \beta > 0$.
\end{hypothesis}

\begin{defn}
We say that $M$ is \emph{pure} if the functions $f_1(M,r),\dots,f_n(M,r)$ 
 for $r \in -\log I$ are all equal to a single affine function.
 A \emph{spectral decomposition} of $M$ is a direct sum decomposition $M = \bigoplus_i M_i$ in which each summand
$M_i$ is pure and the values $f_1(M_i, r)$ are all distinct for each $r \in -\log I$.
If such a decomposition exists, it specializes to the spectral decomposition of $M_\rho$ for all $\rho \in I$;
in particular, a spectral decomposition is unique if it exists.
\end{defn}

\begin{lemma} \label{L:spectral decomposition exists}
Consider the following conditions.
\begin{enumerate}
\item[(a)]
The module $M$ admits a spectral decomposition.
\item[(b)]
For $i=1,\dots,n$, the function $f_i(M,r)$ is affine for $r \in -\log I$.
\item[(c)]
The functions $F_n(M,r)$ and $F_{n^2}(\End(M),r)$ are affine for $r \in -\log I$. (That is, $M$ is \emph{clean} in the sense of
\cite[Definition~12.8.2]{kedlaya-book}.)
\end{enumerate}
Then (a) and (b) are equivalent, and (c) implies both of them.
\end{lemma}
\begin{proof}
It is clear that (a) implies (b), and \cite[Theorem~12.8.3]{kedlaya-book} shows that (c) implies (b), so it remains to check that (b) implies (a). Given (b), for $i=1,\dots,n-1$, if there exists $r_0 \in -\log I$ for which $f_i(M,r_0) = f_{i+1}(M,r_0)$, then we must have
$f_i(M,r) = f_{i+1}(M,r)$ identically; otherwise, since $f_i(M,r)$ and $f_i(M,r)$ are both affine, the inequality $f_i(M,r) \geq f_{i+1}(M,r)$ would have to be violated on one side of $r_0$. In other words, for $i=1,\dots,n-1$, either $f_i(M,r) = f_{i+1}(M,r)$ for $r \in -\log I$
or $f_i(M,r) > f_{i+1}(M,r)$ for $r \in -\log I$.
This allows us to apply Proposition~\ref{P:decompose annulus} to obtain a spectral decomposition, yielding (a).
\end{proof}

\begin{defn}
Suppose that $M$ admits a spectral decomposition. By the \emph{Robba component} of $M$, we mean the summand
$M_1$ in the spectral decomposition of $M$ for which $f_1(M_1,r) = r$ for each $r \in -\log I$, or the zero submodule
if no such summand exists.
\end{defn}

\begin{lemma} \label{L:compare cohomology1}
Suppose that $M$ admits a spectral decomposition. Let $M_1$ be the Robba component of $M$.
Then the natural maps $H^i(M_1) \to H^i(M)$ are bijections for $i=0,1$.
\end{lemma}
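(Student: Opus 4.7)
My plan is to reduce to showing that the complementary summand from the spectral decomposition has vanishing cohomology, then verify this by a direct argument for $H^0$ and a splitting-of-extensions argument for $H^1$. Writing $M = M_1 \oplus M'$ with $M' = \bigoplus_{j \neq 1} M_j$, the spectral decomposition is a direct sum decomposition of differential modules, so $H^i(M) = H^i(M_1) \oplus H^i(M')$ and the natural map in the statement identifies with the inclusion of the first summand. It thus suffices to prove $H^0(N) = H^1(N) = 0$ for any pure differential module $N$ over $R_I^{\an}$ with $f_1(N,r) > r$ for all $r \in -\log I$; equivalently, every intrinsic subsidiary radius of every $N_\rho$ is strictly less than $1$.

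For $H^0(N) = 0$: any hypothetical nonzero $\bv \in \ker(D)$ remains nonzero in some $N_\rho$ (since $N$ is torsion-free over the domain $R_I^{\an}$) and generates a rank-$1$ trivial differential submodule of $N_\rho$. Such a submodule contributes a Jordan-H\"older constituent of intrinsic radius $1$, contradicting the hypothesis.

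For $H^1(N) = 0$: I will interpret a class as a Yoneda extension $0 \to N \to E \to R_I^{\an} \to 0$ and show it splits. From the construction of subsidiary radii via Jordan-H\"older constituents (Definition~\ref{D:intrinsic radius}), the multiset for $E_\rho$ is the union of the one for $N_\rho$ (all below $1$) with the singleton $\{1\}$ coming from the trivial quotient; hence $f_i(E,r) = f_i(N,r)$ for $i \leq \rank(N)$ and $f_{\rank(N)+1}(E,r) = r$. The hypotheses of Proposition~\ref{P:decompose annulus} hold at $i = \rank(N)$ (affinity from purity of $N$; strict inequality because $f_{\rank(N)}(N,r) > r$), yielding $E = E' \oplus E''$ with $E'$ pure of rank $\rank(N)$ inheriting the non-Robba radii and $E''$ rank-$1$ Robba. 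The projection $N \to E''$ must vanish, since its image would simultaneously be a pure quotient of $N$ (radius $<1$) and a pure subobject of $E''$ (radius $1$). Thus $N \subseteq E'$, and the cokernel $E'/N$ is a torsion $R_I^{\an}$-module; it also embeds in $E/N \cong R_I^{\an}$ as a differential submodule, forcing it to vanish since $R_I^{\an}$ is a domain. Therefore $E = N \oplus E''$, and the composite $R_I^{\an} \cong E/N \cong E''$ followed by the inclusion $E'' \hookrightarrow E$ splits the extension. The main obstacle is in this $H^1$ step, specifically aligning the spectral decomposition of $E$ with the embedded submodule $N$; this relies on the uniqueness assertion in Proposition~\ref{P:decompose annulus} and on $R_I^{\an}$ being a domain.
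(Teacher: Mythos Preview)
Your proof is correct and follows essentially the same approach as the paper: reduce to showing the non-Robba complement has vanishing $H^0$ and $H^1$, handle $H^0$ by noting a horizontal section would produce a subsidiary radius equal to $1$, and handle $H^1$ by showing the relevant Yoneda extension admits a spectral decomposition that splits it. Your version spells out the alignment step (why the spectral summand $E'$ coincides with $N$) that the paper leaves implicit, and you write the extension as $0 \to N \to E \to R_I^{\an} \to 0$, which is the standard identification of $H^1(N)$ with $\mathrm{Ext}^1(R_I^{\an},N)$; the paper writes the extension in the opposite order, but since the condition $f_1(\cdot,r)>r$ is self-dual the splitting argument goes through either way.
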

\begin{proof}
Let $M_2$ be the summand complementary to $M_1$ in the spectral decomposition of $M$. It is clear that $H^0((M_2)_\rho) = 0$ for $\rho \in I$,
proving the desired bijectivity for $i=0$. For $i=1$, note that 
$f_1(M_2,r) > r$ for each $r \in -\log I$, so any extension
$0 \to R_I \to N \to M_2 \to 0$ splits by Lemma~\ref{L:spectral decomposition exists}.
\end{proof}

\begin{lemma} \label{L:compare cohomology2}
Suppose that $M$ admits a spectral decomposition. Let $M_1$ be the Robba component of $M$.
Assume either that $p=0$ or that $p>0$ and $M_1$ has $p$-adic non-Liouville exponents.
\begin{enumerate}
\item[(a)]
Let $M_2$ be the maximal unipotent submodule of $M_1$. Then
the natural maps $H^i(M_2) \to H^i(M)$ are bijections for $i=0,1$.
\item[(b)]
The composition $H^0(M) \times H^1(M^\dual) \to H^1(R_I) \to K$
in which the first map is induced by the natural pairing $M \times M^\dual \to R_I$ and the second map is the residue
map is a perfect pairing of
finite-dimensional $K$-vector spaces.
\item[(c)]
For any open subinterval $J$ of $I$, the map
\[
H^i(M) \to H^i(M_{J})
\]
is a bijection for $i=0,1$.
\end{enumerate}
\end{lemma}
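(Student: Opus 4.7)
The plan is to reduce all three parts to the case where $M$ is unipotent, using the hypothesis to split off the maximal unipotent summand of the Robba component.

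For part (a), I would first invoke Lemma~\ref{L:compare cohomology1} to reduce to showing $H^i(M_2)\to H^i(M_1)$ is a bijection. Next, I would split $M_1=M_2\oplus M_2'$ using Corollary~\ref{C:cm1} when $p>0$; in characteristic zero, the analogous decomposition is obtained from Theorem~\ref{T:CM char 0} via the shearing techniques of \S\ref{sec:Robba2a}, glued across overlapping closed subintervals by uniqueness. Here $M_2$ is truly the maximal unipotent submodule because $M_2'$ admits no unipotent subquotient (it has no integer exponent). It then suffices to show $H^0(M_2')=H^1(M_2')=0$. Any horizontal section of $M_2'$ would generate a trivial submodule with exponent $\{0\}$, so by Remark~\ref{R:exponent functoriality}(a) an exponent of $M_2'$ would contain $0$, a contradiction. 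Any nontrivial extension $0\to M_2'\to N\to R_I^{\an}\to 0$ would give a Robba module $N$ with exponent partitioning into $\{0\}$ and an exponent of $M_2'$; this is a Liouville partition (by construction of $M_2'$), so Theorem~\ref{T:CM part 2} (or its $p=0$ analog) splits the extension, a contradiction.

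For part (b), since dualization negates exponents, the maximal unipotent submodule of the Robba component $M_1^\dual$ of $M^\dual$ is $M_2^\dual$; applying (a) to both $M$ and $M^\dual$ reduces to showing the pairing is perfect when $M=M_2$ is unipotent. I would induct on rank. The base case $M=R_I^{\an}$ is direct: $H^0(M)=K$ (constants) and $H^1(M^\dual)=H^1(R_I^{\an})=K$ (via residue), with pairing $(c,[f])\mapsto c\cdot\Res(f)$ manifestly perfect. For the inductive step, a short exact sequence $0\to R_I^{\an}\to M\to M''\to 0$ with $M''$ unipotent gives a six-term cohomology sequence, as does the dual sequence $0\to (M'')^\dual\to M^\dual\to R_I^{\an}\to 0$; the cup product together with the residue intertwines these into a morphism of six-term exact sequences, and the five-lemma, applied with the base case and induction hypothesis, concludes.

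For part (c), by (a) applied to both $M$ and $M_J$ (noting that the maximal unipotent submodule of the Robba component of $M_J$ is just the restriction of $M_2$), we reduce to $M$ unipotent. A second induction on rank, via parallel long exact sequences over $R_I^{\an}$ and $R_J^{\an}$ related by restriction, reduces to the base case $M=R_I^{\an}$, where $H^0=K$ (constants) and $H^1=K$ (via residue at any point interior to $J$) coincide under restriction.

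The principal difficulty is part (a): establishing the decomposition $M_1=M_2\oplus M_2'$ uniformly over the open interval $I$ (since the results of \cite{kedlaya-book} are formulated over closed subintervals), and in characteristic zero extracting a proper analog of Corollary~\ref{C:cm1}. Both are resolved by gluing via uniqueness of the constituent summands on overlaps, but the verification is not entirely trivial. A secondary issue in (b) is confirming the naturality of the residue pairing with respect to the long exact sequences, so that the five-lemma is legitimately applicable.
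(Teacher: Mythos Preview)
Your overall strategy matches the paper's proof: reduce to the Robba component via Lemma~\ref{L:compare cohomology1}, split off the maximal unipotent piece, show the complement has vanishing $H^0$ and $H^1$, and then for (b) and (c) devolve to the trivial module via the snake lemma. Your arguments for $H^0(M_2')=0$ (via exponents) and $H^1(M_2')=0$ (via Theorem~\ref{T:CM part 2}) in the $p>0$ case are exactly what the paper does.

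Two points where the paper is more economical. First, your concern about gluing over closed subintervals is unnecessary in the $p>0$ case: Corollary~\ref{C:cm1} is already stated over the open interval $I$ (it sits under Hypothesis~\ref{H:robba}), so the decomposition $M_1=M_2\oplus M_2'$ is global from the start. Second, and more substantively, in the $p=0$ case the paper does \emph{not} attempt to build an analog of Corollary~\ref{C:cm1} over $K$. Instead it observes that since $M$ is a direct summand of $M\otimes_K K'$ for any finite extension $K'/K$, the vanishing of $H^i(M_3)$ may be checked after enlarging $K$; after a suitable extension, Theorem~\ref{T:CM char 0} exhibits $M_3$ as a successive extension of rank-one modules $M_\lambda$ with $\lambda\in\gotho_K\setminus\ZZ$, and one reduces by the snake lemma to this case, where $H^0$ vanishes by nontriviality and $H^1$ vanishes by applying Theorem~\ref{T:CM char 0} to the extension itself. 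This sidesteps entirely the descent problem you flag as ``the principal difficulty,'' at the cost of only treating $M_3$ up to successive extension rather than as a direct sum---which is all that is needed.
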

\begin{proof}
To prove (a), we may replace $M$ by $M_1$ using Lemma~\ref{L:compare cohomology1}.
In case $p=0$, we may check the claim after replacing $K$
by a finite extension $K'$, since $M$ may be viewed as a direct summand of $M \otimes_K K'$.
After a suitable such extension, by Theorem~\ref{T:CM char 0}
we may decompose $M_1 = M_2 \oplus M_3$ in such a way that $M_3$ becomes a successive extension of copies of $M_\lambda$ for various $\lambda \in \gotho_K \setminus \ZZ$. 
To see that $H^i(M_3) = 0$ for $i=0,1$, we may use the snake lemma to reduce to the case
$M = M_\lambda$ for some $\lambda \in \gotho_K \setminus \ZZ$. In this case, vanishing of $H^0$
follows from the nontriviality of $M_\lambda$, while vanishing of $H^1$ follows from
Theorem~\ref{T:CM char 0} applied to an extension $0 \to R_I \to N \to M_\lambda \to 0$.

To prove (a) in case $p>0$,
apply Corollary~\ref{C:cm1} to decompose $M_1 = M_2 \oplus M_3$ where $M_3$
has an exponent containing no integer or $p$-adic Liouville number.
On one hand, $H^0(M_3) = 0$ because otherwise Remark~\ref{R:exponent functoriality} would
force $M$ to have an exponent containing $0$. On the other hand, $H^1(M_3) = 0$ because we may split
any extension $0 \to R_I \to N \to M_3 \to 0$ using Remark~\ref{R:exponent functoriality} and
Theorem~\ref{T:CM part 2}. 

To prove (b) and (c), we may use (a) to reduce to the case $M = M_2$. We may then
use the snake lemma to reduce to the case $M = R_I$, for which both claims are
easily verified.
\end{proof}

\begin{lemma} \label{L:compare cohomology3}
Suppose that $M$ admits a spectral decomposition. 
Assume either that $p=0$ or that $p>0$ and the Robba component of $M$ has $p$-adic non-Liouville exponent differences.
Then for any $\rho \in I$, the map $H^0(M) \to H^0(M_\rho)$ is a bijection.
\end{lemma}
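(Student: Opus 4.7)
The plan is to reduce in stages to the trivial module $R_I^{\an}$, where the comparison map $H^0(R_I^{\an}) \to H^0(F_\rho)$ is the identity $K \to K$. First, using the spectral decomposition $M = M_1 \oplus M'$, I isolate the Robba component $M_1$. The complementary summand $M'$ has all intrinsic subsidiary radii strictly less than $1$ at every $F_\rho$, so the spectral radius of $D$ on $M'_\rho$ is positive and $H^0(M'_\rho) = 0$; since $R_I^{\an}$ is a principal ideal domain and embeds into $F_\rho$ via the $\rho$-Gauss norm, the finite free module $M'$ embeds into $M'_\rho$, giving $H^0(M') = 0$ as well. Thus the statement reduces to $M_1$.

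Next, I decompose $M_1$ by eigenvalue cosets modulo $\ZZ$, using Theorem~\ref{T:CM char 0} when $p=0$ (after a finite extension $K \to K'$ to trivialize the Galois action on the exponent, descending by faithful flatness at the end) or Corollary~\ref{C:cm2} when $p>0$ (via the non-Liouville exponent-differences hypothesis). This writes $M_1 = \bigoplus_\lambda N_\lambda$, with each $N_\lambda$ admitting a basis on which $D = t\frac{d}{dt}$ acts by an upper triangular matrix over $K$ (or $K'$) with all diagonal entries equal to a single $\lambda$, where $\lambda$ ranges over a set of coset representatives of $\ZZ$. For $\lambda \notin \ZZ$, I claim $H^0(M_\lambda) = 0$ over both $R_I^{\an}$ and $F_\rho$: writing a putative horizontal section as $f\bv$ with $f = \sum a_n t^n$, the condition $tf' + \lambda f = 0$ gives $(n+\lambda) a_n = 0$ for every $n$, and in characteristic zero $n+\lambda \neq 0$ forces $a_n = 0$. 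An induction on the length of the filtration by copies of $M_\lambda$, using the snake lemma, upgrades this to $H^0(N_\lambda) = 0 = H^0((N_\lambda)_\rho)$, so only the unipotent summand $N_0$ contributes to $H^0$ on either side.

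Finally, for $N_0$ unipotent, I pick a basis $\be_1,\dots,\be_n$ placing the matrix of $D$ in strict upper triangular form with constant entries $c_{ij} \in K$. A horizontal section $\sum f_j \be_j$ amounts to a solution of the triangular system $D(f_j) = -\sum_{k>j} c_{jk} f_k$, solved from $j=n$ downward; each scalar equation has the form $tf' = c$ with $c \in K$, and inspection of the coefficient of $t^0$ shows this is solvable in $F_\rho$ (respectively $R_I^{\an}$) iff $c = 0$, in which case $f$ is determined up to addition of a constant in $K$. The solvability obstructions and the dimensions of solution spaces are thus intrinsic to $K$, independent of the ambient ring, so the map $H^0(N_0) \to H^0((N_0)_\rho)$ is bijective. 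The main obstacle is verifying that the eigenvalue-coset decomposition of $M_1$ restricts compatibly to the analogous decomposition of $(M_1)_\rho$ under the $\rho$-Gauss embedding, so that the summand-by-summand comparison of $H^0$ actually reassembles into the global comparison; this follows from the naturality of the construction in Corollary~\ref{C:cm2} (and of its $p=0$ counterpart via Theorem~\ref{T:CM char 0}) with respect to the inclusion $R_I^{\an} \hookrightarrow F_\rho$.
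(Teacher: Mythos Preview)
Your overall strategy matches the paper's proof: strip off the non-Robba part (where $H^0$ vanishes on both sides), then use the Christol--Mebkhout decomposition (Corollary~\ref{C:cm2}, or Theorem~\ref{T:CM char 0} when $p=0$) to reduce to the modules $M_\lambda$, and finish by checking $H^0(M_{\lambda,\rho})=0$ for $\lambda\notin\ZZ$ while the unipotent piece contributes identically on both sides.

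The gap is in your justification of $H^0(M_{\lambda,\rho})=0$. You write a putative horizontal section as $f=\sum_n a_n t^n$ and kill each coefficient. But $F_\rho$ is the completion of $K(t)$ for the $\rho$-Gauss norm, not a ring of convergent Laurent series: elements such as $1/(t-a)$ with $|a|=\rho$ lie in $F_\rho$ yet have no Laurent expansion converging for $|\cdot|_\rho$. So coefficient-matching is not available over $F_\rho$, and the same objection applies to your ``inspection of the coefficient of $t^0$'' in the unipotent step. The paper handles this by citing \cite[Proposition~9.5.3]{kedlaya-book}, which shows that $M_\lambda$ is nontrivial over $F_\rho$ whenever $\lambda\notin\ZZ$ (equivalently, $t^{-\lambda}\notin F_\rho$); for the unipotent step, the correct device over $F_\rho$ is the residue map on $\Omega_{F_\rho/K}$, which shows $tf'=c$ forces $c=0$ and that $\ker(d)=K$. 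Once you replace the Laurent-coefficient arguments by these two facts, your proof is complete and essentially identical to the paper's.
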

\begin{proof}
In case $p=0$, this is immediate from Lemma~\ref{L:compare cohomology2}(a).
In case $p>0$, apply Corollary~\ref{C:cm2} to reduce to the case $M = M_\lambda$ for some $\lambda \in \ZZ_p$.
The claim then holds because by \cite[Proposition~9.5.3]{kedlaya-book},
$H^0(M_{\lambda,\rho}) = 0$ whenever $\lambda \notin \ZZ$.
\end{proof}

%
%

\begin{defn}
We say that $M$ is \emph{refined} if $M$ is pure and moreover $f_1(M,r) > f_1(M^\dual \otimes M,r)$ for all $r \in -\log I$ (that is, $M$ is pure and $M_\rho$ is refined for all $\rho \in I$).
If $M_1, M_2$ are refined, we say they are \emph{equivalent} if $f_1(M_1^\dual \otimes M_2,r) < f_1(M_1,r), f_1(M_2,r)$
for all $r \in -\log I$. Note that if $M_1$ and $M_2$ are inequivalent, then 
by convexity (Proposition~\ref{P:variation}(e)) we must have $f_1(M_1^\dual \otimes M_2,r) = \max\{f_1(M_1,r), f_1(M_2,r)\}$
for all $r \in -\log I$.

A \emph{refined decomposition} of $M$ is a direct sum
decomposition in which each summand $M_i$ is either refined or satisfies the Robba condition,
at most one summand satisfies the Robba condition,
and any two distinct refined summands $M_i, M_j$ are inequivalent. 
Such a decomposition specializes to a refined
decomposition of $M_\rho$ for each $\rho \in I$, and hence is unique if it exists.
\end{defn}

It is easiest to obtain refined decompositions using the following construction of \emph{test modules}
(compare \cite[Example~1.3.20]{xiao-refined}).
\begin{defn} \label{D:test modules}
For any finite tamely ramified extension $K'$ of $K$,
any $\lambda \in K'$, any positive integer $m$ not divisible by $p$,
 any positive integer $e$ which is a power of $p$  
(which must be $1$ if $p=0$), 
and any integer $h$ coprime to $em$, let $N_{\lambda,h,e,m}$ be the differential module
over $(R_I \otimes_{K[t]} K'[t^{1/m}], \frac{d}{dt^{1/m}})$
on the generators $\bv_1,\dots,\bv_e$ given by
\[
D(\bv_1) = t^{-1/m} \bv_2, \,\dots, \,D(\bv_{e-1}) = t^{-1/m} \bv_e, \,D(\bv_e) = \lambda t^{-1/m+h/m} \bv_1.
\]
\end{defn}

\begin{lemma} \label{L:compute test modules}
With notation as in Definition~\ref{D:test modules}, for $\rho > 0$ we have
\[
\min\{\omega, IR((N_{\lambda,h,e,m})_\rho)\} = \min\{\omega, \omega \left|\lambda\right|^{-1/e} \rho^{-h/(em)}\}.
\]
\end{lemma}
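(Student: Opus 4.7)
Set $r = \rho^{1/m}$ and $u = t^{1/m}$, so that $M := (N_{\lambda,h,e,m})_\rho$ becomes a rank-$e$ differential module with derivation $D = d/du$ over a tamely ramified extension of $F_r$. The plan is to split into two cases depending on whether $|\lambda| r^h \leq 1$ or $|\lambda| r^h > 1$, corresponding to the two branches of the $\min$ on the right-hand side.

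In the easy case $|\lambda| r^h \leq 1$ (where the right-hand side equals $\omega$), it suffices to show $IR(M) \geq \omega$, which will follow from bounding the operator norm of $D$ on $M$ by $r^{-1}$. Equipping $M$ with the supremum norm in the basis $\bv_1, \ldots, \bv_e$, for $v = \sum f_j \bv_j$ the $\bv_j$-component of $Dv$ is $f_j' + u^{-1} f_{j-1}$ for $j \geq 2$ and $f_1' + \lambda u^{h-1} f_e$ for $j = 1$. The operator norm of $d/du$ on $F_r$ is $r^{-1}$, $|u^{-1}|_r = r^{-1}$, and the hypothesis gives $|\lambda u^{h-1}|_r = |\lambda| r^{h-1} \leq r^{-1}$; so each component of $Dv$ has norm at most $r^{-1}|v|$. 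Hence the spectral radius of $D$ is at most $r^{-1}$, which translates to $IR(M) \geq \omega$ via the defining equation $\omega/(r \cdot IR(M)) = $ spectral radius of $D$.

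The substantive case $|\lambda| r^h > 1$ uses Proposition~\ref{P:christol-dwork} with cyclic vector $\bv := \bv_1$. The relations $uD(\bv_j) = \bv_{j+1}$ for $j < e$ and $uD(\bv_e) = \lambda u^h \bv_1$ immediately yield $(uD)^e \bv = \lambda u^h \bv$. Combining this with the classical identity $u^e D^e = uD(uD-1)\cdots(uD-e+1)$ (proved by induction on $e$ from $[D,u] = 1$) and the Stirling expansion $(uD)^k = \sum_{j \leq k} S(k,j)\, u^j D^j$, and then dividing by $u^e$, produces an explicit expression
\[
D^e \bv = \lambda u^{h-e} \bv + \sum_{j=1}^{e-1} \alpha_j u^{j-e} D^j \bv
\]
with integer coefficients $\alpha_j$ (whose precise values matter only through the bound $|\alpha_j| \leq 1$). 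Proposition~\ref{P:christol-dwork} then applies to the associated polynomial $P(T) = T^e - \sum_{j=1}^{e-1} \alpha_j u^{j-e} T^j - \lambda u^{h-e}$.

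The main obstacle is the Newton polygon analysis for $P$. Plotting the points $(0, -\log|\lambda| + (e-h)\log r)$, $(e, 0)$, and the middle points (each with $y$-coordinate at least $(e-j)\log r$ thanks to $|\alpha_j| \leq 1$), a short algebraic rearrangement shows that the condition ``each middle point lies on or above the segment from $(0, y_0)$ to $(e, 0)$'' reduces precisely to $|\lambda| r^h \geq 1$. Hence the Newton polygon is a single segment of width $e$, and all roots of $P$ share a common norm $|\lambda|^{1/e} r^{(h-e)/e}$. Under our hypothesis this norm exceeds $r^{-1}$, so the slope lies in the visible range of Proposition~\ref{P:christol-dwork}, which yields
\[
\log IR(M) = \log\omega - \log r - \log(|\lambda|^{1/e} r^{(h-e)/e}) = \log\omega - \tfrac{1}{e}\log|\lambda| - \tfrac{h}{em}\log\rho,
\]
and hence $IR(M) = \omega|\lambda|^{-1/e}\rho^{-h/(em)}$, matching the right-hand side in this case.
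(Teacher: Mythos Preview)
Your proof is correct and follows essentially the same approach as the paper, which simply says the result is immediate from Proposition~\ref{P:christol-dwork}. You have spelled out the cyclic-vector computation and Newton polygon analysis that underlie that one-line citation; your separate treatment of the case $|\lambda| r^h \leq 1$ via an operator-norm bound is a harmless alternative to reading off the same conclusion directly from the proposition (no visible slopes means all intrinsic subsidiary radii are at least $\omega$).
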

\begin{proof}
This is immediate from Proposition~\ref{P:christol-dwork}.
\end{proof}

\begin{lemma} \label{L:test modules2}
Suppose that $M$ is pure and $f_1(M,r) > r - \log \omega$ for $r \in -\log I$.
Then for any $\rho \in I$, there exist a finite tamely ramified extension $K'$ of $K$ and a positive integer $m$
not divisible by $p$ such that $M_\rho \otimes_{K[t]} K'[t^{1/m}]$ admits a refined decomposition
in which for each summand $V$, there exist a scalar $\lambda \in K'$, a positive integer $e$ which is a power of $p$,
and an integer $h$ coprime to $em$
 such that $IR(M_\sigma) = IR((N_{\lambda,h,e,m})_\sigma)$ for $\sigma$
in some neighborhood of $\rho$ and $IR(V^\dual \otimes (N_{\lambda,h,e,m})_\rho) > IR(V)$.
\end{lemma}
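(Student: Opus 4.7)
The plan is to combine Proposition~\ref{P:field decomposition1}(b), which provides a refined decomposition after tame ramification, with the Christol--Dwork correspondence (Proposition~\ref{P:christol-dwork} and Corollary~\ref{C:christol-dwork}) in order to match each refined summand to a test module $N_{\lambda,h,e,m}$. Applying Proposition~\ref{P:field decomposition1}(b) to $M_\rho$ over $F_\rho$ yields a finite tamely ramified extension $E'/F_\rho$ over which $M_\rho \otimes_{F_\rho} E'$ admits a refined decomposition $\bigoplus_i V_i$. Every tame extension of $F_\rho$ is obtained as a compositum of a finite tamely ramified extension of constants $K'/K$ and a Kummer-type extension $K'[t^{1/m}]$ with $m$ coprime to $p$. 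After enlarging $K'$ and $m$ uniformly for all summands, I may identify $E'$ with the Gauss completion of $K'(u)$ at $\sigma = \rho^{1/m}$ in the variable $u = t^{1/m}$.

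For each summand $V = V_i$, I extract the integers $e$ and $h$ from the affine function $f_1(M,r)$, whose slope $a$ is rational with denominator at most $n$ by Proposition~\ref{P:variation}(b). Writing $1-a = h/d$ in lowest terms, I factor $d = em$ with $e$ a power of $p$ and $m$ coprime to $p$, possibly enlarging $m$ by a factor coprime to $p$ to make the choice of $m$ uniform across summands. Then Lemma~\ref{L:compute test modules} gives $IR((N_{\lambda,h,e,m})_\sigma) = \omega|\lambda|^{-1/e}\sigma^{-h/(em)}$, whose logarithmic slope in $-\log\sigma$ equals $a$. The equation $\omega|\lambda|^{-1/e}\rho^{-h/(em)} = IR(M_\rho) = IR(V)$ then determines $|\lambda|$; by Proposition~\ref{P:rational intercepts} this value lies in the divisible closure of $|K^\times|$, and after enlarging $K'$ tamely if necessary, I may realize $|\lambda|$ by an element of $K'$.

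To choose $\lambda$ so as to match the \emph{refinement class} of $V$ rather than merely its norm, I apply Christol--Dwork via a cyclic vector. The characteristic polynomial $P_V(T) \in E'[T]$ has a single-slope Newton polygon by purity of $V$, and Corollary~\ref{C:christol-dwork}(a) together with the refined hypothesis on $V$ implies that all roots of $P_V$ in $\overline{E'}$ share a common leading term of the form $\lambda_0 u^c$ with $c = (e-h)/(em)$ and $\lambda_0 \in \overline{K'}$. After further tame enlargement of $K'$ to contain an $e$-th root of $\lambda_0$, I set $\lambda = \lambda_0^e$, so that the roots of the characteristic polynomial of a cyclic vector of $(N_{\lambda,h,e,m})_\rho$ have the same leading term up to an $e$-th root of unity. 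The identity $IR(M_\sigma) = IR((N_{\lambda,h,e,m})_\sigma)$ for $\sigma$ in a neighborhood of $\rho$ then follows because both sides are affine in $-\log\sigma$ with matching slope and matching value at $\rho$; and applying Corollary~\ref{C:christol-dwork}(a) to the pair $(V, N_{\lambda,h,e,m})$ shows that the roots of the characteristic polynomial of $V^\dual \otimes N_{\lambda,h,e,m}$, being pairwise differences of roots of $P_V$ and of the corresponding polynomial for $N_{\lambda,h,e,m}$, have strictly smaller norm than those of $P_V$, yielding $IR(V^\dual \otimes (N_{\lambda,h,e,m})_\rho) > IR(V)$.

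The main obstacle is the third step: identifying the leading term of a root of $P_V$ in $\overline{E'}$ and confirming that it has the specified Puiseux form $\lambda_0 u^{(e-h)/(em)}$ with denominator $em$, where $e$ is a power of $p$. This requires an analysis of the graded structure of the algebraic closure of the analytic field $E'$, and the verification that once the tame ramification of $F_\rho$ has been absorbed into the passage to $E'$, the ramification in $\overline{E'}$ responsible for the refinement class is wild, and hence contributes only $p$-power denominators. This wild residue is precisely what is captured by the rank-$e$ structure of the test module $N_{\lambda,h,e,m}$, and its rigorous identification is the genuinely non-formal input needed to complete the proof.
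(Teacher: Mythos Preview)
Your approach reverses the order of the paper's argument and runs into exactly the gap you flag in your final paragraph. The paper takes a cyclic vector of $M$ over $\Frac(R_I)$, \emph{before} any completion or tame base change; the characteristic polynomial $P(T)$ then has coefficients in $\Frac(R_I)$, so its roots, viewed in an algebraic closure of $\Frac(R_I)$ sitting inside $\overline{F_\rho}$, are algebraic over $K(t)$ and carry honest Puiseux approximations. For each root $\alpha$ one reads off $\lambda, h, e, m$ directly from the leading monomial via $|\alpha - m^{-1}\lambda^{1/e} t^{-1+h/(em)}|_\rho < |\alpha|_\rho$, and Corollary~\ref{C:christol-dwork} then gives both the equality $IR(M_\sigma) = IR((N_{\lambda,h,e,m})_\sigma)$ for $\sigma$ near $\rho$ and a subsidiary radius of $M_\rho^\dual \otimes (N_{\lambda,h,e,m})_\rho$ exceeding $IR(M_\rho)$. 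Only \emph{after} this does the paper invoke Proposition~\ref{P:field decomposition1} to obtain a refined decomposition over some tame $E/F_\rho$; each refined summand is then automatically equivalent to one of the test modules already constructed, and a Galois-stability argument (the summands are fixed by $\Gal(E'/F')$ because their equivalence classes are already represented over the smaller field) descends the decomposition to $K'[t^{1/m}]$.

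By contrast, you take a cyclic vector of each summand $V$ over the completed field $E'$, so $P_V(T) \in E'[T]$ rather than in a rational function ring. Roots in $\overline{E'}$ have well-defined norms and images in the associated graded ring, but no canonical leading Puiseux monomial $\lambda_0 u^c$: extracting one is precisely the ``analysis of the graded structure of the algebraic closure'' that you defer, and it is not a formality. Working over $\Frac(R_I)$ is what renders this step immediate in the paper. Your second paragraph's attempt to recover $e$ and $h$ from the slope of $f_1(M,r)$ alone is also shakier than it looks: the slope determines the reduced fraction $h/(em)$, but matching the \emph{refinement class} of $V$ (not just its radius) is what pins down $\lambda$ and hence the test module, and that is exactly the information encoded in the Puiseux leading term that your route does not supply.
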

\begin{proof}
We imitate the proof of \cite[Lemma~6.8.1]{kedlaya-book}.
Apply Corollary~\ref{C:cyclic vector} to produce $\bv \in M$ which is a cyclic vector in $M \otimes_{R_I} \Frac(R_I)$. Write $D^n(\bv) = a_0 \bv + \cdots + a_{n-1} D^{n-1}(\bv)$ with $a_0,\dots,a_{n-1} \in \Frac(R_I)$.
Factor the polynomial $P(T) = T^n - a_{n-1} T^{n-1} - \cdots - a_0$ over an algebraic closure of $\Frac(R_I)$ within an algebraic closure of $F_\rho$.
For each root $\alpha$, we can find $\lambda,h,e,m$ such that $\left|\alpha - m^{-1} \lambda^{1/e} t^{-1+h/(em)}\right|_\rho < \left|\alpha\right|_\rho$;
by Corollary~\ref{C:christol-dwork}, $IR(M_\sigma) = IR((N_{\lambda,h,e,m})_\sigma)$ for $\sigma$ in a neighborhood of $\rho$ and one of the intrinsic subsidiary radii of $M_\rho^\dual \otimes  (N_{\lambda,h,e,m})_\rho$ is greater than $IR(M_\rho)$. 
Apply Proposition~\ref{P:field decomposition1} to construct a refined decomposition of $M_\rho \otimes_{F_\rho} E$
for some finite tamely ramified extension $E$ of $F_\rho$; then each summand is equivalent
to $(N_{\lambda,h,e,m})_\rho$ for some $\lambda,h,e,m$, and in particular is stable under
$\Gal(E'/F')$ for $F' = F_\rho \otimes_{K[t]} K'[t^{1/m}]$ and
$E'$ a compositum of $E$, $F'$, and $K(\mu_m)$. We thus obtain a refined decomposition of
$M_\rho \otimes_{K[t]} K'[t^{1/m}]$ with the desired property.
\end{proof}

\begin{theorem} \label{T:refined decomposition}
Suppose that $M$ is pure. Then
%
there exist a finite tamely ramified extension $K'$ of $K$ and a positive integer $m$ not divisible by $p$ such that
$M \otimes_{K[t]} K'[t^{1/m}]$ 
admits a refined decomposition.
\end{theorem}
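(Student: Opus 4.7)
The plan is to induct on $n = \rank M$. The case $n = 1$ is immediate: a pure rank-one module either has $f_1(M, r) < r$ on all of $-\log I$ (so $M$ is refined, since $\End M$ is trivial of rank one with $f_1(\End M, r) = r$) or has $f_1(M, r) = r$ (Robba condition), and in either case $\{M\}$ is a refined decomposition. For $n \geq 2$, if $M$ satisfies the Robba condition the singleton $\{M\}$ again works; otherwise, by purity, $f_1(M, r) = ar + b > r$ on $-\log I$, and I aim to either show that $M$ is globally refined or split $M$ nontrivially and invoke induction on the pieces.

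Either $M_\rho$ is refined for every $\rho \in I$, in which case $M$ itself is refined on $I$ (by the definition of refined) and $\{M\}$ is the desired decomposition; or there exists $\rho_0 \in I$ at which $M_{\rho_0}$ is not refined. In the latter case, Lemma~\ref{L:test modules2} applied at $\rho_0$ yields, after passing to a finite tamely ramified extension $K'$ of $K$ and to $K'[t^{1/m}]$ for some $m$ coprime to $p$ (an extension permitted by the statement, henceforth absorbed into the notation), a refined decomposition $M_{\rho_0} = \bigoplus_{i=1}^s V_i$ with $s \geq 2$ in which each $V_i$ is refined-equivalent to a test module $N_i = N_{\lambda_i, h_i, e_i, m}$. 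Consider the auxiliary module $L = M \otimes N_1^\dual$: at $\rho_0$, the summand $V_1 \otimes (N_1^\dual)_{\rho_0}$ is refined of intrinsic radius strictly greater than $s := IR(M_{\rho_0})$ (by equivalence of $V_1$ and $N_1$), while for $j \geq 2$ the summands $V_j \otimes (N_1^\dual)_{\rho_0}$ are refined of radius exactly $s$ (since $V_j$ is inequivalent to $N_1^\dual$). Setting $d = (n - \dim V_1)\,e_1$, this gives $f_d(L, r_0) > f_{d+1}(L, r_0)$, and Proposition~\ref{P:decompose annulus} produces a global splitting $L = A \oplus B$, with $B$ carrying the large-radius part at $\rho_0$.

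To transfer the splitting back to $M$, observe that tensoring with $N_1$ gives $L \otimes N_1 \cong M \otimes \End N_1$, and that $\End N_1$ has the base ring as a horizontal direct summand via the horizontal element $\mathrm{id}_{N_1}$ (with complement $\ker(\mathrm{tr})$); hence $M$ embeds as a horizontal direct summand of $M \otimes \End N_1$. Setting $M' = M \cap (A \otimes N_1)$ and $M'' = M \cap (B \otimes N_1)$, one computes $M'_{\rho_0} \cong V_1$ and $M''_{\rho_0} \cong \bigoplus_{j \geq 2} V_j$, and a rank comparison forces $M = M' \oplus M''$ as differential modules on $I$. Each of $M', M''$ is then pure of rank strictly less than $n$, so the induction hypothesis applies, and concatenating the resulting refined decompositions produces one for $M$. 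The main obstacle is the global verification of the hypotheses of Proposition~\ref{P:decompose annulus} for $L$, that is, affinity of $F_d(L, r)$ on $-\log I$ and persistence of the separation $f_d(L, r) > f_{d+1}(L, r)$; this should follow by combining the affine structure $f_j(M, r) = ar + b$ coming from purity, the affineness of $f_1(N_1, r)$ from Lemma~\ref{L:compute test modules}, and the convexity/integrality constraints of Proposition~\ref{P:variation}, possibly after a further tame base extension.
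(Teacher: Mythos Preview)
Your approach shares the core idea with the paper's proof---use test modules to separate refined equivalence classes via Proposition~\ref{P:decompose annulus}---but there are genuine gaps.

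The most serious is that you invoke Lemma~\ref{L:test modules2} at $\rho_0$ without checking its hypothesis $f_1(M,r) > r - \log \omega$, i.e., $IR(M_{\rho_0}) < \omega$. When $p > 0$ and $IR(M_{\rho_0}) \geq \omega$, that lemma is unavailable and your argument stalls. The paper treats this case separately: for $IR(M_\rho) > \omega$ it passes to the global Frobenius antecedent (Proposition~\ref{P:global Frobenius antecedent}) to push the radius into the visible range; for $IR(M_\rho) = \omega$ it passes to the Frobenius descendant, obtains a refined decomposition there by the visible-range argument, and then descends via the $\ZZ/p\ZZ$-orbit grouping of Remark~\ref{R:refined construction}. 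Nothing in your sketch substitutes for this, and a ``further tame base extension'' does not help here.

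Second, you correctly flag the ``main obstacle'' of verifying affinity of $F_d(L,r)$ and the strict inequality $f_d(L,r) > f_{d+1}(L,r)$ on \emph{all} of $-\log I$, but you do not resolve it; the convexity and integrality constraints you cite yield piecewise affinity, not affinity, and the comparison with $N_1$ coming out of Lemma~\ref{L:test modules2} is only asserted near $\rho_0$. The paper sidesteps this entirely: since a refined decomposition is unique when it exists, it suffices to produce one \emph{locally} near each $\rho$ (where the required separation and affinity hold automatically on a small enough subinterval), and then the local decompositions glue. Your global-at-once strategy would require substantially more work.

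Finally, a technical slip: you claim $\End N_1$ splits as $R \oplus \ker(\mathrm{tr})$ via $\mathrm{id}_{N_1}$, but the composite $R \to \End N_1 \xrightarrow{\mathrm{tr}} R$ is multiplication by $\rank N_1 = e_1$, a power of $p$, and hence need not be invertible in $K$ when $p > 0$. The paper's projection step (``tensor with $N$ and project the decomposition from $N \otimes N^\dual \otimes M$ to $M$'') uses the surjective evaluation map $N \otimes N^\dual \to R$ rather than a trace splitting, relying on the uniqueness of the refined decomposition at each fibre to identify the resulting images as a direct sum.
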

\begin{proof}
By virtue of the uniqueness of refined decompositions, we may work 
locally in a neighborhood of some $\rho \in (\alpha, \beta)$.
Suppose first that $IR(M_\rho) < \omega$.
To simplify notation, we may assume that the conclusion of Lemma~\ref{L:test modules2} holds with
$K' = K$ and $m=1$, so that $M_\rho$ admits a refined decomposition.
In addition, for each summand $V$ in the refined decomposition of $M_\rho$,
we can find a differential module $N$ over $R_I$ such that $IR(V^\dual \otimes N_\rho) > IR(V)$.
By continuity (Proposition~\ref{P:variation}(a)),
for $\sigma$ in a neighborhood of $\rho$, $M_\sigma^\dual \otimes N_\sigma$
has an intrinsic subsidiary radius strictly greater than $IR(M_\sigma) = IR(N_\sigma)$.
Apply Proposition~\ref{P:decompose annulus} to $N^\dual \otimes M$
to pull off a summand corresponding to the intrinsic subsidiary radii of $N_\rho^\dual \otimes M_\rho$ less than $IR(V)$,
then tensor with $N$ and project the decomposition from $N \otimes N^\dual \otimes M$ to $M$.
Repeating this process gives the desired decomposition.

Suppose next that $p>0$ and $IR(M_\rho) = \omega$. Let $M'$ be the global Frobenius descendant of $M$
(Definition~\ref{D:global Frobenius descendant}). By Proposition~\ref{P:descendant},
$IR(\varphi_* M_\rho) = \omega^p$, so we may apply the previous paragraph to exhibit
a finite tamely ramified extension $K'$ of $K$ and a positive integer $m$ not divisible by $p$ such that
$M' \otimes_{K[t^p]} K'[t^{p/m}]$ admits a refined decomposition. 
To simplify notation, we may assume that
$K' = K$ and $m = 1$, i.e., that $M'$ itself admits a refined decomposition.
In particular, $\varphi_* M_\rho$ admits a refined decomposition.
By Remark~\ref{R:refined construction}, if we group summands of $\varphi_* M$ into $\ZZ/p\ZZ$-orbits,
the resulting decomposition descends to a decomposition specializing to a refined decomposition of $M$.

Suppose finally that $p>0$ and $IR(M_\rho) > \omega$. Using Frobenius antecedents (Proposition~\ref{P:global Frobenius antecedent}), we may reduce to one of the previous cases.
\end{proof}

\begin{theorem} \label{T:cyclic type}
Suppose that either:
\begin{enumerate}
\item[(a)]
$M$ is refined and $\rank(M)$ is not divisible by $p$; or
\item[(b)]
$p>0$ and $M$ is of cyclic type.
\end{enumerate}
Then the slopes of $f_1(M,r)$ are in $\ZZ$.
\end{theorem}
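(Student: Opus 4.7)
The plan is to show in each case that $f_1(M, r)$ agrees, up to an additive constant, with the $f_1$ function of a rank-$1$ differential module, which has integer slopes by Proposition~\ref{P:variation}(b).

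For case (a), first reduce to the visible range $f_1(M,r) > r - \log\omega$ by iteratively applying Proposition~\ref{P:global Frobenius antecedent}; global Frobenius antecedents preserve refinedness and rank, and the relation $f_1(M', r') = p\,f_1(M, r'/p)$ (derived from $IR(M') = IR(M)^p$) preserves slopes. In the visible range, apply Corollary~\ref{C:cyclic vector} to get a cyclic vector with associated polynomial $P(T) = T^n - a_{n-1}T^{n-1} - \cdots - a_0$. By Proposition~\ref{P:christol-dwork}, the roots $\alpha_1,\dots,\alpha_n$ of $P$ share a common norm $|\alpha|_\rho = \omega/(\rho\,IR(M_\rho))$ at each $\rho \in I$, and the field-level refined structure (Proposition~\ref{P:Turrittin analogue} in characteristic zero, Lemma~\ref{L:descend refined} combined with Frobenius descent in positive characteristic) forces the $\alpha_i$ to share a common leading term modulo terms of strictly smaller norm: $\alpha_i = c_\rho + \beta_i$ with $|\beta_i|_\rho < |c_\rho|_\rho$. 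Since $p \nmid n$, we have $|n|=1$, so the trace $a_{n-1} = \sum_i \alpha_i = nc_\rho + \sum_i\beta_i$ satisfies $|a_{n-1}|_\rho = |\alpha|_\rho$. Because $a_{n-1}$ is the scalar by which $D$ acts on $\det M = \wedge^n M$ in the cyclic basis, $IR(\det M_\rho) = IR(M_\rho)$, hence $f_1(M, r) = f_1(\det M, r)$ on the visible range, and the slopes are integers.

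For case (b), if $p \nmid n$ reduce to case (a) (observing that cyclic type implies refined when $M$ is not itself Robba, while Robba gives $f_1(M, r) = r$ with integer slope). Otherwise assume $p \mid n$ and exploit the cyclic type hypothesis more fully. By Theorem~\ref{T:refined decomposition} and Lemma~\ref{L:test modules2}, after some tame base change $K'[t^{1/m}]$, $M$ becomes equivalent to a test module $N_{\lambda, h, e, m}$. Cyclic type forces $e=1$: for $e>1$, the $e$-th root structure of $N_{\lambda,h,e,m}$ produces nontrivial summands of $\End N_{\lambda,h,e,m}$ with $D$-action involving $(\zeta-\zeta')\lambda^{1/e}$ for distinct $e$-th roots of unity $\zeta,\zeta'$, which fail the Robba condition. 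With $e=1$, set $V = N_{\lambda,h,1,m}$; compatibility of $M$ with the Galois action of $\mu_m$ forces $V \cong \zeta^* V = N_{\lambda\zeta^h,h,1,m}$ for every $\zeta \in \mu_m$ (otherwise non-Robba summands $V^\dual \otimes \zeta^* V$ would appear in $\End M$), which generically requires $m \mid h$ and hence $m=1$ since $\gcd(h,m)=1$. Thus $M$ is equivalent to a rank-$1$ module $V$ over $R_I$ itself, and $f_1(M,r) = f_1(V,r)$ has integer slopes.

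The main obstacle lies in case (b): one must rigorously verify that cyclic type (rather than merely refinement) forces both $e=1$ and $m=1$ in the test-module parameters of the tame decomposition, and that the refined \emph{equivalence} of $M$ to $V^{\oplus n}$ (as opposed to isomorphism) suffices to equate their $f_1$ functions. The latter may require first splitting off any Robba summand via Proposition~\ref{P:decompose annulus} before passing to equalities of $f_1$.
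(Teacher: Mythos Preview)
Your case (a) is essentially the paper's argument: both reduce to the visible range via Frobenius antecedents and then compare $f_1(M,r)$ with $f_1$ of a rank-$1$ module built from $\wedge^n M$, using that $p \nmid n$ to make the trace/division by $n$ behave. Fine.

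Case (b) has a real gap. You argue that cyclic type forces $e=1$ and $m=1$ in the test module $N = N_{\lambda,h,e,m}$ by analysing $\End(N)$ and $V^\dual \otimes \zeta^* V$. But the only link you have between $M$ and $N$ is \emph{refined equivalence}: $IR(M^\dual \otimes N) > IR(M)$. This does not let you transport the Robba condition on $\End(M)$ to $\End(N)$, nor does it embed $V^\dual \otimes \zeta^* V$ as a summand of $\End(M)$. Concretely: a rank-$1$ module $M$ is always cyclic type, yet it can be equivalent to a test module $N$ with $e>1$ whose $\End(N)$ is not Robba. So ``$\End(N)$ not Robba'' does not contradict ``$\End(M)$ Robba'', and your deductions $e=1$, $m=1$ are unjustified. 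You flag this yourself at the end, but the sketch you give does not close it, and I do not see how to close it along these lines without essentially a new idea.

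The paper's route in case (b) is genuinely different and is the missing idea. It does not try to pin down the test-module parameters. Instead, for $\mu \in 1+\gothm_K$ it uses Lemma~\ref{L:projector}: because $\End(M)$ is Robba, the Taylor isomorphism on $\End(M)$ yields a canonical rank-$1$ horizontal projector in $\End(\mu^* M^\dual \otimes M)$, hence a rank-$1$ submodule $Q_\mu \subset \mu^* M^\dual \otimes M$. A direct computation with the test module (Corollary~\ref{C:christol-dwork}) shows that for $|\mu-1|$ close to $1$,
\[
f_1(\mu^* M^\dual \otimes M, r) = f_1(M,r) + a \log|\mu-1|
\]
for some $a>0$; since $\mu^* M^\dual \otimes M$ is again cyclic type, $f_1(Q_\mu,r)$ equals this as well. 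Now $Q_\mu$ has rank $1$, so $f_1(Q_\mu,r)$ has integral slopes by Proposition~\ref{P:variation}(b), and the additive constant does not affect slopes. This bypasses entirely the question of whether $e=m=1$.
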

\begin{proof}
Using Proposition~\ref{P:variation}(a), $f_1(M,r)$ is piecewise affine. It thus suffices to compute its slope
on a closed subinterval $J$ of $I$ on which $f_1(M,r)$ is affine. 
We may assume that this slope is not equal to 0 or 1,
as otherwise there is nothing left to check.

Suppose first that we are in case (a) with $p=0$.
Choose a generator $\bv$ of $\wedge^n M_J$,
define $c \in R_J$ by the formula
$D(\bv) = c\bv$, and let $N$ be the differential module over 
$R_J$
on a single generator $\bv$ given by $D(\bw) = (c/n)\bv$. We then have
$N^{\otimes n} \cong \wedge^n M$ and
so $f_1(N^\dual \otimes M, r) < f_1(M, r)$ for $r \in I$
by \cite[Proposition~6.8.4]{kedlaya-book}.
In particular, in some range we have
$f_1(M, r) = f_1(N, r)$, whereas $f_1(N,r)$ has integer slopes
by Proposition~\ref{P:variation}(b). This proves the claim in this case.

Suppose next that we are in case (a) with $p>0$. 
Since we assumed that the slope of $f_1(M,r)$ is neither 0 nor 1, we may
shrink $J$ to ensure that $f_1(M,r) \neq r - p^{-j} \log \omega$ for all $r \in J$
and all nonnegative integers $j$. We may then use Frobenius antecedents
 (Proposition~\ref{P:global Frobenius antecedent})
to reduce to the case where $f_1(M,r) > r - \log \omega$ for all $r \in J$,
and then argue as in (a).

Suppose finally that we are in case (b).
We may again assume that $f_1(M,r) > r - \log \omega$ for all $r \in J$;
we may also assume that $K$ is algebraically closed.
Pick any $r_0 \in J$ and 
apply Lemma~\ref{L:test modules2} to construct
$\lambda,h,e,m$ for which $IR(M_\rho^\dual \otimes (N_{\lambda,h,e,m})_\rho) > IR(M_\rho)$ for $\rho = e^{-r_0}$;
by continuity (Proposition~\ref{P:variation}(a)), 
the same inequality holds for $\rho$ in a neighborhood of $e^{-r_0}$.
For $\mu \in 1 + \gothm_K$, apply Corollary~\ref{C:christol-dwork}(a) to $\mu^* N_{\lambda,h,e,m}^\dual \otimes
N_{\lambda,h,e,m}$; it implies that there exists $a>0$ for which
$f_1(\mu^* M^\dual \otimes M,r) = f_1(M,r) + a \log |\mu-1|$
for $|\mu-1|$ sufficiently close to 1 and $r$ sufficiently close to $r_0$.
By Lemma~\ref{L:projector}, there exists a rank $1$ submodule $Q_\mu$ of $\mu^* M^\dual \otimes M$.
Since $\mu^* M^\dual \otimes M$ is of cyclic type,
we have $f_1(Q_\mu, r) = f_1(\mu^* M^\dual \otimes M,r) = f_1(M,r) + a \log |\mu-1|$ for suitable $\mu, r$.
Since $f_1(Q_\mu, r)$
has integer slopes by Proposition~\ref{P:variation}(b) again, so then does $M$ in a neighborhood of $r_0$;
this proves the claim in this case.
\end{proof}

\begin{remark}
Theorem~\ref{T:cyclic type}(b) is new to this paper. It was known previously that
if $p>0$, $M$ is of cyclic type, and $\End(M)$ has $p$-adic non-Liouville exponent differences,
then $M$ is a successive extension of differential modules of rank $1$ over $R_I$;
namely, this is an easy consequence of Corollary~\ref{C:cm2}. That previous result figures in the proofs of
the $p$-adic local monodromy theorem given by Andr\'e \cite{andre-monodromy} and Mebkhout \cite{mebkhout-monodromy};
see Remark~\ref{R:local monodromy}.
\end{remark}

The following refinement of Lemma~\ref{L:test modules2} will be used in the study of solvable modules
in \S \ref{sec:solvable}.
\begin{lemma} \label{L:rank 1 test modules}
Choose $\gamma, \delta$ with $\alpha < \gamma < \delta < \beta$. Suppose 
that $p>0$, $K$ is algebraically closed, 
$M$ is refined, and there exists a nonnegative integer $b$ such that 
$IR(M_\rho) = (\alpha/\rho)^b < \omega$ for $\rho \in [\gamma,\delta]$. Then there exists
a differential module $N$ over $R_I$ which is free of rank $1$ with $IR(N_\rho) = (\alpha/\rho)^b$ for $\rho \in (\alpha,\delta]$
and $IR((N^\dual \otimes M)_\rho) < IR(M_\rho)$ for $\rho \in [\gamma,\delta]$.
\end{lemma}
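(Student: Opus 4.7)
The plan is to take $N$ to be an explicit rank $1$ test module from Definition~\ref{D:test modules}, verify its intrinsic radius matches $(\alpha/\rho)^b$ throughout $(\alpha,\delta]$, and then argue that it is refined-equivalent to $M$ throughout $[\gamma,\delta]$. Using that $K$ is algebraically closed, choose $\lambda \in K$ with $|\lambda| = \omega/\alpha^b$, and let $N = N_{\lambda,b,1,1}$: the rank $1$ module on a generator $\bv$ with $D(\bv) = \lambda t^{b-1}\bv$. Lemma~\ref{L:compute test modules} gives $\min\{\omega, IR(N_\rho)\} = \min\{\omega, (\alpha/\rho)^b\}$ directly, which identifies $IR(N_\rho) = (\alpha/\rho)^b$ on the visible part of the interval. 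To extend to those $\rho \in (\alpha,\delta]$ for which $(\alpha/\rho)^b \geq \omega$, I would iterate Frobenius antecedents (Proposition~\ref{P:global Frobenius antecedent}): replacing $N$ by a suitable antecedent reduces the target radius from $r$ to $r^p$, and after finitely many descents one lands in the visible range where Lemma~\ref{L:compute test modules} again applies.

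For the radius bound on $N^\dual \otimes M$, fix $\rho_0 \in [\gamma,\delta]$ and apply Lemma~\ref{L:test modules2} to $M$ at $\rho_0$. Because $M$ is refined with $IR(M_{\rho_0}) = (\alpha/\rho_0)^b < \omega$, the refined decomposition of $M_{\rho_0}$ has exactly one summand, producing a test module $N_{\mu,h',e',m'}$ that is refined-equivalent to $M_{\rho_0}$. Matching intrinsic radii via Lemma~\ref{L:compute test modules} forces $-h'/(e'm') = -b$; combined with the coprimality $\gcd(h',e'm') = 1$ this compels $e' = m' = 1$ and $h' = b$, and then $|\mu| = \omega/\alpha^b = |\lambda|$. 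Hence $N$ and $N_{\mu,b,1,1}$ differ only by the unit $\mu/\lambda$ in their defining scalar, so they define isomorphic rank $1$ modules after harmless rescaling; consequently $N$ itself is refined-equivalent to $M_{\rho_0}$ in the sense of \S\ref{subsec:annuli refined}.

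Finally, since the quadruple $(\lambda,b,1,1)$ depends only on the common value $(\alpha/\rho)^b$ and not on $\rho_0$, the same $N$ witnesses refined equivalence at every point of $[\gamma,\delta]$; continuity and integrality of $f_1(N^\dual \otimes M, r)$ (Proposition~\ref{P:variation}(a,b)) then propagate the strict radius gap to the whole closed subinterval. The main obstacle is engineering $IR(N_\rho) = (\alpha/\rho)^b$ uniformly on $(\alpha,\delta]$ when this value exceeds $\omega$, which demands the Frobenius-antecedent construction above and careful tracking of the rank~$1$ structure through the iteration. One point worth flagging is that part~(c) of Definition~\ref{D:intrinsic radius} forces $IR((N^\dual \otimes M)_\rho) \geq \min\{IR(N_\rho), IR(M_\rho)\} = IR(M_\rho)$ whenever $IR(N_\rho) = IR(M_\rho)$, so the strict inequality in the displayed conclusion is consistent only when read as the refined-equivalence bound $IR((N^\dual \otimes M)_\rho) > IR(M_\rho)$, which is what the construction actually produces.
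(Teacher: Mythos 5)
Your overall shape (a rank-one test module whose leading coefficient matches that of $M$, then refined equivalence) is the right one, and the paper's proof indeed starts from Lemma~\ref{L:test modules2}. But there are two genuine gaps. The main one is the construction of $N$ itself. The naive monomial module $D(\bv)=\lambda t^{b-1}\bv$ with $\left|\lambda\right|=\omega\alpha^{-b}$ has its intrinsic radius controlled by Lemma~\ref{L:compute test modules} (i.e.\ by Proposition~\ref{P:christol-dwork}) only where that radius is $<\omega$; the lemma, however, demands $IR(N_\rho)=(\alpha/\rho)^b$ on all of $(\alpha,\delta]$, and since $(\alpha/\rho)^b\to 1$ as $\rho\to\alpha^+$, most of that range lies outside the visible region. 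There the naive module genuinely fails: for instance if $p\mid b$, the degree-$b$ term $\lambda(t-c)^b/b$ of the local exponential solution has coefficient of norm $\omega/|b|>\omega$ and caps the intrinsic radius at roughly $|b|^{1/b}\rho^{-1}$ rather than $(\alpha/\rho)^b$. Your proposed repair by ``iterating Frobenius antecedents'' is not a construction: an antecedent of $N_\rho$ exists only once one already knows $IR(N_\rho)>\omega$, and forming antecedents of $N$ does not change $N$ or its radii. What is actually required is to augment the defining scalar of $N$ with higher-order correction terms (an Artin--Schreier--Witt/Dwork-exponential tail) so that the slope-$b$ behavior of $-\log IR$ persists all the way to the inner boundary; this is precisely the construction from the proof of \cite[Theorem~12.7.2]{kedlaya-book} that the paper invokes after rescaling to $\alpha=1$ and substituting $t\mapsto t^{-1}$.

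The second gap is in the equivalence step. Knowing $\left|\mu\right|=\left|\lambda\right|$ does not make $N_{\lambda,b,1,1}$ and $N_{\mu,b,1,1}$ isomorphic, nor even equivalent as refined modules: $N_{\lambda,b,1,1}^\dual\otimes N_{\mu,b,1,1}$ is the rank-one module with $D(\bv)=(\mu-\lambda)t^{b-1}\bv$, whose intrinsic radius exceeds $IR(M_\rho)$ only when $\left|\mu-\lambda\right|<\left|\lambda\right|$. So the scalar cannot be fixed in advance by its norm alone; it must be extracted from $M$ via the cyclic-vector/test-module argument of Lemma~\ref{L:test modules2} (which is why the paper's proof begins by replacing $M$ with the $N_{\lambda,h,e,m}$ produced there, observing that $b\in\ZZ$ forces $e=m=1$). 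Your closing observation about the inequality is a correct catch: as printed, the conclusion contradicts Definition~\ref{D:intrinsic radius}(c), and it must be read as $IR((N^\dual\otimes M)_\rho)>IR(M_\rho)$, consistent with its use in Lemma~\ref{L:cyclic type2} to force $b_1(N^\dual\otimes M)<b_1(M)$.
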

\begin{proof}
We may rescale to reduce to the case $\rho = \alpha = 1$.
Using Lemma~\ref{L:test modules2}, we may replace $M$ with $N_{\lambda,h,e,m}$; note that the fact that
$b \in \ZZ$ forces $e = m= 1$. After making the substitution $t \mapsto t^{-1}$,
we may perform the construction from the proof of \cite[Theorem~12.7.2]{kedlaya-book} to obtain the desired $N$.
\end{proof}

\subsection{Solvable modules}
\label{sec:solvable}

We continue in the vein of \cite{kedlaya-book}, next treating differential modules over rings of convergent power
series on an open annulus which are \emph{solvable at a boundary}. This gives a uniform statement of the
classical Turrittin-Levelt-Hukuhara decomposition as well as a strong $p$-adic analogue.

Note that for differential modules on an open annulus,
one can equally well discuss solvability at the inner boundary or the outer boundary.
In \cite{kedlaya-book} and other literature, it is typical to consider outer boundaries because one has in mind
the boundary of a residue disc. However, in this paper we will mostly need to consider inner boundaries
(see \S\ref{subsec:solvable}),
so we will set notation to address that case.

\begin{hypothesis}
Throughout \S\ref{sec:solvable}, 
fix $\alpha > 0$ and put
\[
\calR_\alpha = \bigcup_{\beta > \alpha} R_{(\alpha,\beta)},
\]
viewed as a differential ring for the derivation $d = \frac{d}{dt}$.
Let $M$ be a differential module over $\calR_\alpha$ which is \emph{solvable at $\alpha$} in the sense of 
Definition~\ref{D:solvable} below.
\end{hypothesis}

\begin{convention}
The functions $f_i(M,r)$ and $F_i(M,r)$ are not well-defined for any particular $r < -\log \alpha$;
however, the germs of these functions in left neighborhoods of $- \log \alpha$ may be interpreted 
unambiguously. We will use these germs frequently in what follows.
\end{convention}

\begin{defn} \label{D:solvable}
The module $M$ is \emph{solvable at $\alpha$} if 
\[
\lim_{r \to (-\log \alpha)^-} f_1(M, r) = -\log \alpha.
\]
By Proposition~\ref{P:variation} plus an extra argument (see \cite[Lemma~12.6.2]{kedlaya-book}),
this implies that there exist nonnegative rational numbers $b_1(M) \geq \cdots \geq b_n(M)$ such that
at the level of germs, we have
\begin{equation} \label{eq:slope condition}
f_i(M,r) = r + b_i(M)(-\log \alpha - r) \qquad (i=1,\dots,n).
\end{equation}
We call the $b_i(M)$ the \emph{slopes} of $M$.
\end{defn}

\begin{defn}
Suppose that $M \neq 0$.
We say $M$ satisfies the \emph{Robba condition} if $b_1(M) = 0$.
We say $M$ is \emph{pure} if $b_1(M) = \cdots = b_{\rank(M)}(M)$.
We say $M$ is \emph{refined} if $b_1(M) > b_1(\End(M))$; this implies that $M$ is pure.
We say $M$ is of \emph{cyclic type} if $b_1(\End(M)) = 0$; this implies that $M$ either is refined or satisfies the Robba condition.

By \eqref{eq:slope condition} plus 
Lemma~\ref{L:spectral decomposition exists}, $M$ admits a unique decomposition
$\bigoplus_j M_j$ into pure summands of distinct slopes; we call this the \emph{spectral decomposition} of $M$. By the \emph{Robba component} of $M$, we mean the summand of the spectral decomposition of slope $0$,
or the zero submodule of $M$ if no such summand exists.
\end{defn}

\begin{defn} \label{D:irreducible solvable relation}
Define a binary relation on irreducible solvable differential modules
over $\calR_\alpha$ by declaring that $M \sim N$ if at least one slope of $M^\dual \otimes N$ is nonzero. This relation is evidently reflexive and symmetric; it is also transitive by Lemma~\ref{L:relation transitive} below.
\end{defn}

\begin{lemma} \label{L:relation transitive}
With notation as in Definition~\ref{D:irreducible solvable relation}, the relation $\sim$ is transitive.
\end{lemma}
\begin{proof}
Suppose $M \sim N$ and $N \sim P$. Let $S,T$ be the Robba components of $M^\dual \otimes N$, $N^\dual \otimes P$. Since $N$ is irreducible and $S,T \neq 0$, the images of the elements of $S \subseteq \Hom_{\calR_{\alpha}}(M, N)$ span $N$ and the kernels of the elements of $T \subseteq \Hom_{\calR_{\alpha}}(N,P)$ have zero intersection in $N$. It follows that the image of $S \otimes T$ under the contraction map $M^\dual \otimes N \otimes N^\dual \otimes P \to M^\dual \otimes P$ is nonzero; this image satisfies the Robba condition. This proves the claim.
\end{proof}

\begin{lemma} \label{L:no H1}
If $b_{\rank(M)}(M) > 0$, then $H^1(M) = 0$.
\end{lemma}
\begin{proof}
Each element of $H^1(M)$ corresponds to an extension $0 \to M \to N \to \calR_\alpha \to 0$ of differential modules, but any such extension is split by the spectral decomposition of $N$.
\end{proof}

\begin{prop} \label{P:solvable decomposition}
There exists a unique direct sum decomposition $M = \bigoplus_i M_i$ such that for any irreducible subquotients $P,Q$ of $M_i, M_j$, we have $P \sim Q$ in the sense of Definition~\ref{D:irreducible solvable relation} if and only if $i=j$.
\end{prop}
\begin{proof}
It suffices to check that if $P,Q$ are inequivalent irreducible solvable differential modules over $\calR_{\alpha}$, then $H^1(P^\dual \otimes Q) = 0$. But this is immediate from Lemma~\ref{L:no H1}.
\end{proof}

\begin{lemma} \label{L:cyclic type2}
Suppose either that:
\begin{enumerate}
\item[(a)]
$p=0$ and $M$ is refined; or
\item[(b)]
$p>0$, $K$ is algebraically closed, $M$ is refined, and $\dim(M)$ is not divisible by $p$; or
\item[(c)]
$p>0$, $K$ is algebraically closed, $M$ is of cyclic type, and $b_1(M) > 0$.
\end{enumerate}
Then there exists a differential module $N$ over $\calR_\alpha$ which is free of rank $1$, is solvable at $\alpha$, and satisfies
$b_1(N^\dual \otimes M) < b_1(M)$.
\end{lemma}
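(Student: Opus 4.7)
My plan is to construct the desired rank $1$ module $N$ differently in the three cases, paralleling the proof of Theorem~\ref{T:cyclic type}: via the determinant of $M$ in cases (a) and (b), and via Lemma~\ref{L:rank 1 test modules} in case (c). After possibly shrinking $\calR_\alpha$ to a smaller annulus $R_{(\alpha,\beta')}$ (which is a principal ideal domain), the relevant rank $1$ modules can be taken free; moreover, solvability and the tensor-product property of intrinsic radii guarantee that $N^\dual \otimes M$ is automatically solvable at $\alpha$ whenever $N$ is.

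For cases (a) and (b), pick a generator $\bv_0$ of $\det M$, define $c \in \calR_\alpha$ by $D(\bv_0) = c \bv_0$, and let $N$ be the rank $1$ module with $D(\bw) = (c/n)\bw$, where $n = \dim M$. Since $p \nmid n$ (automatic in (a)), $|n| = 1$, so $N^{\otimes n} \cong \det M$; applying \cite[Proposition~6.8.4]{kedlaya-book} pointwise to $M_\rho$ (after Frobenius descent in case (b) to reach the visible range, as in the proof of Theorem~\ref{T:cyclic type}(a)) gives $b_1(\det M) = b_1(M)$, hence $b_1(N) = b_1(M)$ and $N$ is solvable at $\alpha$. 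For the inequality $b_1(N^\dual \otimes M) < b_1(M)$, argue by contradiction: if it fails, then $\End(N^\dual \otimes M) \cong \End M$ has $b_1 < b_1(M)$ by refinedness of $M$, making the twist itself refined with positive $b_1$; but $\wedge^n(N^\dual \otimes M) \cong N^{\otimes -n} \otimes \det M$ is trivial, and the same pointwise application of \cite[Proposition~6.8.4]{kedlaya-book} to the refined twist near $\alpha$ forces $IR((N^\dual \otimes M)_\rho) = IR(\wedge^n(N^\dual \otimes M)_\rho) = 1$, contradicting refinedness.

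For case (c), cyclic type combined with $b_1(M) > 0$ gives $b_1(\End M) = 0 < b_1(M)$, so $M$ is refined; by Theorem~\ref{T:cyclic type}(b), $b := b_1(M)$ is a positive integer. Shrink $\calR_\alpha$ so $IR(M_\rho) = (\alpha/\rho)^b$ holds on the annulus, then choose $\gamma,\delta$ with $\alpha < \gamma < \delta$ and $(\alpha/\rho)^b < \omega$ on $[\gamma,\delta]$, and apply Lemma~\ref{L:rank 1 test modules}. This yields a rank $1$ $N$ with $IR(N_\rho) = (\alpha/\rho)^b$ on $(\alpha,\delta]$ (so $b_1(N) = b$ and $N$ is solvable) and a strict pointwise comparison between the intrinsic radii of $N^\dual \otimes M$ and $M$ on $[\gamma,\delta]$, translating to a strict inequality between $f_1(N^\dual \otimes M, r)$ and $f_1(M, r) = (1-b)r - b\log\alpha$ at any $r_0 \in [-\log\delta, -\log\gamma]$. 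By the convexity of $f_1(N^\dual \otimes M, r)$ (Proposition~\ref{P:variation}(e)) and its solvability $f_1 \to -\log\alpha$ at $r = -\log\alpha$, a direct calculation shows the average slope on $[r_0, -\log\alpha]$ strictly exceeds $1 - b$, so the left-sided slope at $-\log\alpha$ does too, yielding $b_1(N^\dual \otimes M) < b$. The main obstacle is case (c): when $p \mid \dim M$ the determinant construction is unavailable, and one must instead build $N$ via the finer test-module construction of Lemma~\ref{L:rank 1 test modules} (ultimately resting on the Frobenius and Artin-Schreier machinery behind \cite[Theorem~12.7.2]{kedlaya-book}), then use convexity and boundary solvability to promote a local pointwise comparison into a germ-level slope inequality.
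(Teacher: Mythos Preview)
Your proof is correct, but your treatment of cases (a) and (b) takes a genuinely different route from the paper's. The paper proceeds via test modules throughout: in case (a) it invokes Lemma~\ref{L:test modules2} to produce an $N_{\lambda,h,1,1}$ with $\lambda$ a priori in a finite extension of $K$, and then uses the determinant comparison only to descend $\lambda$ to $K$; in cases (b) and (c) it passes to the visible range (via global Frobenius antecedents) and applies Lemma~\ref{L:rank 1 test modules}. You instead take $N$ to be the $n$-th root of $\det M$ from the start in (a) and (b) and run the contradiction of Proposition~\ref{P:Turrittin analogue} pointwise, pushing it through Frobenius antecedents when $p>0$. This is cleaner and avoids the test-module detour entirely; the only cost is that the step ``hence $b_1(N)=b_1(M)$'' deserves a word of justification (it follows because $1/n\in\ZZ_p$ forces the binomial series $(1+x)^{1/n}$ to have integral coefficients, so taking the $n$-th root of a rank~1 module preserves the intrinsic radius --- or alternatively, one can first deduce $IR((N^\dual\otimes M)_\rho)>IR(M_\rho)$ pointwise from \cite[Proposition~6.8.4]{kedlaya-book} and then read off $IR(N_\rho)=IR(M_\rho)$ from the tensor inequality). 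For case (c) both arguments invoke Lemma~\ref{L:rank 1 test modules}; your explicit convexity step promoting the pointwise inequality on $[\gamma,\delta]$ to the germ inequality $b_1(N^\dual\otimes M)<b$ is a detail the paper leaves to the reader.
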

\begin{proof}
Realize $M$ as a refined differential module over $R_{(\alpha,\beta)}$ for some $\beta > \alpha$.
By Theorem~\ref{T:cyclic type}, $b_1(M)$ is a positive integer.
We may thus imitate the proof of \cite[Theorem~12.7.2]{kedlaya-book} as follows.

In case (a), we may apply Lemma~\ref{L:test modules2} to construct $N_{\lambda,h,e,m}$ with
$IR((N_{\lambda,h,e,m}^\dual \otimes M)_\rho) < IR(M_\rho)$ for $\rho$ in some interval;
because $b_1(M) \in \ZZ$, we are forced to take $e = m = 1$.
By Lemma~\ref{L:compute test modules}, $N_{\lambda,h,1,1}$ is solvable at $\alpha$.
It remains to check that we may take $\lambda$ in $K$, not just in a finite extension of $K$; for this,
we argue as in Proposition~\ref{P:Turrittin analogue}.
Put $n = \rank(M)$.
Choose a generator $\bv$ of the restriction of $\wedge^n M$ to $R_I$ for some closed interval $I$,
and write $D(\bv) = a \bv$ with $a \in R_I$. 
Let $M'$ be the differential module over $R_I$ on the single generator $\bw$ with
$D(\bw) = (a/n) \bw$; then $(M')^{\otimes n}$ is isomorphic to the restriction of $\wedge^n M$ to $R_I$.
It follows that $\left|a/n - \lambda t^{h-1}\right|_\rho < \left|a/n\right|_\rho = 
\left| \lambda t^{h-1} \right|_\rho$ for $\rho \in I$, so there must exist $\lambda' \in K$
with $\left| \lambda - \lambda'\right| < \left| \lambda \right| = \left| \lambda' \right|$.
We may thus replace $N_{\lambda,h,1,1}$ with $N_{\lambda',h,1,1}$ without affecting the preceding arguments.

In cases (b) and (c), 
by taking global Frobenius antecedents (Proposition~\ref{P:global Frobenius antecedent})
as needed, we can ensure that
there exist $\gamma,\delta$ with $\alpha < \gamma < \delta < \beta$ such that
$IR(M_\rho) > \omega$ for $\rho \in [\gamma,\delta]$.
By Lemma~\ref{L:rank 1 test modules}, we obtain the desired module $N$.
\end{proof}

\begin{cor} \label{C:cyclic type2}
Suppose either that:
\begin{enumerate}
\item[(a)]
$p=0$ and $M$ is indecomposable and refined; or
\item[(b)]
$p=0$ and $M$ is of cyclic type; or 
\item[(c)]
$p>0$, $K$ is algebraically closed, $M$ is indecomposable and refined, and $\dim(M)$ is not divisible by $p$; or
\item[(d)]
$p>0$, $K$ is algebraically closed, $M$ is of cyclic type, and $b_1(M) > 0$.
\end{enumerate}
Then there exists a factorization $M \cong N \otimes P$
in which $N$ is free of rank $1$ and $b_1(P) = 0$. In particular, $M$ is of cyclic type.
\end{cor}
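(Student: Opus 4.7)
The proof is by induction on $b_1(M) \in \ZZ_{\geq 0}$, which is a non-negative integer by Theorem~\ref{T:cyclic type} (applied with $p \nmid \rank M$ in cases (a), (c) or with cyclic type in cases (b), (d)). The base case $b_1(M) = 0$ is immediate: $M$ satisfies the Robba condition, so I take $N = \calR_\alpha$ (the trivial rank-$1$ module) and $P = M$. For the inductive step with $b_1(M) > 0$, Lemma~\ref{L:cyclic type2} yields a rank-$1$ module $N$ over $\calR_\alpha$, solvable at $\alpha$, with $b_1(N^\dual \otimes M) < b_1(M)$. Setting $M_1 := N^\dual \otimes M$, we have $M \cong N \otimes M_1$, so any factorization $M_1 \cong N' \otimes P$ with $b_1(P) = 0$ yields the desired $M \cong (N \otimes N') \otimes P$. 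It therefore suffices to verify that $M_1$ itself satisfies some hypothesis among (a)--(d).

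Since $N$ has rank $1$, there is a natural isomorphism $\End(M_1) \cong \End(M)$, so $M_1$ is of cyclic type if and only if $M$ is. Cases (b), (d) thus proceed cleanly: $M_1$ inherits cyclic type from $M$, and the inductive hypothesis applies directly (falling back to the base case if $b_1(M_1) = 0$ in case (d)). In cases (a), (c) the module $M_1$ is still indecomposable (tensoring by a rank-$1$ invertible object preserves indecomposability), but Lemma~\ref{L:refined comparison} applied to the refined-equivalent pair $(N, M)$ only yields $b_1(M_1) \geq b_1(\End M)$, possibly with equality; hence $M_1$ may fail to be refined, and the induction cannot proceed directly.

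The plan for cases (a), (c) is therefore to show that $M$ is already of cyclic type, thereby reducing to cases (b), (d). For case (a) with $p = 0$, I fix $\rho$ sufficiently close to $\alpha$ and analyze $M \otimes E'$ for a tame extension $E'$ of $\calR_\alpha$ splitting the refined decomposition: the refined and indecomposable structure of $M$, combined with the characterization of refined decompositions, forces every indecomposable summand of $M \otimes E'$ to have the same $b_1$ and to be mutually equivalent refined. Applying Lemma~\ref{L:compare cohomology3} (whose auxiliary hypothesis is vacuous when $p = 0$) to each such summand $V$ and to its iterated tame base changes transfers indecomposability from the annulus to absolute indecomposability of the fibre $V_\rho$ over $F_\rho$; Proposition~\ref{P:Turrittin analogue} then yields $V_\rho \cong W_\rho \otimes P_\rho$ with $P_\rho$ satisfying the Robba condition. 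One then verifies that the resulting cross-terms in the decomposition of $\End(M_\rho)$ are also Robba, so $\End(M_\rho)$ satisfies the Robba condition for every $\rho$ in a one-sided neighborhood of $\alpha$, whence $b_1(\End M) = 0$.

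The principal obstacle is case (c), since no direct analog of Proposition~\ref{P:Turrittin analogue} is available in positive residue characteristic. I expect to handle it by combining iterative Frobenius antecedents (Proposition~\ref{P:antecedent}) to reduce to the regime $IR(M_\rho) \leq \omega$ with a Galois-averaging construction over the tame extension splitting the refined decomposition: the hypothesis $p \nmid \dim(M)$ forces every relevant Galois orbit size to be coprime to $p$, which supplies exactly the divisibility needed to descend the rank-$1$ modules produced by Lemma~\ref{L:cyclic type2} over the tame extension back to rank-$1$ modules over $\calR_\alpha$, so that the induction can proceed through any step at which $M_1$ fails to be refined.
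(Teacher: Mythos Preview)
Your induction on $b_1(M)$ via Lemma~\ref{L:cyclic type2} is precisely the paper's argument, and your observation that cyclic type is preserved under rank-$1$ twist makes cases (b) and (d) go through. You are also right to be suspicious of cases (a) and (c): after one twist, $M_1 = N^\dual \otimes M$ remains indecomposable but need not remain refined, so neither the lemma nor Theorem~\ref{T:cyclic type} reapplies directly, and the paper's one-line proof does not address this.

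Your proposed repair for case (a)---show that $M$ is already of cyclic type---contains a genuine error and in fact cannot work. After writing each absolutely indecomposable constituent $V_i$ of $M$ over a tame extension as $W_i \otimes P_i$ with $P_i$ Robba (via Proposition~\ref{P:Turrittin analogue}), you assert that the cross-terms $V_i^\dual \otimes V_j$ in $\End(M)$ are Robba as well; but $V_i^\dual \otimes V_j \cong (W_i^\dual \otimes W_j) \otimes (P_i^\dual \otimes P_j)$, and nothing forces $W_i \cong W_j$. Here is a concrete failure. Take $K$ algebraically closed of residue characteristic $0$, set $\alpha = 1$, and let $M$ be the descent to $\calR_\alpha$ of $V_1 \oplus V_2$, where $V_j$ is the rank-$1$ module over $\calR_\alpha \otimes_{K[t]} K[t^{1/2}]$ with $\tfrac{d}{dt}\,\bv_j = (ct + (-1)^{j+1} d\, t^{1/2})\bv_j$ for units $c,d \in \gotho_K^\times$, the Galois involution $t^{1/2}\mapsto -t^{1/2}$ exchanging $V_1$ and $V_2$. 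One computes $b_1(V_j)=2$ and $b_1(V_1^\dual\otimes V_2)=3/2$, so $M$ is indecomposable and refined with $b_1(M)=2$ and $b_1(\End M)=3/2$; thus $M$ is \emph{not} of cyclic type. Moreover, since any $h\in\calR_\alpha$ has no $t^{1/2}$-term, every rank-$1$ twist $N$ over $\calR_\alpha$ leaves $b_1(N^\dual\otimes M)\geq 3/2$, so the desired factorization does not exist at all. Your reduction therefore fails, and case (a) as stated appears to need a stronger hypothesis. Your treatment of case (c) is, as you acknowledge, only a plan rather than an argument.
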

\begin{proof}
This follows by repeated application of Lemma~\ref{L:cyclic type2}.
Note that since $b_1(M) \in \ZZ$
by Theorem~\ref{T:cyclic type}, only finitely many iterations are needed before $b_1(M)$ is reduced to 0.
\end{proof}

When $p=0$, the structure of solvable modules is relatively simple.
\begin{theorem} \label{T:turrittin0}
Assume $p=0$. Then there exist a finite extension $K'$ of $K$ and a positive integer $m$ such that
$M \otimes_{K[t]} K'[t^{1/m}]$ admits a direct sum decomposition in which
each summand is of cyclic type.
\end{theorem}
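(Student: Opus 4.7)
The plan is to induct on the leading slope $b_1(M)$, arranged so as to take values in $\ZZ_{\geq 0}$ and to decrease strictly at each inductive step. The base case $b_1(M)=0$ asserts that $M$ satisfies the Robba condition; the inclusion of $R_I$ into $\End(M)$ as the identity gives $f_1(\End M, r) \geq r$ via Definition~\ref{D:intrinsic radius}(b), while Definition~\ref{D:intrinsic radius}(a,c) applied to $\End M = M^\dual \otimes M$ yields $f_1(\End M, r) \leq f_1(M,r) = r$. Hence $f_1(\End M,r) = r$, so $\End M$ itself satisfies the Robba condition and $M$ is already of cyclic type.

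\textbf{Inductive step.}
Realize $M$ over $R_{(\alpha,\beta)}$ for some $\beta > \alpha$. By Proposition~\ref{P:variation}(a) and the germ formula \eqref{eq:slope condition}, we may shrink $\beta$ so that each $f_i(M,r)$ is affine on $-\log I$ and that consecutive functions $f_i(M,r),f_{i+1}(M,r)$ are either identically equal or strictly ordered; Lemma~\ref{L:spectral decomposition exists} then yields a spectral decomposition of $M$, so we reduce to the case where $M$ is pure with $b_1(M)>0$. Apply Theorem~\ref{T:refined decomposition} to obtain, after a finite tamely ramified extension $K'/K$ and a root extraction $t\mapsto t^{1/m}$, a refined decomposition of $M\otimes_{K[t]}K'[t^{1/m}]$. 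Purity together with $b_1(M)>0$ rules out any Robba summand, so every summand $V$ is refined with $b_1(V)=b_1(M)$; Theorem~\ref{T:cyclic type}(a) (whose rank hypothesis is vacuous for $p=0$) then forces $b_1(V)\in\ZZ_{>0}$.

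\textbf{Descent and conclusion.}
For each such refined summand $V$, Lemma~\ref{L:cyclic type2}(a) supplies a rank-one differential module $N$ over $\calR_\alpha$, solvable at $\alpha$, with $b_1(N^\dual\otimes V)<b_1(V)$. Setting $P:=N^\dual\otimes V$, we have $V\cong N\otimes P$ with $b_1(P)<b_1(V)$. Decompose $P$ spectrally into pure summands $P_t$, each with $b_1(P_t)\le b_1(P)<b_1(V)$: those with $b_1(P_t)=0$ are of cyclic type by the base case, and those with $b_1(P_t)>0$ become, after a further tame extension, refined modules of integer $b_1$ strictly less than $b_1(V)$ (again by Theorems~\ref{T:refined decomposition} and~\ref{T:cyclic type}(a)), hence fall under the inductive hypothesis. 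Each $P_t$ therefore admits a decomposition into cyclic type summands after a suitable further extension; tensoring through by the rank-one $N$ preserves the cyclic type property, since $\End(N\otimes P_t)\cong\End(P_t)$, producing a cyclic type decomposition of $V$ and hence of $M$. The main obstacle---well-foundedness of the recursion---is settled by the strict integer descent $b_1(P_t)<b_1(V)$ noted above, which terminates the recursion in finitely many steps; collecting the finitely many tame extensions and root extractions arising along the way yields a single $K'$ and positive integer $m$ with the required property.
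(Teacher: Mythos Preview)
Your proof is correct and follows essentially the same route as the paper: reduce to refined summands via Theorem~\ref{T:refined decomposition}, then twist by rank-one modules (Lemma~\ref{L:cyclic type2}) to drive $b_1$ down, using Theorem~\ref{T:cyclic type}(a) to guarantee the integer descent terminates. The paper's one-line proof simply packages your explicit induction into Corollary~\ref{C:cyclic type2}, which already iterates Lemma~\ref{L:cyclic type2} internally on an indecomposable refined module until $b_1(P)=0$; you instead unroll this loop, re-running the spectral and refined decompositions after each twist, which is slightly more work but not substantively different.
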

\begin{proof}
This follows from Theorem~\ref{T:refined decomposition} and Corollary~\ref{C:cyclic type2}.
\end{proof}

\begin{remark}
By taking $K = \CC$ with the trivial norm,
we may deduce from Theorem~\ref{T:turrittin0} the usual Turritin-Levelt-Hukuhara decomposition theorem
for differential modules over $\CC((t))$ \cite[Theorem~7.5.1]{kedlaya-book}.
\end{remark}

\begin{defn}
Put $F = \Frac(\calR_\alpha)$.
Let $[M]$ denote the Tannakian subcategory generated by $M$ within the category of differential modules
over $\calR_\alpha$, equipped with the fibre functor $\omega$ taking each $N \in [M]$ to the $F$-vector space $N \otimes_{\calR_\alpha} F$.
Note that the objects of $[M]$ are all solvable at $\alpha$.

Let $G(M)$ be the automorphism group of $\omega$.
For $r \geq 0$, let $G^r(M)$ denote the subgroup of $G(M)$ which acts trivially
on $\omega(N)$ for each nonzero $N \in [M]$ for which $b_1(N) < r$.
Also put $G^{r+}(M) = \bigcup_{s>r} G^s(M)$.
\end{defn}

\begin{remark} \label{R:torus}
As in Remark~\ref{R:Tannakian easy}, we may use Theorem~\ref{T:turrittin0} to deduce that when $p=0$,
the group $G^{0+}(M)$ is a torus. The structure of $G^{0+}(M)$ in case $p>0$ will be clarified by 
Theorem~\ref{T:finite levels} below; this will imply that for any $p$ and any $r \geq 0$,
$G^{r+}(M)$ equals the subgroup of $G(M)$ which
acts trivially on $\omega(N)$ for each nonzero $N \in [M]$ for which $b_1(N) \leq r$.
\end{remark}

\begin{lemma} \label{L:cyclic type1}
If $p>0$ and $M$ is of cyclic type, then there exists a nonnegative integer $h$ such that
$b_1(M^{\otimes p^h}) = 0$.
\end{lemma}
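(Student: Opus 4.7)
The plan is to reduce to the rank one case, where Proposition~\ref{P:rank 1 power} supplies an explicit bound on how $b_1$ transforms under $p$-th powers. Since the germ invariants $b_i$ are insensitive to enlargement of the constant field, I may assume throughout that $K$ is algebraically closed. If $b_1(M) = 0$, then $h = 0$ works, so I assume $b_1(M) > 0$. I then invoke Corollary~\ref{C:cyclic type2}(d) to obtain a factorization $M \cong N \otimes P$ in which $N$ is free of rank $1$ and $b_1(P) = 0$. Because the inequality $IR(V_1 \otimes V_2) \geq \min\{IR(V_1), IR(V_2)\}$ applied fiberwise implies that the Robba property propagates through tensor products, $b_1(P^{\otimes p^h}) = 0$ for every $h$, and hence $b_1(M^{\otimes p^h}) \leq b_1(N^{\otimes p^h})$. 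Thus it suffices to find $h$ with $b_1(N^{\otimes p^h}) = 0$.

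For $\rho > \alpha$ sufficiently close to $\alpha$, one has $IR(N_\rho) = (\alpha/\rho)^{b_1(N)}$, which approaches $1$ from below and so eventually exceeds $\omega^p$. In this range, Proposition~\ref{P:rank 1 power} applied to the rank one module $N_\rho$ over $F_\rho$ yields
\[
IR((N^{\otimes p})_\rho) \;\geq\; IR(N_\rho)^{1/p} \;=\; (\alpha/\rho)^{b_1(N)/p}.
\]
The solvability of $N^{\otimes p}$ required to make sense of $b_1(N^{\otimes p})$ follows from the squeeze $r \leq f_1(N^{\otimes p}, r) \leq f_1(N, r)$ together with the solvability of $N$. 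Translating the above radius bound into the germ form of Definition~\ref{D:solvable} gives $b_1(N^{\otimes p}) \leq b_1(N)/p$, and iterating this step $h$ times yields $b_1(N^{\otimes p^h}) \leq b_1(N)/p^h$.

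To conclude, I apply the integrality clause of Proposition~\ref{P:variation}(b) with $i = n = 1$ to the rank one module $N^{\otimes p^h}$: the slopes of $f_1(N^{\otimes p^h}, r)$ lie in $\ZZ$, forcing $b_1(N^{\otimes p^h}) \in \ZZ_{\geq 0}$. Once $h$ is chosen large enough that $b_1(N)/p^h < 1$, this nonnegative integer is strictly less than $1$ and must vanish, giving the desired conclusion. The only delicate point is verifying that the global factorization $M \cong N \otimes P$ from Corollary~\ref{C:cyclic type2}(d) interacts cleanly with the fiberwise field-theoretic inequality of Proposition~\ref{P:rank 1 power} (one needs that the factorization survives restriction to each $F_\rho$, which it does because it is a direct decomposition of differential modules over $\calR_\alpha$); once this compatibility is noted, the remainder is a short arithmetic iteration.
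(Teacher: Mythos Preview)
Your proof is correct, but it takes a different route from the paper's. The paper argues directly: since $M$ is of cyclic type with $b_1(M)>0$, it is refined, so Proposition~\ref{P:refined power} gives $b_1(M^{\otimes p}) < b_1(M)$; Theorem~\ref{T:cyclic type} forces $b_1(M)$ and $b_1(M^{\otimes p})$ to be nonnegative integers (and $M^{\otimes p}$ is again of cyclic type), so iterating reaches $0$ in at most $b_1(M)$ steps. You instead invoke Corollary~\ref{C:cyclic type2}(d) to peel off a rank~$1$ factor $N$, then apply Proposition~\ref{P:rank 1 power} to get the sharper quantitative bound $b_1(N^{\otimes p^h}) \leq b_1(N)/p^h$, and finish with the elementary integrality from Proposition~\ref{P:variation}(b). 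Your route gives a geometric rather than linear decay estimate, which is pleasant, but note that Corollary~\ref{C:cyclic type2} already rests on Theorem~\ref{T:cyclic type}, so you have not actually bypassed that integrality result; the paper's two-line argument is the more economical packaging of the same ingredients.
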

\begin{proof}
If $b_1(M) > 0$,
then by Proposition~\ref{P:refined power},
we have $b_1(M^{\otimes p}) < b_1(M)$.
Since $b_1(M)$ and $b_1(M^{\otimes p})$ are nonnegative integers by Theorem~\ref{T:cyclic type},
this proves the claim.
\end{proof}

\begin{theorem} \label{T:finite levels}
If $p>0$, then $G^{0+}(M)$ is a finite $p$-group.
\end{theorem}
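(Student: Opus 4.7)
The plan is to invoke Proposition~\ref{P:finite Tannakian} via the Tannakian framework of Remark~\ref{R:finite Tannakian}, closely following the template of Theorem~\ref{T:finite levels fields}. As there, I first replace $\calR_\alpha$ by the direct limit of its finite tamely ramified extensions (also allowing $K$ to be replaced by finite extensions); passing to this direct limit changes $G^{0+}(M)$ only by a base extension on the ambient $\GL$, and both finiteness and the $p$-group property descend under such extensions. Assign to each nonzero $V \in [M]$ the invariant $r(V) := b_1(V)$ if $b_1(V) > 0$ and $r(V) := -\infty$ if $b_1(V) = 0$. The axioms (a), (b), (c) on $r$ required by Remark~\ref{R:finite Tannakian} translate directly from the pointwise identities of Definition~\ref{D:intrinsic radius} (duality invariance, max-additivity for short exact sequences, and the tensor inequality), restricted to the eventual boundary slope $b_1$. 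Because $b_1$ takes only finitely many values on any finitely generated Tannakian subcategory, the filtration $\{G^r(V)\}$ is locally constant near $r = 0$, so that $G^{-\infty+}$ in the sense of the proposition coincides with $G^{0+}(M)$ of the theorem.

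It remains to verify conditions (i), (ii), (iii) of the remark. For (i), given $V$ with $b_1(V) > 0$, I realize $V$ over some $R_{(\alpha,\beta)}$ and restrict to a smaller $R_{(\alpha,\beta')}$ on which each $f_i(V,r)$ is affine and any two consecutive slopes either agree identically or are strictly separated; Lemma~\ref{L:spectral decomposition exists} then yields a spectral decomposition into pure summands, which descends to $\calR_\alpha$ since $\calR_\alpha = \bigcup_{\beta' > \alpha} R_{(\alpha,\beta')}$. After a further tame extension, Theorem~\ref{T:refined decomposition} splits each pure summand into refined pieces (possibly with one Robba piece), each of which satisfies $b_1(V_i^\dual \otimes V_i) < b_1(V)$ as required. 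For (ii), if $V$ is refined at $\alpha$, then $V_\rho$ is refined for $\rho$ near $\alpha$, so Proposition~\ref{P:refined power} applied pointwise gives $IR(V_\rho^{\otimes p}) > IR(V_\rho)$; because the relation $IR(W_\rho) = (\alpha/\rho)^{b_1(W)}$ holds near the boundary for any solvable $W$, this converts to $b_1(V^{\otimes p}) < b_1(V)$. For (iii) with $\rank V = 1$ and $b_1(V) > 0$, the pointwise bound $IR(V_\rho^{\otimes p}) \geq \min\{IR(V_\rho)^{1/p}, p \cdot IR(V_\rho)\}$ from Proposition~\ref{P:rank 1 power} degenerates near the boundary to $IR(V_\rho^{\otimes p}) \geq IR(V_\rho)^{1/p}$, giving $b_1(V^{\otimes p}) \leq b_1(V)/p$; since $b_1 \in \ZZ_{\geq 0}$ for rank-$1$ modules by Proposition~\ref{P:variation}(b), iterating forces $b_1(V^{\otimes p^h}) = 0$ once $h$ is large enough.

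The main technical obstacle is making the decomposition in (i) live in the Tannakian category rather than only on a subannulus; this is addressed by the direct limit presentation of $\calR_\alpha$, which guarantees that a decomposition on $R_{(\alpha,\beta')}$ propagates to $\calR_\alpha$ by base change. Once all hypotheses are verified, Proposition~\ref{P:finite Tannakian} gives the finiteness of $G^{0+}(M)$, and because the integer $n$ is chosen as a power of $p$ (namely $n = p$ in (ii) and $n = p^h$ in (iii)), the concluding observation of Remark~\ref{R:finite Tannakian} upgrades this to the assertion that $G^{0+}(M)$ is a $p$-group.
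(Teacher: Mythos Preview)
Your proposal is correct and follows essentially the same approach as the paper: both invoke Proposition~\ref{P:finite Tannakian} through Remark~\ref{R:finite Tannakian} after passing to the direct limit over tame extensions, and both verify conditions (i) and (ii) via Theorem~\ref{T:refined decomposition} and Proposition~\ref{P:refined power}. The only minor difference is in condition (iii): the paper cites the packaged Lemma~\ref{L:cyclic type1} (which combines Proposition~\ref{P:refined power} with the integrality of $b_1$ from Theorem~\ref{T:cyclic type}), whereas you extract the bound $b_1(V^{\otimes p}) \leq b_1(V)/p$ directly from Proposition~\ref{P:rank 1 power} and then appeal to integrality via Proposition~\ref{P:variation}(b); these are equivalent routes to the same conclusion.
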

\begin{proof}
This follows from Proposition~\ref{P:finite Tannakian} using Remark~\ref{R:finite Tannakian} as follows.
Replace the category of differential modules over $\calR_\alpha$ with the direct limit of the categories of differential
modules over $\calR_\alpha \otimes_{K[t]} K'[t^{1/m}]$ over all finite extensions $K'$ of $K$ and all
positive integers $m$ not divisible by $p$; this does not change the groups $G^{r}(M)$ except for a base extension.
We may then deduce conditions (i), (ii), (iii) of Remark~\ref{R:finite Tannakian} using
Theorem~\ref{T:refined decomposition}, Proposition~\ref{P:refined power}, Lemma~\ref{L:cyclic type1}, respectively.
\end{proof}
\begin{cor}
There exist a finite extension $K'$ of $K$ and a positive integer $m$ such that
for all nonnegative integers $g,h$,
$(M^\dual)^{\otimes g} \otimes M^{\otimes h} \otimes_{K[t]} K'[t^{1/m}]$ admits a refined decomposition.
\end{cor}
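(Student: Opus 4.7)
The plan is to combine Theorem~\ref{T:refined decomposition} with the Tannakian finiteness of $G^{0+}(M)$ provided by Theorem~\ref{T:finite levels} (when $p > 0$) or by Remark~\ref{R:torus} (when $p = 0$), in order to find a single tame extension controlling the refined decomposition of every tensor-dual product of $M$ at once. The rough idea is that once $M$ is sufficiently split so that each refined summand factors as a rank-one module tensored with a Robba module, all tensor-dual products decompose automatically from this same data.

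First I would apply Theorem~\ref{T:refined decomposition} to $M$ to obtain a finite tamely ramified extension $(K_0, m_0)$ over which $M \otimes_{K[t]} K_0[t^{1/m_0}]$ admits a refined decomposition, and then enlarge to $(K', m)$ so that each refined summand further decomposes into indecomposable refined pieces, with each such piece $V$ factoring as $V \cong N \otimes P$, where $N$ has rank one and $P$ satisfies the Robba condition. This factorization is supplied by Corollary~\ref{C:cyclic type2} (part (a) for $p = 0$; part (c) for $p > 0$ when $\dim V$ is coprime to $p$). For $p > 0$ with $\dim V$ divisible by $p$, I would apply Frobenius descent/antecedent (Definition~\ref{D:global Frobenius descendant}, Proposition~\ref{P:global Frobenius antecedent}) iteratively as in the proof of Theorem~\ref{T:refined decomposition} to reduce to the preceding case; the finiteness of $G^{0+}(M)$ from Theorem~\ref{T:finite levels} guarantees termination after finitely many steps, so a single finite tame extension suffices.

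Once $M \otimes_{K[t]} K'[t^{1/m}]$ is written as $V_0 \oplus \bigoplus_\alpha N_\alpha \otimes P_\alpha$ (with $V_0$ the Robba summand, each $N_\alpha$ rank one, and each $P_\alpha$ Robba), any $(M^\dual)^{\otimes g} \otimes M^{\otimes h} \otimes_{K[t]} K'[t^{1/m}]$ expands as a direct sum of terms of the form $N \otimes P$, in which $N$ is a tensor product of rank-one modules (hence itself rank one) and $P$ is a tensor product of Robba modules (hence Robba, since the Robba condition is preserved under tensor products). Each such term is pure, and is refined if $b_1(N) > 0$ or Robba if $b_1(N) = 0$. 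Grouping these terms by refined equivalence class of their rank-one factor ($N_1 \sim N_2$ when $b_1(N_1^\dual \otimes N_2) < b_1(N_1) = b_1(N_2)$) then yields the desired refined decomposition: terms with $b_1(N) = 0$ combine into a single Robba summand, and distinct equivalence classes produce inequivalent refined pieces because tensoring by a Robba module preserves $b_1$ and hence refined equivalence classes.

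The hardest part will be establishing the factorization step in the case $p > 0$ with indecomposable refined summands of dimension divisible by $p$, where Corollary~\ref{C:cyclic type2} does not directly apply. Here one must carefully track the tame extensions introduced at each level of Frobenius descent, and it is precisely Theorem~\ref{T:finite levels} that guarantees only finitely many such extensions are needed in aggregate.
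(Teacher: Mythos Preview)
Your overall plan has a genuine gap in the $p>0$ case. You aim to decompose $M$ (after a single tame extension) as $V_0 \oplus \bigoplus_\alpha N_\alpha \otimes P_\alpha$ with each $N_\alpha$ of rank one and each $P_\alpha$ satisfying the Robba condition. But such a decomposition forces $G^{0+}(M)$ to act on $M$ through a direct sum of characters (since it acts trivially on each $P_\alpha$), hence to be abelian; and $G^{0+}(M)$ is only known to be a finite $p$-group, which need not be abelian. Concretely, an indecomposable refined summand of dimension divisible by $p$ need not become of cyclic type after any tame extension, so Corollary~\ref{C:cyclic type2} simply does not apply, and your proposed Frobenius descent workaround does not reduce to a case where it does. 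The sentence ``the finiteness of $G^{0+}(M)$ guarantees termination'' conflates two unrelated things: Frobenius descent shifts intrinsic radii, it does not shrink the dimension of an indecomposable refined piece modulo $p$.

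The paper's argument avoids this entirely and is much shorter. It never attempts to decompose $M$ itself beyond what Theorem~\ref{T:refined decomposition} already gives for a single module. Instead it observes that for each fixed $(g,h)$, Theorem~\ref{T:refined decomposition} provides \emph{some} tame extension over which $(M^\dual)^{\otimes g}\otimes M^{\otimes h}$ admits a refined decomposition; the only issue is uniformity in $(g,h)$. Since $G^{0+}(M)$ is finite (Theorem~\ref{T:finite levels}), it has only finitely many irreducible representations, and every $(M^\dual)^{\otimes g}\otimes M^{\otimes h}$ is built from these. So one chooses, for each irreducible $\tau$, a single pair $(g_\tau,h_\tau)$ in which $\tau$ appears, takes a tame extension that works for those finitely many pairs, and this extension then works for all $(g,h)$ at once. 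No factorization of refined summands into rank-one-tensor-Robba form is needed.
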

\begin{proof}
This is apparent from Theorem~\ref{T:turrittin0} if $p=0$. If $p>0$,
for each pair $(g,h)$ we may choose a suitable $m$ by Theorem~\ref{T:refined decomposition},
so we need only check that $m$ may be chosen uniformly. But this follows from
Theorem~\ref{T:finite levels}: it is enough to list each of the finitely many
isomorphism classes of irreducible representations $\tau$ of $G^{0+}(M)$
and, for each $\tau$, ensure that $m$ works for one pair $g,h$ such that $\tau$ appears in $(M^\dual)^{\otimes g} \otimes M^{\otimes h}$.
\end{proof}

\begin{cor} \label{C:cyclic type constituent}
If $p>0$ and $b_1(M) > 0$, then there exist
a finite extension $K'$ of $K$, a positive integer $m$
and an object $N \in [M \otimes_{K[t]} K'[t^{1/m}]]$ of cyclic type such that
$b_1(N) > 0$ but $b_1(N^{\otimes p}) = 0$.
\end{cor}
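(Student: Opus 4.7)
The plan is to first construct a rank-$1$ object $N_0$ in the category $[M \otimes_{K[t]} K'[t^{1/m}]]$, for some finite $K' \supseteq K$ and some positive integer $m$, with $b_1(N_0) > 0$, and then obtain the desired $N$ from $N_0$ by taking an appropriate tensor-$p$-th power. The key structural input is Theorem~\ref{T:finite levels}: the Tannakian group $G := G^{0+}(M)$ is a finite $p$-group. Moreover $G$ is nontrivial, since otherwise Remark~\ref{R:torus} would force every object of $[M]$ to satisfy $b_1 = 0$, contradicting $b_1(M) > 0$.

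Once $N_0$ is in hand, the passage to $N$ is automatic. Since $N_0$ has rank $1$, its endomorphism module is trivial, so $N_0$ is of cyclic type (and refined); by Theorem~\ref{T:cyclic type}(a), $b_1(N_0)$ is a positive integer. By Proposition~\ref{P:refined power}, the sequence $(b_1(N_0^{\otimes p^k}))_{k \geq 0}$ is a strictly decreasing sequence of nonnegative integers so long as it remains positive, and so must reach $0$. Let $k \geq 1$ be minimal with $b_1(N_0^{\otimes p^k}) = 0$, and set $N := N_0^{\otimes p^{k-1}}$. As a tensor product of cyclic type modules, $N$ is of cyclic type; by minimality of $k$, $b_1(N) > 0$; and $b_1(N^{\otimes p}) = b_1(N_0^{\otimes p^k}) = 0$.

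To construct $N_0$, I will first apply Theorem~\ref{T:refined decomposition}, after an initial finite extension $K_1/K$, a substitution $t \mapsto t^{1/m_1}$, and restriction to an open sub-annulus near $\alpha$ where the pure pieces of $M$ can be separated via Proposition~\ref{P:decompose annulus}. Since $b_1(M) > 0$, some refined summand has positive $b_1$; decomposing further into equivalent indecomposable pieces, I pick such a piece $W$ with $b_1(W) > 0$. In the favorable case $\gcd(\dim W, p) = 1$, Corollary~\ref{C:cyclic type2}(c), applied after passing to the algebraic closure of $K_1$ and then descended to a finite extension over which the resulting rank-$1$ factor is defined, produces a factorization $W \cong N_0 \otimes P$ with $\rank N_0 = 1$ and $b_1(P) = 0$, so that $b_1(N_0) = b_1(W) > 0$.

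The main obstacle is the case $p \mid \dim W$, in which Corollary~\ref{C:cyclic type2}(c) does not apply. Here the plan is to first realize, via Tannakian duality, a cyclic type object $V' \in [M \otimes_{K[t]} K'[t^{1/m}]]$ with $b_1(V') > 0$; Corollary~\ref{C:cyclic type2}(d) will then furnish the rank-$1$ factor $N_0$. Since $G$ is a nontrivial finite $p$-group, its Frattini quotient $G/\Phi(G)$ is a nontrivial elementary abelian $p$-group, and admits a nontrivial character $\chi$ of order $p$. The conjugation action of $G(M)/G$ on $G/\Phi(G)$ factors through a finite quotient, which I will trivialize by further enlarging $K$ and $m$; over such an extension $\chi$ is $G(M)$-invariant. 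By Tannaka--Krein, $\chi$ appears in the $G$-restriction of some tensor product $(M^\dual)^{\otimes g} \otimes M^{\otimes h}$, and its $\chi$-isotypic summand is then a $G(M)$-stable subobject of cyclic type with positive $b_1$. Carrying out this Tannakian realization and controlling the size of the required extensions is the crux of the argument.
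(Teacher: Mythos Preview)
Your last paragraph is essentially the paper's argument. The paper's proof is one sentence: by Theorem~\ref{T:finite levels} the group $G^{0+}(M)$ is a nontrivial finite $p$-group, hence admits a normal subgroup of index $p$, and the corresponding order-$p$ character $\chi$ of $G^{0+}(M)$, realized (after passing to suitable $K'$ and $m$) as the $\chi$-isotypic piece $N$ of some tensor construction, has the required properties.

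Your surrounding strategy, however, is an unnecessary detour and contains a gap. The $\chi$-isotypic object $V'$ you construct in the last paragraph is \emph{already} the desired $N$: since $G^{0+}$ acts on $V'$ by the scalar $\chi$, it acts trivially on $\End(V')$ (so $b_1(\End(V'))=0$ and $V'$ is of cyclic type) and trivially on $(V')^{\otimes p}$ (so $b_1((V')^{\otimes p})=0$), while $\chi\ne 1$ forces $b_1(V')>0$. There is no need to extract a rank-$1$ factor via Corollary~\ref{C:cyclic type2}(d), nor to iterate $p$-th tensor powers as in your second paragraph.

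The gap is that the rank-$1$ module $N_0$ produced by Corollary~\ref{C:cyclic type2} in your third paragraph (and again at the end of the fourth) is only an abstract differential module over the base ring; nothing places it in the Tannakian category $[M \otimes_{K[t]} K'[t^{1/m}]]$. From a factorization $W \cong N_0 \otimes P$ with $b_1(P)=0$ you cannot recover either $N_0$ or $P$ as a subquotient of a tensor construction from $M$. Drop the rank-$1$ reduction and the case split on whether $p$ divides $\dim W$, and simply stop at $V'$.
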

\begin{proof}
This follows from Lemma~\ref{L:isolate character} plus the proof of Theorem~\ref{T:finite levels} (in which it is shown that the conditions of Remark~\ref{R:finite Tannakian} are satisfied).
\end{proof}

\begin{lemma} \label{L:order p case}
Suppose that $p>0$, $K$ contains a primitive $p$-th root of unity,
$M$ is free of rank $1$, and $b_1(M^{\otimes p}) = 0$.
Then there exists another differential module $N$ over $\calR_\alpha$ which is solvable on $\alpha$,
is free on a single 
generator $\bv$ such that $D(\bv) = P'(t)$ for some $P(t) \in K[t]$
with $|P(t)|_\alpha = \omega$,
and satisfies $b_1(N^\dual \otimes M) = 0$.
\end{lemma}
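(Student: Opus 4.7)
Write $D(\bv_M) = f(t)\bv_M$ for some $f \in \calR_\alpha$. Any rank-$1$ module $N$ with $D(\bv_N) = P'(t)\bv_N$ gives $N^\dual \otimes M$ the connection form $f - P'(t)$, so the task reduces to producing a polynomial $P \in K[t]$ with $|P|_\alpha = \omega$ such that the rank-$1$ module with connection form $f - P'(t)$ is Robba at $\alpha$. Solvability of $N$ at $\alpha$ then follows automatically from $N \cong M \otimes (N^\dual \otimes M)^\dual$ being a tensor product of two solvable modules (the second being Robba, hence trivially solvable).

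The strategy is to build $N$ by iterated application of the test-module construction from Lemma~\ref{L:rank 1 test modules}. If $b_1(M) = 0$ then $M$ is already Robba, and one may take $P$ to be any constant of $K$-norm $\omega$. Otherwise $M$ is refined (since $\End(M)$ is trivial of rank $1$, automatically of $b_1 = 0 < b_1(M)$), and the slope $b_1(M)$ is a positive integer by Theorem~\ref{T:cyclic type}. On a closed subannulus $[\gamma,\delta] \subset (\alpha,\beta)$ where $IR(M_\rho) < \omega$, apply Lemma~\ref{L:rank 1 test modules} (after extending to $K^{\alg}$) to produce a rank-$1$ Dwork-type test module $N_1$ with $b_1(N_1^\dual \otimes M) < b_1(M)$; iterate this construction on $N_1^\dual \otimes M$. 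Since the slopes are positive integers at each stage, the procedure terminates in finitely many steps with a Robba module, and the product $N_1 \otimes \cdots \otimes N_k$ is a rank-$1$ module whose connection form is $P_0'(t)$ for some polynomial $P_0 \in K^{\alg}[t]$. The hypothesis $\mu_p \subset K$ is used to descend $P_0$ back to $K[t]$: it guarantees that the Galois orbits of the test modules $N_{\lambda,h,e,m}$ (Definition~\ref{D:test modules}) can be averaged into $K$-rational choices, since the only obstruction to $K$-rationality in the explicit form of these modules is the presence of a $\mu_p$-character.

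Finally, normalize to $|P|_\alpha = \omega$ exactly. Using the Dwork-size formula of Lemma~\ref{L:compute test modules}, the contribution of each test module to $P_0'$ has Gauss norm $\omega/\alpha$ at $\alpha$, whereas telescoping across the iteration ensures $|P_0|_\alpha \leq \omega$; if strict inequality occurs, replace $P_0$ by $P_0 + c$ for $c \in K$ with $|c| = \omega$, which leaves $P' = P_0'$ unchanged and therefore does not disturb the Robba property of $N^\dual \otimes M$. The main obstacle is exactly this bookkeeping: verifying that the Gauss norms of the successive test-module contributions accumulate to at most $\omega$ rather than exceeding it, which relies on the monotone decrease of $b_1$ through the iteration together with the explicit form of the polynomial parts of the test-module connections. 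The descent step from $K^{\alg}$ to $K$ via $\mu_p$-averaging is the second delicate point, and is the essential place where the hypothesis $\mu_p \subset K$ enters.
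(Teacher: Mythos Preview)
The paper's proof is a one-line citation to \cite[Theorem~17.1.6, Remark~17.1.7]{kedlaya-book}, which classify rank-one solvable modules over the Robba ring up to twist by modules satisfying the Robba condition. There is thus no self-contained argument in the paper to compare against; the question is whether your outline actually supplies one.

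It does not, because the hypothesis $b_1(M^{\otimes p}) = 0$ never enters your argument. Your iteration of test modules produces a rank-one $N$ with $b_1(N^\dual \otimes M) = 0$, but that much is already Corollary~\ref{C:cyclic type2} and needs no hypothesis on $M^{\otimes p}$. The whole content of the lemma is the precise normalization $|P|_\alpha = \omega$, and your justification for it (``telescoping across the iteration ensures $|P_0|_\alpha \le \omega$'') is a bare assertion that is false as stated. Each test-module step contributes a monomial $(\lambda_j/h_j)t^{h_j}$ with $h_j$ equal to the current slope $b_j$ and $|\lambda_j|\alpha^{h_j}=\omega$, so $|P_j|_\alpha=\omega/|h_j|$; whenever $p \mid b_j$ this exceeds $\omega$, and your add-a-constant trick cannot repair an overshoot. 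The hypothesis $b_1(M^{\otimes p})=0$ is exactly what pins down the norm, but you have not shown how.

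Your descent step is also not an argument. For $e=m=1$ the test modules are simply $D(\bv)=\lambda t^{h-1}\bv$ with $\lambda$ a scalar; there is no $\mu_p$-character in sight, so ``$\mu_p$-averaging'' has no content here, and the $K$-rationality of $\lambda$ is an unrelated issue. In the book's argument the hypothesis $\mu_p\subset K$ (equivalently, $K$ contains $\pi$ with $\pi^{p-1}=-p$) enters through the Dwork exponential and the Artin--Schreier description of rank-one modules whose $p$-th tensor power is Robba; that mechanism is what produces the exact value $\omega$, and it is absent from your outline.
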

\begin{proof}
This follows from \cite[Theorem~17.1.6, Remark 17.1.7]{kedlaya-book}.
\end{proof}

\begin{defn} \label{D:extend Robba}
Let $\calR_\alpha^{\bd}$ be the subring of $\calR_\alpha$ consisting of germs of 
bounded analytic functions. This ring is henselian but not complete for the $\alpha$-Gauss norm;
let $\calR_\alpha^{\inte}$ denote the valuation subring.

If $S$ is a connected finite \'etale cover, it makes sense to impose the Robba condition on 
$M \otimes_{\calR^{\inte}_\alpha} S$ provided that $S$ can be identified with
a ring of the form $\calR^{\inte}_{\alpha}$ in a suitable power series coordinate;
the resulting condition will not depend on the choice of this identification. Such an identification
can always be made if $\kappa_K$ is algebraically closed.
\end{defn}

\begin{theorem} \label{T:p-adic turrittin}
If $p>0$, then there exists a connected finite \'etale cover $S$ of $\calR^{\inte}_\alpha$ such that
$M_S = M \otimes_{\calR^{\inte}_\alpha} S$ satisfies the Robba condition in the sense of
Definition~\ref{D:extend Robba}.
\end{theorem}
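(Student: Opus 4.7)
The plan is to induct on the order of the finite $p$-group $G^{0+}(M)$, which is finite by Theorem~\ref{T:finite levels}. At each stage, I would use a tame base change followed by an Artin--Schreier cover to strictly reduce $|G^{0+}|$, terminating when the group becomes trivial.

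In the base case $G^{0+}(M)$ is trivial, so every object of $[M]$ lies in the Tannakian subcategory generated by objects with $b_1 = 0$; since that condition is preserved by subquotients, tensor products, and duals, $M$ itself satisfies $b_1(M) = 0$ and thus already satisfies the Robba condition, up to the tame extension required by Definition~\ref{D:extend Robba} in order to identify $S$ as a copy of $\calR^{\inte}_\alpha$ in a suitable power series coordinate. We take $S$ to be that tame cover.

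For the inductive step, assume $b_1(M) > 0$. Corollary~\ref{C:cyclic type constituent} produces, after a tame base change of the form $\calR_\alpha \otimes_{K[t]} K'[t^{1/m}]$, an object $N$ of the resulting Tannakian category which is of cyclic type with $b_1(N) > 0$ and $b_1(N^{\otimes p}) = 0$. After further enlarging $K$ to an algebraically closed field (still a tame cover), Corollary~\ref{C:cyclic type2}(d) supplies a factorization $N \cong L \otimes Q$ with $L$ of rank~$1$ and $b_1(Q) = 0$; the inequalities for $b_1$ under tensor products then force $b_1(L) = b_1(N) > 0$ and $b_1(L^{\otimes p}) = 0$. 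Adjoining $\mu_p$ (another tame extension) and applying Lemma~\ref{L:order p case}, we replace $L$ by a rank-one module $L'$ of the standard form $D(\bv) = P'(t)\bv$ with $P(t) \in K[t]$, $|P(t)|_\alpha = \omega$, and $b_1(L^{\dual} \otimes L') = 0$. I would then form the connected finite \'etale Artin--Schreier cover $S_0$ of $\calR^{\inte}_\alpha$ given by $y^p - y = P(t)$; the normalization $|P(t)|_\alpha = \omega$ guarantees that this is irreducible and totally (wildly) ramified at the boundary.

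Over $S_0$, the module $L'$ should be trivialized: a horizontal section is given formally by $\exp(P(t)) = \exp(y^p)\exp(-y)$, whose convergence one verifies from $|y|_\alpha = \omega^{1/p}$ together with standard $p$-adic estimates for $\exp$. Consequently the image of the Tannakian character associated to $L$ in $G^{0+}(M \otimes_{\calR^{\inte}_\alpha} S_0)$ is trivial, and hence $|G^{0+}|$ drops strictly (by at least a factor of $p$, since we are killing a nontrivial rank-one character of a finite $p$-group). Applying the induction hypothesis to $M \otimes_{\calR^{\inte}_\alpha} S_0$, viewed as a differential module over a copy of $\calR^{\inte}_{\alpha'}$ in a new power-series parameter, and amalgamating the resulting cover with $S_0$ and the tame covers used above, produces the desired connected finite \'etale cover $S$.

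The main obstacle is the verification that $S_0$ really trivializes $L'$, i.e., that $b_1(L' \otimes_{\calR^{\inte}_\alpha} S_0) = 0$ and hence $|G^{0+}|$ strictly decreases. This requires a careful analysis of how the operator $D$ transforms in the new coordinate on $S_0$, together with the ramification of $S_0/\calR^{\inte}_\alpha$ being controlled by the level of $P(t)$; this is exactly the sort of upper-numbering ramification estimate supplied by Proposition~\ref{P:local fields} applied to the residual local extension. A secondary technical point is that the tame covers and Artin--Schreier covers from different inductive stages must be assembled into a single connected finite \'etale cover of $\calR^{\inte}_\alpha$; this is routine once one tracks how each identification of $S_0$ with $\calR^{\inte}_{\alpha'}$ is transported through the subsequent stages.
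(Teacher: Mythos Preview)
Your overall strategy matches the paper's proof exactly: induct on $|G^{0+}(M)|$, use Corollary~\ref{C:cyclic type constituent} and Corollary~\ref{C:cyclic type2} to produce a rank-one object, normalize it via Lemma~\ref{L:order p case}, and then kill it with an Artin--Schreier cover. The gap is entirely in the last step.

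With $|P(t)|_\alpha = \omega < 1$, the equation $y^p - y = P(t)$ already has a solution in $\calR^{\inte}_\alpha$: the series $y_0 = -\sum_{k \geq 0} P(t)^{p^k}$ converges there and satisfies $y_0^p - y_0 = P(t)$. Hence your $S_0$ splits as a product of copies of $\calR^{\inte}_\alpha$ and cannot reduce $G^{0+}$. Relatedly, your claim $|y|_\alpha = \omega^{1/p}$ is incorrect (one gets $|y_0|_\alpha = \omega$), and the factorization $\exp(y^p)\exp(-y)$ does not converge since $|y| = \omega$ is exactly the radius of $\exp$. The paper fixes this by first adjoining $\pi$ with $\pi^{p-1} = -p$ and using the cover $z^p - z = \pi^{-1} P(t)$, for which $|\pi^{-1} P(t)|_\alpha = 1$ so the residual Artin--Schreier extension is genuinely nontrivial. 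Trivialization of $L'$ then comes from the Dwork exponential $\exp(\pi(z^p - z)) = \exp(P(t))$, whose radius of convergence in $z$ is strictly greater than $1$; this overconvergence is a classical but nontrivial fact (see \cite[Example~9.9.3]{kedlaya-book}) and is not visible from naive termwise estimates. Finally, Proposition~\ref{P:local fields} plays no role here; it is used later in the paper for the analysis at type~4 points.
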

\begin{proof}
Since $G^{0+}(M)$ is finite by Theorem~\ref{T:finite levels} and is trivial if and only if $M$ satisfies
the Robba condition, it suffices to produce a cover that decreases $G^{0+}(M)$.
This may be achieved as follows. We may assume from the outset that $K$ contains
an element $\pi$ with $\pi^{p-1} = -p$; this also forces $K$ to contain a primitive $p$-th root of unity.
Pick out an object $N \in [M \otimes_{K[t]} K'[t^{1/m}]]$ for some $K',m$ as in 
Corollary~\ref{C:cyclic type constituent}.
Apply Corollary~\ref{C:cyclic type2}
to produce a free rank 1 object $N'\in [M \otimes_{K[t]} K'[t^{1/m}]]$ for some $K',m$ such that $N^\dual \otimes N'$ satisfies the Robba condition.
By Lemma~\ref{L:order p case},
we may choose $N'$ to be free on one generator $\bv$ satisfying $D(\bv) = 
P'(t)$ for some $P \in K[t]$ with $\left|P(t)\right|_\alpha = \omega$.
We may then trivialize $N'$ by extending scalars from
$\calR^{\inte}_\alpha$ to $\calR^{\inte}_\alpha[z]/(z^p - z - \pi^{-1} P(t))$ and recalling that the power series
$\exp(\pi(z^p - z))$ in $z$ has radius of convergence strictly greater than $1$
(see for example \cite[Example~9.9.3]{kedlaya-book}).
\end{proof}

\begin{cor} \label{C:turrittin ramification}
Assume that $p>0$, $\kappa_K$ is algebraically closed, and $\alpha=1$.
\begin{enumerate}
\item[(a)]
There is a unique minimal choice of $S$ satisfying the conclusion of Theorem~\ref{T:p-adic turrittin}.
\item[(b)]
The residue field of $S$ is a finite Galois extension of $\kappa_K((t))$ whose highest ramification break
is equal to $b_1(M)$.
\end{enumerate}
\end{cor}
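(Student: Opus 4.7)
The plan is to realize the minimal cover $S$ Tannakianly via the finite $p$-group $G^{0+}(M)$ from Theorem~\ref{T:finite levels}, and then match the Tannakian filtration $\{G^r(M)\}$ with the upper-numbering ramification filtration on $\Gal(E/\kappa_K((t)))$ so that the highest jump $b_1(M)$ of the former corresponds to the highest ramification break of the latter.

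For part (a), since $\kappa_K$ is algebraically closed and $\alpha=1$, the henselian ring $\calR^{\inte}_1$ has residue field $\kappa_K((t))$, so connected finite \'etale covers correspond bijectively to finite separable extensions of $\kappa_K((t))$ via the residue functor. I would start with any cover $\tilde{S}$ produced by Theorem~\ref{T:p-adic turrittin}, replaced if necessary by its Galois closure, and let $\tilde{G} = \Gal(\tilde{S}/\calR^{\inte}_1)$ act on $M \otimes \tilde{S}$. The Tannakian formalism, together with $G^{0+}(M \otimes \tilde{S}) = 1$ (from the Robba property) and the finiteness of $G^{0+}(M)$ from Theorem~\ref{T:finite levels}, produces a surjection $\tilde{G} \twoheadrightarrow G^{0+}(M)$. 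The subcover $S$ fixed by the kernel is Galois over $\calR^{\inte}_1$ with group $G^{0+}(M)$, and any other valid cover contains $S$ because the analogous Tannakian identification forces its covering group to annihilate $G^{0+}(M)$.

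For the upper bound $\leq b_1(M)$ in part (b), I would exploit the Taylor action of $1+\gothm_K$ on $M \otimes S$ guaranteed by the Robba condition (Example~\ref{exa:Robba}, Lemma~\ref{L:projector}): the substitution $t \mapsto \lambda t$ for $|\lambda - 1|$ sufficiently small lifts uniquely to $S$ by \'etale lifting (together with the uniqueness in (a)) and reduces modulo $\gothm_K$ to a homomorphism $\Gamma_m \to N_{m,t}$ satisfying Hypothesis~\ref{H:local fields2} for every integer $m > b_1(M)$. Proposition~\ref{P:local fields} then bounds all upper ramification breaks of $E/\kappa_K((t))$ by $m$; sharpening this to $\leq b_1(M)$ proceeds by passing to a ramified base change $t \mapsto t^{1/d}$ for an integer $d$ clearing denominators in $b_1(M)$ and applying the proposition at the sharper integer $d \cdot b_1(M)$.

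For the lower bound $\geq b_1(M)$, I would apply Theorem~\ref{T:refined decomposition} (after a tamely ramified base change) to extract a refined summand $V$ of $M$ with $b_1(V) = b_1(M)$, then iterate Lemma~\ref{L:cyclic type2}, using Frobenius descents as needed to handle the case $p \mid \dim V$, to produce a rank-one object $N \in [V]$ of cyclic type with $b_1(N) = b_1(M)$ and $b_1(N^{\otimes p}) = 0$. Lemma~\ref{L:order p case} trivializes $N$ by an Artin-Schreier extension built from a polynomial $Q(t) \in K[t]$ with $|Q|_1 = \omega$; reducing the defining equation $z^p - z = \pi^{-1} Q(t)$ modulo $\gothm_K$ yields an Artin-Schreier extension of $\kappa_K((t))$ whose ramification number, by the computation underlying Lemma~\ref{L:local fields2}(a), equals the pole order of the reduction of $\pi^{-1} Q(t)$, which in turn matches $b_1(N) = b_1(M)$. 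This extension sits inside $E$, providing the desired lower bound. The main obstacle is verifying the compatibility of the Tannakian filtration $\{G^r(M)\}$ with the upper-numbering ramification filtration on $\Gal(E/\kappa_K((t)))$ through the inductive Frobenius reductions; this matching is the delicate heart of the argument.
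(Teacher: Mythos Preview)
Your approach to (a) via the Tannakian group $G^{0+}(M)$ is reasonable and roughly matches the spirit of the external references the paper invokes.

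The upper bound in (b), however, does not work as written. The scaling action $t \mapsto \lambda t$ for $\lambda \in 1+\gothm_K$ reduces to the \emph{identity} on the residue field $\kappa_K((t^{-1}))$ of $\calR^{\inte}_1$; whatever automorphism of the residue field $E$ of $S$ you obtain by lifting and reducing therefore lies in the finite group $\Gal(E/\kappa_K((t^{-1})))$, and in particular gives a trivial map from any connected $\Gamma_m \cong \mathbb{G}_a$. Hypothesis~\ref{H:local fields2} requires $\Gamma_m \to N_m/N_{m+1}$ to be surjective and separable, which is impossible here. In the paper, Proposition~\ref{P:local fields} is invoked only at type~4 points (Lemma~\ref{L:type 4 constant1}), where the relevant action is by \emph{translations} $t \mapsto t+c$ on the residue field; that mechanism has no analogue for a type~2 boundary of an annulus.

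Your lower bound also has a gap. From a refined summand $V$ with $b_1(V)=b_1(M)$, Lemma~\ref{L:cyclic type2} only applies directly when $p \nmid \dim V$ or when $V$ is already known to be of cyclic type; Frobenius descent does not cure the $p \mid \dim V$ case. Even granting a rank-one $N$ with $b_1(N)=b_1(M)$, nothing in the cited lemmas forces $b_1(N^{\otimes p})=0$; Lemma~\ref{L:cyclic type1} only gives $b_1(N^{\otimes p^h})=0$ for some $h$, and replacing $N$ by a tensor power lowers $b_1$. The construction in Theorem~\ref{T:p-adic turrittin} does produce a cyclic-type $N$ with $b_1(N^{\otimes p})=0$, but with $b_1(N)$ possibly strictly less than $b_1(M)$.

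The paper instead defers to \cite[Theorem~5.23]{kedlaya-overview} and \cite[Theorem~19.4.1]{kedlaya-book}. The argument there proceeds by tracking both the slopes $b_i$ and the ramification breaks through the Artin--Schreier tower built in the proof of Theorem~\ref{T:p-adic turrittin}: for a rank-one module with $D(\bv)=P'(t)\bv$ and $|P|_1=\omega$, the trivializing extension $z^p-z=\pi^{-1}P(t)$ has residual ramification break exactly $b_1$; one then controls how pullback along such an extension transforms the $b_i$ (the pushforward/pullback formulas of \cite[Chapter~19]{kedlaya-book}) and combines this with Herbrand's rule. The upper bound comes from the explicit tower, and the lower bound from the observation that pullback along an extension with highest break $<b_1(M)$ cannot drop $b_1$ to zero. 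This is the matching of filtrations you allude to at the end, but it is carried out via these slope-transfer formulas rather than via Proposition~\ref{P:local fields}.
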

\begin{proof}
This follows from Theorem~\ref{T:p-adic turrittin} as in the proof of \cite[Theorem~5.23]{kedlaya-overview}
(see also \cite[Theorem~19.4.1]{kedlaya-book}).
\end{proof}

\begin{cor} \label{C:turrittin factorization}
Assume that $p>0$,
and decompose $M = \bigoplus M_i$ as in Proposition~\ref{P:solvable decomposition}. Then for each $i$, there exists an isomorphism $M_i \cong N \otimes P$ for some solvable differential modules $N,P$ over $\calR_\alpha$ such that $N$ is irreducible, $N_S$ is trivial for some connected finite \'etale cover $S$ of $\calR_\alpha^{\inte}$, and $P$ satisfies the Robba condition.
\end{cor}
\begin{proof}
Let $Q$ be an irreducible subquotient of $M_i$.
By Theorem~\ref{T:p-adic turrittin}, we may choose $S$ so that 
$M_i \otimes_{\calR^{\inte}_{\alpha}} S$ satisfies the Robba condition,
as then does $Q \otimes_{\calR^{\inte}_{\alpha}} S$.
Let $T$ be the restriction of scalars of $\calR \otimes_{\calR^{\inte}_{\alpha}} S$ to $\calR$, viewed as a solvable differential module; then $Q \otimes T$ has a nontrivial Robba component, so $Q$ is equivalent to some irreducible subquotient $N$ of $T$. Let $P$ be the Robba component of $N^\dual \otimes M_i$; by construction, there is a natural map $N \otimes P \to M_i$ 
factoring through the contraction $N^\dual \otimes N \otimes M_i \to M_i$.
We may check that this map is an isomorphism by induction on the length of a Jordan-H\"older filtration of $M_i$.
\end{proof}

\begin{cor} \label{C:successive extension}
Suppose that $p>0$ and that the Robba component of $\End(M)$ has $p$-adic non-Liouville exponents.
Then for $S$ as in Theorem~\ref{T:p-adic turrittin},
$M_S$ splits as a direct sum, each summand of which is a successive extension of copies of $M_\lambda$ for some $\lambda \in \ZZ_p$.
\end{cor}
\begin{proof}
We may assume that $M$ is indecomposable. By Corollary~\ref{C:turrittin factorization}, we may write $M \cong N \otimes P$ where $N$ is irreducible, $N_S$ is trivial, and $P$ satisfies the Robba condition. We then have $\End(M) \cong \End(N) \otimes \End(P)$
and hence $\End(M_S) \cong \End(N_S) \otimes \End (P_S)$. 
Choose an exponent $A$ of $P$; then $A-A$ is an exponent of $\End(P)$, and the multiset obtained from $A-A$ by multiplying
each multiplicity by $\rank(N)^2$ is an exponent of $\End(M_S)$.
On the other hand, since $\End(N)$ contains a nontrivial Robba component (namely the trace component), $\End(P)$ is isomorphic to a submodule of the Robba component of $\End(M)$. Therefore $\End(P)$ has $p$-adic non-Liouville exponents, as then does $\End(M_S)$.
By Corollary~\ref{C:cm2}, $M$ has the desired form.
\end{proof}
\begin{remark}
In Corollary~\ref{C:successive extension}, it is not true in general that the differences between the different values of $\lambda$ are $p$-adic Liouville numbers. That is because if $M$ splits nontrivially 
as in Proposition~\ref{P:solvable decomposition}, then $M_i^\dual \otimes M_j$ has no Robba component and thus imposes no restriction on the exponents of $(M_i^\dual \otimes M_j)_S$. For instance, 
choose inequivalent irreducible solvable differential modules $N_i, N_j$
over $\calR_{\alpha}$ with $N_{i,S}, N_{j,S}$ trivial, and choose  $\lambda, \mu \in \ZZ_p$ which differ by a $p$-adic Liouville number. Then 
\[
M = (N_i \otimes M_{\lambda}) \oplus (N_j \otimes M_{\mu})
\]
satisfies the hypothesis of Corollary~\ref{C:successive extension}
but $\End(M)$ admits an exponent containing $\lambda - \mu$.
\end{remark}

\begin{remark} \label{R:local monodromy}
Theorem~\ref{T:p-adic turrittin} includes a result variously known as the \textit{$p$-adic Turrittin theorem} (the implicit analogy being perhaps most clear from Corollary~\ref{C:turrittin factorization})
and the \emph{$p$-adic local monodromy theorem}. That result, due to Andr\'e \cite{andre-monodromy},
Mebkhout \cite{mebkhout-monodromy}, and the author \cite{kedlaya-monodromy},
assumes the existence of a \emph{Frobenius structure} on $M$
(see \cite[Chapter~17]{kedlaya-book}); in addition, $K$ must be discretely valued
and $\beta$ must equal 1. 

The methods of Andr\'e and Mebkhout can be used to derive Theorem~\ref{T:p-adic turrittin} also in the case
where all of the objects in $[M]$ have $p$-adic non-Liouville exponent differences.
In these arguments, the non-Liouville condition is needed to ensure that irreducible objects satisfying the Robba condition are all of rank 1. The proof of Theorem~\ref{T:p-adic turrittin} provides a workaround
in cases where advance information about exponents is not available.
\end{remark}

\section{Berkovich discs}
\label{sec:Berkovich discs annuli}

We are at last ready to shift language and perspective towards Berko\-vich's nonarchimedean analytic spaces.
In this section, we introduce the topological spaces which play the role of discs in Berkovich's theory,
and consider radii of convergence of local horizontal
sections of differential modules on such spaces.
This draws heavily on the results of
\S \ref{sec:rings discs annuli}, but some additional maneuvering is needed.
In addition, the behavior of differential
modules around points of type 4 requires some extra work.

\subsection{Underlying topological spaces}

We begin by defining the Gel'\-fand spectrum of a Banach ring. 
For now, we just consider the resulting topological space; we postpone discussion of the 
analytic space structure to \S\ref{sec:berkovich curves}.

\begin{defn}
For $R$ a ring equipped with a submultiplicative norm (e.g., a commutative Banach algebra over $K$), the \emph{Gel'fand spectrum} $\calM(R)$ is defined as the set
of bounded (by the given norm) multiplicative seminorms on $R$, topologized as a subset of the product $\RR^R$.
Note that $\calM(R)$ may also be viewed as a closed subset of a product of bounded closed intervals,
and hence is compact; it is also nonempty provided that $R \neq 0$ \cite[Theorem~1.2.1]{berkovich1}.
For $x \in \calM(R)$, let $\calH(x)$ denote the completion of $\Frac(R/\ker(x))$ for the multiplicative
norm induced by $x$.
\end{defn}

\begin{remark}
Any bounded homomorphism $R \to S$ of commutative Banach algebras over $K$ defines a continuous restriction map
$\calM(S) \to \calM(R)$. If this map is surjective, then it is a quotient map because the source and target are
compact: the induced map from the quotient space is a continuous bijection from a quasicompact space to a Hausdorff space, hence a bijection \cite[\S 9, No. 4, Corollaire~2]{bourbaki-top}.

For example, suppose that $R$ is a commutative Banach algebra over $K$ and that $K'$ is a complete field extension of $K$.
Then the completed tensor product $R' = R \widehat{\otimes}_K K'$ is a Banach algebra over $K'$ and the restriction map $\calM(R') \to \calM(R)$ is always surjective
\cite[Lemma~1.20]{kedlaya-witt}.

In the previous paragraph, if $K'$ is the completion of an algebraic Galois extension of $K$
(such as $\CC$), we can say more: not only is the restriction map $\calM(R') \to \calM(R)$ surjective,
but the group of continuous automorphisms of $K'$ over $K$ acts transitively on the fibres of the restriction map.
See \cite[Corollary~1.3.6]{berkovich1}.
\end{remark}

\begin{defn} \label{D:point type}
Let $R$ be a commutative Banach algebra over $K$, and put $R' = R \widehat{\otimes}_K \CC$.
For $x \in \calM(R)$, choose any lift $\tilde{x} \in \calM(R')$ of $x$, and define the \emph{signature}
of $x$ as the triple
\[
\left( \dim (\ker(\tilde{x})), \rank (|\calH(x)^\times|/|K^\times|), \trdeg(\kappa_{\calH(x)}/\kappa_K) \right).
\]
Note that one can have $\dim(\ker(\tilde{x})) > \dim(\ker(x))$.
\end{defn}

\subsection{Discs}
\label{sec:geometry discs}

We now specialize the previous discussion to rings of convergent power
series on discs. Due to the increasing prevalence of such rings and their associated Gel'fand spectra
in various branches of mathematics, numerous expositions of this material can be found in the literature;
among these, perhaps the most comprehensive is
the book of Baker and Rumely \cite[Chapter~1]{baker-rumely}.
However, that treatment assumes that the ground field $K$ is algebraically closed, which we prefer not to
do here; to avoid imposing this condition, we refer also to \cite[\S 2]{kedlaya-witt}.
\begin{defn}
For $\beta > 0$, the space $\calM(R_{[0,\beta]})$ is called the 
\emph{Berkovich closed disc of radius $\beta$} with coordinate $t$ over $K$, and also denoted
$\DD_{\beta,K}$.
For $z \in \CC$ with $|z| \leq \beta$ and $\rho \in [0,\beta]$, 
the restriction to $R_I \cong K \langle t/\beta \rangle$ of the $\rho$-Gauss
norm on $\CC\langle (t-z)/\beta\rangle$ defines a point $\zeta_{z,\rho} \in \DD_{\beta,K}$; the point $\zeta_{0,\beta}$ is called the 
\emph{Gauss point} of $\DD_{\beta,K}$. For $\beta' > \beta$, the natural map $R_{[0,\beta']} \to R_{[0,\beta]}$
induces an inclusion $\DD_{\beta,K} \to \DD_{\beta',K}$; the direct limit of the $\DD_{\beta,K}$ along these maps
is called the \emph{Berkovich affine line} over $K$.
\end{defn}

\begin{lemma} \label{L:disc points quotient}
The restriction map $\DD_{\beta,\CC} \to \DD_{\beta,K}$ identifies $\DD_{\beta,K}$ with the quotient
of $\DD_{\beta,\CC}$ by the action of
the group of continuous automorphisms of $\CC$ over $K$.
\end{lemma}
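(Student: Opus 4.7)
The plan is to exhibit this as a purely formal consequence of two facts already available: surjectivity of the restriction map together with transitivity of the Galois action on fibres (both recorded in the remark preceding Definition~\ref{D:point type}, where they are attributed to Berkovich), and the standard topological observation that a continuous bijection from a compact space to a Hausdorff space is a homeomorphism.

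First I would set $G$ to be the group of continuous automorphisms of $\CC$ over $K$. The inclusion $R_{[0,\beta]} \hookrightarrow R_{[0,\beta]} \widehat{\otimes}_K \CC = \CC\langle t/\beta\rangle$ is $G$-equivariant if we let $G$ act trivially on the source and by coefficientwise action on the target, so the induced restriction map $\pi\colon\DD_{\beta,\CC}\to\DD_{\beta,K}$ is $G$-invariant and therefore factors through the orbit space $\DD_{\beta,\CC}/G$ to give a continuous map $\bar{\pi}\colon\DD_{\beta,\CC}/G\to\DD_{\beta,K}$.

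Next I would verify that $\bar\pi$ is a bijection. Surjectivity of $\pi$ (and hence of $\bar\pi$) is the content of the cited lemma of Berkovich on base change of the Gel'fand spectrum along a complete field extension. Injectivity of $\bar\pi$ is equivalent to the statement that $G$ acts transitively on each fibre of $\pi$, which is Berkovich's Corollary~1.3.6 applied to the Galois extension $\CC/K$ (noting that any continuous automorphism of $\CC$ fixing $K$ is automatic here, since $\CC$ is the completion of an algebraic Galois extension of $K$).

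Finally, to upgrade $\bar\pi$ from a continuous bijection to a homeomorphism, I would use that $\DD_{\beta,\CC}$ is compact (as the Gel'fand spectrum of a Banach algebra), so its quotient $\DD_{\beta,\CC}/G$ is compact as well, while $\DD_{\beta,K}$ is Hausdorff as a subspace of the product $\RR^{R_{[0,\beta]}}$; a continuous bijection between such spaces is automatically a homeomorphism. There is no real obstacle here: the entire proof is a bookkeeping exercise around the two Berkovich results, and the only thing that genuinely requires care is the verification that the $G$-action is well defined on $\DD_{\beta,\CC}$, which follows because $G$ acts by isometric (and thus bounded) $K$-algebra automorphisms of $\CC\langle t/\beta\rangle$.
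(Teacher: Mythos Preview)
Your proposal is correct and essentially matches the paper's approach: the paper simply cites \cite[Proposition~1.3.5]{berkovich1}, while you spell out the argument using the surjectivity and fibre-transitivity statements already recorded in the remark before Definition~\ref{D:point type} (which are themselves attributed to Berkovich). The only difference is that you make explicit the routine topological step (continuous bijection from compact to Hausdorff is a homeomorphism), which is exactly what underlies Berkovich's proposition.
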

\begin{proof}
See \cite[Proposition 1.3.5]{berkovich1}.
\end{proof}

\begin{prop} \label{P:homotopy}
For $\beta > 0$, $x \in \DD_{\beta,K}$ and $\rho \in [0,\beta]$, define
\begin{equation} \label{eq:homotopy}
H(x,\rho)(f) = \max \left\{ \rho^i x \left( \frac{1}{i!} \frac{d^i}{dt^i}(f) \right): i = 0,1,\dots \right\}
\end{equation}
with the interpretation that $\rho^0 = 1$ even for $\rho=0$.
\begin{enumerate}
\item[(a)]
The formula \eqref{eq:homotopy}
defines a continuous map 
\[
H: \DD_{\beta,K} \times [0, \beta] \to \DD_{\beta,K}.
\]
\item[(b)]
For $x \in \DD_{\beta,K}$, $H(x,0) = x$ and $H(x,\beta) = \zeta_{0,\beta}$.
\item[(c)]
For $x \in \DD_{\beta,K}$ and $\rho,\sigma \in [0,\beta]$, 
\[
H(H(x,\rho),\sigma) = H(x, \max\{\rho,\sigma\}).
\]
\item[(d)]
For $z \in \CC$ with $|z| \leq \beta$ and $\rho \in [0,\beta]$, $H(\zeta_{z,0},\rho) = \zeta_{z,\rho}$.
\item[(e)]
For $x,y \in \DD_{\beta,K}$, $y$ dominates $x$ (that is, $y(f) \geq x(f)$ for all $f \in R_I$)
if and only if $y = H(x,\rho)$ for some $\rho \in [0,\beta]$.
\end{enumerate}
\end{prop}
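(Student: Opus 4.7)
\medskip

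\noindent\textbf{Proof plan.} The unifying idea is that $H(x,\rho)$ should be interpreted as the supremum norm on the ``disc of radius $\rho$ centered at $x$.'' Concretely, let $t_x$ denote the image of $t$ in $\calH(x)$; since $x(t) \leq \beta$ and $\rho \leq \beta$, the substitution $t \mapsto t_x + (t - t_x)$ defines a bounded $K$-algebra homomorphism $R_{[0,\beta]} \to \calH(x)\langle (t-t_x)/\rho\rangle$ into the affinoid closed disc of radius $\rho$ over $\calH(x)$, whose Gauss norm is multiplicative. The formal Taylor expansion $f = \sum_i \frac{1}{i!}(\partial^i f)(t_x)(t-t_x)^i$ shows that the pullback of this Gauss norm is exactly the right side of \eqref{eq:homotopy}. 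This interpretation yields part (a): $H(x,\rho)$ is a bounded multiplicative seminorm on $R_{[0,\beta]}$. For joint continuity, note that for each $f \in R_{[0,\beta]}$, the quantity $\rho^i x(\frac{1}{i!}\frac{d^i f}{dt^i})$ is continuous in $(x,\rho)$, and an elementary estimate on the tails of the Taylor series (using $x \leq \zeta_{0,\beta}$ and $\rho \leq \beta$) shows that the supremum is achieved uniformly up to arbitrary precision by finite truncations, so continuity of $H$ follows from the continuous dependence of the finite maxima.

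For (b), the case $\rho = 0$ is immediate from the convention $0^0 = 1$. For $\rho = \beta$, the ultrametric inequality gives that the closed disc of radius $\beta$ around $t_x$ coincides with the closed disc of radius $\beta$ around $0$ (since $x(t) \leq \beta$), so $H(x,\beta) = \zeta_{0,\beta}$. For (c), I will compute directly using the identity $\frac{1}{i!}\frac{d^i}{dt^i} \circ \frac{1}{j!}\frac{d^j}{dt^j} = \binom{i+j}{i}\frac{1}{(i+j)!}\frac{d^{i+j}}{dt^{i+j}}$; combined with $|\binom{i+j}{i}|_K \leq 1$ for the upper bound and the specialization $(i,j) = (k,0)$ or $(0,k)$ for the lower bound, this yields $H(H(x,\rho),\sigma) = H(x,\max\{\rho,\sigma\})$. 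For (d), writing $f = \sum c_n (t-z)^n$ gives $\frac{1}{i!}\frac{d^i f}{dt^i}(z) = c_i$, so $H(\zeta_{z,0},\rho)(f) = \max_i \rho^i |c_i| = \zeta_{z,\rho}(f)$.

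For (e), the ``if'' direction is immediate: the $i=0$ term in \eqref{eq:homotopy} already gives $H(x,\rho)(f) \geq x(f)$. The ``only if'' direction is the main obstacle and requires a geometric argument via the tree structure of the Berkovich disc. I would argue first over $\CC$, where by the classification of points (types 1 through 4) each $\tilde{x} \in \DD_{\beta,\CC}$ arises as the infimum of a decreasing nested family of classical closed discs $\zeta_{z_n,\rho_n}$, and its dominators are precisely the points corresponding to closed discs containing one (equivalently, all sufficiently small) of the $\zeta_{z_n,\rho_n}$. Part (d) combined with (c) then identifies these dominators with $H(\tilde{x},\sigma)$ for $\sigma \geq \lim \rho_n$, proving the claim over $\CC$. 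To descend to $K$ via Lemma~\ref{L:disc points quotient}, given $y \geq x$ in $\DD_{\beta,K}$, I will lift $y$ to some $\tilde{y} \in \DD_{\beta,\CC}$; since $\ker(\tilde{y}) \cap R_{[0,\beta]} \subseteq \ker(y) \subseteq \ker(x)$, the seminorm $x$ extends to a bounded multiplicative seminorm $\tilde{x}$ on $\calH(\tilde{y})$ dominated by $\tilde{y}$, and restriction yields a lift $\tilde{x} \in \DD_{\beta,\CC}$ of $x$ with $\tilde{y} \geq \tilde{x}$. Applying the result over $\CC$ gives $\tilde{y} = H(\tilde{x},\rho)$ for some $\rho$, and the Galois-equivariance of formula \eqref{eq:homotopy} (each of the differential operators $\frac{1}{i!}\frac{d^i}{dt^i}$ is defined over $K$) pushes the identity down to $y = H(x,\rho)$.
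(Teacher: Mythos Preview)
The paper itself disposes of this proposition by citation to \cite{berkovich1} and \cite{kedlaya-witt}, so there is no in-paper argument to compare against; you are supplying details the paper omits. Your treatment of (a)--(d) and of the ``if'' direction of (e) is correct and follows the standard interpretation of $H(x,\rho)$ as the Gauss point of the radius-$\rho$ disc over $\calH(x)$; the uniform tail estimate for continuity in (a) and the binomial computation for (c) are both fine.

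The descent step in the ``only if'' direction of (e) has a genuine gap. You assert that $x$ extends to a bounded multiplicative seminorm $\tilde{x}$ on the complete field $\calH(\tilde{y})$, dominated by the norm of $\calH(\tilde{y})$. But for any complete valued field $L$, the Gel'fand spectrum $\calM(L)$ is a single point: if $\alpha$ is multiplicative with $\alpha \leq |\cdot|_L$, then for $a \in L^\times$ one has $\alpha(a) = 1/\alpha(a^{-1}) \geq 1/|a^{-1}|_L = |a|_L$, forcing $\alpha = |\cdot|_L$. Thus the only bounded multiplicative seminorm on $\calH(\tilde{y})$ is its norm, and your $\tilde{x}$ would be forced to equal $\tilde{y}$, handling only the trivial case $x = y$. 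The kernel inclusion you cite is not the obstruction; the issue is that a field admits no nontrivial bounded specializations.

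A repair must avoid extending to a field. One route: since $x \leq y$, the seminorm $x$ extends by continuity to the Banach \emph{ring} $B$ obtained by completing $R_{[0,\beta]}$ under $y$ (so $\calM(B) = \{z \in \DD_{\beta,K} : z \leq y\}$, which genuinely has many points). The surjectivity of $\calM(B \widehat{\otimes}_K \CC) \to \calM(B)$ then produces a lift $\tilde{x}$ of $x$; one still has to identify a lift $\tilde{y}$ of $y$ dominating $\tilde{x}$ inside $\calM(B \widehat{\otimes}_K \CC)$, which requires a further argument (e.g.\ via the Shilov boundary, or by first choosing $\tilde{y}$ and then working in the completion of $R_{[0,\beta],\CC}$ under $\tilde{y}$ rather than in $\calH(\tilde{y})$). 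Alternatively, the reference \cite[Theorem~2.11]{kedlaya-witt} proves (e) directly over general $K$ without reducing to the algebraically closed case; consulting that argument would be the most efficient fix.
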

\begin{proof}
Ssee \cite[Remark~6.1.3(ii)]{berkovich1} or \cite[Lemma~2.3]{kedlaya-witt} for (a)-(d)
and \cite[Theorem~2.11]{kedlaya-witt} for (e).
\end{proof}

\begin{defn} \label{D:radius}
For $\beta>0$ and $x \in \DD_{\beta,K}$, define the \emph{diameter} of $x$, denoted $\rho(x)$, to be the maximum
$\rho \in [0,\beta]$ for which $H(x,\rho) = x$. 
Beware that the diameter is stable under base extension from $K$ to $\CC$
(see Proposition~\ref{P:Berkovich classification}), but 
not under general base extensions (see Remark~\ref{R:canonical base extension}).
It is also stable under increasing $\beta$.
\end{defn}

\begin{remark} \label{R:canonical base extension}
For $\beta>0$ and  $x \in \DD_{\beta,K}$, let $t_x \in \calH(x)$ be the image of $t$ under the natural map
$R_{[0,\beta]} \to \calH(x)$. We may then realize $x$ as the restriction of the seminorm $\zeta_{t_x,0} \in \calM(R_{I,\calH(x)})$ of radius 0.
\end{remark}

At the other extreme, we have the following.
\begin{lemma}\label{L:same radius}
For $\beta > 0$, $x \in \DD_{\beta,K}$, and $K'$ an analytic field containing $K$, there exists $y \in \DD_{\beta,K'}$ lifting $x$ with $\rho(y) = \rho(x)$.
\end{lemma}
\begin{proof}
In case $K' = \CC$, this will follow from Proposition~\ref{P:Berkovich classification} below. In case $K = \CC$, then the tensor product
norm on $\calH(x) \otimes_K K'$ is itself multiplicative
(see for instance \cite[3.14]{poineau}) and hence defines a point $y$ of the desired form.

In the general case, note that any lift $y$ satisfies $\rho(y) \leq \rho(x)$, so it suffices to check the claim after enlarging $K'$. We may thus ensure that $\CC \subseteq K'$ and then check the claim in two steps using the previous paragraph.
\end{proof}

In terms of the intrinsic radius function, Berkovich's classification of points of $\calM(R_I)$
reads as follows.
\begin{prop} \label{P:Berkovich classification}
For $\beta > 0$, every point of $\DD_{\beta,K}$ is of exactly one of the following types
(called \emph{types 1,2,3,4} hereafter).
\begin{enumerate}
\item[1.] Points of signature $(1,0,0)$. These are the points of the form $\zeta_{z,0}$ for some $z \in \CC$. The diameter of such a point is $0$.
\item[2.] Points of signature $(0,1,0)$. These are the points of the form $\zeta_{z,\rho}$ for some $z \in \CC$ and some $\rho \in (0,\beta] \cap |\CC^\times|$.
The diameter of such a point is $\rho>0$.
\item[3.] Points of signature $(0,0,1)$. These are the points of the form $\zeta_{z,\rho}$ for some $z \in \CC$ and some $\rho \in (0,\beta] \setminus |\CC^\times|$.
The diameter of such a point is $\rho >0$.
\item[4.] Points of signature $(0,0,0)$. The diameter of such a point $x$
is the infimum of those values of $\rho$
for which the seminorm $x$ is dominated by some $\zeta_{z,\rho}$; it belongs to the interval
$(0,\beta)$.
\end{enumerate}
Moreover, the points that are minimal under domination are precisely those of types 1 and 4.
\end{prop}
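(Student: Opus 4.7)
The plan is to reduce via Lemma~\ref{L:disc points quotient} to the case where $K = \CC$ is algebraically closed; the signatures in Definition~\ref{D:point type} are defined via a lift to $\DD_{\beta,\CC}$ and are hence unchanged by the reduction, while both the domination ordering and the description in terms of $\zeta_{z,\rho}$ transport cleanly from $\CC$ to $K$. Over $\CC$, every formal symbol $\zeta_{z,\rho}$ with $z \in \CC$, $|z| \leq \beta$, $\rho \in [0,\beta]$ is genuinely a point of the disc.

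Given $x \in \DD_{\beta,\CC}$, I would attach to it the family of ``enclosing classical discs''
\[
B_\rho(x) = \{z \in \CC : |z| \leq \beta,\ x(t-z) \leq \rho\} \qquad (\rho \geq 0).
\]
Applying the strong triangle inequality to $(z_1 - t) + (t - z_2)$ shows that any two elements of $B_\rho(x)$ lie within distance $\rho$, so $B_\rho(x)$ is either empty or a closed classical disc of radius $\rho$; moreover these sets are nondecreasing in $\rho$. Let $\rho^\ast(x) = \inf\{\rho : B_\rho(x) \neq \emptyset\}$, which is finite since $0 \in B_{x(t)}(x)$, and which is readily identified with the diameter of Definition~\ref{D:radius}. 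Three cases then present themselves. If $B_0(x) \neq \emptyset$, picking $z \in B_0(x)$ and using multiplicativity gives $x = \zeta_{z,0}$, of type 1. If $\rho^\ast(x) > 0$ and $B_{\rho^\ast(x)}(x) = D(z,\rho^\ast(x))$ is nonempty, I would argue $x = \zeta_{z,\rho^\ast(x)}$ by comparing both seminorms on linear polynomials: for each $\alpha \in \CC$, either $\alpha \notin D(z,\rho^\ast(x))$, in which case the ultrametric inequality forces $x(t-\alpha) = |z-\alpha|$, or $\alpha \in D(z,\rho^\ast(x))$, in which case the minimality of $\rho^\ast(x)$ forces $x(t-\alpha) = \rho^\ast(x)$; in either subcase this matches $\zeta_{z,\rho^\ast(x)}(t-\alpha)$. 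Since $\CC$ is algebraically closed, polynomials factor into linear pieces, and multiplicativity together with density of $\CC[t]$ in $R_{[0,\beta]}$ extends the agreement to all of $R_{[0,\beta]}$. The resulting point is of type 2 or 3 according to whether $\rho^\ast(x) \in |\CC^\times|$. The remaining case $\rho^\ast(x) > 0$ with $B_{\rho^\ast(x)}(x) = \emptyset$ produces a type 4 point, described by the nested family $\{B_\rho(x) : \rho > \rho^\ast(x)\}$ having empty intersection.

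The signature in each case is then read off from the explicit identification of $\calH(x)$: a copy of $\CC$ in type 1, the completion of $\CC(t)$ at the appropriate Gauss norm in types 2 and 3, and a norm-completion along a descending family of classical discs in type 4. The minimality claim follows from Proposition~\ref{P:homotopy}(e): a point $x$ is nonminimal precisely when $x = H(y,\rho)$ for some $\rho > 0$ and $y \neq x$, and this is exactly when $x = \zeta_{z,\rho}$ strictly dominates $\zeta_{z,0}$, covering the types 2 and 3. Type 1 is minimal because the image of $H(\cdot,\rho)$ for $\rho > 0$ has positive diameter; type 4 is minimal because any strict predecessor would define the same descending family of enclosing discs and hence coincide with $x$. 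The main obstacle is the middle case: identifying $x$ with the Gauss point $\zeta_{z,\rho^\ast(x)}$ requires delicate use of algebraic closedness of $\CC$ through polynomial factorization, combined with continuity arguments on $R_{[0,\beta]}$. A secondary point of care is the type 4 case, where one must verify that nested discs with empty intersection produce no spurious extension of residue field or value group, again leveraging algebraic closedness of $\CC$.
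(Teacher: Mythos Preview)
Your sketch is correct and is precisely the classical argument of Berkovich that the paper defers to via the citations \cite[1.4.4]{berkovich1} and \cite[Theorem~2.26]{kedlaya-witt}. The one point you should make explicit for exhaustiveness is that $\rho^\ast(x)=0$ forces $B_0(x)\neq\emptyset$ by completeness of $\CC$ (the centers of the $B_{1/n}(x)$ form a Cauchy sequence whose limit lies in $B_0(x)$), so your three cases genuinely cover everything.
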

\begin{proof}
For $K = \CC$, see \cite[1.4.4]{berkovich1}. For the general case, see
\cite[Theorem~2.26]{kedlaya-witt}.
\end{proof}
This can be used to recover a version of the Zariski-Abhyankar inequality. For a more traditional
variant, see for instance \cite[Th\'eor\`eme~9.2]{vaquie}.

\begin{cor} \label{C:Abhyankar}
Let $R$ be the completion of $K[T_1,\dots,T_d]$ for the Gauss norm. Then the signature of each point in $\calM(R)$
consists of three nonnegative integers whose sum is at most $d$.
\end{cor}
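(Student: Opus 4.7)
The plan is to reduce this to the classical Zariski--Abhyankar inequality for valued field extensions, by working with the lift $\tilde x$ and leveraging the dimension theory of Tate algebras. First, I would reduce to a bound on the signature of $\tilde x$ computed over $\CC$: since $\CC$ is the completion of an algebraic closure of $K$, the extension $|\CC^\times|/|K^\times|$ is torsion and $\kappa_\CC/\kappa_K$ is algebraic, so from the inclusion $\calH(x) \hookrightarrow \calH(\tilde x)$ together with additivity of rank (resp.\ transcendence degree) in towers I get
\[
\rank(|\calH(x)^\times|/|K^\times|) \leq \rank(|\calH(\tilde x)^\times|/|\CC^\times|), \qquad \trdeg(\kappa_{\calH(x)}/\kappa_K) \leq \trdeg(\kappa_{\calH(\tilde x)}/\kappa_\CC).
\]
It then suffices to bound $\dim(\ker(\tilde x)) + \rank(|\calH(\tilde x)^\times|/|\CC^\times|) + \trdeg(\kappa_{\calH(\tilde x)}/\kappa_\CC)$ by $d$.

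Writing $R' = \CC\langle T_1,\dots,T_d\rangle$ and $P = \ker(\tilde x)$, the quotient $A := R'/P$ is an affinoid integral domain over $\CC$ with fraction field $L$, and $\calH(\tilde x)$ is the completion of $L$ for the multiplicative norm induced by $\tilde x$. The dimension theory of affinoid algebras (Noether normalization together with catenariness of Tate algebras) gives $\dim(A) = \trdeg(L/\CC)$ and $\dim(P) + \dim(A) = d$, hence $\dim(P) = d - \trdeg(L/\CC)$. Since completion preserves both value group and residue field, the classical Zariski--Abhyankar inequality (see e.g.\ \cite[Th\'eor\`eme~9.2]{vaquie}) applied to the valued extension $L/\CC$ yields
\[
\rank(|\calH(\tilde x)^\times|/|\CC^\times|) + \trdeg(\kappa_{\calH(\tilde x)}/\kappa_\CC) \leq \trdeg(L/\CC) = d - \dim(P),
\]
which upon rearranging is exactly the required inequality.

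The main obstacle in this route is really just the dimension-theoretic input from rigid analytic geometry: that an affinoid domain over a field has Krull dimension equal to the transcendence degree of its fraction field, and that Tate algebras are catenary of pure dimension equal to the number of variables. Both facts are classical (going back to Tate and Kiehl), so the obstacle is one of citation rather than substance. A more self-contained alternative would induct on $d$, taking the case $d=1$ directly from Proposition~\ref{P:Berkovich classification} and projecting a given $x$ along the coordinate $T_d$, interpreting the fibre as a Tate algebra over $\calH$ of the projection; tower additivity of the last two signature components is then immediate, but transferring the first coordinate of the signature across the base change to $\calH$ of the projected point still relies on the same dimension-theoretic bookkeeping.
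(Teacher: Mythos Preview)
Your direct approach via the classical Zariski--Abhyankar inequality is correct. The paper, however, takes precisely the inductive route you sketch in your final paragraph: it restricts $x$ successively to the completion of $K[T_1,\dots,T_i]$, asserts that the difference between the signatures of $x_{i+1}$ and $x_i$ equals the signature of the induced point on $\DD_{1,\calH(x_i)}$, and then invokes Proposition~\ref{P:Berkovich classification} to bound each increment by $1$.

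The two routes trade off as follows. Your argument makes all external inputs explicit (catenarity and the dimension formula for Tate algebras, plus Abhyankar's inequality from \cite{vaquie}), at the cost of importing machinery from outside the paper. The paper's induction stays closer to what has already been established internally, but its key assertion about the first signature coordinate is stated without proof; as you correctly note, checking that $\dim\ker(\tilde x_{i+1}) - \dim\ker(\tilde x_i)$ agrees with the first coordinate of the fibre point ultimately rests on the same dimension formula $\operatorname{ht}(P) + \dim(R'/P) = d$ for primes in $\CC\langle T_1,\dots,T_d\rangle$ that you invoke directly. So your worry about the inductive approach is well-founded, though the issue is minor: granting biequidimensionality of Tate algebras over $\CC$, the first-coordinate increment is $1 - \trdeg(\Frac A_{i+1}/\Frac A_i) \in \{0,1\}$, which is $1$ exactly when the fibre point is of type~$1$ and $0$ otherwise. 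In short, both proofs are valid and rely on the same affinoid dimension theory; yours front-loads it, the paper's hides it inside a one-line claim.
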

\begin{proof}
Choose $x \in \calM(R)$ and let $x_i$ be the restriction of $R$ to the completion of $K[T_1,\dots,T_i]$.
Then the difference between the signatures of $x_{i+1}$ and $x_i$ is itself the signature of a point
in $\DD_{1,\calH(x_i)}$. The claim thus follows from Proposition~\ref{P:Berkovich classification}.
\end{proof}

We next make the topology of $\DD_{\beta,K}$ more explicit.
\begin{defn}
For $\beta > 0$ and $x \in \DD_{\beta,K}$, 
the \emph{root path} of $x$ is the subspace $\{H(x, \rho): \rho \in [0,\beta]\}$ of $\DD_{\beta,K}$.
It is homeomorphic to the interval $[\rho(x), \beta]$ via the map $H(x, \cdot)$.

A \emph{rooted skeleton} in $\DD_{\beta,K}$ is a subset of the form
\[
\bigcup_{i=1}^m \{H(x_i,\rho): \rho \in [\rho(x_i),\beta]\}
\]
for some nonempty finite subset $\{x_1,\dots,x_m \} \subseteq \DD_{\beta,K}$;
we sometimes say that this skeleton is 
\emph{generated} by $x_1,\dots,x_m$. A 
\emph{strict rooted skeleton} is a rooted skeleton generated by a set of points of type 2.

For any rooted skeleton $S$ of $\DD_{\beta,K}$, define the map $\pi_S: \DD_{\beta,K} \to S$ taking each
$x \in \DD_{\beta,K}$ to $H(x,\rho)$ for $\rho$ the least value in $[0,\beta]$ for which $H(x,\rho) \in S$.
By Proposition~\ref{P:homotopy}, $\pi_S$ is a deformation retract.
\end{defn}

\begin{prop} \label{P:inverse limit of skeleta}
Form the inverse system consisting of the rooted skeleta of $\DD_{\beta,K}$ with morphisms given as follows:
for every pair of rooted skeleta $S, S'$ with $S \subseteq S'$, include a morphism $S' \to S$ given by the restriction
of $\pi_S$. Define a map from $\DD_{\beta,K}$ to this inverse system whose projection onto $S$ is given by
$\pi_S$. Then this map is a homeomorphism of topological spaces.
\end{prop}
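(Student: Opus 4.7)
The plan is to exhibit the stated map as a continuous bijection from $\DD_{\beta,K}$ to the inverse limit, and then invoke the standard fact that a continuous bijection from a compact space to a Hausdorff space is automatically a homeomorphism. The target inverse limit sits as a closed subspace of the product $\prod_S S$, which is compact Hausdorff since each rooted skeleton is a finite union of compact intervals; and $\DD_{\beta,K}$ itself is compact Hausdorff. So the final topological step is free once bijectivity and continuity are in hand.

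Well-definedness of the map amounts to checking that $\pi_S = \pi_S \circ \pi_{S'}$ on $\DD_{\beta,K}$ whenever $S \subseteq S'$. Writing $\pi_{S'}(x) = H(x,\rho')$ with $\rho'$ minimal such that $H(x,\rho') \in S'$, and writing $\pi_S(H(x,\rho')) = H(H(x,\rho'),\rho'') = H(x,\max\{\rho',\rho''\})$ by Proposition~\ref{P:homotopy}(c) for the appropriate minimal $\rho''$, I would reduce to showing $\max\{\rho',\rho''\} = \rho$, where $\rho$ is the minimal value with $H(x,\rho) \in S$; this follows by unwinding the two minimality conditions and using $\rho' \leq \rho$ forced by $S \subseteq S'$. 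Continuity of the combined map is then immediate from the continuity of each $\pi_S$, which is part of each $\pi_S$ being a deformation retract. For injectivity, given distinct $x,y$, the rooted skeleton generated by $\{x,y\}$ already contains both points, so $\pi_S(x) = x \neq y = \pi_S(y)$.

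The main obstacle is surjectivity. The plan is to view a compatible family $(y_S)_S$ as a net in $\DD_{\beta,K}$ indexed by the poset of rooted skeleta under inclusion, which is directed because the union of two rooted skeleta is again a rooted skeleton. By compactness of $\DD_{\beta,K}$, this net admits at least one cluster point $x$. For any fixed rooted skeleton $S_0$, the sub-poset $\{S : S \supseteq S_0\}$ is cofinal in the directed set, so $x$ remains a cluster point of the corresponding sub-net. Applying the continuous map $\pi_{S_0}$ and invoking the compatibility $\pi_{S_0}(y_S) = y_{S_0}$ for all $S \supseteq S_0$, one concludes that $\pi_{S_0}(x)$ is a cluster point of the constant net with value $y_{S_0}$, and therefore equals $y_{S_0}$. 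Since $S_0$ was arbitrary, the family $(y_S)_S$ is the image of $x$, completing surjectivity.
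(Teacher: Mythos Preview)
Your proof is correct and follows essentially the same strategy as the paper: injectivity via a skeleton containing any two given points, surjectivity via compactness of $\DD_{\beta,K}$, and the homeomorphism conclusion from the compact-to-Hausdorff fact. Your treatment is more explicit, supplying the well-definedness check the paper omits and phrasing the surjectivity step as a net/cluster-point argument rather than the paper's one-line appeal to compactness (which unpacks to the equivalent finite-intersection-property argument on the closed fibers $\pi_S^{-1}(y_S)$).
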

\begin{proof}
The map is injective because every pair of points can be found in some rooted skeleton. 
The map is surjective because $\DD_{\beta,K}$ is compact and surjects onto each rooted skeleton.
The map is a homeomorphism because any continuous bijection from a quasicompact space to a Hausdorff space is a
homeomorphism. (See also \cite[Proposition~1.13]{baker-rumely} for an alternate treatment in case
$K$ is algebraically closed and $\beta=1$.)
\end{proof}

\begin{remark}
Proposition~\ref{P:inverse limit of skeleta}
is a special case of the general phenomenon that Berkovich analytic spaces can be described 
as inverse limits of tropical spaces (see for example \cite{payne}).
For Berkovich curves, this inverse limit presentation is also closely related to semistable models;
we will return to this point in \S \ref{sec:berkovich curves}.
\end{remark}

\begin{defn}
For $x \in \DD_{\beta,K}$, a \emph{branch} of $\DD_{\beta,K}$ at $x$ is a path-connected component of $\DD_{\beta,K} \setminus \{x\}$.
If $x$ is not the Gauss point, then there is a branch containing the Gauss point, called the \emph{upper branch}
of $\DD_{\beta,K}$ at $x$. By Proposition~\ref{P:inverse limit of skeleta},
additional branches (called \emph{lower branches}) exist according to the type of $x$ as follows.
\begin{enumerate}
\item[1.] No lower branches.
\item[2.] Infinitely many lower branches.
\item[3.] Exactly one lower branch.
\item[4.] No lower branches. 
\end{enumerate}
For $S$ a rooted skeleton of $\DD_{\beta,K}$ and $x \in S$, a \emph{branch} of $S$ at $x$ is a branch of $X$
at $x$ meeting $S$. There are only finitely many such branches at any $x$.
\end{defn}

\begin{defn} \label{D:subdivision}
Let $S$ be a rooted skeleton of $\DD_{\beta,K}$. 
By a \emph{subdivision} of $S$, we will mean a graph (in the combinatorial sense) with underlying topological space
$S$.

We
equip $S$ with the piecewise linear structure 
characterized as follows: a function $f: S \to \RR$ is piecewise affine (with integral slopes) if and only if for each
$x \in S$, the function $r \mapsto f(H(x,e^{-r}))$ is piecewise affine (with integral slopes)
and constant for $r$ sufficiently large. Then for any piecewise affine function $f: S \to \RR$, there exists
a subdivision of $S$ such the restriction of $f$
to any edge of the subdivision is affine. We call such a subdivision a \emph{controlling graph} of $f$.

It is meaningful to refer to the \emph{slope} of a piecewise affine function $f: S \to \RR$
along a branch of $S$ at a point $x$. Explicitly, the slope along the upper branch is the left slope
of $r \mapsto f(H(x,e^{-r}))$ at $r_0 = -\log \rho(x)$ (or $0$ in case $\rho(x) = 0$),
while the slope along the lower branch containing $y \in S$ is the right slope of $r \mapsto f(H(y,e^{-r}))$
at $r_0$.
\end{defn}

\begin{defn}
By the \emph{Berkovich open unit disc of radius $\beta$ over $K$}, denoted $\DD_{\beta,K}^{\circ}$,
we will mean the branch of $\DD_{\beta,K}$
at the Gauss point containing $\zeta_{0,0}$.
\end{defn}

\subsection{Radii of convergence}
\label{sec:radii discs}

We now define the radii of optimal convergence for differential modules on discs,
following Baldassarri \cite{baldassarri}.

\begin{hypothesis} \label{H:radii discs}
Throughout \S\ref{sec:radii discs}, fix $\beta > 0$ and (except in 
Definition~\ref{D:open unit disc} and Lemma~\ref{L:variation0 convexity})
let $M$ be a differential module
of rank $n \geq 0$ over $R_{[0,\beta]}$.
\end{hypothesis}

\begin{defn} \label{D:radii of convergence}
For $x \in \DD_{\beta,K}$, put 
\begin{align*}
M_{x,0} &= M \otimes_{R_{[0,\beta]}} \calH(x)\llbracket t-t_x \rrbracket, \\
\qquad 
M_{x,\rho} &= M \otimes_{R_{[0,\beta]}} \calH(x)\langle (t-t_x)/\rho\rangle
\quad (\rho \in (0,\beta);
\end{align*}
these can be viewed as differential modules as well.
By a standard argument
(see for instance \cite[Theorem~7.2.1]{kedlaya-book}),
the natural map 
\[
M_{x,0}^{D=0} \otimes_{\calH(x)} \calH(x)\llbracket t-t_x \rrbracket \to 
M_{x,0}
\]
is an isomorphism. Define the sequence $s_i(M, x)$ of \emph{radii of optimal convergence}
of $M$ at $x$ as follows: for $i =1,\dots,n$, put
\[
s_i(M, x) = \sup\{\rho \in [0,\beta): \dim_{\calH(x)} (M_{x,0}^{D=0} \cap M_{x,\rho}) \geq n-i+1\}.
\]
In other words, $s_i(M, x)$ is the radius of the maximal open disc around $t_x$ on which there exist
$n-i+1$ linearly independent horizontal sections of $M$.
For $M \neq 0$, we refer to $s_1(M, x)$ also as the \emph{radius of convergence}
of $M$ at $x$.
\end{defn}

\begin{lemma} \label{L:base change radii}
Let $K'$ be an analytic field containing $K$, and suppose $y \in \DD_{\beta,K'}$ restricts to $x \in \DD_{\beta,K}$.
Then 
\[
s_i(M, x) = s_i(M \otimes_{R_{[0,\beta],K}} R_{[0,\beta],K'}, y) \qquad (i=1,\dots,n).
\]
\end{lemma}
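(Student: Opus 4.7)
The plan is to reduce the equality of the radii to the invariance under base change of the dimension function
\[
d(\rho) := \dim_{\calH(x)}(M_{x,0}^{D=0} \cap M_{x,\rho}) \qquad (\rho \in [0,\beta)),
\]
since $s_i(M,x)$ is determined from $d$ as a supremum.

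First I would identify $M_{y,0}^{D=0}$ with $M_{x,0}^{D=0} \otimes_{\calH(x)} \calH(y)$. The embedding $\calH(x) \hookrightarrow \calH(y)$ carries $t_x$ to $t_y$, so $\calH(x)\llbracket t-t_x\rrbracket$ embeds into $\calH(y)\llbracket t-t_y\rrbracket$; invoking the standard trivialization \cite[Theorem~7.2.1]{kedlaya-book}, a basis $e_1,\ldots,e_n$ of $M_{x,0}^{D=0}$ simultaneously trivializes $M_{x,0}$ as a free $\calH(x)\llbracket t-t_x\rrbracket$-module, and after base change it trivializes $M_{y,0}$ with $\calH(y)$-span equal to $M_{y,0}^{D=0}$. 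The inequality $\dim_{\calH(y)}(M_{y,0}^{D=0} \cap M_{y,\rho}) \geq d(\rho)$ is then immediate, since the Tate convergence condition $|a_i|\rho^i \to 0$ is preserved under the isometric inclusion $\calH(x) \hookrightarrow \calH(y)$.

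For the reverse inequality, the plan is to choose the basis $e_1,\ldots,e_n$ adapted to the filtration by the $F_\rho^{(x)} := M_{x,0}^{D=0} \cap M_{x,\rho}$, labeled so that $r(e_j) := \sup\{\rho : e_j \in M_{x,\rho}\}$ equals $s_j(M,x)$, and then to establish that any $v = \sum_j c_j e_j$ with $c_j \in \calH(y)$ satisfies
\[
r(v) = \min\{r(e_j) : c_j \neq 0\}.
\]
Granted this, $v \in M_{y,\rho}$ forces $c_j = 0$ whenever $r(e_j) \leq \rho$, so $F_\rho^{(y)} = F_\rho^{(x)} \otimes_{\calH(x)} \calH(y)$ and the dimensions agree. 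Writing $e_j = \sum_i a_{j,i}(t-t_x)^i$ with $a_{j,i} \in \calH(x)^m$ in a fixed $R_{[0,\beta]}$-basis of $M$, so that $r(e_j)^{-1} = \limsup_i |a_{j,i}|^{1/i}$, the bound $r(v) \geq \min\{r(e_j) : c_j \neq 0\}$ falls out of the ultrametric estimate $|A_i| \leq \max_j |c_j| |a_{j,i}|$ applied to $A_i := \sum_j c_j a_{j,i}$, together with $|c_j|^{1/i} \to 1$.

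The main obstacle will be the opposite bound $r(v) \leq \min\{r(e_j) : c_j \neq 0\}$, which reduces after splitting $v$ by level to the following assertion: a $\calH(y)$-linear combination of $\calH(x)$-linearly independent horizontal sections of exact radius $\tau$ cannot acquire radius strictly greater than $\tau$ at $y$. The adapted-basis condition gives $\calH(x)$-linear independence of the relevant $e_j$'s modulo the next filtration piece $F_{\tau'}^{(x)}$, where $\tau'$ is the next critical radius, so no $\calH(x)$-combination has radius exceeding $\tau$. To propagate this to $\calH(y)$, I would express each $c_j$ in an orthogonal $\calH(x)$-basis of a finitely generated sub-extension of $\calH(y)$ containing all the $c_j$ (using the existence of cartesian bases for analytic field extensions), thereby reducing a potential Tate-norm bound on the $\calH(y)$-combination to simultaneous Tate-norm bounds on finitely many $\calH(x)$-linear combinations of the same $e_j$'s, each of which must have radius $\leq \tau$ by the adapted-basis property; the contradiction yields $r(v_\tau) \leq \tau$ and completes the proof.
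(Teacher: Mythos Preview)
Your approach is genuinely different from the paper's. The paper first replaces $K$ by $\calH(x)$ (and $K'$ by $\calH(y)$), reducing to the case $x = y = \zeta_{0,0}$; then $M_{x,\rho} = M \otimes_{R_{[0,\beta]}} R_{[0,\rho]}$ is a $K$-Banach space, $M_{y,\rho}$ is its completed base change to $K'$, and the claim becomes the statement that formation of $\ker(D)$ on this Banach space commutes with $\widehat{\otimes}_K K'$, which the paper cites as a general Banach-space lemma from \cite{kedlaya-liu}. Your route bypasses both the reduction and the abstract lemma, working instead with power-series coefficients of an adapted basis of horizontal sections; this is more hands-on and self-contained, at the cost of some length.

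Your argument is essentially sound, but the final step needs two adjustments. First, you should work with the finite-dimensional $\calH(x)$-\emph{vector subspace} of $\calH(y)$ spanned by the $c_j$, not a ``sub-extension'': a finitely generated field extension could well be infinite-dimensional over $\calH(x)$. Second, and more substantively, an orthogonal (strictly cartesian) basis of such a subspace need not exist unless $\calH(x)$ is spherically complete, which you have no reason to assume. The remedy is standard: over any complete nonarchimedean field, every finite-dimensional normed space admits an $\alpha$-cartesian basis for each $\alpha \in (0,1)$ (see \cite{bgr}). With such a basis $\{b_k\}$ you still obtain $|A_i| \geq \alpha \max_k |b_k|\,\bigl|\sum_j \lambda_{jk} a_{j,i}\bigr|$, and since you only need $|A_i|\,(\tau'')^i \to 0$ to force each $\bigl|\sum_j \lambda_{jk} a_{j,i}\bigr|\,(\tau'')^i \to 0$, the constant $\alpha$ is harmless. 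With these corrections your proof goes through.
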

\begin{proof}
By replacing $K$ with $\calH(x)$, we may reduce to the case $x = \zeta_{0,0}$.
The claim then comes down to the fact that formation of the kernel of the bounded $K$-linear endomorphism
of the Banach space $M \otimes_{R_{[0,\beta]}} R_{[0,\rho]}$
commutes with formation of the completed tensor product over $K$ with $K'$.
This in turn reduces formally to the case where $K'$ is the completion of a countably generated field extension of $K$, in which case the claim is clear because $K'$ admits a Schauder basis over $K$ (see \cite[Proposition~2.7.2/3]{bgr} or \cite[Lemma~1.3.8]{kedlaya-book}).
\end{proof}

\begin{remark} \label{R:grow sections}
The intuition behind Definition~\ref{D:radii of convergence} is that the
elements of $M_{x,0}^{D=0}$ are the formal horizontal sections of $M$ centered at $x$.
In the language of \cite{kedlaya-book} and preceding literature on $p$-adic differential equations, one would think of $x$
as the generic point of a certain subdisc of $\DD_{\beta,K}$.

Following this intuition, one observes that for $y = H(x,\sigma)$ for some $\sigma > \rho(x)$,
the discs of radius $\rho$
centered at $x$ and $y$ coincide for all $\rho \in (\sigma,\beta)$. Formally, for any field $L$
containing both $\calH(x)$ and $\calH(y)$, we obtain a natural isomorphism
$L \langle (t-t_x)/\rho \rangle \cong L \langle (t-t_y)/\rho \rangle$.
One consequence is that for $i \in \{1,\dots,n\}$, if $s_i(M,x) > \rho(x)$, then $s_i(M,x) = s_i(M,H(x,\rho))$ for all 
$\rho < s_i(M,x)$.
\end{remark}

The relationship between radii of optimal convergence and intrinsic subsidiary radii (due in its original
form to Young) is the following.
\begin{defn} \label{D:differential field}
For $x \in \DD_{\beta,K}$ not of type $1$,
let $F_x$ be a copy of $\calH(x)$ viewed as a differential field for the derivation $\frac{d}{dt}$.
\end{defn}

\begin{lemma} \label{L:same spectral norm}
For any $x \in \DD_{\beta,K}$ not of type $1$, any analytic field $K'$ containing $K$, and any $y \in \DD_{\beta,K'}$ lifting $x$ with
$\rho(y) = \rho(x)$ (which exists by Lemma~\ref{L:same radius}),
the spectral norms of $\frac{d}{dt}$ on $F_x$ and $F_y$ coincide.
\end{lemma}
\begin{proof}
Since $F_x \subseteq F_y$, the spectral norm of $\frac{d}{dt}$ on $F_x$ is no greater than that on $F_y$. To prove the reverse inequality, 
by Lemma~\ref{L:same radius} we are free to enlarge $K'$.
We may thus reduce to the cases where $K = \CC$ and where $K' = \CC$.
In the former case, we have $F_y = F_x \widehat{\otimes}_K K'$ with the tensor product norm (see the proof of Lemma~\ref{L:same radius}), so the desired inequality is clear.

To treat the latter case, it is sufficient to instead consider the case where $K'$ is a finite extension of $K$. In this case, $F_y$ is a direct summand of $F_x \otimes_K K'$, so the desired inequality is again clear.
\end{proof}

\begin{prop} \label{P:compare radii}
For $x \in \DD_{\beta,K}$ not of type 1, the intrinsic subsidiary radii of $M \otimes_{R_{[0,\beta]}} F_x$ are given by
\[
\min\{1, s_i(M,x) / \rho(x)\} \qquad (i=1,\dots,n).
\]
\end{prop}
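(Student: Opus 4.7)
The plan is to split into two cases based on the type of $x$, and in each case relate the spectral radius of the derivation on $M\otimes F_x$ to the radii of optimal convergence of formal horizontal sections.

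\emph{Types $2$ and $3$.} Suppose $x=\zeta_{z,\rho}$ with $\rho=\rho(x)>0$. Both sides of the asserted formula are invariant under enlargement of $K$: the intrinsic subsidiary radii by \cite[Proposition~10.6.6]{kedlaya-book} (as recorded in Definition~\ref{D:intrinsic radius}), the radii of optimal convergence by Lemma~\ref{L:base change radii}. After enlarging $K$ so that $z\in K$ and then translating $t\mapsto t-z$, we may assume $x=\zeta_{0,\rho}$, so that $F_x$ coincides with the Gauss completion $F_\rho$ of Hypothesis~\ref{H:differential field}. Apply Corollary~\ref{C:cyclic vector} to produce a cyclic vector for $M\otimes\Frac(R_{[0,\beta]})$ with associated companion polynomial $P(T)\in F_\rho[T]$. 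Proposition~\ref{P:christol-dwork} reads off the intrinsic subsidiary radii of $M\otimes F_\rho$ strictly less than $\omega$ from the slopes of the Newton polygon of $P$ less than $\log\rho$. The same Banach-norm estimate underlying that proposition (cf.\ \cite[Theorem~6.5.3]{kedlaya-book}) also extracts the radii of optimal convergence of $M$ at $\zeta_{0,\rho}$ in the corresponding range from the same roots of $P$, and the two identifications are compatible in such a way that $\min\{1,s_i/\rho\}=IR_i$ on this range. For intrinsic subsidiary radii in $[\omega,1)$ (an issue only when $p>0$), iterated Frobenius antecedents (Proposition~\ref{P:antecedent}) reduce to the previous range, since pullback along $t\mapsto t^p$ transforms both sides compatibly.

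\emph{Type $4$.} Set $\rho_0=\rho(x)\in(0,\beta)$ and extend scalars from $K$ to $K'=\calH(x)$. In addition to the type-$1$ lift $\zeta_{t_x,0}$ of $x$ supplied by Remark~\ref{R:canonical base extension}, the positivity of $\rho_0$ allows us to form $\tilde{x}:=\zeta_{t_x,\rho_0}\in\DD_{\beta,K'}$. A direct computation using the formula from Proposition~\ref{P:homotopy} and the equality $H(x,\rho_0)=x$ shows that $\tilde{x}$ also restricts to $x$: for $f\in R_{[0,\beta],K}$,
\[
\tilde{x}(f)=\max_i \rho_0^i |f^{(i)}(t_x)|/|i!| = \max_i \rho_0^i x(f^{(i)})/|i!| = H(x,\rho_0)(f) = x(f).
\]
Since $\rho_0>0$, $\tilde{x}$ is of type $2$ or $3$ with $\rho(\tilde{x})=\rho_0$, and Lemma~\ref{L:base change radii} gives $s_i(M,x)=s_i(M\widehat{\otimes}_K K',\tilde{x})$. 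The previous case applied over $K'$ therefore computes the intrinsic subsidiary radii of $M\widehat{\otimes}_K K'\otimes F_{\tilde{x}}$ as the desired right-hand side. The remaining task is to identify these with the intrinsic subsidiary radii of $M\otimes F_x$ over $F_x$, which requires comparing spectral norms of $d/dt$ on the two modules even though the derivations act differently on the shared element $t_x$ (in $F_x$ it has derivative $1$; in $F_{\tilde{x}}$ it lies in the constant field $K'$).

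\emph{The main obstacle} is this last identification. A cleaner route is a continuity argument along the root path of $x$: for each $\sigma>\rho_0$ the point $y=H(x,\sigma)$ is of type $2$ or $3$ and satisfies the proposition by the preceding case, so one lets $\sigma\searrow\rho_0$ using the variation results of Proposition~\ref{P:variation}(a) on the intrinsic-radius side and Remark~\ref{R:grow sections} on the convergence-radius side. Verifying that the limits as $\sigma\searrow\rho_0$ correctly compute the invariants at $x$ itself amounts to unwinding the definition of $F_x$ as the completion $\calH(x)$ of $R_{[0,\beta],K}$ at $x$, which is precisely the field governing formal Taylor expansions around $t_x$. The non-trivial point is to control both the spectral norm of $d/dt$ on $M\otimes F_x$ and the radii $s_i(M,x)$ in terms of data on a fundamental system of larger discs centred on the root path; once this is done, the limit-passage closes the argument.
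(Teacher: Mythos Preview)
The paper's proof is a two-line reduction: use Lemma~\ref{L:base change radii} (together with the invariance of intrinsic subsidiary radii under extension of the differential field, recorded in Definition~\ref{D:intrinsic radius}) to reduce \emph{uniformly} to the case $x=\zeta_{0,\rho}$, then cite \cite[Theorem~11.9.2]{kedlaya-book}. Your proposal is considerably more elaborate, and in the type~4 case it manufactures an obstacle that is not there.

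Your ``main obstacle'' rests on a confusion between two different embeddings of $\calH(x)$ into $\calH(\tilde{x})$. You correctly construct $\tilde{x}=\zeta_{t_x,\rho_0}\in\DD_{\beta,K'}$ with $K'=\calH(x)$ and verify it restricts to $x$. The point you miss is that the restriction-induced map $F_x=\calH(x)\to\calH(\tilde{x})=F_{\tilde{x}}$ is \emph{itself} a homomorphism of differential fields for $d/dt$: it arises from $R_{[0,\beta],K}\to R_{[0,\beta],K'}\to\calH(\tilde{x})$, both arrows commuting with $d/dt$. Under this map, the element $t_x\in F_x$ (the image of $t$) goes to the image of $t$ in $\calH(\tilde{x})$, which is \emph{not} the constant $t_x\in K'\subset\calH(\tilde{x})$; indeed $|t-t_x|_{\tilde{x}}=\rho_0>0$. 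So there is no clash of derivations. Using this differential embedding, $(M\otimes_{R_{[0,\beta],K}}F_x)\otimes_{F_x}F_{\tilde{x}}\cong (M\widehat{\otimes}_K K')\otimes_{R_{[0,\beta],K'}}F_{\tilde{x}}$ as differential modules, and invariance of intrinsic subsidiary radii under this extension finishes the comparison immediately. No continuity argument along the root path is needed.

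For types~2 and~3, after your (correct) reduction to $x=\zeta_{0,\rho}$, what remains is exactly the content of \cite[Theorem~11.9.2]{kedlaya-book}. Your sketch via cyclic vectors, Christol--Dwork, and Frobenius antecedents is the outline of that theorem's proof, but the sentence ``the same Banach-norm estimate\ldots also extracts the radii of optimal convergence\ldots and the two identifications are compatible'' is where all the work lies and is not justified by what you wrote. Either cite the theorem, as the paper does, or carry out that comparison in full.
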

\begin{proof}
By Lemma~\ref{L:base change radii} and Lemma~\ref{L:same spectral norm}, we are free to lift $x$ as long as we do not change its diameter. This lifting being possible by Lemma~\ref{L:same radius},
we may reduce to the case $x = \zeta_{0,\rho}$ for some $\rho \in (0,\beta]$. In this case, the claim follows from
\cite[Theorem~11.9.2]{kedlaya-book}.
\end{proof}

One can also interpret Dwork's transfer theorem in this language.
\begin{prop} \label{P:transfer}
For $M$ nonzero, for all $x \in \DD_{\beta,K}$ and $\rho \in [0,\beta]$,
\[
s_1(M, H(x,\rho)) \leq s_1(M,x).
\]
\end{prop}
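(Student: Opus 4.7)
We may assume $\rho > \rho(x)$, else $H(x, \rho) = x$ trivially. Write $y = H(x, \rho)$, of type 2 or 3 with $\rho(y) = \rho$. By Lemma~\ref{L:base change radii} we extend scalars to a complete algebraically closed spherically complete extension $L$ of $K$; this preserves all $s_1$ and eliminates type-$4$ points, so we may write the lifts as $x = \zeta_{z, \rho(x)}$, $y = \zeta_{z, \rho}$, and, after translating $t \mapsto t-z$, assume $z = 0$. The plan is to show that $\sigma \mapsto s_1(M, \zeta_{0, \sigma})$ is nonincreasing on $[0, \beta]$, from which the desired inequality follows since $\rho \geq \rho(x)$.

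The function $f_1(M, r)$ of Proposition~\ref{P:variation} is continuous piecewise-affine on $[-\log \beta, +\infty)$ and satisfies $f_1 \geq r$. By Proposition~\ref{P:variation}(d) and Corollary~\ref{C:right slope 1}, $f_1$ is nonincreasing on the open set $\{f_1 > r\}$; since such a set cannot contain an interval $(a, +\infty)$, the quantity $r^* := \inf\{r : f_1(M, r) = r\}$ is finite, $f_1(M, r) = r$ for all $r \geq r^*$, and (by monotonicity and continuity) $f_1(M, r) \geq r^*$ for $r < r^*$. Proposition~\ref{P:compare radii} then gives $s_1(M, \zeta_{0, \sigma}) = e^{-f_1(M, -\log \sigma)} \leq e^{-r^*}$ for $\sigma > e^{-r^*}$, and this is a nonincreasing function of $\sigma$ on that range.

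For the complementary range $\sigma \in [0, e^{-r^*}]$ I aim to show $s_1(M, \zeta_{0, \sigma}) = e^{-r^*}$. The upper bound uses Remark~\ref{R:grow sections}: for $\tau > e^{-r^*}$ and $\sigma_2 \in (\max(\sigma, e^{-r^*}), \tau)$, a base change to a common complete field yields $\dim M_{\zeta_{0, \sigma}, \tau}^{D=0} = \dim M_{\zeta_{0, \sigma_2}, \tau}^{D=0} < n$, so $s_1(M, \zeta_{0, \sigma}) \leq \tau$; letting $\tau \searrow e^{-r^*}$ gives $\leq e^{-r^*}$. For the lower bound, first establish $s_1(M, \zeta_{0, 0}) \geq e^{-r^*}$ by invoking Young's theorem (\cite[Theorem~9.6.1]{kedlaya-book}) at the transition Gauss point $\zeta_{0, e^{-r^*}}$, where the generic radius of convergence equals $e^{-r^*}$ (intrinsic radius $1$). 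Then for $\sigma \in (0, e^{-r^*}]$, apply Remark~\ref{R:grow sections} from $\zeta_{0, 0}$ to $\zeta_{0, \sigma}$: for $\tau \in (\sigma, e^{-r^*})$, $\dim M_{\zeta_{0, \sigma}, \tau}^{D=0} = \dim M_{\zeta_{0, 0}, \tau}^{D=0} = n$, giving $s_1(M, \zeta_{0, \sigma}) \geq \tau$ and hence $\geq e^{-r^*}$.

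Together, $\sigma \mapsto s_1(M, \zeta_{0, \sigma})$ is constant on $[0, e^{-r^*}]$ and nonincreasing on $[e^{-r^*}, \beta]$, hence nonincreasing on $[0, \beta]$, completing the proof. The main obstacle is the lower bound $s_1(M, \zeta_{0, 0}) \geq e^{-r^*}$, which requires Young's theorem (the classical weak Dwork transfer) rather than flowing from the structural theory of $f_1$; all other steps are handled by combining Proposition~\ref{P:variation} with Remark~\ref{R:grow sections}.
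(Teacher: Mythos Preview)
Your proof is correct and rests on the same two ingredients as the paper's---Remark~\ref{R:grow sections} and \cite[Theorem~9.6.1]{kedlaya-book}---but you reach them by a much longer path. The paper extends scalars not to an arbitrary spherically complete field but specifically to $\calH(x)$ (via Remark~\ref{R:canonical base extension}), which realizes $x$ itself as the type~1 point $\zeta_{0,0}$. The statement then becomes simply $s_1(M,\zeta_{0,\rho}) \leq s_1(M,\zeta_{0,0})$, and one disposes of this in two lines: if $s_1(M,\zeta_{0,\rho}) > \rho$ then Remark~\ref{R:grow sections} gives equality; if $s_1(M,\zeta_{0,\rho}) \leq \rho$ then Proposition~\ref{P:compare radii} identifies this with the intrinsic radius times $\rho$, and \cite[Theorem~9.6.1]{kedlaya-book} finishes. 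No call to Proposition~\ref{P:variation} or Corollary~\ref{C:right slope 1} is needed.

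Your approach instead proves the stronger fact that $\sigma \mapsto s_1(M,\zeta_{0,\sigma})$ is nonincreasing on all of $[0,\beta]$, which is a nice byproduct but costs you the detour through the shape analysis of $f_1(M,r)$ and the definition of $r^*$. One small notational slip: after base change to a spherically complete $L$, a type~4 point $x$ need not lift to a point of the \emph{same} diameter $\rho(x)$ (see Definition~\ref{D:radius}); the lift may well be $\zeta_{z,0}$. This does not harm your argument, since you establish monotonicity on the whole segment anyway, but it is worth being precise about.
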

\begin{proof}
Using Lemma~\ref{L:base change radii}, we may reduce to the case where $x = \zeta_{0,0}$,
in which case the claim asserts that $s_1(M, \zeta_{0,\rho}) \leq s_1(M, \zeta_{0,0})$ for any $\rho \in [0,\beta]$.
If $s_1(M, \zeta_{0,\rho}) > \rho$, then this follows from Remark~\ref{R:grow sections}.
If $s_1(M, \zeta_{0,\rho}) \leq \rho$, then by Proposition~\ref{P:compare radii}, 
$\rho^{-1} s_1(M, \zeta_{0,\rho})$ equals the intrinsic radius of $M \otimes_{R_{[0,\beta]}} F_\rho$,
so we may apply \cite[Theorem~9.6.1]{kedlaya-book} to conclude.
\end{proof}

\begin{remark} \label{R:positive radius}
The radius of convergence of $M$ at any $x \in \DD_{\beta,K}$ is always positive. This can be deduced either from
Proposition~\ref{P:transfer} or from Clark's $p$-adic Fuchs theorem \cite[Theorem~13.2.3]{kedlaya-book};
the latter also covers the case of a regular singularity with $p$-adic non-Liouville exponent differences.
\end{remark}

\begin{remark} \label{R:dual convergence}
For $M$ nonzero, the properties of the intrinsic radius described in Definition~\ref{D:intrinsic radius} carry over
to the radius of convergence, as follows.
\begin{enumerate}
\item[(a)]
We have $s_1(M^\dual, x) = s_1(M, x)$.
\item[(b)]
For any short exact sequence $0 \to M_1 \to M \to M_2 \to 0$, $s_1(M, x) = \min\{s_1(M_1,x), s_1(M_2,x)\}$.
\item[(c)]
For any $M_1, M_2$, $s_1(M_1 \otimes M_2,x) \geq \min\{s_1(M_1,x), s_1(M_2,x)\}$,
with equality if $s_1(M_1,x) \neq s_1(M_2,x)$.
\end{enumerate}
However, unlike for intrinsic subsidiary radii, these properties do not propagate to
radii of optimal convergence despite the validity of Proposition~\ref{P:compare radii}. 
The difficulty already appears in (a): the existence of a horizontal section of $M$ on a large open disc
does not imply the same for $M^\dual$. A similar difficulty arises for (b) unless we restrict consideration to
split exact sequences. No such difficulty arises for (c).
\end{remark}

So far we have consider only radii of convergence on closed discs, but one can make similar definitions
for open discs.
\begin{defn} \label{D:open unit disc}
For $M$ a differential module over $R_{[0,\beta)}$, we may similarly define $s_i(M,x)$ for $x \in \DD_{\beta,K}^{\circ}$. Then Proposition~\ref{P:transfer}
implies that for $M$ nonzero, for all $x \in \DD_{\beta,K}^\circ$,
\[
\limsup_{\rho \to \beta^-} s_1(M, H(x,\rho)) \leq s_1(M,x).
\]
\end{defn}

For open discs, we have the following key lemma.
\begin{lemma} \label{L:variation0 convexity}
Let $M$ be a differential module over $R_{[0,\beta)}$.
Suppose that for some $\gamma \in (0,\beta]$,
there exists $m \in \{0,\dots,n\}$ satisfying the following conditions.
\begin{enumerate}
\item[(a)]
For $i=1,\dots,m$, $s_i(M,\zeta_{0,\rho})$ is constant and less than $\gamma$ for $\rho$ in some punctured left neighborhood of $\gamma$.
\item[(b)]
For $i=m+1,\dots,n$, $\limsup_{\rho \to \gamma^-} s_i(M,\zeta_{0,\rho}) \geq \gamma$.
\end{enumerate}
Then the restrictions of the functions $s_i(M, \cdot)$ to $\DD_{\gamma,K}^{\circ}$ are constant for $i=1,\dots,n$.
\end{lemma}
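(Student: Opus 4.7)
The plan is to reduce to the structural decomposition of Corollary~\ref{C:decompose disc}, verify constancy of $s_j$ along the axis directly, and propagate this off the axis using base change and the variational framework of Proposition~\ref{P:variation}.

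First, Proposition~\ref{P:compare radii} translates conditions (a) and (b) into: the functions $f_1(M,r), \dots, f_m(M,r)$ are each constant (with common value strictly greater than $r$) on a right neighborhood of $r_0 := -\log\gamma$, and $\liminf_{r \to r_0^+} f_{m+1}(M,r) = r_0$. These exactly match the hypotheses of Corollary~\ref{C:decompose disc} with the corollary's parameter $i$ set equal to $m$, applied to $M|_{R_{[0,\gamma)}}$, yielding a decomposition $M|_{R_{[0,\gamma)}} = M_0 \oplus M_1 \oplus \cdots$ in which $f_j(M_0, r) = r$ for $r > r_0$ and $f_j(M_k, r) = \max\{r, c_k\}$ for each $k \geq 1$ and some $c_k > r_0$. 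Since $s_i(M,\cdot)$ is assembled from the $s_j(M_k,\cdot)$'s, it will suffice to show constancy summand by summand.

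For each summand, constancy along the axis $\{\zeta_{0,\rho}\}$ follows from Propositions~\ref{P:compare radii} and~\ref{P:transfer}: for $\rho \in (e^{-c_k}, \gamma)$, Proposition~\ref{P:compare radii} yields $s_j(M_k, \zeta_{0,\rho}) = e^{-c_k}$ directly; for $\rho \in (0,e^{-c_k}]$, Proposition~\ref{P:transfer} forces $s_j(M_k, \zeta_{0,\rho}) \geq e^{-c_k}$, and a strict inequality would propagate, via re-expansion of a horizontal section of larger radius around some axial Gauss point $\zeta_{0, \rho'}$ with $\rho' \in (e^{-c_k}, \gamma)$, into a contradiction with the established value there. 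An analogous argument treats $M_0$, with $e^{-c_k}$ replaced by the global Robba radius $S_0 \geq \gamma$ determined by $M_0$. For an off-axis point $x \in \DD_{\gamma,K}^\circ$, Lemma~\ref{L:base change radii} and Remark~\ref{R:canonical base extension} reduce the computation to $s_j$ at the type 1 point $\zeta_{t_x,0}$ after base change to $\calH(x)$; the $f_j$-functions are preserved by this enlargement (Definition~\ref{D:intrinsic radius}), so the $\calH(x)$-axis again has $s_j(M_k,\zeta_{0,\rho}) = e^{-c_k}$, and propagation to $\zeta_{t_x,0}$ along the root path joining it to the $\calH(x)$-axis at $\zeta_{0,|t_x|}$ is achieved by Proposition~\ref{P:transfer} together with the same consistency argument.

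I expect the main obstacle to be the off-axis propagation: when $x$ lies in a branch emanating from a Gauss point $\zeta_{0,|t_x|}$ with $|t_x| \geq e^{-c_k}$, no global section of $M_k$ re-expands directly around $t_x$, and the upper bound $s_j(M_k,x) \leq e^{-c_k}$ must instead be obtained via the subharmonicity of $F_j$ (Proposition~\ref{P:variation}(c)) at the branch point, together with convexity (Proposition~\ref{P:variation}(e)), so as to force the slopes in the lower branches to vanish given that the upper slope already vanishes by axial constancy. The equality case of subharmonicity is available thanks to the clean separation of $f_j$-values provided by the decomposition, which is in turn guaranteed by hypothesis (a) forcing $f_m(M,r) > f_{m+1}(M,r)$ in a right neighborhood of $r_0$. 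Combining over summands then yields the constancy of each $s_i(M,\cdot)$ on $\DD_{\gamma,K}^\circ$.
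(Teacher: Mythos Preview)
Your overall plan matches the paper's: apply Corollary~\ref{C:decompose disc} to $M|_{R_{[0,\gamma)}}$ and then argue summand by summand. However, two points in your execution need attention.

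First, the detour through subharmonicity for the off-axis upper bound on $s_j(M_k,\cdot)$ is unnecessary. Once you have $f_j(M_k,r)=\max\{r,c_k\}$ for all $r>r_0$, base-change invariance of intrinsic subsidiary radii (Definition~\ref{D:intrinsic radius}) together with Proposition~\ref{P:compare radii} gives $\min\{\rho(y),s_j(M_k,y)\}=\min\{\rho(y),e^{-c_k}\}$ for \emph{every} $y\in\DD_{\gamma,K}^\circ$, not just axial ones. For $\rho(y)>e^{-c_k}$ this yields $s_j(M_k,y)=e^{-c_k}$ immediately. For $\rho(y)\le e^{-c_k}$, the upper bound comes from Remark~\ref{R:grow sections}: if $s_j(M_k,y)>e^{-c_k}$, choose $\delta\in(e^{-c_k},s_j(M_k,y))$ and note $s_j(M_k,H(y,\delta))=s_j(M_k,y)>e^{-c_k}$, contradicting the already-established value at $H(y,\delta)$ (which has $\rho>e^{-c_k}$). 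The lower bound is Proposition~\ref{P:transfer}. No appeal to Proposition~\ref{P:variation}(c) is needed.

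Second, and more seriously, your reduction ``it will suffice to show constancy summand by summand'' is incomplete. The decomposition lives only over $R_{[0,\gamma)}$, so constancy of the $s_j(M_k,\cdot)$ only yields constancy of $s_i(M|_{R_{[0,\gamma)}},y)=\min\{s_i(M,y),\gamma\}$, not of $s_i(M,y)$ itself. Your phrase ``the global Robba radius $S_0\ge\gamma$ determined by $M_0$'' does not resolve this: $M_0$ is a module over $R_{[0,\gamma)}$ and cannot by itself determine radii exceeding $\gamma$. The paper fills this gap as follows. For $i\le m$, one checks $s_i(M,y)<\gamma$ for all $y$ (otherwise Remark~\ref{R:grow sections} would force $s_i(M,\zeta_{0,\rho})\ge\gamma$ for $\rho$ near $\gamma$, violating hypothesis~(a)), so truncation is harmless. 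For $i>m$, one first shows $s_i(M,y)\ge\gamma$ for all $y$ (via $s_1(M_0,y)=\gamma$ and the truncation relation); then if some $y_0$ has $s_i(M,y_0)>\gamma$, Remark~\ref{R:grow sections} makes $s_i(M,\cdot)$ constant on all of $\DD_{\gamma,K}^\circ$, while otherwise it is identically $\gamma$. You should add this step.
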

\begin{proof}
Using Proposition~\ref{P:compare radii} to see that the appropriate hypotheses are satisfied,
we may decompose $M \otimes_{R_{[0,\beta)}} R_{[0,\gamma)} = M_0 \oplus M_1 \oplus \cdots$  as per Corollary~\ref{C:decompose disc}.

Consider any $k>0$.
By Corollary~\ref{C:decompose disc} and Proposition~\ref{P:compare radii},
for $i \in \{1,\dots,\rank(M_k)\}$, for all $y \in \DD_{\gamma,K}^{\circ}$ we have $\min\{\rho(y), s_i(M_k,y)\} = 
\min\{\rho(y), e^{-c_k}\}$. 
For those $y$ with $\rho(y) > e^{-c_k}$, we have 
\[
e^{-c_k} = \min\{\rho(y),e^{-c_k}\} = \min\{\rho(y), s_i(M_k,y)\}
\]
and the right side cannot equal $\rho(y)$, so we must have $s_i(M_k,y) = e^{-c_k}$.
For those $y$ with $\rho(y) \leq e^{-c_k}$, we cannot have $s_i(M_k,y) > e^{-c_k}$: otherwise, we could choose
$\delta \in (e^{-c_k}, s_i(M_k,y))$ and apply 
Remark~\ref{R:grow sections} to see that $s_i(M_k,H(y,\delta)) = s_i(M_k,y) > e^{-c_k}$,
contradicting the previously established equality $s_i(M_k,H(y,\delta)) = e^{-c_k}$.
We thus have $s_i(M_k,y) \leq e^{-c_k}$; on the other hand, for any $\delta \in (e^{-c_k},\gamma)$
we may apply Proposition~\ref{P:transfer} to obtain $e^{-c_k} = s_1(M_k,H(y,\delta)) \leq s_1(M_k,y) \leq s_i(M_k,y)$.
We conclude that $s_i(M_k,y)$ is constant for $y \in \DD_{\gamma,K}^{\circ}$.

For $i=1,\dots,n$ and $y \in \DD_{\gamma,K}^{\circ}$, we have
\begin{equation} \label{eq:truncate radius}
s_i(M \otimes_{R_{[0,\beta)}} R_{[0,\gamma)}, y) = \min\{s_i(M,y), \gamma\}.
\end{equation}
For $i=1,\dots,m$, we must have $s_i(M,y) < \gamma$ or else Remark~\ref{R:grow sections} would lead
to a violation of hypothesis (a);
moreover, from \eqref{eq:truncate radius} and the previous paragraph, $\min\{s_i(M,y), \gamma\}$ is constant on
$\DD_{\gamma,K}^{\circ}$.
We are thus done in case $m=n$,
so we may assume $m < n$ hereafter.

By Corollary~\ref{C:decompose disc}, Proposition~\ref{P:compare radii}, and Proposition~\ref{P:transfer}
(applied as in Definition~\ref{D:open unit disc}),
we have $s_1(M_0,y) \geq \gamma > \rho(y)$ for all $y \in \DD_{\gamma,K}^{\circ}$.
From this inequality plus \eqref{eq:truncate radius}, it follows that
for $i=m+1,\dots,n$, we have $s_i(M,y) \geq \gamma$ for all $y \in \DD_{\gamma,K}^{\circ}$.
If there exists $y \in \DD_{\gamma,K}^{\circ}$ for which $s_i(M,y) > \gamma$,
then by Remark~\ref{R:grow sections}, $s_i(M,y)$ is constant on $\DD_{\gamma,K}^{\circ}$;
otherwise, $s_i(M,y)$ is evidently equal to the constant value $\gamma$ on $\DD_{\gamma,K}^{\circ}$.
This completes the proof.
\end{proof}

\subsection{Solvable modules}
\label{subsec:solvable}

If one views solvability of a differential module on an annulus as a question about what happens
as one approaches the generic point of the inner boundary, one is then led to an analogous concept
in which one approaches an arbitrary point of a Berkovich disc. For points of type 2,
this amounts to a cosmetic revision of \S\ref{sec:solvable}, but at points of other types one has more precise
results. The case of type 4 points is especially critical in order to eliminate such points from the controlling graph of $M$ (see Theorem~\ref{T:strict skeleton}).

\begin{defn}
Choose $x \in \DD_{\beta,K}$ with $x \neq \zeta_{0,\beta}$, so that $\rho(x) < \beta$.
Put $r_0 = -\log \rho(x)$. 

For $\rho \in (\rho(x), \beta]$, the points $H(x,\rho)$ are all of types 2 and 3 (because they are not minimal).
Moreover, as $\rho \to \rho(x)^+$ these points form 
a net converging to $x$.

For any $\gamma,\delta$ with $\rho(x) < \gamma \leq \delta \leq \beta$, the subset of $\DD_{\beta,K}$
consisting of points dominated by $H(x,\rho)$ for some $\rho \in [\gamma,\delta]$ has the form
$\calM(R_{x,[\gamma,\delta]})$ for some Banach algebra $R_{x,[\gamma,\delta]}$ over $K$.
More precisely, this subset is an \emph{affinoid subdomain} of $\DD_{\beta,K}$ in the sense of
Definition~\ref{D:spaces}.
Even more precisely, if $K = \CC$ and $\gamma,\delta \in \left| \CC^\times \right|$, the set in question is an annulus.

For $\delta \in (\rho(x), \beta]$, define
\[
R_{x,(\rho(x), \delta]} = \bigcap_{\gamma \in (\rho(x), \delta]} R_{x,[\gamma,\delta]};
\]
it is equivalent to run the intersection over
$\gamma \in (\rho(x),\delta] \cap \left| \CC^\times \right|$.
Define the \emph{Robba ring at $x$} as the ring
\[
\calR_x = 
\bigcup_{\delta \in (\rho(x),\beta]} R_{x,(\rho(x),\delta]};
\]
it is equivalent to run the union over $\delta \in (\rho(x), \beta] \cap \left| \CC^\times \right|$.
All of these rings may be viewed as differential rings for the derivation $\frac{d}{dt}$.
\end{defn}

\begin{defn}
For $N$ a differential module of rank $n>0$ over $\calR_x$, the germ of the function
$-\log s_i(N, H(x,e^{-r}))$ in a left neighborhood of $r_0$ is well-defined for $i=1,\dots,n$.
We may thus say that $N$ is \emph{solvable at $x$} if 
\[
\limsup_{r \to r_0^-}
 -\log s_1(N, H(x,e^{-r}))-r \leq 0.
\]
In this case, as in Definition~\ref{D:solvable},
there exist
nonnegative rational numbers $b_1(N,x) \geq \dots \geq b_n(N,x)$ such that for $i=1,\dots,n$,
at the level of germs we have
\[
\max\{r, -\log s_i(N,H(x,e^{-r}))\} = r + b_i(N,x)(r_0 -r).
\]
\end{defn}

\begin{remark} \label{R:type 1 2 3}
For $x$ of type 2, after making a finite extension of $K$ to force $K$ to be integrally closed in $\calH(x)$,
we may obtain an isomorphism $\calR_x \cong \calR_\alpha$ for $\alpha = \rho(x)$
by translating $x$ to $\zeta_{0,\alpha}$. We may thus transfer statements about $\calR_\alpha$, such as
Theorem~\ref{T:p-adic turrittin}, directly
to the setting of solvable modules over $\calR_x$.

For $x$ of other types, the behavior of a solvable module over $\calR_x$ is much more restricted,
especially in the case of a module obtained by base extension from $R_{[0,\beta]}$.
It is most convenient to postpone discussion of this point until after we have Theorem~\ref{T:strict skeleton}
in hand; see \S\ref{subsec:more solvable}. However, one key case is needed for the proof of Theorem~\ref{T:strict skeleton}, so we include it here; see Lemma~\ref{L:type 4 constant}.
\end{remark}

\begin{lemma} \label{L:type 4 constant1}
Assume $p>0$.
Let $x \in \DD_{\beta,K}$ be a point of type 4 for which $\rho(x) \in \left| \CC^\times \right|$.
Let $N$ be a differential module over $\calR_x$ of rank $n$ which is solvable at $x$.
Then $b_i(N,x) \in [0,1]$ for $i=1,\dots,n$.
\end{lemma}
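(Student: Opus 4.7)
Proof plan. I reduce to a disc situation via base extension. Using Lemma~\ref{L:base change radii}, I enlarge $K$ so that $\calH(x) \subseteq K$ and $\alpha \in |K^\times|$, which is possible since $\rho(x) \in |\CC^\times|$; after translating and rescaling, I may assume $t_x = 0$ and $\alpha = 1$. Under these reductions, the type $4$ point $x$ lifts to the type $1$ point $\zeta_{0,0}$ of $\DD_{\beta,K}$ (over the enlarged $K$), the ring $\calR_x$ is identified with the Robba ring on an open annulus $1 < |t| < \delta$ for some $\delta > 1$, and $N$ becomes a differential module on this annulus, solvable at the inner boundary $|t|=1$ in the sense of \S\ref{sec:solvable}.

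The key construction is the following. For any $c \in K$ with $1 < |c| < \delta$, the ultrametric inequality forces $|t-c| \leq 1 < |c|$ to imply $|t| = |c|$, so the closed disc $|t-c| \leq 1$ is entirely contained in the annulus. Restricting $N$ to this disc yields a differential module $N_c$ on a standard closed disc of radius $1$ over $K$ in the coordinate $v = t-c$, to which the theory of \S\ref{sec:discs annuli} applies. Using Proposition~\ref{P:compare radii} at the shared Gauss point $\zeta_{c,1} = \zeta_{0,|c|}$ (in both coordinate systems) together with the annulus germ $f_i(N, r_t) = (1-b_i)r_t$ for $r_t$ slightly less than $0$, I obtain the relation
\[
f_i(N_c, 0) = b_i \log|c|.
\]

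The plan is then to combine Proposition~\ref{P:variation}(d) (disc monotonicity) applied to $N_c$ with Proposition~\ref{P:variation}(c) (subharmonicity) applied to $N$ at the common Gauss point $\zeta_{0,|c|}$, after substituting $t \mapsto t/c$ to normalize this Gauss point to $|t'|=1$. The left slope in subharmonicity (pointing in the annulus direction of increasing $|t|$) equals $i-(b_1+\cdots+b_i)$ from the germ; the right slope in the $\overline{\mu}=0$ branch (pointing toward the inner boundary) equals the same value since the germ is affine across the Gauss point. Each right slope in a branch with $\overline{\mu}\neq 0$ corresponds to sub-discs centered at $\mu c$, which are again contained in the annulus by the same ultrametric argument, and where the disc module $N_{\mu c}$ has the analogous structure; by monotonicity applied to $N_{\mu c}$ together with Corollary~\ref{C:right slope 1}, these contributions are nonpositive integers. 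Combining the subharmonicity inequality with convexity (Proposition~\ref{P:variation}(e)) and taking $|c| \to 1^+$ should yield the desired bound $b_i \leq 1$. The main obstacle will be the precise accounting of the branch contributions at the common Gauss point and the verification of the limiting behavior as $|c|\to 1^+$; a secondary issue is the possibility that $|c|$ is constrained by the available value group of the enlarged $K$, which may require an additional base extension step at the outset to ensure a sufficiently dense supply of admissible $c$'s.
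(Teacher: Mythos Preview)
Your proposal contains a concrete error and, more importantly, a strategic gap that cannot be repaired within your framework.

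\textbf{The concrete error.} You assert that $\zeta_{c,1} = \zeta_{0,|c|}$ ``in both coordinate systems.'' This is false when $|c|>1$: evaluating at $t-c$ gives $\zeta_{c,1}(t-c)=1$ but $\zeta_{0,|c|}(t-c)=|c|$. The Gauss point of your disc $|t-c|\leq 1$ is $\zeta_{c,1}$, which lies \emph{strictly below} the skeleton point $\zeta_{0,|c|}$, at $r$-distance $\log|c|$ inside the residue branch of $c$. Consequently your formula $f_i(N_c,0)=b_i\log|c|$ is not justified: the germ formula $(1-b_i)r$ describes only the skeleton $\{\zeta_{0,\rho}\}$, not the off-skeleton point $\zeta_{c,1}$.

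\textbf{The strategic gap.} Even if you work correctly at $\zeta_{0,|c|}$ and apply subharmonicity there, the argument cannot yield $b_i\leq 1$. After rescaling so this point has $r''=0$, the left and right skeletal slopes of $F_i$ are both $i-\sum_{j\leq i}b_j$, so subharmonicity reduces to $\sum_{\overline\mu\neq 0}s_{\overline\mu,i}\geq 0$; monotonicity on each residue disc gives $s_{\overline\mu,i}\leq 0$; together these force all off-skeleton slopes to vanish. But this is perfectly consistent with any value of $b_1$: a module on the annulus with $b_1=2$, skeletal slope $-1$, and all branch slopes $0$ satisfies every convexity, monotonicity, and subharmonicity constraint in Proposition~\ref{P:variation}. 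Taking $|c|\to 1^+$ does not help, since the module is not defined at the inner boundary.

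\textbf{What is missing.} After your base extension, $N'$ is just an arbitrary solvable module on an annulus over $K'$, and nothing in your argument uses that it \emph{descends} to the original $K$ with $x$ of type~4. That descent is precisely the source of the bound. The paper's proof exploits it by taking two copies $L_1,L_2$ of $\calH(x)$ with a common extension $L_3$: the minimal \'etale covers $S_1,S_2$ from Theorem~\ref{T:p-adic turrittin} must become isomorphic over $L_3$, which forces the residue field of $S_1$ to admit an action of the full group of translations $u\mapsto u+c$ of $\kappa_K((u))$. Proposition~\ref{P:local fields} then bounds the highest upper ramification break by $1$, and Corollary~\ref{C:turrittin ramification} converts this into $b_1(N,x)\leq 1$. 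Your geometric tools (Proposition~\ref{P:variation}) are blind to this Galois-theoretic constraint.
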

\begin{proof}
Using Lemma~\ref{L:base change radii} and the fact that Berkovich's classification is preserved
by passage from $K$ to $\CC$ (see Proposition~\ref{P:Berkovich classification}),
we may assume without loss of generality that $K = \CC$ and $\rho(x) = 1$.
We may also assume $n>0$.

Let $L_1, L_2$ be two copies of $\calH(x)$, and let $L_3$ be a complete extension of both
(obtained by choosing an element of $\calM(L_1 \widehat{\otimes}_K L_2)$).
Let $t_1, t_2$ be the copies of $t_x$ in $L_1, L_2$.
For $i=1,2$, let $\calR_{(i)}$ be a copy of $\calR_1$ (that is, the ring $\calR_\alpha$ with $\alpha = 1$)
over $L_i$ in the variable $t-t_i$.
Let $\calR_{(3)}$ be a copy of $\calR_1$ over $L_3$ in the variable $t-t_1$,
and equip $\calR_{(3)}$ with the map from $\calR_{(1)}$ sending $t - t_1$ to $t-t_1$ and the map from $\calR_{(2)}$ sending $t-t_2$ to
$t-t_1 + (t_1-t_2)$.
For $i=1,2$, we may identify the residue field of $\calR_{(i)}$ with $\kappa_K((u_i))$ for $u_i = (t-t_i)^{-1}$,
and then apply Theorem~\ref{T:p-adic turrittin} and Corollary~\ref{C:turrittin ramification} to produce
the minimal finite \'etale extension $S_i$ of $\calR_{(i)}^{\inte}$ over which
$N \otimes_{\calR_{(i)}^{\inte}} S_i$ satisfies the Robba condition.
By the uniqueness in Corollary~\ref{C:turrittin ramification}, we must then have an isomorphism
\begin{equation} \label{eq:type 4 constant}
S_1 \otimes_{\calR_{(1)}^{\inte}} \calR_3^{\inte} \cong S_2 \otimes_{\calR_{(2)}^{\inte}} \calR_3^{\inte}
\end{equation}
which commutes with the cocycle condition. This implies (e.g., by faithfully flat descent)
that $S_1$ admits an action of the group  
of $\kappa_K$-linear substitutions on $\kappa_K((u_1))$ of the form $t \mapsto t + c$ with $c \in \kappa_K$.
Let $L$ be the residue field of $S_1$; applying Proposition~\ref{P:local fields}, we may deduce that the highest upper numbering ramification break of $L$ as a finite extension of $\kappa_K((u_1))$ 
is at most $1$. 
By Corollary~\ref{C:turrittin ramification},
this implies that $b_1(N,x) \leq 1$ and hence $b_i(N,x) \leq 1$ for $i=1,\dots,n$.
\end{proof}

\begin{lemma} \label{L:type 4 constant}
Assume $p>0$.
Let $M$ be a differential module over $R_{[0,\beta]}$ of rank $n$.
Let $x$ be a point of type 4 for which $\rho(x) \in \left| \CC^\times \right|$.
Put $N = M \otimes_{R_{[0,\beta]}} \calR_x$.
If $N$ is solvable at $x$, then $b_i(N,x) \in \{0,1\}$ for $i=1,\dots,n$.
\end{lemma}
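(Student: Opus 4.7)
The plan starts from Lemma~\ref{L:type 4 constant1}, which already gives $b_i = b_i(N,x) \in [0,1]$ for each $i$; the remaining task is to upgrade these to integers using the fact that $M$ is defined on the whole closed disc $R_{[0,\beta]}$ rather than merely on $\calR_x$. The extra leverage will come from Proposition~\ref{P:variation}(b), applied after passing to the field $L = \calH(x)$ over which $x$ becomes a rational point.

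First I base-change to $L = \calH(x)$: by Remark~\ref{R:canonical base extension}, $x$ lifts to the $L$-rational type~$1$ point $\tilde x = \zeta_{t_x,0}$ in the coordinate $u = t - t_x$, and by Lemma~\ref{L:base change radii} the radii of optimal convergence are unchanged, so the $b_i$'s are preserved. Write $r_0 = -\log\rho(x)$. Solvability of $N$ at $x$ plus continuity of $s_1$ forces $f_1(M \otimes L, r_0) = r_0$; Corollary~\ref{C:right slope 1} then yields $f_j(M\otimes L, r) = r$ for all $j$ and all $r \geq r_0$, so that $M\otimes L$ is Robba at every generic point of the closed central residue disc $|u|\leq\rho(x)$. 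Meanwhile, in a left neighborhood of $r_0$, the germ of $f_j(M\otimes L, r)$ is the affine function $(1-b_j)r + b_j r_0$ of slope $1 - b_j$.

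Applying Proposition~\ref{P:variation}(b) to $M\otimes L$ at interior points $r' < r_0$ produces integer-slope conclusions whenever $i = n$ or $f_i(r') > f_{i+1}(r')$; in our germ the latter inequality is equivalent to $b_i > b_{i+1}$. Consequently, at each ``group boundary'' index (together with the unconditional case $i = n$) the partial sum $\sum_{j=1}^i b_j$ must be an integer. Taking successive differences, each ``group sum'' $m\cdot b^{(k)}$ is an integer, where $m$ is the size of the group on which the $b_j$'s take the common value $b^{(k)}$; combined with $b^{(k)} \in [0,1]$ this yields $b^{(k)} \in \{0,1/m, 2/m, \ldots, 1\}$.

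To finish the proof I must exclude the intermediate values $b^{(k)} \in \{1/m, \ldots, (m-1)/m\}$. For each such group I would extract the pure rank-$m$ spectral summand $M'$ by applying Proposition~\ref{P:decompose annulus} on a small sub-annulus $(\rho(x), \beta_1)$ where each $F_i$ is affine and $f_i > f_{i+1}$ at the relevant group boundaries; Theorem~\ref{T:refined decomposition} then decomposes $M' \otimes_{L[u]} L'[u^{1/m'}]$, after a suitable tame cover, into refined summands $V_k$ each with $b_1(V_k) = b^{(k)}$. For any $V_k$ whose rank is prime to $p$, Theorem~\ref{T:cyclic type}(a) forces the slope $1 - b^{(k)}$ of $f_1(V_k,r)$ to be an integer, giving $b^{(k)} \in \{0,1\}$. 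The main obstacle is the remaining case in which every refined summand $V_k$ has rank divisible by $p$: here I would invoke the Frobenius-antecedent construction of Lemma~\ref{L:descend refined} together with Proposition~\ref{P:refined power}, iterating the descent until one reaches either a rank prime to $p$ or a cyclic-type module (so that Theorem~\ref{T:cyclic type}(b) applies). Verifying that this Frobenius descent genuinely terminates in such a favorable case and that the resulting integer slope transfers back to $M'$ itself is the delicate technical point of the argument.
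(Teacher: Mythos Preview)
Your argument correctly identifies the base change to $L = \calH(x)$ and the recentering at $u = t - t_x$, which puts $M \otimes L$ over a genuine disc ring $R_{[0,\beta]}$ in the coordinate $u$. However, you then extract from Proposition~\ref{P:variation} only the integrality clause (b), which yields merely $\sum_{i \leq j}(1-b_i) \in \ZZ$ at group boundaries and leaves you with the fractional possibilities $b^{(k)} \in \{1/m,\dots,(m-1)/m\}$ that you cannot eliminate. The proposed completion via refined decompositions on a sub-annulus and iterated Frobenius descent is not actually carried out, and there is no clear reason it should terminate in a case covered by Theorem~\ref{T:cyclic type}: once you pass to the annulus $(\rho(x),\beta_1)$ you have discarded the disc structure, and with it the decisive input coming from the global hypothesis that $M$ lives on $R_{[0,\beta]}$.

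The missing idea is to use the \emph{monotonicity} clause Proposition~\ref{P:variation}(d) rather than integrality. After recentering you have $0 \in I$, and for the largest index $j$ with $b_j > 0$ each $f_i(M\otimes L, r) > r$ for $i \leq j$ and $r$ in a punctured left neighborhood of $r_0$; part (d) then forces $F_j(M\otimes L, r) = \sum_{i=1}^j f_i$ to have \emph{nonpositive} slope there. But that slope equals $\sum_{i=1}^j (1 - b_i)$, and each summand is nonnegative by Lemma~\ref{L:type 4 constant1}. Hence every summand vanishes, giving $b_1 = \cdots = b_j = 1$ at once, while $b_{j+1} = \cdots = b_n = 0$ by the choice of $j$. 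This is exactly the paper's proof. (Your detour through Corollary~\ref{C:right slope 1} concerns $r \geq r_0$ and is orthogonal to the question; it can be dropped.)
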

\begin{proof}
Let $j \in \{0,\dots,n\}$ be any index for which $b_1(N,x), \dots, b_j(N,x) > 0$.
For $r$ in some left neighborhood of $-\log \rho(x)$, the function 
\[
\sum_{i=1}^j -\log s_i(M,H(x,e^{-r}))
\]
is affine with nonpositive slope by Proposition~\ref{P:variation}(a,d).
However, this slope is equal to
\[
\sum_{i=1}^j (1 - b_i(N,x)),
\]
each summand of which is nonnegative by Lemma~\ref{L:type 4 constant1}. This proves the claim.
\end{proof}

\begin{remark}
It is tempting to argue directly that the isomorphism \eqref{eq:type 4 constant}
from the proof of Lemma~\ref{L:type 4 constant} implies
by faithfully flat descent that $S_1$ descends to a finite \'etale algebra over $\calR_x^{\inte}$.
One obstruction to this approach is that it is unclear whether the maps $\calR_x^{\inte} \to \calR_{(i)}^{\inte}$ are flat.
\end{remark}

\subsection{Controlling graphs for radii of convergence}
\label{subsec:controlling graphs}

Using Proposition~\ref{P:compare radii}, we can give a partial translation of Proposition~\ref{P:variation}
into the language of radii of optimal convergence.
This reproduces and improves a result of Pulita 
\cite[Theorem~4.7]{pulita-poineau};
see Remark~\ref{R:pulita}.
Throughout \S\ref{subsec:controlling graphs}, retain Hypothesis~\ref{H:radii discs}.

\begin{defn}
For $x \in \DD_{\beta,K}$, put
\[
f_i(M, x) = -\log s_i(M, x) \qquad ( i \in \{1,\dots,n\})
\]
and $F_i(M,x) = f_1(M,x) + \cdots + f_i(M,x)$.
Note that unlike the functions $f_i(M,r)$ considered in \S\ref{sec:discs annuli}, the function $f_i(M,x)$
may take values less than $-\log \rho(x)$. We are thus led to define the truncated functions
\begin{align*}
\overline{s}_i(M,x) &= \min\{\rho(x), s_i(M,x)\} \\
\overline{f}_i(M, x) &= -\log \overline{s}_i(M, x).
\end{align*}
\end{defn}

\begin{remark} \label{R:translation of old variation}
By Proposition~\ref{P:compare radii},
the function $f_i(M,r)$ of \S\ref{sec:discs annuli}
coincides with $\overline{f}_i(M,H(\zeta_{0,0},e^{-r}))$. This will allow us to apply Proposition~\ref{P:variation} to obtain
information about the functions $f_i$.
\end{remark}

\begin{prop} \label{P:divisible closure}
For any $x \in \DD_{\beta,K}$, 
$s_1(M,x)$ belongs to the divisible closure of $\left| \calH(x)^\times \right|$.
\end{prop}
\begin{proof}
By making the canonical base extension as in
Remark~\ref{R:canonical base extension},
 we may reduce to the case where $x$ is of type 1.
By Lemma~\ref{L:base change radii}, we may further reduce to the case where $K = \CC$ 
and $x = \zeta_{0,0}$.

By Remark~\ref{R:translation of old variation} and Proposition~\ref{P:rational intercepts}, the function 
$\overline{f}_1(M,H(x,e^{-r}))$
is piecewise of the form $ar+b$ with $a \in \QQ$ and $b \in \log |K^\times|$.
Put $r_0 = -\log s_1(M,x)$. By 
Proposition~\ref{P:transfer}, $r_0$ is the smallest value 
for which $\overline{f}_1(M,H(x,e^{-r})) = r$ for all $r \geq r_0$.
We thus have $r_0 = -b/a$ for some $a \in \QQ$ and $b \in \log |K^\times|$, proving the claim.
\end{proof}

\begin{lemma} \label{L:grow sections}
For $i=1,\dots,n$, if $s_i(M,x) > \rho(x)$ for some $x$, then $s_i(M,x)$ is constant on some neighborhood of $x$.
\end{lemma}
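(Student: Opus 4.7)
The plan is to pass to a larger analytic field $L/K$ over which both $x$ and $y$ become type-$1$ points at nearby coordinates, and then apply the ultrametric coincidence of discs. Fix $\sigma$ with $\rho(x) < \sigma < s_i(M,x)$; let $z = H(x,\sigma)$, a point of type $2$ or $3$ with diameter $\sigma$, and let $B \subseteq \DD_{\beta,K}$ be the closed affinoid consisting of all points dominated by $z$. Since $\rho(x) < \sigma$, $x$ lies in the Berkovich open ball strictly below $z$, which is an open neighborhood of $x$ contained in $B$. I claim $s_i(M,y) = s_i(M,x)$ for every $y \in B$: the case in which $y$ lies on the root path from $x$ to $z$ is Remark~\ref{R:grow sections}, so assume $y$ lies on a different lower branch at some interior point of this path, and set
\[
\sigma(y,x) := \min\{\tau \geq 0 : H(y,\tau) = H(x,\tau)\} \leq \sigma.
\]

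To construct $L$, put $z_0 = H(x,\sigma(y,x))$, $K_0 = \calH(z_0)$, and let $\tilde z_0 = \zeta_{t_{z_0},\sigma(y,x)} \in \DD_{\beta,K_0}$ be the Gauss-type lift of $z_0$. The closed ball in $\DD_{\beta,K_0}$ below $\tilde z_0$ surjects onto the closed ball in $\DD_{\beta,K}$ below $z_0$, so we may pick lifts $\tilde x_0,\tilde y_0$ of $x$ and $y$ in $\DD_{\beta,K_0}$ both dominated by $\tilde z_0$. Then $\calH(\tilde x_0)$ contains $\calH(x)$ and $K_0$ isometrically, and similarly for $\tilde y_0$. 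Taking $L = \calH(\gamma)$ for any $\gamma$ in the nonempty Gel'fand spectrum $\calM(\calH(\tilde x_0) \widehat{\otimes}_{K_0} \calH(\tilde y_0))$ yields an analytic field containing $\calH(x),\calH(y),K_0$ isometrically, and the domination $\tilde x_0,\tilde y_0 \leq \tilde z_0$ forces $|t_x - t_{z_0}|_L,\ |t_y - t_{z_0}|_L \leq \sigma(y,x)$, whence $|t_x - t_y|_L \leq \sigma(y,x)$ by the ultrametric inequality.

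The type-$1$ lifts $\tilde x = \zeta_{t_x,0}$ and $\tilde y = \zeta_{t_y,0}$ in $\DD_{\beta,L}$ restrict to $x$ and $y$, so Lemma~\ref{L:base change radii} gives $s_i(M,x) = s_i(M \widehat{\otimes}_K L,\tilde x)$ and analogously for $y$. For any $r \in (|t_x - t_y|_L,\, s_i(M,x))$, the closed discs of radius $r$ around $t_x$ and $t_y$ in any extension of $L$ coincide, so $n - i + 1$ linearly independent formal horizontal sections centered at $t_x$ and converging on the disc about $t_x$ re-expand as formal horizontal sections at $t_y$ converging on the same disc. Passing to the supremum and arguing symmetrically yields $s_i(M,y) = s_i(M,x)$.

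The delicate step is the quantitative construction of $L$: mere isometric containment of $\calH(x)$ and $\calH(y)$ in a single analytic extension is automatic from the nonemptiness of Berkovich spectra, but the bound $|t_x - t_y|_L \leq \sigma(y,x)$ requires the geometric device of realizing both $x$ and $y$ by lifts dominated by a common Gauss-type point over the residue field of the join.
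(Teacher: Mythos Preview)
Your argument is correct and follows the same underlying principle as the paper's one-line proof, namely the ultrametric identification of discs recorded in Remark~\ref{R:grow sections}: once $|t_x - t_y| < r$ in a common extension field, the open discs of radius $r$ about $t_x$ and $t_y$ coincide, so the spaces of convergent horizontal sections match. The paper simply cites that remark; you have supplied the detail the paper leaves implicit, namely the construction of a common analytic field $L$ in which $|t_x - t_y|_L \leq \sigma(y,x) \leq \sigma < s_i(M,x)$ for an arbitrary $y$ in the ball below $z$, not just for $y$ on the root path of $x$. Your two-step descent through $K_0 = \calH(z_0)$ and the completed tensor product over $K_0$ is exactly what is needed to guarantee that bound, and your final re-expansion argument is the standard one.
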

\begin{proof}
This is immediate from Remark~\ref{R:grow sections}.
\end{proof}

\begin{lemma} \label{L:piecewise affine restriction}
For $i=1,\dots,n$, the restriction of $f_i(M, \cdot)$ to any skeleton of $\DD_{\beta,K}$ 
is piecewise affine.
\end{lemma}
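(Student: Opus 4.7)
The plan is to reduce to piecewise affineness along a single root path, use Remark~\ref{R:translation of old variation} to identify the truncated function $\overline{f}_i$ along the path with the function $f_i(M,r)$ of \S\ref{sec:discs annuli}, and then compare $f_i$ with $\overline{f}_i$ using Lemma~\ref{L:grow sections}.

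Since any rooted skeleton is a finite union of root paths, it suffices to verify the conclusion along a single root path $r \mapsto H(x, e^{-r})$ for some $x \in \DD_{\beta,K}$. Base changing from $K$ to $\calH(x)$ via Lemma~\ref{L:base change radii} leaves the values of $f_i(M, \cdot)$ along the path unchanged, so after a coordinate translation I may assume $x = \zeta_{0,0}$ and the path is parametrized by $r \mapsto \zeta_{0, e^{-r}}$ for $r \in [-\log \beta, +\infty)$. Set $g(r) = f_i(M, \zeta_{0, e^{-r}})$ and $\overline{g}(r) = \overline{f}_i(M, \zeta_{0, e^{-r}}) = \max\{r, g(r)\}$. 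By Remark~\ref{R:translation of old variation}, $\overline{g}$ coincides with the function $f_i(M,r)$ of \S\ref{sec:discs annuli}, so by Proposition~\ref{P:variation}(a) it is continuous and piecewise affine with only finitely many slopes over any closed subinterval. On the open set where $\overline{g}(r) > r$, the equality $g = \overline{g}$ yields piecewise affineness immediately. On the complementary closed set where $\overline{g}(r) = r$ and thus $g(r) \leq r$, Lemma~\ref{L:grow sections} applied at any point where $g(r) < r$ provides a $\DD_{\beta,K}$-neighborhood on which $s_i(M, \cdot)$ is constant, so that $g$ is locally constant at $r$.

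Consequently the open set $U = \{r : g(r) < r\}$ decomposes into open intervals on each of which $g$ is constant, and it remains to show that $U$ meets any closed subinterval $[r_1, r_2]$ in only finitely many components. For a component $(a, b)$ of $U$ with $a, b$ interior to $[r_1,r_2]$, the constant value $c$ of $g$ on $(a,b)$ satisfies $c \leq a$, whereas at the right boundary one has $g(b) \geq b$ (else $b \in U$); continuity of $g$ at $b$ from within the component then forces $c = g(b) \geq b$, contradicting $c \leq a < b$. Hence no interior component of $U$ can occur, so $U \cap [r_1,r_2]$ has at most two components (one touching each endpoint), and $g$ is piecewise affine on $[r_1,r_2]$ with finitely many pieces. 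The main obstacle is verifying the continuity of $g$ at transition points where $g(r) = r$: although $\overline{g}$ is continuous, continuity of $g$ at such points requires ruling out downward jumps as $r$ crosses from a constant component of $U$ into the locus $\{g = r\}$, which may necessitate either a direct local analysis of the formal horizontal sections as the base point varies, or an appeal to Proposition~\ref{P:transfer} together with the structure of the spectral decomposition on neighborhoods.
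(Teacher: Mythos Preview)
Your overall strategy matches the paper's: reduce to a single root path, identify $\overline{g}(r) = \max\{r, g(r)\}$ with the piecewise-affine function of Proposition~\ref{P:variation}(a) via Remark~\ref{R:translation of old variation}, and then treat separately the loci where $g > r$, $g < r$, and $g = r$. The gap you flag at the end is genuine, and in fact your continuity step is circular: if an interior component $(a,b)$ of $U$ did exist with $g \equiv c \leq a$ on it, then continuity of $\overline{g}$ forces $g(b)=b$, so $g$ would \emph{fail} to be left-continuous at $b$, and the contradiction you want does not follow from continuity alone.

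The missing observation --- which is exactly what the paper uses --- is that Remark~\ref{R:grow sections} gives something much stronger than the local constancy you extract from Lemma~\ref{L:grow sections}: if $g(r_1) < r_1$ at any $r_1$, then $g$ is constant on \emph{all} of $[r_1,\infty)$. Indeed, for any $r_2 > r_1$ the open discs of radius $\rho$ around $\zeta_{0,e^{-r_1}}$ and $\zeta_{0,e^{-r_2}}$ coincide for every $\rho > e^{-r_1}$, so the same $n-i+1$ independent horizontal sections witness $s_i$ at both points, and hence $g(r_2)=g(r_1)$. With this in hand your component argument collapses: $U$ is either empty or a single interval of the form $(c_0,\infty)$ on which $g\equiv c_0$, so no interior component can occur. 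The paper organizes the same observation pointwise: at $r_0$ with $g(r_0)=r_0$, to the left any $r_1<r_0$ with $g(r_1)<r_1$ would force $g(r_0)=g(r_1)<r_1<r_0$; to the right, either $g\geq r$ nearby (so $g=\overline{g}$), or such $r_1$ accumulate at $r_0$, forcing $g$ constant on $(r_0,\infty)$ with value $\leq r_0$, and strict inequality is ruled out by propagating the constancy back to $r_0$. Your suggested remedies (direct section analysis or Proposition~\ref{P:transfer} with spectral decompositions) are unnecessary once this one-sided propagation is in place.
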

\begin{proof}
It suffices to check that for any $x \in \DD_{\beta,K}$ not of type 1,
the function $g_i$ given by $g_i(r) = f_i(M, H(x,e^{-r}))$ is piecewise affine
(the same then holds for points of type 1 by Lemma~\ref{L:grow sections}
and Remark~\ref{R:positive radius}).
We first verify that $\max\{r, g_i(r)\} = \overline{f}_i(M,H(x,e^{-r}))$ is piecewise affine.
Using Lemma~\ref{L:base change radii}, we may reduce to the case where $x = \zeta_{0,\alpha}$ for some
$\alpha > 0$, in which case the claim follows from Proposition~\ref{P:variation}(a).

Given that $\max\{r, g_i(r)\}$ is piecewise affine, it follows that $g_i$ is piecewise affine at 
any $r_0$ for which $g_i(r_0) > r_0$. At a value $r_0$ where $g_i(r_0) < r_0$, by 
Lemma~\ref{L:grow sections}, $g_i$ is constant
in a neighborhood of $r_0$. It thus suffices to check piecewise affinity at an arbitrary value $r_0$ at which
$g_i(r_0) = r_0$.

We first consider a right neighborhood of $r_0$. If the values of $r$ in this neighborhood
for which $g_i(r) < r$ fail to accumulate at $r_0$, then in some smaller neighborhood we have
$g_i(r) = r$ identically.
Otherwise, for each value $r_1$ at which $g_1(r_1) < r_1$, by the previous paragraph $g_i$ is constant
for $r \geq r_1$. It follows that $g_i$ is constant for $r > r_0$ and the constant value must be at most $r_0$.
If it were strictly less than $r_0$, we would have $g_i(r_0) < r_0$ by Remark~\ref{R:grow sections},
contrary to hypothesis;
we thus have $g_i(r) = r_0$ for $r \geq r_0$.
This proves affinity to the right of $r_0$.

We next consider a left neighborhood of $r_0$. If there exists any $r_1$ in this neighborhood
for which $g_i(r_1) < r_1$, then as above, $g_i$ would be constant for $r \geq r_1$.
But then we would have $r_0 = g_i(r_0) = g_i(r_1) < r_1 < r_0$, a contradiction.
Hence $g_i(r) = r$ identically in this neighborhood.
This proves affinity to the left of $r_0$.
\end{proof}
 
\begin{remark}
By Lemma~\ref{L:piecewise affine restriction}, it makes sense to refer to the slopes
of $f_i(M, \cdot)$ or $\overline{f}_i(M, \cdot)$ along any branch of $\DD_{\beta,K}$. 
\end{remark}

\begin{defn}
For $x \in \DD_{\beta,K}$, define the \emph{spectral cutoff} of $M$ at $x$ to be the largest value
$m(x) \in \{0,\dots,n\}$ such that $s_i(M,x) < \rho(x)$ for $i=1,\dots,m(x)$.
\end{defn}

\begin{lemma} \label{L:variation1 convexity}
Let $U$ be a lower branch of $\DD_{\beta,K}$ at some point $x$.
Suppose that for $i=1,\dots,m(x)$, the slope of $\overline{f}_i(M,\cdot)$ along $U$
(which exists by Lemma~\ref{L:piecewise affine restriction})
is equal to $0$. Then 
\[
s_i(M,x) = s_i(M,y) \qquad (y \in U; \, i=1,\dots,n).
\]
\end{lemma}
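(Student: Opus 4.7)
The plan is to reduce to Lemma~\ref{L:variation0 convexity} by identifying the lower branch $U$ with a Berkovich open disc, then convert constancy of $s_i(M',\cdot)$ on this open disc back to constancy of $s_i(M,\cdot)$ on $U$. By Proposition~\ref{P:Berkovich classification}, points of type $1$ or $4$ admit no lower branches, so the statement is vacuous in those cases. For $x$ of type $2$ or $3$, I would first base-extend to $\CC$ using Lemma~\ref{L:base change radii}, then choose a lift $\tilde{x}$ of $x$ in $\DD_{\beta,\CC}$ together with a lower branch $\tilde{U}$ at $\tilde{x}$ lying over $U$, and translate the coordinate so that $\tilde{x} = \zeta_{0,\alpha}$ with $\alpha = \rho(x) > 0$ and $\tilde{U}$ contains $\zeta_{0,0}$; then $\tilde{U} = \DD_{\alpha,\CC}^{\circ}$. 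Proving the claim on $\tilde{U}$ will yield it on $U$ since each $y \in U$ lifts to a point in some Galois conjugate of $\tilde{U}$.

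Next I would restrict to $M' := M \otimes_{R_{[0,\beta]}} R_{[0,\alpha)}$, observing that $s_i(M',y) = \min\{s_i(M,y),\alpha\}$ for $y \in \DD_{\alpha,\CC}^{\circ}$, and apply Lemma~\ref{L:variation0 convexity} with $\gamma = \alpha$ and $m = m(x)$. For hypothesis (a): for $i \leq m(x)$ we have $f_i(M,x) > -\log\alpha$, so $\overline{f}_i(M,x) = f_i(M,x)$, and the slope-zero hypothesis along $U$ yields $s_i(M,\zeta_{0,\rho}) = s_i(M,x) < \alpha$ for $\rho$ in a left neighborhood of $\alpha$, which transfers to $s_i(M',\zeta_{0,\rho})$. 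For hypothesis (b): for $i > m(x)$ we have $\overline{f}_i(M,x) = -\log\alpha$, and continuity of $\overline{f}_i$ along the central root path (Lemma~\ref{L:piecewise affine restriction}) forces $\overline{s}_i(M,\zeta_{0,\rho}) \to \alpha$, whence $\liminf_{\rho \to \alpha^-} s_i(M',\zeta_{0,\rho}) \geq \alpha$, as required.

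Lemma~\ref{L:variation0 convexity} then produces constants $c'_i$ with $s_i(M',y) = c'_i$ for all $y \in \DD_{\alpha,\CC}^{\circ}$ and $i = 1,\dots,n$. For $i \leq m(x)$, $c'_i = s_i(M,x) < \alpha$ immediately gives $s_i(M,y) = s_i(M,x)$. For $i > m(x)$, continuity forces $c'_i = \alpha$ and hence $s_i(M,y) \geq \alpha$ on $\DD_{\alpha,\CC}^{\circ}$. To upgrade to equality with $s_i(M,x)$, I apply Remark~\ref{R:grow sections} at $y$: since $s_i(M,y) \geq \alpha > \rho(y)$, one has $s_i(M,y) = s_i(M,H(y,\rho))$ for all $\rho < s_i(M,y)$. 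If $s_i(M,x) > \alpha$, Lemma~\ref{L:grow sections} provides a Berkovich neighborhood of $x$ on which $s_i(M,\cdot) = s_i(M,x)$; since $H(y,\rho) \to x$ as $\rho \to \alpha^-$, this rules out $s_i(M,y) = \alpha$ and forces $s_i(M,y) = s_i(M,H(y,\alpha)) = s_i(M,x)$. If $s_i(M,x) = \alpha$, then $s_i(M,y) > \alpha$ together with Remark~\ref{R:grow sections} at $\rho = \alpha < s_i(M,y)$ would force $\alpha = s_i(M,x) = s_i(M,H(y,\alpha)) = s_i(M,y) > \alpha$, a contradiction, so $s_i(M,y) = \alpha = s_i(M,x)$. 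The main obstacle is this final case analysis for $i > m(x)$, where the slope hypothesis supplies no direct information; combining continuity of $\overline{f}_i$ with the growth property in Remark~\ref{R:grow sections} is what pins down equality with $s_i(M,x)$.
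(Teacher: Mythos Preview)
Your argument is correct and follows the same essential plan as the paper: after translating $x$ to $\zeta_{0,\alpha}$ (with $\alpha=\rho(x)$), verify the hypotheses of Lemma~\ref{L:variation0 convexity} at $\gamma=\alpha$ and read off constancy of the $s_i$ on $U$. The paper's own proof is the two-line version of this, using only that Lemma~\ref{L:piecewise affine restriction} makes $f_i(M,\cdot)$ continuous along the root path, so $s_i(M,H(y,\rho))\to s_i(M,x)$ as $\rho\to\alpha^-$, which both yields conditions (a) and (b) of Lemma~\ref{L:variation0 convexity} and identifies the constant value.

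The one avoidable detour in your write-up is the restriction to $M'=M\otimes_{R_{[0,\beta]}}R_{[0,\alpha)}$. Lemma~\ref{L:variation0 convexity} does not require $\gamma$ to equal the outer radius of the disc on which $M$ lives: after translation you may apply it directly to $M$ over $R_{[0,\beta)}$ with $\gamma=\alpha<\beta$. Then its conclusion says the \emph{untruncated} radii $s_i(M,\cdot)$ are constant on $\DD_{\alpha}^\circ=U$, and continuity of $f_i$ pins the constant at $s_i(M,x)$. This removes the need for your final case analysis for $i>m(x)$; your truncation at $\alpha$ is precisely what forces you to argue separately that $s_i(M,y)\geq\alpha$ can be upgraded to $s_i(M,y)=s_i(M,x)$.
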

\begin{proof}
By Lemma~\ref{L:piecewise affine restriction}, for any $y \in U$ we have $s_i(M,H(y,\rho)) \to s_i(M,x)$
as $\rho \to \rho(x)^-$. This implies on one hand that the conditions of Lemma~\ref{L:variation0 convexity}
are satisfied, so $s_i(M,y)$ is constant for $y \in U$, and on the other hand that this constant value
is equal to $s_i(M,x)$.
\end{proof}

\begin{lemma} \label{L:variation1 convexity2}
Let $S$ be a rooted skeleton of $\DD_{\beta,K}$. Let $T$ be the interior of an edge in a subdivision of $S$.
Suppose that for $i=1,\dots,n$, $\overline{f}_i(M,\cdot)$ is affine on $T$. Then
\[
s_i(M,y) = s_i(M, \pi_S(y)) \qquad (i=1,\dots,n; y \in \pi_S^{-1}(T)).
\]
\end{lemma}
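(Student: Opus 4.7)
The plan is to reduce to Lemma~\ref{L:variation1 convexity}. Fix $y \in \pi_S^{-1}(T)$; if $y \in T$ the conclusion is trivial, so assume $y$ lies on an off-$T$ lower branch $U$ of $\DD_{\beta,K}$ at some interior point $x \in T$. Using Lemma~\ref{L:base change radii}, I first replace $K$ by $\CC$ and translate so that $x$ lifts to $\zeta_{0, \rho(x)}$. Lemma~\ref{L:variation1 convexity} applied at $x$ and $U$ reduces the problem to verifying that the outward slope of $\overline{f}_i(M, \cdot) = f_i(M, \cdot)$ along $U$ vanishes for each $i \leq m(x)$.

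For the slope computation, write $d_{i, U'}$ for the outward slope of $F_i$ along any off-$T$ lower branch $U'$ at $x$ in the $r$-coordinate. The affinity hypothesis makes the $T$-up and $T$-down outward contributions to $F_i$ cancel, so Proposition~\ref{P:variation}(c) gives $\sum_{U'\text{ off-}T} d_{i, U'} \geq 0$, with equality when $i = n$ and $m(x) = n$, or when $i < n$ and $f_i(M, x) > f_{i+1}(M, x)$. Proposition~\ref{P:variation}(d), applied along a chain through a type-$1$ point of $U'$, yields $d_{i, U'} \leq 0$ for each off-$T$ branch and each $i \leq m(x)$. Consequently $d_{i, U'} = 0$ for every off-$T$ branch at each equality level, handling the ``generic'' case $f_1(M, x) > \cdots > f_{m(x)}(M, x)$.

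The main obstacle is the ``clump'' case where $f_i(M, x) = \cdots = f_j(M, x)$: the equality conditions only pin down $d_{i-1, U'} = d_{j, U'} = 0$, leaving the individual slopes $\delta_{l, U'}$ of $f_l$ for $l \in [i, j]$ only constrained to sum to zero. To handle this, I would decompose $M$ restricted to the open annulus $A$ underlying $T$ via Proposition~\ref{P:decompose annulus} into a direct sum $\bigoplus_k M_k$ of pure summands, one per clump. For each pure $M_k$, I would argue that all its subsidiary radii remain equal throughout $U$: after a finite tamely ramified base extension of $K$, apply Theorem~\ref{T:refined decomposition} to $M_k$ to produce refined summands, each equivalent to a test module $N_{\lambda, h, e, m}$ from Lemma~\ref{L:test modules2} with the same $|\lambda|, h, e, m$ (forced by matching the common radius of $M_k$ on $A$). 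By Lemma~\ref{L:compute test modules}, the radius of such a test module at $y \in U$ depends only on $|\lambda|$ and $|t_y|$; and $|t_y|$ is constant on $U$ by the ultrametric inequality. Hence each refined piece---and so each $M_k$---has constant radius on $U$, forcing $\delta_{l, U}(M_k) = 0$ individually. Reassembling gives $\delta_{i, U} = 0$ for all $i \leq m(x)$, and Lemma~\ref{L:variation1 convexity} concludes. The hardest step is the reduction to test modules within each pure summand in the clump case.
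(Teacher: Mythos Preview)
Your overall plan matches the paper's: reduce to Lemma~\ref{L:variation1 convexity} by checking that the slope of each $\overline{f}_i(M,\cdot)$ along every off-$T$ lower branch at $x$ vanishes for $i\leq m(x)$, using Proposition~\ref{P:variation}(c,d). The paper's proof is exactly this, in two lines.

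The ``clump'' obstacle you identify is not real, and the detour through refined decompositions and test modules is unnecessary. You already wrote down both pieces: subharmonicity gives $\sum_{U'\text{ off-}T} d_{i,U'}\geq 0$ (this is the \emph{inequality} in Proposition~\ref{P:variation}(c), valid for every $i\leq m(x)$, not only at the equality levels), and monotonicity gives $d_{i,U'}\leq 0$ for each such $U'$ and each $i\leq m(x)$. Together these force $d_{i,U'}=0$ for \emph{every} $i\leq m(x)$ and every off-$T$ branch $U'$, regardless of whether $f_i(M,x)=f_{i+1}(M,x)$. The individual slopes of $\overline{f}_i$ are then differences $d_{i,U'}-d_{i-1,U'}=0$. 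So the generic and clump cases are handled simultaneously; you simply overlooked that the inequality form of (c) is already enough once combined with (d).

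Your proposed workaround also has a gap of its own: the spectral decomposition from Proposition~\ref{P:decompose annulus} lives on the annulus $A$ underlying $T$, but the branch $U$ is an open disc \emph{off} $A$, so the summands $M_k$ are not defined on $U$ without further argument. The test-module comparison likewise controls radii only along the segment $\{\zeta_{0,\rho}\}$, not at arbitrary $y\in U$, so the claim that ``$|t_y|$ constant on $U$ forces the radius constant'' would need more justification. Fortunately none of this is needed.
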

\begin{proof}
For $x \in T$ and $i=1,\dots,m(x)$, by Proposition~\ref{P:variation}(c,d) and Remark~\ref{R:translation of old variation},
the slope of $\overline{f}_i(M, \cdot)$ along any lower branch of $x$ other than the
one meeting $T$ is equal to $0$. The claim thus follows from Lemma~\ref{L:variation1 convexity}.
\end{proof}

\begin{lemma} \label{L:branches with slope 0}
For any $x \in \DD_{\beta,K}$, along all but finitely many lower branches of $\DD_{\beta,K}$ at $x$, the slope of
$\overline{f}_i(M,\cdot)$ is $0$ for $i=1,\dots,m(x)$.
\end{lemma}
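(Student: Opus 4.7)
The claim is vacuous unless $x$ has type $2$: by Proposition~\ref{P:Berkovich classification}, points of types $1$ and $4$ have no lower branches and points of type $3$ have exactly one, so only a type $2$ point can present infinitely many lower branches. I therefore restrict to the type $2$ case.

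Using Lemma~\ref{L:base change radii} and Lemma~\ref{L:disc points quotient}, I may replace $K$ by its completed algebraic closure $\CC$ without changing any of the functions $f_i(M,\cdot)$ or the slopes along branches, since branches at $x$ pull back to Galois orbits of branches at a preimage of $x$ in $\DD_{\beta,\CC}$. So I assume $K$ is algebraically closed; after rescaling (and, if necessary, passing to a slightly larger ambient disc so that $1$ is interior to the relevant interval of radii) I may place $x$ at $\zeta_{0,1}$, so that $r_0 = -\log\rho(x)=0$ and the lower branches at $x$ are parameterized by $\overline{\mu}\in\kappa_K$ via the translations $t\mapsto t+\mu$. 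By Remark~\ref{R:translation of old variation}, the slope of $\overline{f}_j(M,\cdot)$ along the branch labeled by $\overline{\mu}$ is the right slope at $r=0$ of $r\mapsto f_j(T_\mu^\ast M,r)$.

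The heart of the argument is the subharmonicity inequality of Proposition~\ref{P:variation}(c), applied to each cumulative function $F_j(M,r)$ with $j\in\{1,\dots,m(x)\}$:
\[
s_{\infty,j}(M)\;\leq\; s_{0,j}(M)\;+\sum_{\overline{\mu}\in\kappa_K^\times} s_{\overline{\mu},j}(M).
\]
Since $j\leq m(x)$ forces $f_j(M,x)>0=r_0$, monotonicity (Proposition~\ref{P:variation}(d)) makes each summand $s_{\overline{\mu},j}(M)\leq 0$, while integrality (Proposition~\ref{P:variation}(b)) confines each to $\bigcup_{k=1}^{n}\frac{1}{k}\ZZ$. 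Both terms on the left of the sum are finite, so the sum of the nonpositive $s_{\overline{\mu},j}(M)$ is bounded below by a finite constant; because every nonzero summand is at most $-1/n$, only finitely many of them can differ from $0$.

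Hence for each $j\leq m(x)$ the slope of $F_j(M,\cdot)$ along all but finitely many lower branches at $x$ vanishes. Taking a finite union over $j=1,\dots,m(x)$ and differencing $F_j-F_{j-1}$ transfers the same conclusion to each individual $f_j$; since $f_j(M,x)>r_0$ for such $j$, the piecewise linearity from Lemma~\ref{L:piecewise affine restriction} ensures $\overline{f}_j=f_j$ along a right neighborhood inside each lower branch, so the slope of $\overline{f}_j(M,\cdot)$ also vanishes there. Undoing the base change and rescaling completes the proof. The genuinely delicate step is the discrete cofinite-vanishing argument: converting a finite lower bound on a sum of nonpositive rationals with uniformly bounded denominator into the statement that all but finitely many of them are exactly $0$.
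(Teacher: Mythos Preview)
Your argument is correct and is exactly an unpacking of the paper's one-line proof (``This is immediate from Proposition~\ref{P:variation}(c)''): subharmonicity bounds the sum of lower-branch slopes of each $F_j$ from below, monotonicity (d) makes each such slope nonpositive, integrality (b) forces each nonzero one to be at most $-1/n$, and differencing $F_j-F_{j-1}$ recovers the individual $\overline{f}_j$. One cosmetic point: your phrase ``passing to a slightly larger ambient disc'' is not actually available when $x=\zeta_{0,\beta}$ is the Gauss point (one cannot extend $M$ beyond the given closed disc), but the paper's bare citation of (c) has the same elision, and the underlying Newton-polygon argument behind \cite[Theorem~11.3.2(c)]{kedlaya-book} still yields the needed finite lower bound in that boundary case.
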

\begin{proof}
This is immediate from Proposition~\ref{P:variation}(c).
\end{proof}

\begin{lemma} \label{L:local controlling graph}
For any $x \in \DD_{\beta,K}$, there exist a skeleton $S$ of $\DD_{\beta,K}$ and an open neighborhood $I$ of $\pi_S(x)$
such that the restrictions of $s_1(M, \cdot), \dots, s_n(M, \cdot)$ to $\pi_S^{-1}(I)$
factor through $\pi_S$. Moreover, we may choose $S$ to have no generators of type 3.
\end{lemma}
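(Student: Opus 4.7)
The strategy is a case analysis on the Berkovich type of $x$, combining Lemma~\ref{L:variation1 convexity2} (which handles fibres of $\pi_S$ over edge interiors) with Lemma~\ref{L:variation1 convexity} (which handles the fibre over $x$ when $x$ is a vertex, via good off-skeleton branches). By Lemma~\ref{L:branches with slope 0}, only finitely many lower branches $U_1,\dots,U_k$ at $x$ carry a nonzero slope of some $\overline{f}_i(M,\cdot)$ with $i \le m(x)$. If $x$ is of type $1$ or $4$, let $S$ be the rooted skeleton generated by $\{x\}$; if $x$ is of type $2$, pick a type-$2$ point $y_j \in U_j$ for $j=1,\dots,k$ and generate $S$ by $\{x,y_1,\dots,y_k\}$; if $x$ is of type $3$, so $k \in \{0,1\}$, pick a type-$2$ point $y$ in the unique lower branch of $x$ (with $y \in U_1$ when $k=1$) and take $S$ to be generated by $\{y\}$, whose underlying set is the root path of $y$ and contains $x$ in its interior. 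No generator is of type $3$, and in each case $\pi_S(x) = x$.

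Using Lemma~\ref{L:piecewise affine restriction} together with the finiteness of breakpoints of $\overline{f}_i(M,\cdot)$ on any bounded portion of $S$ away from the Gauss point (from Proposition~\ref{P:variation}(a)), subdivide $S$ so that each $\overline{f}_i(M,\cdot)$ is affine on the interior of every edge in a small neighborhood of $x$ in $S$. In the type-$3$ case, Proposition~\ref{P:rational intercepts} combined with $\rho(x) \notin |\CC^\times|$ forces the absence of a breakpoint of $\overline{f}_i$ at $x$, so $x$ sits interior to a single edge of the subdivision. Choose $I$ as a small open star-neighborhood of $x$ in $S$ touching only the edges adjacent to $x$. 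For each edge interior $T \subseteq I$, Lemma~\ref{L:variation1 convexity2} gives $s_i(M,z) = s_i(M,\pi_S(z))$ on $\pi_S^{-1}(T)$. For the fibre $\pi_S^{-1}(\{x\})$: in types $1$ and $4$ this equals $\{x\}$; in type $2$, a short computation using the non-archimedean triangle inequality shows that every $z$ in a bad branch $U_j$ projects via $\pi_S$ into $[y_j,x) \subsetneq S$, so $\pi_S^{-1}(\{x\}) = \{x\} \cup \bigcup_{U \text{ good}} U$, and Lemma~\ref{L:variation1 convexity} applied to each good $U$ yields $s_i(M,z) = s_i(M,x)$ for all $z \in U$ and all $i$; in type $3$, since $x$ is interior to an edge $T$, $\pi_S^{-1}(\{x\}) \subseteq \pi_S^{-1}(T)$ is already handled.

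The delicate step is the type-$3$ case, where the prohibition on type-$3$ generators of $S$ forbids taking $x$ itself as a generator. The resolution is the rigidity provided by Proposition~\ref{P:rational intercepts}: the noncommensurability $\rho(x) \notin |\CC^\times|$ rules out any breakpoint of $\overline{f}_i$ at $x$, letting the fibre over $x$ be absorbed into the edge analysis of Lemma~\ref{L:variation1 convexity2} rather than requiring a separate vertex argument.
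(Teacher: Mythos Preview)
Your proof is correct and follows essentially the same route as the paper: identify the finitely many bad lower branches via Lemma~\ref{L:branches with slope 0}, build $S$ to pass through $x$ and through a type~2 witness in each bad branch, subdivide using Lemma~\ref{L:piecewise affine restriction}, and then apply Lemma~\ref{L:variation1 convexity2} on edge interiors and Lemma~\ref{L:variation1 convexity} on the fibre over $x$. Your separate treatment of type~3 via Proposition~\ref{P:rational intercepts} is valid but unnecessary: since the unique lower branch already lies in $S$ (you had to pick $y$ there to avoid a type~3 generator), one has $\pi_S^{-1}(\{x\}) = \{x\}$, so the fibre over $x$ is handled trivially without needing to know that $x$ is not a breakpoint.
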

\begin{proof}
By Lemma~\ref{L:branches with slope 0}, along all but finitely many lower branches of $\DD_{\beta,K}$ at $x$, the slope of
$\overline{f}_i(M,\cdot)$ is $0$ for $i=1,\dots,m(x)$. Choose $S$ to pass through $x$ and
meet each of the remaining lower branches
of $X$ at $x$; this can always be done without using generators of type 3 because any point of type 3 dominates
some points of type 2  by Proposition~\ref{P:Berkovich classification}.
By Lemma~\ref{L:piecewise affine restriction}, we can find a subdivision of $S$ such that 
for $i=1,\dots,n$, $\overline{f}_i(M, \cdot)$ is affine on each edge of the subdivision meeting $x$.
Let $I$ be the union of the interiors of these edges, together with $x$. For $y \in \pi_S^{-1}(I)$, we have
$s_i(M,y) = s_i(M,\pi_S(y))$ by Lemma~\ref{L:variation1 convexity} (if
$\pi_S(y) = x$) or Lemma~\ref{L:variation1 convexity2} (if $\pi_S(y) \neq x$).
\end{proof}

\begin{lemma} \label{L:local controlling graph2a}
For $x \in \DD_{\beta,K}$ of type 4, for $i=1,\dots,n$,
in some left neighborhood of $\rho(x)$, the function
\[
\rho \mapsto \min\{\omega \rho, s_i(M,H(x,\rho))\}
\]
is either constant or identically equal to $\omega \rho$.
\end{lemma}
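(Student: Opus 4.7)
The plan is to argue by contradiction on the slope of the radial function. After reducing to $K = \CC$ via Lemma~\ref{L:base change radii} (using that the type of $x$ is preserved by Proposition~\ref{P:Berkovich classification}), set $r_0 := -\log\rho(x)$ and $g_i(r) := -\log s_i(M, H(x, e^{-r}))$. By Lemma~\ref{L:piecewise affine restriction}, $g_i$ is piecewise affine along the root path of $x$, so on a left neighborhood of $r_0$ (the values of $r$ corresponding to $\rho$ slightly above $\rho(x)$ along the root path) we may write $g_i(r) = ar + b$. The conclusion is equivalent to the dichotomy: either $g_i(r) \le r - \log\omega$ throughout this neighborhood (so $\min\{\omega\rho, s_i\} = \omega\rho$) or $a = 0$ (so $s_i$ is constant). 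Assume both fail: $g_i > r - \log\omega$ throughout and $a \neq 0$. Then $g_i > r$, so $\overline{g}_i = g_i$, and Proposition~\ref{P:rational intercepts} gives $a \in \QQ$ and $b$ in the divisible closure $\log|\CC^\times|$ of $\log|K^\times|$.

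By continuity and Proposition~\ref{P:divisible closure}, $ar_0 + b = g_i(r_0) = -\log s_i(M, x) \in \log|\CC^\times|$, so $ar_0 \in \log|\CC^\times|$. If $\rho(x) \notin |\CC^\times|$, then $r_0 \notin \log|\CC^\times|$; since $\log|\CC^\times|$ is a $\QQ$-subspace of $\RR$ and $a \in \QQ$, this immediately forces $a = 0$, a contradiction.

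If instead $\rho(x) \in |\CC^\times|$, I rescale so that $\rho(x) = 1$ and $r_0 = 0$; the standing hypothesis then reads $g_i(0) = b > -\log\omega \ge 0$ strictly. Applying Lemma~\ref{L:type 4 constant} to $N := M \otimes \calR_x$ in the case $N$ is solvable at $x$ would yield $b_i(N, x) \in \{0, 1\}$, hence $\overline{g}_i(0) = 0$, contradicting $g_i(0) > 0$; so $N$ must be non-solvable. For this remaining subcase, I would use Lemma~\ref{L:spectral decomposition exists} together with Proposition~\ref{P:decompose annulus} on a small annular neighborhood of $x$ to isolate the pure summand of $M \otimes \calR_x$ realizing the $i$-th subsidiary radius, which is a pure non-solvable module with intrinsic radius strictly less than $\omega$ at $x$. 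I would then adapt the ramification-theoretic argument of Lemma~\ref{L:type 4 constant1} to this summand: work with two copies of $\calH(x)$ and a common extension, construct a suitable finite \'etale cover from the refined decomposition of Theorem~\ref{T:refined decomposition} paired with the test modules $N_{\lambda, h, e, m}$ of Definition~\ref{D:test modules}, and apply the ramification bound of Proposition~\ref{P:local fields} to this cover. Combined with the monotonicity in Proposition~\ref{P:variation}(d) (which bounds the sum of slopes from above), this would force $a = 0$ and complete the contradiction.

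The hard part is precisely this last adaptation. The proof of Lemma~\ref{L:type 4 constant1} invokes the $p$-adic Turrittin theorem (Theorem~\ref{T:p-adic turrittin}), which as stated requires solvability at $x$ to produce an \'etale cover trivializing $N$; for a non-solvable pure summand this route is unavailable. The workaround should go through the refined decomposition of Theorem~\ref{T:refined decomposition} and the explicit slopes of the rank-one test modules computed in Lemma~\ref{L:compute test modules}, which carry enough combinatorial data to substitute for Turrittin in the ramification argument.
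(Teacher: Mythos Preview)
Your proposal has a genuine gap: the case you single out as ``the hard part'' is in fact the entire content of the lemma, and you do not prove it. Under your standing hypothesis $g_i(r) > r - \log\omega$ near $r_0$, we have $s_i(M,H(x,\rho)) < \omega\rho < \rho$, hence $s_1 \le s_i < \rho$ and $M\otimes\calR_x$ is \emph{never} solvable at $x$. Thus your appeal to Lemma~\ref{L:type 4 constant} is vacuous, and everything rests on the ``adaptation'' you sketch but do not carry out. That adaptation would require a version of the $p$-adic Turrittin theorem for modules that are not solvable, which is not available (and would be substantially harder than the lemma you are trying to prove). The case split on $\rho(x)\in|\CC^\times|$ is also beside the point: the lemma is uniform in this respect.

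The paper's argument avoids all of this machinery. The point is that the lemma only concerns the \emph{visible} range $s_i < \omega\rho$, where Proposition~\ref{P:christol-dwork} computes the subsidiary radii directly from the Newton polygon of the twisted characteristic polynomial attached to a cyclic vector. Concretely: choose $\bv\in M$ which is a cyclic vector for $M\otimes_{R_{[0,\beta]}}\Frac(R_{[0,\beta]})$, and write $D^n(\bv)=a_0\bv+\cdots+a_{n-1}D^{n-1}(\bv)$ with $a_j\in\Frac(R_{[0,\beta]})$. For a point $x$ of type~4, each $a_j$ has the property that $y\mapsto y(a_j)$ is constant on a neighborhood of $x$ along the root path (indeed, $x$ is not of the form $\zeta_{z,\rho}$, so no zero or pole of $a_j$ can lie at distance $\le\rho(x)$ from $x$). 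Hence the Newton polygon of $T^n-a_{n-1}T^{n-1}-\cdots-a_0$ is constant near $x$, and by Proposition~\ref{P:christol-dwork} the values $s_i(M,H(x,\rho))$ with $s_i < \omega\rho$ are constant there. This is a three-line argument; the heavy ramification-theoretic input (Lemma~\ref{L:type 4 constant1}, Theorem~\ref{T:p-adic turrittin}) is reserved for the range $s_i\in[\omega\rho,\rho)$, which is handled separately in Lemma~\ref{L:local controlling graph2b}.
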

\begin{proof}
Apply Corollary~\ref{C:cyclic vector} to construct $\bv \in M$ which is a cyclic vector in
$M \otimes_{R_{[0,\beta]}} \Frac(R_{[0,\beta]})$, and write
$D^n(\bv) = a_0 \bv + \cdots + a_{n-1} D^{n-1}(\bv)$ for some $a_0,\dots,a_{n-1} \in \Frac(R_{[0,\beta]})$. 
Since $x$ is of type 4, for $i=0,\dots,n-1$, the function $y \mapsto y(a_i)$ is constant in some neighborhood
of $x$. By Proposition~\ref{P:christol-dwork}, this yields the desired result.
\end{proof}

\begin{lemma}\label{L:local controlling graph2b}
For $x \in \DD_{\beta,K}$ of type 4, for $i=1,\dots,n$, 
in some left neighborhood of $\rho(x)$, the function $\rho \mapsto \overline{s}_i(M, H(x,\rho))$
is either constant or identically equal to $\rho$.
\end{lemma}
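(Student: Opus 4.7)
My plan is to reduce to Lemma~\ref{L:local controlling graph2a} by analyzing the affine germ of $\overline{f}_i$ at the type~4 point $x$, and then to dispatch the remaining ``boundary'' range of intrinsic radii $[\omega,1)$ by either a rationality argument or iterated Frobenius descent. First, I extend scalars so that $K=\CC$ (Lemma~\ref{L:base change radii}) and apply Lemma~\ref{L:piecewise affine restriction} to the root path of $x$, shrinking the germ so that $\overline{f}_i(M,H(x,e^{-r}))=A_i r+B_i$ is affine in $r$ just below $r_0:=-\log\rho(x)$. The conclusion of the lemma is equivalent to the dichotomy $A_i=0$ (giving $\overline{s}_i$ constant) or $(A_i,B_i)=(1,0)$ (giving $\overline{s}_i=\rho$), so after shrinking further I may assume $\overline{f}_i>r$, i.e.\ $s_i<\rho$, and try to show $A_i=0$. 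Lemma~\ref{L:local controlling graph2a} tells me $\min\{\omega\rho,s_i\}$ is either constant or identically $\omega\rho$; if constant, then since $\omega\rho$ itself is not constant, $s_i$ must be constant too, yielding $A_i=0$. It remains to handle $s_i\geq\omega\rho$ throughout the germ, i.e.\ intrinsic subsidiary radius in $[\omega,1)$.

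If $\rho(x)\notin|\CC^\times|$, I argue directly: Proposition~\ref{P:rational intercepts} gives $A_i\in\QQ$ and $B_i\in\log|\CC^\times|$, while Proposition~\ref{P:divisible closure} together with continuity at $r_0^-$ gives $A_i r_0+B_i=-\log s_i(M,x)\in\log|\CC^\times|$. Since $\log|\CC^\times|$ is a $\QQ$-subspace of $\RR$ not containing $r_0$, this forces $A_i=0$. If instead $\rho(x)\in|\CC^\times|$ and $M\otimes\calR_x$ is solvable at $x$, I apply Lemma~\ref{L:type 4 constant}: it yields $b_i(M\otimes\calR_x,x)\in\{0,1\}$, i.e.\ $A_i\in\{0,1\}$, and the assumption $s_i<\rho$ rules out $A_i=1$, leaving $A_i=0$.

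The remaining case is $\rho(x)\in|\CC^\times|$ with $M\otimes\calR_x$ non-solvable. I rescale so that $\beta=1$ and $\rho(x)\in(\omega,1)$, then apply iterated off-center Frobenius descent (Proposition~\ref{P:off-center descendant}) with a suitable $\lambda\in K$ of norm $1$; the image $x'$ of $x$ under $t\mapsto u=(t+\lambda)^p-\lambda^p$ is again of type~4 with $\rho(x')=\rho(x)^p$, and a direct computation using the formula of Proposition~\ref{P:off-center descendant} shows the germ of $\overline{f}_i$ at $x'$ becomes $A_i r'+p B_i$ in the descendant parameter $r'=pr$, preserving the slope $A_i$ while raising the intrinsic radius to the $p$-th power. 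After finitely many iterations the intrinsic radius drops below $\omega$, at which point Lemma~\ref{L:local controlling graph2a} applied at $x'$ forces the descendant's slope to be $0$; by slope preservation, $A_i=0$ as well. The main technical obstacle will be the Frobenius-descent bookkeeping: verifying that $x'$ remains of type~4, establishing the slope-preservation identity under iterated off-center descent, and confirming that iteration genuinely reduces the intrinsic radius into the visible range $(0,\omega)$.
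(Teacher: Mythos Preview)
Your reduction via Lemma~\ref{L:local controlling graph2a}, the rationality argument when $\rho(x)\notin|\CC^\times|$, and the appeal to Lemma~\ref{L:type 4 constant} in the solvable case are all fine. The gap is in your case C. You assert that ``after finitely many iterations the intrinsic radius drops below $\omega$,'' but the quantity you are tracking is $\sigma_i(\rho)=s_i(M,H(x,\rho))/\rho$, whose limit at $\rho(x)$ is $s_i(M,x)/\rho(x)$. Non-solvability of $M\otimes\calR_x$ only forces $s_1(M,x)<\rho(x)$; for your fixed index $i>1$ one can perfectly well have $s_i(M,x)=\rho(x)$, i.e.\ $\sigma_i\to 1$. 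Then $\sigma_i^{p^k}\to 1$ for every $k$, so on no left germ of $\rho(x)$ does the descended intrinsic radius fall below $\omega$, and Lemma~\ref{L:local controlling graph2a} never applies. Your slope-preservation identity, even if granted, yields no termination.

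The paper's proof avoids this by \emph{not} working one index at a time. After translating so that $x$ lies in an honest closed disc, it uses Proposition~\ref{P:decompose disc} to split $M=M_1\oplus M_2$ with $M_1$ carrying exactly the radii $\le\omega\rho(x)$; this piece is handled by Lemma~\ref{L:local controlling graph2a} and Corollary~\ref{C:constant initial}. The full statement for $N=M_2$ is then proved by induction on an integer $h$ attached to the whole module $N$. In the base case $N\otimes\calR_x$ is itself solvable, so Lemma~\ref{L:type 4 constant} applied to $N$ (rather than to $M$) disposes of exactly the troublesome indices with $s_i(M,x)=\rho(x)$. In the inductive step one takes the off-center Frobenius descendant of $N$; because every intrinsic radius of $N$ already exceeds the Frobenius threshold, the index correspondence is the clean formula $\overline{s}_{(p-1)\rank(N)+i}(N',\cdot)=\overline{s}_i(N,\cdot)^p$, and $h$ provably drops. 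The global splitting is the missing ingredient in your argument: it is what lets Lemma~\ref{L:type 4 constant} be invoked on a solvable summand, and what makes the descent terminate.
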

\begin{proof}
This is immediate from Lemma~\ref{L:local controlling graph2a} if $p=0$, so we may assume $p>0$;
we may also assume $K=\CC$. Let $h$ be the smallest nonnegative integer for which
$s_i(M,x) \notin (\omega^{p^{-h-1}} \rho(x), \rho(x))$ for $i=1,\dots,n$.
We proceed by induction on $h$.
 
Put $r_0 = - \log \rho(x)$; since $x$ is of type 4, we have $r_0 > -\log \beta$.
Let $j \in \{0,\dots,n\}$ be the largest value for which $s_i(M,x) \leq \omega \rho(x)$ for $i=1,\dots,j$.
Since the functions $r \mapsto \overline{f}_i(M, H(x,e^{-r}))$ are continuous by
Lemma~\ref{L:piecewise affine restriction}, we may apply Lemma~\ref{L:local controlling graph2a}
to produce $r_1 \in (-\log \beta, r_0)$ such that for $i=1,\dots,j$, the function $r \mapsto f_i(M, H(x,e^{-r}))$
is constant for $r \in [r_1,r_0]$. By moving $r_1$ towards $r_0$, we may also ensure that
$\rho(x) > \omega e^{-r_1}$ and $s_i(M, H(x,e^{-r_1})) > \omega e^{-r_1}$ for $i>j$.
By rescaling $t$, we may further ensure that $r_1 < 0 < r_0$.

By Proposition~\ref{P:Berkovich classification},
we can find $z \in \CC$ such that $H(x,1) = \zeta_{z,1}$. There is no harm in applying a translation
on the disc to reduce to the case $z = 0$. If we put $\beta' = e^{-r_1}$,
then by Proposition~\ref{P:decompose disc}, the restriction of $M$
to $\DD_{\beta',K}^{\circ}$ splits as a direct sum $M_1 \oplus M_2$ with $\rank(M_1) = j$
and $f_i(M, e^{-r}) = f_i(M_1,e^{-r})$ for $i=1,\dots,j$ and $r \in (r_1,0]$.
By Corollary~\ref{C:constant initial}, the original claim holds with $M$ replaced by the restriction of $M_1$ to
$\DD_{1,K}$.

Let $N$ be the restriction of $M_2$ to $\DD_{1,K}$;
it now suffices to prove the original claim with $M$ replaced by $N$.
We may assume $j < n$, as otherwise there is nothing to check.
We first check the claim for $N$ in case $s_{i+1}(M, x) \geq \rho(x)$,
which in particular will cover the base case $h=0$ of the induction.
If $\rho(x) \notin \left| \CC^\times \right|$, then Proposition~\ref{P:divisible closure} implies that
$s_i(N,x) > \rho(x)$ for all $i$, so the desired result follows by Lemma~\ref{L:grow sections}.
If instead $\rho(x) \in \left| \CC^\times \right|$, then the desired result follows by 
Lemma~\ref{L:type 4 constant}.

We next check the claim for $N$ in case $s_{i+1}(M,x) < \rho(x)$;
note that by construction we also have $\omega \rho(x) < s_{i+1}(M,x)$.
Let $\psi: \DD_{1,K}^{\circ} \to \DD_{1,K}^{\circ}$ be the map for which 
$\psi^*(t) = (t+1)^p - 1$. Put $y = \psi(x)$; it is a point of type 4 with $\rho(y) = \rho(x)^p$.
Let $N'$ be the off-center Frobenius descendant of $N$
in the sense of Proposition~\ref{P:off-center descendant} with $\lambda = 1$.
By that proposition, $\overline{s}_{(p-1)(n-j)+i}(N', z) = \overline{s}_i(N,z)^p$ for $i=1,\dots,n-j$
and $z \in \DD_{1,K}$ with $\rho(z) > \omega$.
Since we assumed that $\rho(x) > \omega \beta' > \omega$,
we have $\overline{s}_{(p-1)(n-j)+i}(N', H(y,\rho^p)) = \overline{s}_i(N,H(x,\rho))^p$ for $i=1,\dots,n-j$
and $\rho \in [\rho(x), 1]$.
We may thus deduce the claim for $N$ from the corresponding claim for $N'$, to which we may apply the induction
hypothesis because we have decreased the value of $h$.
\end{proof}

\begin{lemma} \label{L:local controlling graph2}
For $x \in \DD_{\beta,K}$ of type 1 or 4, 
for $i=1,\dots,n$, the function $s_i(M, \cdot)$ is constant on some neighborhood of $x$.
\end{lemma}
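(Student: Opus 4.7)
The plan separates the two types, with type $1$ immediate and type $4$ requiring a reduction to the root path at $x$.

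For $x$ of type $1$, Proposition~\ref{P:Berkovich classification} gives $\rho(x)=0$, and Remark~\ref{R:positive radius} gives $s_i(M,x)>0=\rho(x)$ for each $i$; Lemma~\ref{L:grow sections} then immediately supplies a neighborhood on which every $s_i(M,\cdot)$ is constant.

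For $x$ of type $4$, I would first apply Lemma~\ref{L:local controlling graph} to produce a rooted skeleton $S$ (without type-$3$ generators) containing $x$ together with an open neighborhood $I$ of $x$ in $S$ such that every $s_i(M,\cdot)$ factors through $\pi_S$ on $\pi_S^{-1}(I)$. Because $x$ is of type $4$ it has no lower branches, so after shrinking $I$ we may take $I = \{H(x,\rho) : \rho \in [\rho(x), \rho_0)\}$, and the problem reduces to showing that $\rho \mapsto s_i(M, H(x,\rho))$ is constant on a right neighborhood of $\rho(x)$ for each $i$. Fix $i$. If $s_i(M,x) > \rho(x)$, Lemma~\ref{L:grow sections} is already enough. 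Otherwise $s_i(M,x) = \overline{s}_i(M,x) \leq \rho(x)$, and Lemma~\ref{L:local controlling graph2b} says $\overline{s}_i(M, H(x,\rho))$ is either constant or identically equal to $\rho$ in a right neighborhood of $\rho(x)$. In the constant sub-case, continuity across $\rho(x)$ forces the common value to equal $\overline{s}_i(M,x) \leq \rho(x)$, and for $\rho$ sufficiently close to $\rho(x)$ this is strictly less than $\rho$, so $s_i = \overline{s}_i$ throughout and constancy follows.

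The main obstacle will be the ``identically $\rho$'' sub-case of Lemma~\ref{L:local controlling graph2b}, where continuity only yields $\overline{s}_i(M,x) = \rho(x)$ and hence $s_i(M,x) \geq \rho(x)$. To rule out a genuine jump at $x$, the plan is to split on whether $\rho(x) \in |\CC^\times|$. If $\rho(x) \notin |\CC^\times|$, Proposition~\ref{P:divisible closure} forces $s_i(M,x) \neq \rho(x)$, hence $s_i(M,x) > \rho(x)$, and we fall back on Lemma~\ref{L:grow sections}. If $\rho(x) \in |\CC^\times|$ (the genuinely hard case), Lemma~\ref{L:type 4 constant} applied to the part of $M \otimes_{R_{[0,\beta]}} \calR_x$ which is solvable at $x$ constrains the relevant invariants $b_j$ to lie in $\{0,1\}$: indices with $b_j=1$ yield $s_j(M,H(x,\rho)) = \rho(x)$ identically, while indices with $b_j=0$ give $s_j \geq \rho$, at which point Proposition~\ref{P:divisible closure} (no open $\rho$-interval lies entirely in $|\CC^\times|$, so $s_j = \rho$ cannot hold on any interval), Lemma~\ref{L:grow sections} at points where $s_j > \rho$, and the piecewise-affine structure of Lemma~\ref{L:piecewise affine restriction} combine to pin $s_j$ down to a single constant value on a right neighborhood of $\rho(x)$.
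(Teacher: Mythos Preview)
Your type $1$ argument is the same as the paper's. For type $4$, there is a genuine gap in the ``identically $\rho$'' sub-case. Once you have assumed $s_i(M,x) \leq \rho(x)$ and $\overline{s}_i(M,H(x,\rho)) = \rho$ on a right neighborhood of $\rho(x)$, the disc identification of Remark~\ref{R:grow sections} actually forces $s_i(M,H(x,\rho)) = \rho$ \emph{exactly} there: if $s_i(M,H(x,\rho_1)) > \rho_1$ for some $\rho_1 > \rho(x)$, the corresponding horizontal sections converge on a disc of radius strictly larger than $\rho_1$ around $x$ as well, contradicting $s_i(M,x) \leq \rho(x)$. So you must show this sub-case \emph{cannot occur}. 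Your attempt rests on the parenthetical claim that ``no open $\rho$-interval lies entirely in $|\CC^\times|$'', but the paper puts no hypothesis on the value group of $K$; if $|K^\times| = \RR_{>0}$ (as for Mal'cev--Neumann fields $k((\RR))$), then $|\CC^\times| = \RR_{>0}$ and Proposition~\ref{P:divisible closure} is vacuous. Your appeal to Lemma~\ref{L:type 4 constant} is also a red herring: its conclusion $b_j \in \{0,1\}$ is already the content of Lemma~\ref{L:local controlling graph2b}, and in your sub-case 2b you already know the relevant $b_i = 0$.

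The paper's two-line argument feeds Lemma~\ref{L:local controlling graph2b} directly into Lemma~\ref{L:variation1 convexity}. For $\delta$ slightly above $\rho(x)$, the indices $i \leq m(H(x,\delta))$ are precisely those in the ``constant'' sub-case, so the slope-$0$ hypothesis holds at $x' = H(x,\delta)$ along the lower branch containing $x$, and Lemma~\ref{L:variation1 convexity} yields constancy of \emph{all} the $s_i$ (not just the $\overline{s}_i$) on that open disc. The mechanism behind this is Lemma~\ref{L:variation0 convexity}, which uses Corollary~\ref{C:decompose disc} together with the transfer theorem (Proposition~\ref{P:transfer}) to promote the segmentwise bound $s_i \geq \rho$ to a uniform bound $s_i \geq \gamma$ on the whole open disc of radius $\gamma$; this is exactly the step your piecewise-affine/divisible-closure reasoning cannot supply. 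With this in hand, the preliminary reduction through Lemma~\ref{L:local controlling graph} is unnecessary.
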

\begin{proof}
For $x$ of type 1, the claim follows from Remark~\ref{R:grow sections} and Remark~\ref{R:positive radius}.
For $x$ of type 4, Lemma~\ref{L:local controlling graph2b} implies that the hypothesis of
Lemma~\ref{L:variation1 convexity} holds for some open disc containing $x$, yielding the claim in this case.
\end{proof}

\begin{theorem} \label{T:strict skeleton}
\begin{enumerate}
\item[(a)]
There exists a strict skeleton $S$ of $\DD_{\beta,K}$
such that $s_1(M, \cdot), \dots, s_n(M, \cdot)$ factor through $\pi_S$.
\item[(b)]
For $i=1,\dots,n$, $f_i(M,\cdot)$ is piecewise affine with slopes in
$\frac{1}{1} \ZZ \cup \cdots \cup \frac{1}{n} \ZZ$. Moreover, $F_n(M,\cdot)$ has integral slopes.
\item[(c)]
There is a unique minimal graph $G$ in $\DD_{\beta,K}$ which is a controlling
graph for all of the functions $f_i(M,\cdot)$.
Moreover, the vertices of $G$ are all of type 2 or 3.
(We call $G$ the \emph{controlling graph} of $M$.)
\end{enumerate}
\end{theorem}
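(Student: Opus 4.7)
The plan is to prove (a) by a compactness-plus-patching argument, (b) by reducing to the one-dimensional variation results of \S\ref{sec:discs annuli}, and (c) by a direct construction of the minimal controlling graph together with an intrinsic description of its vertices.

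For part (a), I would apply Lemma~\ref{L:local controlling graph} to each $x \in \DD_{\beta,K}$ to produce a rooted skeleton $S_x$ (with no type-3 generators) and an open neighborhood $U_x$ of $x$ on which each $s_i(M,\cdot)$ factors through $\pi_{S_x}$. By compactness of $\DD_{\beta,K}$, a finite subcover $U_{x_1},\ldots,U_{x_m}$ suffices, and I would set $S_0 = S_{x_1} \cup \cdots \cup S_{x_m}$. To check that each $s_i$ factors through $\pi_{S_0}$, note that for $y \in U_{x_j}$, the point $\pi_{S_0}(y)$ lies on the path from $y$ up to $\pi_{S_{x_j}}(y)$; this entire path is contained in $U_{x_j}$ (since points on it retract under $\pi_{S_{x_j}}$ to the same element of $S_{x_j}$ as $y$), hence $s_i$ is constant on this path, yielding $s_i(M,\pi_{S_0}(y)) = s_i(M,y)$. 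To convert $S_0$ into a strict rooted skeleton, I would replace each type-1 or type-4 generator $z$: by Lemma~\ref{L:local controlling graph2} the functions $s_i(M,\cdot)$ are constant in an open neighborhood of $z$, so in particular constant on $\{H(z,\rho') : \rho' \in [\rho(z),\rho_0)\}$ for some $\rho_0 > \rho(z)$; pick $\rho \in (\rho(z),\rho_0) \cap |\CC^\times|$ using density, and replace the generator $z$ by the type-2 point $H(z,\rho)$. The same constancy argument shows that factoring is preserved under this pruning, producing the desired strict $S$.

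For part (b), by (a) each $f_i(M,\cdot)$ factors through $\pi_S$, so it suffices to analyze these functions along root paths, namely edges of (a subdivision of) $S$. Along such a path, Lemma~\ref{L:base change radii} allows us to base-change to $\CC$ and re-center so that the path becomes $\{\zeta_{0,\rho}\}$. Then Remark~\ref{R:translation of old variation} identifies $\overline{f}_i(M, \zeta_{0,e^{-r}})$ with the function $f_i(M,r)$ of \S\ref{sec:discs annuli}, and Proposition~\ref{P:variation}(a,b) supplies piecewise affinity and the slope bound $\frac{1}{1}\ZZ \cup \cdots \cup \frac{1}{n}\ZZ$ for $\overline{f}_i$ together with integrality of slopes for $\overline{F}_n$. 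To pass from $\overline{f}_i$ to $f_i$, observe that on any subinterval where $s_i(M,\cdot) > \rho(\cdot)$, Lemma~\ref{L:grow sections} makes $f_i$ locally constant (slope $0$); elsewhere $\overline{f}_i = f_i$. Summing, one finds that the slope of $F_n(M,\cdot)$ differs from that of $\overline{F}_n(M,\cdot)$ by $\#\{i > m(x) : s_i(M,x) > \rho(x)\}$, an integer, so integrality of slopes transfers.

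For part (c), existence of a controlling graph is immediate: take the strict rooted skeleton $S$ from (a) and subdivide using (b) and the finiteness of slope changes on each compact edge of $S$. For minimality and uniqueness, I would characterize the vertices of $G$ intrinsically as the union of (i) points at which some $f_i$ changes slope, (ii) branch points of the topological retract supporting the variation of some $f_i$, together with paths connecting these up to the Gauss point. By Proposition~\ref{P:rational intercepts}, every slope break along any root path occurs at a value of $\rho$ in the divisible closure of $|K^\times| = |\CC^\times|$, hence at a type-2 point; branch points inherited from $S$ are already type 2 by construction, and Lemma~\ref{L:local controlling graph2} rules out type-1 and type-4 vertices because $f_i$ is locally constant around such points. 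Uniqueness follows because any two minimal controlling graphs must contain this common intrinsically-defined vertex set and identical edges between them.

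The main obstacle is the coherent construction and pruning in (a): we need the local skeleta provided by Lemma~\ref{L:local controlling graph} to glue properly across overlapping $U_{x_i}$, and we need the conversion to a strict skeleton to preserve the factoring property through $\pi_S$. Both depend delicately on Lemma~\ref{L:local controlling graph2}, which asserts local constancy of the $s_i$ at type-1 and type-4 points, combined with Proposition~\ref{P:divisible closure} to locate type-2 points arbitrarily close to any given point on a root path.
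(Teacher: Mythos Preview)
Your proposal is correct and follows essentially the same route as the paper: compactness of $\DD_{\beta,K}$ together with Lemma~\ref{L:local controlling graph} to assemble a global skeleton, Lemma~\ref{L:local controlling graph2} to prune type~1 and type~4 generators, then Proposition~\ref{P:variation} and Lemma~\ref{L:grow sections} for the slope claims, and finally rationality of breakpoints to exclude type~3 vertices. The only cosmetic difference is that the paper cites Proposition~\ref{P:divisible closure} rather than Proposition~\ref{P:rational intercepts} for the type~3 exclusion in (c), and invokes Lemma~\ref{L:piecewise affine restriction} explicitly for piecewise affinity in (b); both routes are equivalent.
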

\begin{proof}
For each $x \in \DD_{\beta,K}$, apply Lemma~\ref{L:local controlling graph} to construct a skeleton $S_x$ of $\DD_{\beta,K}$ and
and an open neighborhood $I_x$ of $\pi_S(x)$
such that the restrictions of $s_1(M, \cdot), \dots, s_n(M, \cdot)$ to $\pi_{S_x}^{-1}(I_x)$
factor through $\pi_{S_x}$. Since $\pi_{S_x}^{-1}(I_x)$ is open in the compact space $\DD_{\beta,K}$, we can choose
finitely many points $x_i \in \DD_{\beta,K}$ such that, if we relabel $S_x, I_x$ as $S_i, I_i$, then the
open sets $\pi_{S_i}^{-1}(I_i)$ cover $\DD_{\beta,K}$. Let $S$ be the union of the $S_i$; for $y \in \pi_{S_i}^{-1}(I_i)$, we have
$\pi_{S_i}(y) = \pi_{S_i}(\pi_S(y))$ and so $s_i(M,y) = s_i(M, \pi_{S_i}(y)) = s_i(M, \pi_S(y))$.
This proves (a) except that $S$ might include some generators of types 1 or 4
(generators of type 3 are excluded by Lemma~\ref{L:local controlling graph}).
However, by Lemma~\ref{L:local controlling graph2}, if $x$ is a generator of type 1 or 4,
then the functions $s_i(M, \cdot)$ are constant in a neighborhood of $x$, so we may replace $x$
with a point of type 2 in this neighborhood which dominates $x$.
We thus deduce (a). 

{}From (a), we deduce piecewise affinity using Lemma~\ref{L:piecewise affine restriction}.
To deduce integrality of slopes, we apply Proposition~\ref{P:variation}(b) at points $x$ where
$s_i(M,x) < \rho(x)$ and Lemma~\ref{L:grow sections} at points $x$ where $s_i(M,x) > \rho(x)$.
This fails to account for segments where $s_i(M,x) =\rho(x)$ identically, but on any such segment
$f_i(M,x)$ has slope 1. We thus deduce (b).

Using (a) and (b), we deduce the existence of the minimal controlling graph $G$
and the fact that none of its vertices is of type $1$ or $4$. 
This yields (c).
%
\end{proof}

\begin{remark} \label{R:pulita}
The weaker form of Theorem~\ref{T:strict skeleton} in which strictness of the skeleton is not asserted
is the essential content of \cite[Theorem~4.7]{pulita-poineau} applied to a disc: 
more precisely, parts (i) (finiteness) and (ii) (integrality)
of that result are included in Theorem~\ref{T:strict skeleton}. The proof in \cite{pulita-poineau} is a bit different, making use of a combinatorial criterion for piecewise affinity.
A proof in terms of $p$-adic potential theory is given in \cite{pulita-poineau3}. Another proof, essentially a streamlined version of the above arguments, is given in \cite{baldassarri-kedlaya}.
Neither the analysis in \cite{pulita-poineau} nor in \cite{pulita-poineau3} nor in \cite{baldassarri-kedlaya} includes any 
special study of type 4 points, as these are treated
by base extension to convert them into other types. Consequently, the techniques of those papers cannot by
themselves exclude vertices of type 4 from the controlling graph, which here is made possible by the 
analysis in \S\ref{subsec:solvable}.

Note that \cite[Theorem~4.7]{pulita-poineau} gives a finer description of the controlling graph
than appears here. It also 
includes weak analogues of the convexity, subharmonicity, and monotonicity assertions from
Proposition~\ref{P:variation} (although with a change of sign convention, so convexity becomes
concavity and subharmonicity becomes superharmonicity). 
In \cite[Theorem~4.7]{pulita-poineau} these statements are used in an essential
way to prove finiteness; however, given Theorem~\ref{T:strict skeleton}, they can be 
deduced directly from Proposition~\ref{P:variation}.

Note also that \cite{baldassarri-kedlaya, pulita-poineau2, pulita-poineau3, pulita-poineau} consider not just discs
but more general curves. We will return to this more general case in \S\ref{sec:berkovich curves}.
\end{remark}

\subsection{More on solvable modules}
\label{subsec:more solvable}

With Theorem~\ref{T:strict skeleton} in hand, we can now fill out the discussion of solvable modules over
$\calR_x$ initiated in \S\ref{subsec:solvable}. We also point out a link with our previous work
on semistable reduction for overconvergent $F$-isocrystals \cite{kedlaya-semistable4}.

\begin{hypothesis} \label{H:special solvable}
Throughout \S\ref{subsec:more solvable},
let $M$ be a differential module over $R_{[0,\beta]}$ of rank $n$,
choose $x \in \DD_{\beta,K}$,
put $M_x = M \otimes_{R_{[0,\beta]}} \calR_x$,
and let $N$ be a subquotient of $M_x$ which is solvable at $x$.
\end{hypothesis}

\begin{remark} \label{R:type 1 3}
For $x$ of type 3, Theorem~\ref{T:strict skeleton} forces $N$ to satisfy the Robba condition;
if $N = M_x$, then $N$ is forced to be trivial by
Proposition~\ref{P:transfer}.
For $x$ of type 1, we can say even more: Proposition~\ref{P:transfer} and Theorem~\ref{T:strict skeleton}
together imply that $M_x$ itself is a trivial differential module, as then is $N$.
\end{remark}

For $x$ of type 4, we have the following refinements of Lemma~\ref{L:type 4 constant}.

\begin{prop} \label{P:type 4 constant}
Suppose that $x$ is of type 4.
\begin{enumerate}
\item[(a)]
If $\rho(x) \in \left| \CC^\times \right|$, then $b_i(N,x) \in \{0,1\}$ for all $i$.
\item[(b)]
If $\rho(x) \notin \left| \CC^\times \right|$, then $N$ is trivial, so $b_i(N,x) = 0$ for all $i$.
\end{enumerate}
\end{prop}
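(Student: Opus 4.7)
The plan is to exploit the control on radii of convergence of $M$ provided by Theorem~\ref{T:strict skeleton} and transfer it to the subquotient $N$. Put $r_0 = -\log\rho(x)$ and apply that theorem to produce a strict skeleton $S$ of $\DD_{\beta,K}$ through which each $s_i(M,\cdot)$ factors. Since $S$ is generated by points of type $2$ while $x$ is of type $4$ and hence minimal under domination by Proposition~\ref{P:Berkovich classification}, the point $x$ lies outside $S$. Consequently $\pi_S^{-1}(\pi_S(x))$ is an open neighborhood of $x$ containing $\{H(x,\rho) : \rho \in (\rho(x),\rho_1)\}$ for some $\rho_1 > \rho(x)$, on which each $s_i(M,\cdot)$ equals the constant $c_i := s_i(M,\pi_S(x))$; by Proposition~\ref{P:divisible closure}, each $c_i$ lies in $\left|\CC^\times\right|$.

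Next, I would invoke the additivity of the multiset of intrinsic subsidiary radii along short exact sequences of differential modules (from the corresponding behavior of Jordan-H\"older constituents); via Proposition~\ref{P:compare radii} this translates, at any non-type-$1$ point $y$, into additivity of the multiset of truncated radii $\overline{s}_i(\cdot, y) := \min\{\rho(y), s_i(\cdot, y)\}$. Passing through a Jordan-H\"older filtration of $M_x$ over $\calR_x$, this implies that $\{\overline{s}_i(N, y)\}$ is a submultiset of $\{\min\{\rho(y), c_i\}\}_{i=1}^n$ for $y$ in the above neighborhood. For $y = H(x, e^{-r})$ with $r$ in a left neighborhood of $r_0$, the elements of the latter multiset fall into three types: (i) a constant $c_j$ with $c_j < \rho(x)$; (ii) the constant $\rho(x)$, arising when $c_j = \rho(x)$; and (iii) the value $e^{-r}$, arising when $c_j > \rho(x)$. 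The germ identity $-\log \overline{s}_i(N, H(x, e^{-r})) = (1-b_i(N,x))r + b_i(N,x)r_0$ forced by solvability then rules out case (i) (matching coefficients would require $c_j = \rho(x)$), gives $b_i(N,x) = 1$ in case (ii), and gives $b_i(N,x) = 0$ in case (iii).

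Case (a) follows immediately, since each $b_i(N,x) \in \{0,1\}$. For case (b), the hypothesis $\rho(x) \notin \left|\CC^\times\right|$ combined with each $c_j \in \left|\CC^\times\right|$ forbids $c_j = \rho(x)$, so case (ii) is empty and every $i$ falls into case (iii), giving $b_i(N,x) = 0$. Tracking the un-truncated radii through the same filtration then yields $s_i(N, H(x,e^{-r})) = c_{j_i}$ for some $c_{j_i} > \rho(x)$, so the constant $d := \min_i s_i(N, H(x,e^{-r}))$ strictly exceeds $\rho(x)$. Since $e^{-r} \to \rho(x) < d$ as $r \to r_0^-$, for $r$ sufficiently close to $r_0$ we have $d > e^{-r}$ and $N$ admits $\rank N$ linearly independent horizontal sections on any open disc of radius $<d$ centered at $t_{H(x,e^{-r})}$. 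By the uniqueness of analytic continuation these local sections patch consistently over each affinoid $R_{x,[\gamma,\delta]}$ for $\rho(x) < \gamma \leq \delta < d$; passing to the colimit over such $[\gamma,\delta]$ trivializes $N$ over $\calR_x$.

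The main obstacle I anticipate is establishing the submultiset property for radii of convergence of a subquotient --- which should follow by induction on the length of a Jordan-H\"older filtration combined with the exact-sequence additivity of intrinsic subsidiary radii --- and then promoting it to a piecewise-affine-variation statement for individual $\overline{s}_i(N, \cdot)$ valid on the germ $\calR_x$, since Lemma~\ref{L:piecewise affine restriction} is formulated only for modules on the full closed disc. A secondary technical issue is the patching step in case (b): compatibility of the horizontal sections at distinct $H(x,\rho)$ must be verified using Remark~\ref{R:grow sections} before descending them to global sections on each affinoid $R_{x,[\gamma,\delta]}$.
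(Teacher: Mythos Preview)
Your argument for (a) is correct and is exactly what the paper's one-line ``This immediately implies (a)'' is hiding: constancy of the $s_i(M,\cdot)$ near $x$ from Theorem~\ref{T:strict skeleton}, the submultiset property for the \emph{truncated} radii $\overline{s}_i$ of a subquotient (which does follow from additivity of intrinsic subsidiary radii along Jordan--H\"older filtrations), and then matching the affine germ forced by solvability against the finitely many candidate germs $\min\{e^{-r},c_j\}$.

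The gap is in your treatment of (b). You correctly deduce $b_i(N,x)=0$ for all $i$ (case (ii) being excluded since $\rho(x)\notin|\CC^\times|$ while each $c_j\in|\CC^\times|$), so $N$ satisfies the Robba condition. But the sentence ``Tracking the un-truncated radii through the same filtration then yields $s_i(N,H(x,e^{-r}))=c_{j_i}$'' is not justified: as Remark~\ref{R:dual convergence} explicitly warns, the un-truncated radii of optimal convergence do \emph{not} inherit the additivity/submultiset property along short exact sequences. Only the truncated quantities $\overline{s}_i$ (equivalently, the intrinsic subsidiary radii) do. So you have no control on $s_i(N,y)$ beyond $s_i(N,y)\ge\rho(y)$, and the Robba condition alone does not yield triviality over $\calR_x$ --- the modules $M_\lambda$ of Example~\ref{exa:Robba} show this already over an ordinary Robba ring. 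Your patching step therefore cannot get off the ground.

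The paper closes this gap differently: rather than working with $N$, it decomposes $M$ itself on an open disc neighborhood of $x$ via Proposition~\ref{P:decompose disc} as $M'\oplus M''$ with all $s_i(M',x)<\rho(x)$ and all $s_i(M'',x)>\rho(x)$, so that $M''$ is genuinely trivial on a disc (not merely Robba on an annulus). Solvability of $N$ forces its projection to $M'_x$ to vanish, so $N$ is a subquotient of the trivial module $M''_x$ over $\calR_x$, hence trivial. The point is that the overconvergence information is extracted from $M$ on the disc, where it is available, before passing to the subquotient.
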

\begin{proof}
By Theorem~\ref{T:strict skeleton} (or Lemma~\ref{L:local controlling graph2}),
the functions $s_i(M, \cdot)$ are constant in a neighborhood of $x$. This immediately implies (a).
To deduce (b), note that we must have $s_1(M, x) \neq \rho(x)$ by Proposition~\ref{P:divisible closure}.
Since the $s_i(M, \cdot)$ are constant, we may apply Proposition~\ref{P:decompose disc} to decompose $M$ in a neighborhood of $x$ as a direct sum $M' \oplus M''$ with $s_i(M', x) < \rho(x)$ for all $i$
and $s_i(M'', x) \geq \rho(x)$ for all $i$; by applying  Proposition~\ref{P:divisible closure} again,
we see that in fact $s_i(M'', x) > \rho(x)$ for all $i$.
In particular, $M''$ is trivial on some neighborhood of $x$;
moreover, the projection of $N$ onto $M' \otimes \calR_x$ must be zero.
It follows that $N$ is trivial, yielding (b).
\end{proof}

\begin{theorem} \label{T:turrittin type 4}
Assume that $K = \CC$, $x$ is of type 4, and $\rho(x) = 1$. 
For each $c \in \kappa_K$, choose a lift $\tilde{c}$ of $c$ to $\gotho_K$,
and let $Q_c$ be the differential module over $\calR_x$ free on one generator $\bv$
such that $D(\bv) = \tilde{c} \bv$.
\begin{enumerate}
\item[(a)]
For each irreducible subquotient $P$ of $N$, there exists $c \in \kappa_K$ such that 
$P \otimes Q_c$ satisfies the Robba condition.
\item[(b)]
There exists a finite \'etale extension
$S$ of $\calR_x$ of the form
\[
S = \calR_x[z_1,\dots,z_m]/(z_1^p - z_1 - a_1 t, \dots, z_m^p - z_m - a_m t)
\]
for some nonnegative integer $m$ and some $a_1,\dots,a_m \in \gotho_K^\times$ such that
$N \otimes_{\calR_x} S$ is trivial.
\end{enumerate}
\end{theorem}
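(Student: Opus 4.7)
Plan: I will prove (b) first and then extract (a) from the character theory of the resulting Galois cover. By Proposition~\ref{P:type 4 constant}(a), every slope $b_i(N, x)$ lies in $\{0, 1\}$, so an irreducible subquotient $P$ has a single slope $b_1(P, x) \in \{0, 1\}$; when this slope equals $0$, $P$ already satisfies the Robba condition and (a) holds with $c = 0$ (since $Q_0$ is trivial). Accordingly, the content of the theorem is concentrated in the slope-$1$ case of (a) together with the explicit construction of $S$ in (b).

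For (b), apply Theorem~\ref{T:p-adic turrittin} to obtain a connected finite \'etale extension $S_0$ of $\calR_x^{\inte}$ such that $N \otimes_{\calR_x^{\inte}} S_0$ satisfies the Robba condition. To rigidify $S_0$ into the stated Artin-Schreier form, the decisive input is the ramification bound of Corollary~\ref{C:turrittin ramification}(b), reached via Lemma~\ref{L:type 4 constant1} (which itself rests on Proposition~\ref{P:local fields} applied to the additive action of translations $t \mapsto t + c$): after base-changing to $\calH(x)$ and identifying $\calR_x$ with a copy of $\calR_1$ in the variable $u = t - t_x$, the highest upper-numbering ramification break of the residue extension attached to $S_0$ is at most $1$. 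Since $\Gal(S_0/\calR_x^{\inte})$ is a finite $p$-group (by the $\calR_x$-analogue of Theorem~\ref{T:finite levels}) and $\kappa_K$ is algebraically closed, standard ramification theory forces this group to be elementary abelian of exponent $p$, namely $(\ZZ/p\ZZ)^m$, and $S_0$ to arise as an iterated Artin-Schreier extension $\calR_x^{\inte}[z_1, \ldots, z_m]/(z_i^p - z_i - f_i)$. A leading-term normalization in $\calR_x^{\inte}/\wp(\calR_x^{\inte})$ (parallel to the rigidification carried out in Lemma~\ref{L:order p case}) then permits us to replace $f_i$ by $a_i t$ with $a_i \in \gotho_K^\times$, yielding the desired $S$. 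The upgrade from Robba to trivial on $S$ follows from the Dwork splitting identity used in the proof of Theorem~\ref{T:p-adic turrittin}: $\exp(\pi(z_i^p - z_i))$ has radius of convergence $1/\omega$ in $z_i$, which exceeds the value of $|z_i|$ at points of $\calM(S)$ lying above the generic points of $\calR_x$ (a direct estimate using $z_i^p - z_i = a_i t$), so these splitting functions provide explicit horizontal sections; combining them over $i$ shows $N \otimes_{\calR_x} S$ is trivial.

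With $S$ in hand, (a) becomes character-theoretic. The category of $\calR_x$-modules trivialized by $S$ is equivalent to finite-dimensional $K$-representations of $G = \Gal(S/\calR_x) = (\ZZ/p\ZZ)^m$; since $G$ is abelian and $\mu_p \subset K = \CC$, every irreducible representation is a character $\chi: G \to \mu_p$. The rank-$1$ modules $Q_c$ for $c \in \kappa_K$ are each trivialized by $S$ via the horizontal section $\exp(\tilde c t)$, expressed as a product of Dwork splitting functions with coefficients determined by the $a_i$ and $\pi$, and this assignment defines an $\FF_p$-linear surjection $\kappa_K \to \Hom(G, \mu_p)$: the $m$ generating characters dual to $z_1, \ldots, z_m$ are realized by specific $c_i \in \kappa_K$. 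Consequently, the character of any irreducible subquotient $P$ of $N$ coincides with that of some $Q_c$, so that $P \otimes Q_{-c}$ has trivial character and is in particular Robba; relabeling $c$ gives (a). The main obstacle is the rigidification of $S_0$ into the normal form with $f_i = a_i t$: this combines (i) the upper-ramification bound of Corollary~\ref{C:turrittin ramification}(b) giving elementary abelianness, (ii) the leading-term reduction in $\calR_x^{\inte}/\wp(\calR_x^{\inte})$ analogous to the proof of Lemma~\ref{L:order p case}, and (iii) the $\FF_p$-linear surjectivity of the character map $c \mapsto \chi_c$.
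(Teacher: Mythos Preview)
Your proposal has a genuine gap in (b), at the passage from ``$N \otimes_{\calR_x} S$ satisfies the Robba condition'' to ``$N \otimes_{\calR_x} S$ is trivial.'' The Dwork splitting functions $\exp(\pi(z_i^p - z_i))$ are horizontal sections only of the specific rank-$1$ Artin--Schreier modules used to build $S$; in the proof of Theorem~\ref{T:p-adic turrittin} they serve to kill $G^{0+}$ step by step and thereby reach the Robba condition, but they give no handle on the remaining Robba part of $N \otimes S$. Over a Robba ring there are many nontrivial modules satisfying the Robba condition (e.g.\ the $M_\lambda$ of Example~\ref{exa:Robba} for $\lambda \in \ZZ_p \setminus \ZZ$), so something further, specific to the type~$4$ setting, is required here.

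The paper's missing ingredient is geometric. After reducing to $N = M_x$ (via Theorem~\ref{T:strict skeleton} and Proposition~\ref{P:decompose disc}, a step you also omit but which is essential so that $N$ extends to a module on an ambient disc), one uses the isomorphism
\[
R_{[0,\beta^{p^{-m}}]} \;\cong\; R_{[0,\beta]}[z_1,\dots,z_m]/(z_1^p-z_1-a_1 t,\dots,z_m^p-z_m-a_m t), \qquad t \mapsto z_m,
\]
which identifies the tower $S$ with a \emph{disc}. The unique preimage $y$ of $x$ is again of type~$4$, and $N \otimes_{\calR_x} S$ becomes the restriction to $\calR_y$ of the pulled-back disc module $\psi^* M$; now the Robba condition at $y$ together with Dwork's transfer theorem (Proposition~\ref{P:transfer}) forces $\psi^* M$, and hence $N \otimes S$, to be trivial near $y$. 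This is precisely why the right-hand sides $a_i t$ must be \emph{linear}: otherwise the tower is not a disc and transfer cannot be invoked (the paper stresses this in the remark following Corollary~\ref{C:turrittin type 4}). Your ramification-theoretic route to the linear form is reasonable, but you then fail to exploit it in the decisive way. Your character-theoretic deduction of (a) from (b) is a clean alternative to the paper's direct proof of (a) via Proposition~\ref{P:type 4 constant} and \cite[Theorem~12.7.2]{kedlaya-book}, but it rests on a (b) that, as written, is not established.
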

\begin{proof}
By Proposition~\ref{P:type 4 constant} we have $b_1(P) \in \{0,1\}$. If
$b_1(P) = 0$ we take $c=0$; otherwise, by \cite[Theorem~12.7.2]{kedlaya-book},
we can choose $c$ so that $b_1(P \otimes Q_c) < 1$, and then by Proposition~\ref{P:type 4 constant} again
we have $b_1(P \otimes Q_c) = 0$. This proves (a).

Given (a), to prove (b), Theorem~\ref{T:strict skeleton} and Proposition~\ref{P:decompose disc} allow us
to reduce to the case where $M_x$ itself is solvable at $x$;
we may then further reduce to the case where $N = M_x$.
In this case, the proof of Theorem~\ref{T:p-adic turrittin} provides $S$ such that $N \otimes_{\calR_x} S$
satisfies the Robba condition. However, by induction on $m$, we see that there is an isomorphism
\begin{equation} \label{eq:isomorphism of artin-schreier}
R_{[0,\beta^{p^{-m}}]} \cong R_{[0,\beta]}[z_1,\dots,z_m]/(z_1^p-z_1 - a_1 t, \dots, z_m^p - z_m - a_m t)
\end{equation}
sending $t$ to $z_m$. 
This isomorphism gives rise to a map $\psi: \DD_{\beta^{p^{-m}},K} \to \DD_{\beta,K}$
by mapping $R_{[0,\beta]}$ into the right side of \eqref{eq:isomorphism of artin-schreier} and then
crossing to the left side. The inverse image of $x$ under this map is a single point $y$. By
construction, $N \otimes_{\calR_x} S \cong 
\psi^*M \otimes_{R_{[0,\beta^{p^{-m}}]}} \calR_y$ satisfies the Robba condition.
By Proposition~\ref{P:transfer}, $\psi^* M$ is trivial in a neighborhood of $y$,
so $N \otimes_{\calR_x} S$ is also trivial.
\end{proof}

\begin{cor} \label{C:turrittin type 4}
Assume that $x$ is of type 4. Then any subquotient of $N$ satisfying the
Robba condition is trivial, and hence admits the zero tuple as an exponent.
\end{cor}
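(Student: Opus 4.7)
The plan is to decompose $M$ spectrally in a neighborhood of $x$ and show that a Robba subquotient can only live in the ``trivializable'' summand. By Lemma~\ref{L:base change radii} I may reduce to $K = \CC$. By Lemma~\ref{L:local controlling graph2}, each $s_i(M,\cdot)$ is constant on some neighborhood of $x$; let $r_1 < \cdots < r_k$ denote the distinct values attained, each of which lies in $\left|\CC^\times\right|$ by Proposition~\ref{P:divisible closure}. Applying Proposition~\ref{P:decompose annulus} to the annulus rings $R_{x,(\rho(x),\delta]}$ approaching $x$ and passing to the colimit yields a canonical decomposition $M_x = \bigoplus_{j=1}^k M_{j,x}$, in which $M_{j,x}$ is pure with $s_i(M_{j,x},y) = r_j$ at every $y$ in the domain. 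Because this spectral decomposition is canonical, the differential subquotient $P$ splits as $P = \bigoplus_j P_j$ with $P_j \subseteq M_{j,x}$ a subquotient.

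The crux is a case analysis on each $r_j$, using Proposition~\ref{P:compare radii} to convert radii of optimal convergence into intrinsic subsidiary radii. If $r_j < \rho(x)$, the intrinsic radii of $M_{j,x}$ equal $r_j/\rho(y) < 1$ at every $y$ near $x$, so any nonzero subquotient fails the Robba condition and $P_j = 0$. If $r_j = \rho(x)$ (only possible when $\rho(x) \in \left|\CC^\times\right|$), the same conclusion holds, because at $y = H(x,\rho)$ with $\rho > \rho(x)$ the intrinsic radii are $r_j/\rho < 1$. If $r_j > \rho(x)$, then $M_j$ has a full complement of horizontal sections converging on the open disc of radius $r_j$ about $t_x$ in $\calH(x)$, which contains $x$; hence $M_{j,x}$ is trivial.

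Consequently $P = \bigoplus_{r_j > \rho(x)} P_j$ is a differential subquotient of a trivial $\calR_x$-module. Since $\calR_x$ has field of constants $\CC$, the standard fact that a differential subquotient of a trivial module is itself spanned by horizontal sections forces $P$ to be trivial, and triviality immediately implies that $P$ admits the zero tuple as an exponent. The main obstacle in executing this plan is producing the spectral decomposition of $M_x$ at the type~4 point $x$, since Propositions~\ref{P:decompose disc} and~\ref{P:decompose annulus} are phrased over discs and genuine annuli rather than germs near $x$; this is overcome by applying the annulus decomposition on each $R_{x,(\rho(x),\delta]}$ and verifying compatibility as $\delta \searrow \rho(x)$ using the constancy of $s_i(M,\cdot)$ supplied by Lemma~\ref{L:local controlling graph2}.
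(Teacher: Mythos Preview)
Your approach is correct but takes a genuinely different route from the paper's proof.

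The paper, in the delicate case $\rho(x)\in|\CC^\times|$, reduces to $N=M_x$ solvable and then invokes Theorem~\ref{T:turrittin type 4}: the Artin--Schreier cover $S$ trivializes $N$, so the Tannakian category $[N]$ is semisimple with Galois group $(\ZZ/p\ZZ)^m$. A Robba subquotient $P$ then decomposes into rank-$1$ pieces $L_i$ with $L_i^{\otimes p}$ trivial, and Corollary~\ref{C:p-th power trivial} (which rests on the Christol--Mebkhout exponent theory of \S\ref{sec:Robba2}) forces each such $L_i$ to be trivial.

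You bypass all of this machinery. Once the $s_i(M,\cdot)$ are constant near $x$ and $M_x$ is split spectrally, your intrinsic-radius argument handles the summands with $r_j\le\rho(x)$ directly: each $(M_{j,x})_\rho$ is pure of radius $r_j/\rho<1$, so a nonzero subquotient cannot satisfy the Robba condition. The remaining summands are trivial, and a saturated subquotient of a trivial module over $\calR_x$ is trivial (this holds because $\calR_x$ is a domain with constant field $K$, so a saturated $D$-stable submodule of $\calR_x^n$ equals $(W\otimes\calR_x)$ for the $K$-subspace $W$ it spans over $\Frac(\calR_x)$). Thus you never need Theorem~\ref{T:turrittin type 4} or Corollary~\ref{C:p-th power trivial}.

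The one place where your write-up is thin is the assertion that ``because this spectral decomposition is canonical, the differential subquotient $P$ splits.'' This is true, but it does not follow from canonicity alone; it requires that there are no nonzero differential morphisms between the trivial summand $A$ and any subquotient of the part $B$ with $r_j\le\rho(x)$. That vanishing holds because any such morphism would produce a horizontal section of a subquotient of $B$, and over each $F_{H(x,\rho)}$ such subquotients are pure of intrinsic radius $<1$ and hence have $H^0=0$; injectivity of $\calR_x\hookrightarrow F_{H(x,\rho)}$ then pushes the vanishing back to $\calR_x$ and to $\Frac(\calR_x)$. With that sentence added, your argument is complete and gives a cleaner proof than the paper's: the paper's route makes the Tannakian picture explicit (useful for Remark~\ref{R:semistable}), while yours is more direct and avoids the $p$-adic exponent theory entirely.
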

\begin{proof}
If $\rho(x) \notin \left| \CC^\times \right|$, then $N$ is trivial
by Proposition~\ref{P:type 4 constant}(b), so any subquotient of $N$ satisfying the Robba condition is
also trivial and hence admits the zero tuple as an exponent.
If $\rho(x) \in \left| \CC^\times \right|$, we may assume that $K = \CC$ and $\rho(x) = 1$.
Set notation as in the proof of Theorem~\ref{T:turrittin type 4}(b),
again reducing to the case where $N = M_x$.
In this case, the Tannakian category of differential modules over $\calR_x$ generated by $N$ admits a fibre functor
computing horizontal sections over $S$, for which the automorphism group is an elementary abelian
$p$-group. In particular, $N$ splits as a direct sum of irreducible submodules whose $p$-th tensor powers are trivial. Consequently, to check that a subquotient of $N$ satisfying the Robba condition is trivial, 
it suffices to check the case of a irreducible submodule $P$ for which $P^{\otimes p}$ is trivial;
this case follows from Corollary~\ref{C:p-th power trivial}.
\end{proof}

\begin{remark}
Note that the isomorphism in \eqref{eq:isomorphism of artin-schreier} depends critically on having linear powers of $t$ on the right side; otherwise, we would end up with something other than a disc, so Dwork's transfer theorem
(Proposition~\ref{P:transfer}) would not apply. This is why it is necessary to invest the hard work to first
prove $b_i(N,x) \in \{0,1\}$ in order to deduce Corollary~\ref{C:turrittin type 4}.
\end{remark}

\begin{remark} \label{R:semistable}
The above arguments, including the proof of Lemma~\ref{L:type 4 constant}, are loosely inspired by the arguments made in \cite[\S 5]{kedlaya-semistable4}.
However, the correspondence turns out to be somewhat less close than we had originally expected, primarily
because the process of transposing the arguments exposed an error in \cite{kedlaya-semistable4}. We now describe this error and how it may be remedied using results from this paper.

The error appears in the second sentence of the proof of \cite[Lemma~5.6.2]{kedlaya-semistable4}:
it is not the case that the property of being terminally presented is stable under tame alterations. 
That is because the tame alteration $x \mapsto x^m$ is ramified along the segment joining 0 to the Gauss point;
consequently, after pulling back a terminally presented module along a tame alteration, one encounters a change of
slope at the point where one branches off from the ramification locus.
In the continuation of the proof, the tame alteration is erroneously used to force the group 
$\tau(I_1)$, which initially is the semidirect
product of the $p$-group $\tau(W_1')$ with a cyclic group of order prime to $p$,
to become equal to $\tau(W_1')$. 

To correct the proof, it suffices to establish that the equality
$\tau(I_1) = \tau(W_1')$ holds initially, so that no tame alteration is needed and the rest of 
the argument may proceed unchanged. To verify this,
choose $\rho$ as in \cite[Lemma~4.7.4]{kedlaya-book}; by that lemma, $\left|\cdot\right|_{\rho^\alpha,s_0}$
defines a point of $\calM(\ell \langle x \rangle)$ of type 4. We may thus apply
Corollary~\ref{C:turrittin type 4} to deduce that any subquotient of the cross-section $M_\rho$ which
satisfies the Robba condition admits the zero tuple as an exponent. This implies that $\tau(W_1')$ has
no nontrivial quotient of prime-to-$p$ order, and so $\tau(I_1) = \tau(W_1')$ as desired.

One might prefer to incorporate some of the intermediate arguments from this paper into the proof method of
\cite{kedlaya-semistable4}, but this seems difficult. The plan of attack in \cite{kedlaya-semistable4}
is to pick out an Artin-Schreier extension that reduces the image of the monodromy representation,
which requires tame ramification to be ruled out first.
By contrast, the method here is to use Artin-Schreier extensions only to lower the ramification numbers;
only when this stops being possible is the presence of tame ramification ruled out.

A more satisfying resolution would be to use additional results of this paper, especially Theorem~\ref{T:turrittin type 4}, to shortcut many of the complicated proofs in \cite[\S 5]{kedlaya-semistable4}. We leave this as an exercise for the interested reader.
\end{remark}

\section{Berkovich curves}
\label{sec:berkovich curves}

To conclude, we globalize our setup to include more
general Berkovich curves. We now adopt the full language of Berkovich analytic spaces, as in
\cite{berkovich1, berkovich2}.

\subsection{Analytic spaces}

\begin{defn} \label{D:spaces}
A \emph{strictly affinoid algebra} (resp.\ an \emph{affinoid algebra}) over $K$ is a commutative Banach algebra over $K$
isomorphic to a quotient of the completion of some polynomial ring $K[T_1,\dots,T_n]$ for the Gauss norm
(resp.\ the $(r_1,\dots,r_n)$-Gauss norm for some $r_1,\dots,r_n > 0$).

Let $A$ be a (strictly) affinoid algebra over $K$.
A \emph{(strictly) affinoid subdomain} of $\calM(A)$
is a closed subset $U$ for which the category of bounded $K$-linear homomorphisms $A \to B$ of (strictly) affinoid
algebras whose restriction maps carry $\calM(B)$ into $U$ has an initial element. Any such initial homomorphism $A \to B$ is then flat and induces a homeomorphism $\calM(B) \cong U$ \cite[Proposition~2.2.4]{berkovich1};
in particular, a strictly affinoid subdomain is also an affinoid subdomain. 

Note that $\calM(A)$ admits
a neighborhood basis of affinoid subdomains, e.g., because any rational subdomain is an affinoid subdomain.
For $x \in \calM(A)$, define the local $A$-algebra $A_x$ as the direct limit of the
representing homomorphisms $A \to B$ over all affinoid subdomains of $\calM(A)$ which are neighborhoods of $x$.
We define the \emph{structure sheaf} $\calO$ on $\calM(A)$ so that for $U$ an open subset of $\calM(A)$,
$\calO(U)$ consists of the functions $f: U \mapsto \coprod_{x \in \calM(A)} A_x$ such that for each $x \in U$,
there exist a homomorphism $A \to B$ and an element $g \in B$ such that:
\begin{itemize}
\item
the map $A \to B$ represents an affinoid subdomain of $\calM(A)$ contained
in $U$ and containing a neighborhood of $x$; 
\item
for each $y \in U$, $f(y)$ is the image of $g$ in $A_y$.
\end{itemize}
By Tate's theorem, the natural map $A \to \Gamma(\calM(A), \calO)$ is a bijection.
By Kiehl's theorem, coherent sheaves over $\calO$ correspond to finite $A$-modules
via the functor of global sections.
\end{defn}

\begin{defn}
A \emph{good (strictly)
$K$-analytic space} is a locally ringed space which is locally isomorphic to an open subspace of
the Gel'fand spectrum of a (strictly) affinoid algebra over $K$. 
These are the analytic spaces considered in \cite{berkovich1}; they have the property that any
point has a neighborhood basis consisting of affinoid spaces.
\end{defn}

\begin{remark} \label{R:good}
In \cite{berkovich2}, the more general notion of a \emph{(strictly) $K$-analytic space} is considered,
in which it is only required that each point have a neighborhood basis consisting of a finite union of
affinoid spaces (glued in a suitable way). In this paper, we can get away with considering only good
spaces because any curve over $K$ is good \cite[Corollary~3.4]{dejong}. 
\end{remark}

\subsection{Curves and triangulations}

We next introduce some of the the combinatorial structure of a Berkovich analytic curve over $K$.
One way to explain this is using semistable models, as in \cite{baldassarri} and \cite{baldassarri-kedlaya}.
Here, we take an alternate approach using triangulations introduced by Ducros \cite{ducros}, 
so as to avoid leaving the realm of analytic spaces; this follows the example of \cite{pulita-poineau, pulita-poineau2,pulita-poineau3}. There is also a link to tropicalization; see Remark~\ref{R:tropicalization}.

\begin{defn} \label{D:rig-smooth curve}
For $K'$ an analytic field containing $K$ and $X$ a good $K$-analytic space, 
let $X_{K'}$ denote the base extension of $K$ to $K'$. For $X = \calM(A)$, we have $X_{K'} = \calM(A \widehat{\otimes}_K K')$.

Let $\Omega_X$ denote the sheaf of continuous K\"ahler differentials on $X$. We say that $X$ is 
\emph{rig-smooth of pure dimension $n$} if for every analytic field $K'$ containing $K$,
$\Omega_{X_{K'}}$ is locally free of rank $n$.

By a \emph{curve} over $K$, we will mean
a good $K$-analytic space $X$ which is separated (i.e., the diagonal morphism is a closed immersion)
and rig-smooth of pure dimension 1. In particular, $X$ is paracompact.
\end{defn}

\begin{defn}
Let $X$ be a curve over $K$.
For $x \in X$, we declare $x$
to be of \emph{type $1,2,3,4$} if the signature of $x$ is respectively
$(1,0,0)$, $(0,1,0)$, $(0,0,1)$, $(0,0,0)$. These cases are exhaustive by
Proposition~\ref{P:Berkovich classification} plus Noether normalization for strictly affinoid algebras
\cite[Corollary~6.1.2/2]{bgr}.
\end{defn}

\begin{defn}
An \emph{open disc} over $K$ is a $K$-analytic space isomorphic to $\bigcup_{\gamma \in (0,\beta]} \calM(R_{[0,\gamma]})$ for some $\beta > 0$.
An \emph{open annulus} over $K$ is a $K$-analytic space isomorphic to $\bigcup_{\alpha < \gamma \leq \delta < \beta}
\calM(R_{[\gamma,\delta]})$ for some $0 < \alpha < \beta$. 

A \emph{virtual open disc} (resp. \emph{virtual open annulus}) 
is a connected $K$-analytic space whose base extension to $\CC$ is a disjoint union
of open discs (resp.\ open annuli).
By the \emph{skeleton} of a virtual open annulus over $K$,
we mean the set of points not contained in a virtual open disc.
For the standard open annulus $\bigcup_{\alpha < \gamma \leq \delta < \beta}
\calM(R_{[\gamma,\delta]})$ within $\DD_{\beta,K}$, the skeleton is the set $\{\zeta_{0,\rho}: \rho \in 
(\alpha,\beta)\}$; in general, the skeleton of a virtual open annulus is an open segment.
\end{defn}

\begin{defn}
Let $X$ be a curve over $K$.
A \emph{weak strict triangulation} (resp.\ \emph{weak triangulation})
of $X$ is a locally finite subset $S$ of $X$
consisting of points of type 2 (resp.\ of types 2 or 3)
such that any connected component of $X \setminus S$ is a virtual open disc
or a virtual open annulus. The union of the skeleta of the connected components of $X \setminus S$ which 
are virtual open annuli forms a locally finite graph $\Gamma_S$, called the \emph{skeleton} of the weak
triangulation. The points of $\Gamma_S$ are all of types 2 or 3.
\end{defn}

\begin{remark} \label{R:retract}
The definition of weak triangulation used here is the same as in \cite{pulita-poineau} but is somewhat more permissive
than the one used in \cite{ducros}, in which it is required that $X \setminus S$ be relatively compact.
Omitting this condition makes it possible for $\Gamma_S$ to fail to meet some connected components of $X$, 
e.g., if there is a component which is itself a virtual open disc. If $\Gamma_S$ does meet every connected component of $X$, then 
there is a natural continuous retraction $\pi_S: X \to \Gamma_S$ taking any $x \in \Gamma_S$
to itself and taking any $x \in X \setminus \Gamma_S$ to the unique point of $\Gamma_S$ in the closure of 
the connected component of $X \setminus \Gamma_S$ containing $x$.
\end{remark}

\begin{theorem} \label{T:triangulation}
Any (strictly) analytic curve over $K$ admits a weak (strict) triangulation.
\end{theorem}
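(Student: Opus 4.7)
The plan is to reduce to a compact-type local problem via paracompactness, construct triangulations locally using semistable reduction (or its direct analytic analogue due to Ducros and Temkin), and glue compatibly. Concretely: since $X$ is paracompact, I would write $X = \bigcup_{n \geq 1} V_n$ as a countable increasing union of relatively compact open subsets with $\overline{V_n} \subseteq V_{n+1}$, with each $\overline{V_n}$ contained in a finite union of (strictly) affinoid subdomains. If I can produce a (strict) weak triangulation $S_n$ of an open neighborhood of $\overline{V_n}$, with $S_{n+1}$ extending $S_n$, then $S = \bigcup_n S_n$ is locally finite and gives the required triangulation of $X$.

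For the local construction on a (strictly) affinoid curve $Y$, the essential input is semistable reduction for curves. After a finite (separable, in the strict case) extension of $K$, $Y$ admits a (strictly) semistable formal model $\mathcal{Y}$ over $\gotho_K$; the generic points of the irreducible components of the special fiber $\mathcal{Y}_s$ give a finite set of points of $Y$, and by the standard dictionary between semistable formal models and analytic structure, the complement in $Y$ of this finite set decomposes as a disjoint union of open discs (around smooth closed points of $\mathcal{Y}_s$) and open annuli (around nodes). The generic points of components are of type 2, because the reduction map lands on a curve of positive transcendence degree. After taking Galois descent, the finite set of points descends to the non-extended curve and becomes either a set of points of type 2 (in the strict case, because strictly semistable reduction preserves the residue-field data) or of mixed type 2/3 (in general). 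Where a component of $Y \setminus S_n$ extends past the boundary of $V_n$, I would simply cover more of $V_{n+1}$ with the triangulation inherited from a larger affinoid patch.

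For the gluing, I would rely on the following compatibility: any two weak (strict) triangulations of a given affinoid curve admit a common refinement that is still a weak (strict) triangulation, obtained by forming the union and then enlarging slightly if necessary so that each virtual open annulus/disc complement is again of that form. Since $\overline{V_n}$ is compact, only finitely many adjustments are needed at each stage, preserving local finiteness. This is a formal consequence of the fact that subdividing a virtual open annulus at a point of type 2 or 3 on its skeleton produces two virtual open annuli (and one new boundary point), and any point of type 2 inside a virtual open disc, together with its upper branch to the boundary, divides the disc into one virtual open disc and a collection of virtual open annuli/discs on the lower branches.

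The main obstacle is really the local existence statement: packaging semistable reduction in the generality needed here (allowing non-algebraically-closed and non-discretely-valued $K$, and producing a strict triangulation when $X$ is strictly analytic). For this I would appeal to Ducros' triangulation theorem \cite{ducros}, which is proved directly in the analytic category and handles both the strict and non-strict cases; alternatively, one can invoke Temkin's version of semistable reduction combined with Galois descent. A minor technical point is that Ducros' original statement assumes $X \setminus S$ relatively compact, whereas the definition here is slightly more permissive; this is harmless, since one can always delete from $S$ those points whose removal keeps the disc/annulus property, or equivalently allow $\Gamma_S$ to fail to meet a component that is itself a virtual open disc.
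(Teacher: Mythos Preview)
Your proposal is correct and aligns with the paper's approach: the paper's proof is simply ``See \cite{ducros}'', deferring entirely to Ducros' triangulation theorem, which is exactly the reference you invoke for the local existence step. Your sketch supplies more detail (paracompact exhaustion, semistable reduction, gluing via common refinements) than the paper does, but the substance is the same citation.
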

\begin{proof}
See \cite[Th\'eor\`eme~5.1.14]{ducros}.
\end{proof}

\begin{remark} \label{R:tropicalization}
There is also an approach to the structure theory of analytic curves via tropicalization, i.e., consideration
of the projections defined by evaluation at finitely many functions on the curve. For discussion of the case
$K = \CC$, including a proof of Theorem~\ref{T:triangulation} in that context, see \cite[\S 5]{bpr}.
\end{remark}

\begin{defn}
Let $X$ be a curve and let 
$x \in X$ be a point of type 2. Then the residue field $\kappa_{\calH(x)}$ is the function field of an
algebraic curve over $\kappa_K$; we denote the genus of this function field by $g(x)$ and call it the
\emph{genus} of $x$. For any weak triangulation $S$ of $X$, the type 2 points of $X \setminus S$
are all of genus 0; by Theorem~\ref{T:triangulation}, it follows that the type 2 points of $X$ of positive
genus form a locally finite set.
\end{defn}

\begin{defn} \label{D:branches}
Let $X$ be a curve. By a \emph{branch} of $X$ at a point $x \in X$, we mean a
local path-connected component of $X \setminus \{x\}$ at $x$. Depending on the type of $x$, branches exist as follows.
\begin{enumerate}
\item[1.] Exactly one branch.
\item[2.] Infinitely many branches, corresponding to all but finitely many places of the function field
$\kappa_{\calH(x)}$.
\item[3.] Either zero, one, or two branches.
\item[4.] Exactly one branch.
\end{enumerate}
Given a weak triangulation $S$ of $X$, we say a branch $U$ of $X$ at $x$ is \emph{skeletal}
(or \emph{$S$-skeletal} in case of ambiguity) if
the closure of $U \cap \Gamma_S$ contains $x$; such branches can only exist if $x \in \Gamma_S$.

We say that $x \in X$ is \emph{external} if it is of type 2 and its branches do not correspond to all of the
places of $\kappa_{\calH(x)}$ or if it is type 3 and it has fewer than two branches; otherwise,
we say that $x$ is \emph{internal}.
For any weak triangulation $S$ of $X$,
every point of $X \setminus \Gamma_S$ is internal, as is every point of $\Gamma_S$ lying in the interior of
an edge; by Theorem~\ref{T:triangulation}, it follows that the external points of $X$ form a locally finite set.
\end{defn}

\begin{example} \label{exa:Shilov boundary}
For $X$ an affinoid space, the external points of $X$ are precisely the points of the \emph{Shilov boundary},
the minimal subset of $X$ for which the maximal modulus principle holds.
\end{example}

\begin{remark}
If $X$ is a strictly affinoid space, then the points of the Shilov boundary are all of type 2. Consequently, for any strictly analytic curve $X$, the external points of $X$ are all of type 2, so
every point of type 3 has exactly two branches. 
However, for more general analytic spaces, a point of type 3 may have one or even zero branches. For instance, take $X$ to be the annulus
$\calM(R_{[\alpha,\beta]})$ for some $\alpha, \beta \in (0, +\infty) \setminus \left| \CC^\times \right|$. If $\alpha < \beta$, then
$\zeta_{0,\alpha}$ and $\zeta_{0,\beta}$ are points of type 3 each with only one branch. If $\alpha = \beta$, then $\calM(R_{[\alpha,\alpha]})$ consists only of a single point $\zeta_{0,\alpha}$ of type 3, which in particular has zero branches.
\end{remark}

\subsection{Convergence of local horizontal sections}

We next study the convergence of local horizontal sections on analytic curves.
As in the case of discs, we end up with
a global statement about the behavior of radii of convergence of differential modules
on analytic curves; this statement recovers the main results of \cite{pulita-poineau,
pulita-poineau2, pulita-poineau3}.

\begin{hypothesis}
For the remainder of the paper, let $X$ be a curve over $K$ equipped with a weak triangulation $S$
and let $M$ be a vector bundle over $X$ of constant rank $n>0$ equipped with a connection.
(Since $X$ is of dimension $1$, the connection is automatically integrable.)
\end{hypothesis}

\begin{remark}
One interesting case excluded by our hypotheses is that where $X$ is an affine line
and $S$ is empty. In this case, the radii of convergence should be allowed to be infinite, but we do not want to worry about this.
For a more comprehensive treatment, see for instance \cite{baldassarri-kedlaya}.
\end{remark}

In order to define analogues of the radii of optimal convergence, one must make reference to the chosen
triangulation. This has the same effect as the choice of a semistable model in \cite{baldassarri}. 
\begin{defn}
For $x \in \Gamma_S$, define $s_1(M,S,x), \dots, s_n(M,S,x)$ as the intrinsic subsidiary radii of $M$ in order,
and put $\rho_S(x) = 1$.

For $x \in X \setminus \Gamma_S$, lift $x$ to a point $y \in X_{\CC}$,
identify the connected component of $(X \setminus \Gamma_S)_{\CC}$ containing $x$
with an open disc of some radius $R$, then define $s_1(M,S,x),\dots,s_n(M,S,x)$ as the functions
$s_1(M,y)/R,\dots,s_n(M,y)/R$ as in Definition~\ref{D:open unit disc}.
We use the same identification (again dividing by $R$) to define the diameter $\rho_S(x)$.
These definitions do not depend on the choice of $y$ or $R$, and are stable under enlarging $K$.

For $x \in X$, define the \emph{spectral cutoff} of $M$ as the largest $m(x) \in \{0,\dots,n\}$ such that
$s_i(M,S,x) < \rho_S(x)$ for $i =1,\dots,m(x)$.
\end{defn}

In order to analyze these functions, it will be useful to consider them first along individual branches.
\begin{defn} \label{D:slope along branch}
Choose $x \in \Gamma_S$ of type $2$, let $U$ be a branch of $X$ at $x$, and let $v$ be the corresponding place
of $\kappa_{\calH(x)}$. Choose $t \in \calO_{X,x}$ with $x(t) = 1$ whose image $\overline{t}$ in $\kappa_{\calH(x)}$
is a uniformizer of $v$ (i.e., its $v$-valuation is the positive generator of the value group).
Then for $\beta \in (0,1)$ sufficiently close to 1, $t$ defines an isomorphism between the space of $y \in U$ with $y(t) \in (\beta,1)$ and the open annulus $\beta < |t| < 1$ in the $t$-line.
We can use this isomorphism to define the class of functions $f: X \to \RR$ which are affine along $U$ in a neighborhood of $x$,
and to associate to each such function a slope (in the direction away from $x$); neither of these definitions depends on the choice of $t$.
\end{defn}

\begin{lemma} \label{L:affine at higher genus}
Set notation as in Definition~\ref{D:slope along branch}.
Then for $i=1,\dots,m(x)$, the function $\log s_i(M,S,\cdot)$ is affine along $U$
and its limit at $x$ (approaching from within $U$) equals $\log s_i(M,S,x)$.
\end{lemma}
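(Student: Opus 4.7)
The plan is to transport the claim to a closed Berkovich disc, where Theorem~\ref{T:strict skeleton} and Proposition~\ref{P:compare radii} apply directly. I would first base-extend to $K = \CC$ (lifting $x,U$ to some $\tilde x, \tilde U$); the functions $s_i(M,S,\cdot)$, the triangulation $S$, and the branch structure are all preserved under this extension. Since $\overline{t}$ is a uniformizer at $v$ and $X$ is rig-smooth of pure dimension $1$, the differential $dt$ generates $\Omega_{X,\tilde x}$, so $t$ defines an \'etale morphism from a neighborhood of $\tilde x$ to $\mathbb{A}^1_\CC$. The image of $\tilde x$ is a type-$2$ point on which $t$ takes value $1$, and after translating we may take this image to be $\zeta_{0,1}$. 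Shrinking the neighborhood gives an open immersion $\iota$ from a neighborhood $V$ of $\tilde x$ into an affinoid closed disc $\DD_{\gamma,\CC}$ for some $\gamma > 1$, with $\iota(\tilde x) = \zeta_{0,1}$ and with $\iota(\tilde U \cap V)$ containing the annulus $\{\beta < |t| < 1\}$ for some $\beta < 1$. Restricting $M|_V$ along $\iota$ to an affinoid closed disc contained in $\iota(V)$ and containing $\zeta_{0,1}$ in its interior, Theorem~\ref{T:strict skeleton}(a,b) yields that the disc-theoretic functions $f_i(M,\cdot)=-\log s_i(M,\cdot)$ of Definition~\ref{D:radii of convergence} are continuous and piecewise affine; in particular, $f_i(M,\zeta_{0,\sigma})$ is affine in $\log\sigma$ in a two-sided neighborhood of $\sigma=1$ and continuous at $\sigma=1$.

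Next I would translate this into the statement for $s_i(M,S,\cdot)$ along $U$. Fix $i\leq m(x)$, so that $s_i(M,S,x) < 1 = \rho_S(x)$; since the intrinsic subsidiary radius $IR_i$ at $\tilde x$ is exactly $s_i(M,S,x)$ and $\rho(\zeta_{0,1})=1$, Proposition~\ref{P:compare radii} gives $s_i(M,\zeta_{0,1}) = s_i(M,S,x) < 1$ in the disc sense. By the continuity already established, $s_i(M,\zeta_{0,\sigma}) < \sigma$ for $\sigma$ in some left neighborhood of $1$. Now two cases: if $\tilde U$ is $S$-skeletal, then $\zeta_{0,\sigma}\in\Gamma_S$ for such $\sigma$, so $s_i(M,S,\zeta_{0,\sigma})$ is the intrinsic subsidiary radius, which by Proposition~\ref{P:compare radii} equals $s_i(M,\zeta_{0,\sigma})/\sigma$; thus $\log s_i(M,S,\zeta_{0,\sigma})=-f_i(M,\zeta_{0,\sigma})-\log\sigma$, which is affine in $\log\sigma$. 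If $\tilde U$ is not $S$-skeletal, then $\zeta_{0,\sigma}$ lies in a virtual open disc component of $(X\setminus\Gamma_S)_\CC$; we may take $\iota$ itself as the identification of this component with the standard open unit disc (for $\beta$ close to $1$, any ambiguity from disc automorphisms preserves $|t|=\sigma$), so $s_i(M,S,\zeta_{0,\sigma}) = s_i(M,\zeta_{0,\sigma})$, which is again affine in $\log\sigma$. In both cases, continuity of $f_i(M,\cdot)$ at $\zeta_{0,1}$ together with Proposition~\ref{P:compare radii} applied at $\zeta_{0,1}$ yields $\lim_{\sigma\to 1^-} s_i(M,S,\zeta_{0,\sigma}) = s_i(M,S,x)$, giving the limit statement.

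The main obstacle I anticipate is the geometric setup in the first paragraph, namely arranging the \'etale coordinate $t$, the translation placing $\iota(\tilde x)=\zeta_{0,1}$, and the compatibility of $\iota$ with the open-disc identification used to define $s_i(M,S,\cdot)$ in the non-skeletal case. Once this is in hand, the analytic content is simply piecewise affinity and continuity of radii of convergence on a disc, which is the content of Theorem~\ref{T:strict skeleton}.
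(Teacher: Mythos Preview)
Your geometric setup has a genuine gap. The \'etale coordinate $t$ need not yield an open immersion of a neighborhood of $\tilde x$ into a disc: if $g(x) > 0$, the residue field $\kappa_{\calH(\tilde x)}$ is a function field of positive genus, so the extension $\kappa_{\calH(\tilde x)}/\kappa_\CC(\overline t)$ has degree at least $2$ for every choice of $t$, and the map $t$ (even if \'etale) has local degree at least $2$ at $\tilde x$ rather than being a local isomorphism. Even when $g(x) = 0$ and $t$ is a local isomorphism, the image $\iota(V)$ is an open neighborhood of $\zeta_{0,1}$ in the affine line, and such a neighborhood need not contain any closed disc having $\zeta_{0,1}$ in its interior: a closed disc $\DD_{\gamma'',\CC}$ with $\gamma'' > 1$ contains the full open disc $\{|t| < 1\}$, whereas $\iota(V)$ will typically meet the branch toward $0$ only in an annulus (consider already the case where $X$ is an open annulus with $x$ on its skeleton). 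So Theorem~\ref{T:strict skeleton} cannot be invoked as you propose.

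The paper works instead over rings of analytic elements. For $x \in S$ with $g(x) = 0$, one realizes $M$ locally as a module over $R_{(\alpha,1)}^{\an}$; this ring is completed for the seminorms $|\cdot|_\rho$ with $\rho$ in the closure $[\alpha,1]$, so it sees the point $x$ as well as the open annulus, and Proposition~\ref{P:variation}(a) then yields piecewise affinity together with the limit at $\rho=1$ in one stroke. For $g(x) > 0$ one cannot identify $\calH(x)$ with $F_1$, but one can realize $M$ over a ring which is finite \'etale of some degree $d$ over $R_{(\alpha,1)}^{\an}$ and whose base change to $F_1$ is the unramified extension $\calH(x)$; restricting scalars to $R_{(\alpha,1)}^{\an}$ multiplies the rank by $d$ and repeats each intrinsic subsidiary radius $d$ times, after which Proposition~\ref{P:variation}(a) again applies. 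This restriction-of-scalars step is exactly what replaces the nonexistent local disc chart at a point of positive genus.
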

\begin{proof}
For $x \notin S$ this is immediate from Proposition~\ref{P:variation}(a).
For $x \in S$ with $g(x) = 0$, we may also apply Proposition~\ref{P:variation}(a)
over the ring $R_{(\alpha,1)}^{\an}$. For $x \in S$ with $g(x) \neq 0$, we obtain a differential
module over a ring $S$ which can be written as a finite \'etale algebra over $R_{(\alpha,1)}^{\an}$
of some degree $d>0$
such that $S \otimes_{R_{(\alpha,1)}^{\an}} F_1 \cong \calH(x)$ is a finite \emph{unramified}
extension of $F_1$. If we restrict scalars from $S$ to $R_{(\alpha,1)}^{\an}$, the multiset of intrinsic subsidiary radii does not change except that each multiplicity gets multiplied by $d$.
We may thus apply Proposition~\ref{P:variation}(a) in this case also.
\end{proof}

We have the following analogue of Proposition~\ref{P:variation}(c). A more detailed exposition of the geometry used in this argument will be given in 
\cite{baldassarri-kedlaya}.
\begin{theorem} \label{T:global subharmonic}
Choose $x \in X$ of type 2.
Let $c(x)$ be the number of skeletal branches of $X$ at $x$.
(Note that if $x \notin \Gamma_S$, then $g(x) = c(x) = 0$.)
\begin{enumerate}
\item[(a)]
For $i=1,\dots,m(x)$, 
the function $\log s_i(M,S,\cdot)$ is affine of slope $0$ along all but finitely many branches of $X$ at $x$.
In particular, we may form the sum $\mu_i$ of the slopes of the function $\sum_{j=1}^i \log s_j(M,S,\cdot)$ along all of the branches of $X$ at $x$ (in the directions away from $x$).
\item[(b)]
If $x \notin \Gamma_S$, then $\mu_i \leq 0$ for $i=1,\dots,m(x)$.
\item[(c)]
If $x \in \Gamma_S$ is internal,
then $\mu_i \leq (2g(x) -2 + c(x))i$ for $i=1,\dots,m(x)$.
\item[(d)]
In (b) and (c), equality holds if $i = m(x)$. Equality also holds if $i < n$ and $s_i(M,S,x) < s_{i+1}(M,S,x)$.
\end{enumerate}
\end{theorem}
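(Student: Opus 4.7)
The strategy is to reduce each assertion to the disc-case subharmonicity Proposition~\ref{P:variation}(c). Since radii of optimal convergence, the triangulation data, and $g(x)$ are stable under enlarging $K$, we may assume $K = \CC$; then $\kappa_{\calH(x)}$ is the function field of a smooth projective curve $Y_x$ of genus $g(x)$ over an algebraically closed field, the branches of $X$ at $x$ correspond bijectively to the closed points of $Y_x$, and exactly $c(x)$ are skeletal.

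For (a), each non-skeletal branch of $X$ at $x$ lies inside some virtual open disc component of $X \setminus \Gamma_S$; identifying such a component with a standard disc over $\CC$, Lemma~\ref{L:branches with slope 0} ensures that only finitely many branches within it carry nonzero slope of $\log s_i(M,S,\cdot)$. Since only finitely many disc components meet a neighborhood of $x$ and the $c(x)$ skeletal branches are finite, $\mu_i$ is a well-defined finite sum. For (b), with $x$ in the interior of a virtual open disc $D$, identifying $D_\CC$ with a standard disc reduces directly to Proposition~\ref{P:variation}(c); the sign conversion $\log s_i = -f_i$ and the convention that the upper branch corresponds to the $r$-decreasing direction turn the displayed inequality into $\mu_i \leq 0$.

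Part (c) is the main content. The plan is to exhibit a finite morphism $\pi$ of degree $d$ from a neighborhood of $x$ in $X_\CC$ onto a neighborhood of the Gauss point in $\DD_\CC$, constructed by lifting a nonconstant rational function $\overline{t} : Y_x \to \PP^1$ whose pole divisor is supported on the $c(x)$ skeletal points of $Y_x$; such $\overline{t}$ exists once $d$ is sufficiently large, by Riemann--Roch applied to a divisor of the form $\sum_{v \text{ skeletal}} n_v v$. Applying Proposition~\ref{P:variation}(c) to $\pi_* M$ at $\pi(x)$ yields $\mu_i(\pi_* M, \pi(x)) \leq 0$. A careful local analysis of how intrinsic subsidiary radii of $\pi_* M$ decompose in terms of those of $M$ at each branch of $x$, in the spirit of Proposition~\ref{P:descendant}, then yields an identity relating $\mu_i(\pi_* M, \pi(x))$ to $\mu_i(M, x)$ with a Riemann--Hurwitz correction: Riemann--Hurwitz applied to $\overline{t}$ gives total ramification $\deg R = 2g(x) - 2 + 2d$, the contribution of unramified non-skeletal branches cancels (reflecting the vanishing from Lemma~\ref{L:branches with slope 0}), and the net correction is $(2g(x) - 2 + c(x))i$ as claimed. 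Part (d) follows by tracking when Proposition~\ref{P:variation}(c) is an equality through this reduction, since the Riemann--Hurwitz correction is an exact equality.

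The main obstacle is the precise bookkeeping in (c), namely verifying the identity relating $\mu_i(\pi_* M, \pi(x))$ and $\mu_i(M, x)$ including the Riemann--Hurwitz correction distributed among the $c(x)$ skeletal branches (where $\pi$ is ramified, potentially wildly if $p > 0$). An alternative approach, likely cleaner, is to avoid the explicit cover entirely by using a formal semistable model of $X$ near $x$ and applying a Poincar\'e--Lelong-type Laplacian identity on it: the Euler characteristic $\chi(Y_x \setminus \{\text{skeletal points}\}) = 2 - 2g(x) - c(x)$ then appears directly as the boundary contribution, with the sign flip recovering the stated bound.
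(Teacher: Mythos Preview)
Your overall strategy for (c) and (d) --- pushforward along a finite map to the affine line, apply subharmonicity there, and correct via Riemann--Hurwitz --- is exactly the paper's. But two points in your execution are genuine gaps.

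For (a) when $x \in \Gamma_S$, your appeal to Lemma~\ref{L:branches with slope 0} does not work: each non-skeletal branch at $x$ is itself an entire disc component of $X \setminus \Gamma_S$, and there are infinitely many such components touching $x$, so that lemma (which bounds the nonzero-slope branches at a point \emph{inside} one fixed disc) says nothing about how many of these components carry nonzero slope. The paper instead first proves the whole theorem when $X$ is already a subspace of the affine line, by rerunning the argument behind Proposition~\ref{P:variation}(c): Frobenius descendants (Definition~\ref{D:global Frobenius descendant}, Propositions~\ref{P:descendant} and~\ref{P:off-center descendant}) reduce to the visible range $s_i(M,S,x) < \omega\rho_S(x)$, and then a cyclic vector together with Proposition~\ref{P:christol-dwork} gives a Newton-polygon description from which (a) and the genus-zero instance of (c) fall out simultaneously.

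For general $X$, the paper chooses a nonconstant $\overline{f}$ on the residue curve which is \emph{unramified at each branch where the slope is nonzero}, lifts it to $f \in \calO_{X,x}$, and (after harmlessly shrinking $X$ along some branches) obtains a finite \'etale $f\colon X \to X'$ with $X'$ in the affine line. With that choice, each preimage branch of residual ramification number $e$ contributes $(1-e)$ plus its own slope to the corresponding downstairs slope; at the nonzero-slope branches $e=1$ so the slopes pass through unchanged, and the $(1-e)$ contributions from the remaining zero-slope branches assemble via Riemann--Hurwitz into $(2g(x)-2+c(x))i$. Your $\overline{t}$, constrained only to have poles at the skeletal points, does not control ramification at the nonzero-slope branches. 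If one of those is (wildly) ramified under $\pi$, the relation between the subsidiary radii of $\pi_* M$ and of $M$ along it is no longer a clean additive formula --- Proposition~\ref{P:descendant} is not an apt model here --- and the bookkeeping you defer cannot be completed as described. The pole constraint on $\overline{t}$ buys nothing; the unramifiedness at the nonzero-slope branches is what makes the reduction go through.
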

\begin{proof}
We may assume $K = \CC$, so that $\kappa_K$ is algebraically closed. 
If $x \notin \Gamma_S$, by rescaling we may reduce the claims to an instance of Proposition~\ref{P:variation}(c),
so we may assume hereafter that $x \in \Gamma_S$.

Suppose first that $X$ is contained in the affine line; in this case, we may follow the proof of 
\cite[Theorem~11.3.2(c)]{kedlaya-book}. Namely, using Frobenius pushforwards as in
Definition~\ref{D:global Frobenius descendant}
(and using both Proposition~\ref{P:descendant} and Proposition~\ref{P:off-center descendant}),
we may reduce to the case where $s_i(M,S,x) < \omega \rho_S(x)$.
In this case, the claims follow by first using 
Corollary~\ref{C:cyclic vector} to choose an element of $M_x$ which is a cyclic
vector for $M_x \otimes_{\calO_{X,x}} \Frac(\calO_{X,x})$ for the derivation $\frac{d}{dt}$, then applying
Proposition~\ref{P:christol-dwork}. 

We now treat the case of general $X$.
Let $C$ be a
smooth projective connected curve over $\kappa_K$ with function field
$\kappa_{\calH(x)}$.
Choose a nonconstant $\overline{f} \in \kappa_{\calH(x)}$ of degree $d>0$, then
choose $f \in \calO_{X,x}$ with $x(f)=1$ lifting $\overline{f}$
which is unramified at each point corresponding to a branch named in (a).
Note that removing part of $X$ contained in a branch adds $i$ to both sides of the desired inequality
and is thus harmless; we can thus ensure that $f$ defines a finite \'etale map $X \to X'$ for $X'$
a subspace of the affine line.
Put $x' = f(x')$ and let $S'$ be the image of $S$. 
For each branch $U'$ of $X'$ at $x$, the slope of $\sum_{j=1}^{di} \log s_j(f_* M,S',U')$ can be computed
as follows. Let $P'$ be the point of $C$ corresponding to $U'$. For each point $P \in \overline{f}^{-1}(P')$
with multiplicity $m$ and ramification number $e$ (so that $e=m$ if the ramification at $P$ is tame),
let $U$ be the corresponding branch of $X$ at $x$; we then get a contribution of 
$1-e$ plus the slope of $\sum_{j=1}^i \log s_{j}(M,S,U)$
(as may be verified using Frobenius descendants).
We thus deduce the claim from the previous case plus the Riemann-Hurwitz formula.
\end{proof}

To show that the functions $s_i(M,S,\cdot)$ can be computed using some triangulation, we use the following criterion.
\begin{lemma} \label{L:sufficient triangulation}
Let $T$ be a triangulation containing $S$ with the following properties.
\begin{enumerate}
\item[(a)]
The set $\Gamma_T$ meets every connected component of $X \setminus \Gamma_S$. In particular, the retraction
$\pi_T$ exists (see Remark~\ref{R:retract}).
\item[(b)]
Along each edge of $\Gamma_T$, the functions $\log s_i(M,S,\cdot)$ are affine for $i=1,\dots,n$.
\item[(c)]
For each $x \in T$, for $i=1,\dots,m(x)$, the slope of $\log s_i(M,S,\cdot)$ along any nonskeletal branch
of $X$ at $x$ is $0$.
\end{enumerate}
Then for $i=1,\dots,n$, $\log s_i(M,S,\cdot)$ factors as the retraction $\pi_{T}$ followed by a piecewise affine
function on $\Gamma_{T}$. 
\end{lemma}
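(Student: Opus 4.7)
The plan is to verify the factorization point by point. If $y \in \Gamma_T$, the identity $s_i(M,S,y) = s_i(M,S,\pi_T(y))$ is tautological; hypothesis (b) combined with the local finiteness of $\Gamma_T$ then yields piecewise affinity on $\Gamma_T$. The substance of the lemma is therefore the case $y \in X \setminus \Gamma_T$.

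First I would let $V$ be the connected component of $X \setminus \Gamma_T$ containing $y$. Since the components of $X \setminus T$ are virtual open discs or annuli and annulus skeleta are included in $\Gamma_T$, the set $V$ is a virtual open disc whose closure meets $\Gamma_T$ in a single point $x := \pi_T(y)$. Using Lemma~\ref{L:base change radii} I would base extend to $\CC$ and lift $y$ and $x$; I would then identify (the relevant component of) $V$ with an open disc $\DD^\circ_{\gamma,\CC}$ in a coordinate $t$, with $x$ corresponding to the Gauss point $\zeta_{0,\gamma}$ for $\gamma = \rho_S(x)$. Under this identification, $M|_V$ is a differential module over $R_{[0,\gamma)}$, and the functions $s_i(M,S,\cdot)$ restrict to the disc-theoretic radii of convergence of Definition~\ref{D:open unit disc}.

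The key step is then to verify the hypotheses of Lemma~\ref{L:variation0 convexity} with parameters $\gamma$ and $m = m(x)$. Hypothesis (b) of that lemma is immediate: since $s_{m(x)+1}(M,S,x) \geq \rho_S(x) = \gamma$ by definition of the spectral cutoff, the piecewise affine continuity of Lemma~\ref{L:piecewise affine restriction} gives $\limsup_{\rho\to\gamma^-} s_{m(x)+1}(M,S,\zeta_{0,\rho}) \geq \gamma$. Hypothesis (a) requires that $s_i(M,S,\zeta_{0,\rho})$ be constant and strictly less than $\gamma$ on a punctured left neighborhood of $\gamma$, for $i=1,\dots,m(x)$. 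The branch of $X$ at $x$ pointing into $V$ is nonskeletal with respect to $T$, so when $x \in T$ hypothesis (c) supplies slope zero along this branch, giving local constancy, with the strict inequality following from continuity. Lemma~\ref{L:variation0 convexity} then yields constancy of $s_i(M,\cdot)$ on $V$ for every $i$, and continuity at $x$ identifies the common value with $s_i(M,S,x)$.

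The step I expect to need the most care is the case $x \in \Gamma_T \setminus T$, in which $x$ is an interior point of an edge of $\Gamma_T$ (necessarily of type $2$ in the interior of an annulus skeleton), so hypothesis (c) is not directly available. At such $x$ the branches of $X$ consist of two skeletal ones along the edge and infinitely many nonskeletal ones into virtual open discs. Hypothesis (b) forces the sum of the two skeletal slopes of $\sum_{j=1}^i \log s_j(M,S,\cdot)$ at $x$ to vanish for every $i$. On the other hand, Theorem~\ref{T:global subharmonic}(c,d) with $g(x) = 0$ and $c(x) = 2$ gives $\mu_i \leq 0$ for $i \leq m(x)$, with equality at $i = m(x)$. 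Moreover, along any nonskeletal branch the slope of $-F_i = \sum_{j=1}^i \log s_j$ at $x$ is nonnegative, by the monotonicity statement of Proposition~\ref{P:variation}(d) applied after rescaling $V$ as the standard disc around its center. Since nonnegative slopes summing to something nonpositive must all vanish, each nonskeletal slope of each $\log s_i$ (for $i \leq m(x)$) is zero, reducing this case to the one already treated.
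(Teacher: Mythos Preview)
Your argument is correct and follows the same route as the paper's, which tersely extends condition~(c) from $T$ to all of $\Gamma_T$ via Proposition~\ref{P:variation}(c,d) and then invokes Lemma~\ref{L:variation0 convexity} together with Lemma~\ref{L:affine at higher genus}; your subharmonicity-plus-monotonicity treatment at interior points of $\Gamma_T$ is exactly that first step, with Theorem~\ref{T:global subharmonic} standing in for its affine-line precursor. One point to tighten: when $x\notin\Gamma_S$, apply Lemma~\ref{L:variation0 convexity} to $M$ restricted to the ambient $S$-disc $W\supseteq V$ (so $\beta=1$, $\gamma=\rho_S(x)<1$) rather than to $M|_V$ over $R_{[0,\gamma)}$, since otherwise the radii for $i>m(x)$ are capped at $\gamma$ and you do not directly recover constancy of $s_i(M,S,\cdot)$; relatedly, the clause of Theorem~\ref{T:global subharmonic} to cite at such $x$ is (b) rather than (c), though the conclusion $\mu_i\le 0$ is the same.
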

\begin{proof}
Note that (b) implies that (c) holds also for $x \in \Gamma_T$ by Proposition~\ref{P:variation}(c,d).
We may thus deduce the claim using Lemma~\ref{L:variation0 convexity} (applied after enlarging $K$ to turn
a virtual open disc into a true open disc) and Lemma~\ref{L:affine at higher genus}.
\end{proof}

We then obtain the following generalization of Theorem~\ref{T:strict skeleton},
which recovers the main results of \cite{pulita-poineau2, pulita-poineau3, pulita-poineau}.
\begin{theorem} \label{T:global triangulation}
There exists a triangulation $T$
containing $S$
such that $\Gamma_T$ meets every connected component of $X \setminus \Gamma_S$
(so the retraction $\pi_T$ exists by Remark~\ref{R:retract})
and each function $\log s_i(M,S,\cdot)$ factors as $\pi_{T}$ followed by a piecewise affine
function on $\Gamma_{T}$. In particular, the functions $s_1(M,S,x), \dots, s_n(M,S,x)$ on $X$ are continuous. 
\end{theorem}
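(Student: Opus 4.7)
The plan is to enlarge $S$ to a weak triangulation $T$ (weak strict if $S$ is weak strict) satisfying the three hypotheses of Lemma~\ref{L:sufficient triangulation}, and then invoke that lemma. Continuity of the $s_i(M,S,\cdot)$ is then immediate from piecewise affineness together with continuity of the retraction $\pi_T$.

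First, I would handle the connected components of $X \setminus \Gamma_S$ which are virtual open discs. Each such component $U$ has a single distinguished boundary point $x_0 \in \Gamma_S$; after base extension to $\CC$, $\bar U$ is a disjoint union of closed Berkovich discs. Apply Theorem~\ref{T:strict skeleton} (component by component) to produce the minimal strict controlling graph $G_U$ consisting of finitely many type-$2$ vertices controlling the functions $s_i(M,S,\cdot)$ on $\bar U$. By Galois invariance of the $s_i(M,S,\cdot)$ (Lemma~\ref{L:base change radii}) and Lemma~\ref{L:disc points quotient}, the vertex set of $G_U$ descends to a finite set of type-$2$ points in $U$. Adding all these, together with the corresponding vertices inside $\bar U$, enlarges $S$ in a locally finite way since $\bar U$ is compact and $S$ is already locally finite. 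For components of $X \setminus \Gamma_S$ that are virtual open annuli, the skeleton $\Gamma_S$ already passes through them; by Proposition~\ref{P:variation}(a,b,e), the functions $\log s_i(M,S,\cdot)$ along each annulus skeleton are continuous piecewise affine with slopes drawn from the finite set $\frac{1}{1}\ZZ \cup \cdots \cup \frac{1}{n}\ZZ$ and are (after summation) convex, so there are only finitely many breaks overall. Adding these break points as new vertices yields a weak triangulation satisfying conditions (a) and (b) of Lemma~\ref{L:sufficient triangulation}.

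Second, I would enforce condition (c) of Lemma~\ref{L:sufficient triangulation}. By Theorem~\ref{T:global subharmonic}(a), at every vertex $x$ of the current (locally finite) weak triangulation, only finitely many branches of $X$ at $x$ carry a nonzero slope for any of the $\log s_i(M,S,\cdot)$. For each such branch that is not yet skeletal, add a single type-$2$ point inside it, converting the branch into a skeletal branch without destroying local finiteness. Call the resulting weak triangulation $T$; since every added point is of type $2$, $T$ is weak strict whenever $S$ is. Finally, apply Lemma~\ref{L:sufficient triangulation}: condition (a) holds because in step one I placed vertices inside every component of $X \setminus \Gamma_S$ that was a virtual open disc (so $\Gamma_T$ meets every connected component of $X \setminus \Gamma_S$), condition (b) holds because all breaks of the piecewise affine functions along the skeleton edges have been absorbed into the vertex set, and condition (c) holds by the explicit construction just performed. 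The lemma then gives the desired factorization of each $\log s_i(M,S,\cdot)$ through $\pi_T$.

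The main obstacle is ensuring that the controlling graphs produced by Theorem~\ref{T:strict skeleton} inside virtual open discs and the breaks on virtual open annulus skeleta are globally finite, not merely locally finite inside each open component; otherwise $T$ would fail to be locally finite at the boundary point $x_0 \in \Gamma_S$. This is handled by always working with the closure of each component: for discs via the compactness of $\bar U$ and finiteness of the minimal controlling graph in Theorem~\ref{T:strict skeleton}(c), and for annuli via the convexity and finite-slope-set arguments cited above. A secondary subtlety is preserving strictness: the vertices furnished by Theorem~\ref{T:strict skeleton}(c) are of type $2$, while for virtual annuli break points of $\log s_i(M,S,\cdot)$ have intercepts in the divisible closure of $\log|K^\times|$ by Proposition~\ref{P:rational intercepts}, and under the hypothesis that $S$ is weak strict the relevant intercepts lie in $\log|K^\times|$ itself, giving type-$2$ break points.
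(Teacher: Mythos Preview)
Your global construction has the right ingredients, but Step 2 contains a genuine gap. Adding a single type-2 point $v'$ in a nonskeletal branch at some $x$ does make that branch skeletal at $x$, but $v'$ is now a vertex of $T$ and condition (c) of Lemma~\ref{L:sufficient triangulation} must also hold \emph{there}. There is no reason the slopes along the nonskeletal branches at $v'$ should vanish: the functions may continue to vary inside the open disc beyond $v'$. What you actually need to add is the full controlling graph of that disc branch (via Theorem~\ref{T:strict skeleton} applied to its closure), exactly as you did in Step 1a for the disc components of $X \setminus \Gamma_S$; the branches in question are the discs hanging off the annulus skeleta at the break points you inserted in Step 1b, and Step 1 never touches them. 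A smaller omission: Step 1a does not force $\Gamma_T$ to meet a disc component $U$ whose controlling graph happens to be trivial, so condition (a) can fail there; this is easily repaired by throwing in one arbitrary type-2 point in each such $U$. (Also, your strictness argument is slightly off: break points on annulus skeleta are \emph{always} of type 2, independent of whether $S$ is strict, since Proposition~\ref{P:rational intercepts} already places the break radii in $|\CC^\times|$.)

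The paper sidesteps all of this bookkeeping by working locally rather than globally. Since $X$ is locally compact, it suffices to produce around each $x \in X$ a star-shaped neighborhood on which the conditions of Lemma~\ref{L:sufficient triangulation} hold. For $x \notin \Gamma_S$ one invokes Theorem~\ref{T:strict skeleton} on a closed subdisc; for $x \in \Gamma_S$ one uses Theorem~\ref{T:global subharmonic}(a) to single out the finitely many branches with nonzero slope, makes just those skeletal, and uses Proposition~\ref{P:variation}(a) to obtain affinity on a short open star around $x$. This local approach also avoids your implicit reliance on $\overline{U}$ being literally a closed disc carrying $M$ as a module over $R_{[0,\beta]}$, which is true but requires justification when the boundary point has positive genus (compare the restriction-of-scalars maneuver in the proof of Lemma~\ref{L:affine at higher genus}).
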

\begin{proof}
Since $X$ is locally compact, it suffices to check the claim locally around some $x \in X$.
If $x \notin \Gamma_S$, the claim follows from Theorem~\ref{T:strict skeleton},
so we need only consider $x \in \Gamma_S$.
By Theorem~\ref{T:global subharmonic}(a),
we can choose $T$ so that for $i=1,\dots,m(x)$,
the slope of $\log s_i(M,S,\cdot)$ is 0 along each $T$-nonskeletal branch of $X$
at $x$. By Proposition~\ref{P:variation}(a), we may
draw an open star in $\Gamma_{T}$ around $x$ such that on each edge,
the functions $\log s_i(M, S,\cdot)$ are affine for $i=1,\dots,n$.
On this star, the conditions of Lemma~\ref{L:sufficient triangulation} are satisfied,
so the desired result follows.
\end{proof}

One can also change the functions to match the new triangulation without disturbing the conclusion.
\begin{defn}
We say that a triangulation $T$ is \emph{controlling} for $M$ if the functions $s_i(M,T,\cdot)$
also factor as the retraction $\pi_T$ followed by some piecewise affine functions on $\Gamma_T$.
That is, we must be able to take $T = S$ in the conclusion of Theorem~\ref{T:global triangulation}.
\end{defn}

\begin{cor} \label{C:global triangulation}
In the notation of Theorem~\ref{T:global triangulation}, the triangulation $T$ is controlling.
\end{cor}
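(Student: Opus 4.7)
The plan is to exploit the identity $\overline{s}_i(M, S, x)/\rho_S(x) = \overline{s}_i(M, T, x)/\rho_T(x)$, valid for all $x \in X$ and $i = 1,\dots,n$: by Proposition~\ref{P:compare radii}, both ratios compute the $i$-th intrinsic subsidiary radius of $M$ at $x$ (capped at $1$). Since Theorem~\ref{T:global triangulation} has already established that $\log s_i(M, S, \cdot)$ factors as $\pi_T$ followed by a piecewise affine function on $\Gamma_T$, the task reduces to tracking how the normalization changes from $\rho_S$ to $\rho_T$.

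I would first verify factorization through $\pi_T$ pointwise. For $x \in \Gamma_T$ this is tautological. For $x \notin \Gamma_T$, set $x_T = \pi_T(x)$ and let $U$ denote the connected component of $X \setminus \Gamma_T$ containing $x$ (a virtual disc with unique $\Gamma_T$-boundary $x_T$), and $V$ the connected component of $X \setminus \Gamma_S$ containing $x$. After base change to $\CC$ via Lemma~\ref{L:base change radii} and working one lifted component at a time, identify $V_\CC$ with an open unit disc so that $x_T$ corresponds to a point of diameter $r := \rho_S(x_T)$; the sub-component of $V_\CC$ corresponding to $U$ is then an open disc of radius $r$, and the $U$-identification is obtained from the $V$-identification by rescaling the coordinate by $r^{-1}$. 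This rescaling yields, for $y \in U$,
\[
\rho_T(y) = \rho_S(y)/r, \qquad s_i(M, T, y) = \min\{1, s_i(M, S, y)/r\},
\]
where the cap at $1$ accounts for horizontal sections extending beyond $U$. Since Theorem~\ref{T:global triangulation} makes $s_i(M, S, \cdot)$ constant (equal to $s_i(M, S, x_T)$) on the fibre $\pi_T^{-1}(x_T) \supseteq U$, the right-hand side is constant on $U$. A second application of Proposition~\ref{P:compare radii} at $x_T$ in the $V$-identification gives $s_i(M, T, x_T) = \min\{1, s_i(M, S, x_T)/r\}$, so the constant value matches $s_i(M, T, x_T)$. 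The case $x_T \in \Gamma_S$ is subsumed, corresponding to $r = 1$ and no rescaling.

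Next I would verify piecewise affinity of $\log s_i(M, T, \cdot)$ on $\Gamma_T$. On $\Gamma_S$, both $\rho_S$ and $\rho_T$ equal $1$, so $s_i(M, T, \cdot) = s_i(M, S, \cdot)$ on $\Gamma_S$, and piecewise affinity is inherited from Theorem~\ref{T:global triangulation}. On an edge of $\Gamma_T$ lying inside a component $V$ of $X \setminus \Gamma_S$, we have $\rho_T \equiv 1$ while $\log \rho_S$ varies linearly along the root path parametrizing the edge in the $V$-identification; the formula $\log s_i(M, T, y) = \min\{0, \log s_i(M, S, y) - \log \rho_S(y)\}$ then exhibits $\log s_i(M, T, \cdot)$ as piecewise affine on the edge, since $\log s_i(M, S, \cdot)$ is piecewise affine by Theorem~\ref{T:global triangulation} and the pointwise minimum of two piecewise affine functions is piecewise affine.

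The main obstacle will be bookkeeping: carefully matching the open-unit-disc identifications of $U$ and $V$, verifying the rescaling formula in the various edge cases (in particular $x_T \in \Gamma_S$ versus $x_T \in \Gamma_T \setminus \Gamma_S$, and the appearance of the cap $s_i(M, T, y) = 1$ when horizontal sections extend beyond $U$), and checking that the retraction $\pi_T$ interacts cleanly with the sub-disc structure of $U \subseteq V$. No new analytic input is required beyond what is already provided by Theorem~\ref{T:global triangulation} and Proposition~\ref{P:compare radii}.
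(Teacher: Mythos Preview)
Your proof is correct and rests on the same key observation as the paper's, namely that the truncated ratio $\overline{s}_i(M,\ast,x)/\rho_\ast(x)$ computes the $i$-th intrinsic subsidiary radius of $M$ at $x$ (Proposition~\ref{P:compare radii}) and is therefore independent of whether one normalizes by $S$ or by $T$. The execution, however, differs. The paper's argument is a one-liner: it observes that conditions (a) and (b) of Lemma~\ref{L:sufficient triangulation} are really statements about intrinsic subsidiary radii, hence remain valid when the reference triangulation $S$ is replaced by $T$, and then re-invokes that lemma for the pair $(T,T)$. You instead carry out the comparison explicitly, writing down the rescaling formula $s_i(M,T,y) = \min\{1, s_i(M,S,y)/\rho_S(x_T)\}$ on each virtual disc $U$ of $X \setminus \Gamma_T$ and checking factorization and piecewise affinity by hand. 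Your route is longer but more self-contained, and it makes transparent exactly where the cap at $1$ enters (horizontal sections extending past the smaller disc $U$); the paper's route is quicker but leaves the reader to unpack what ``stated in terms of intrinsic subsidiary radii'' means for condition~(b), which is not entirely immediate given the $\min$ in the conversion formula.
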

\begin{proof}
This follows from Theorem~\ref{T:global triangulation} and the fact that conditions (a,b)
of Lemma~\ref{L:sufficient triangulation} can be stated in terms of intrinsic subsidiary radii, 
and so remain valid if we replace $S$ by $T$.
\end{proof}
\begin{cor}
Let $T$ be a triangulation containing $S$ with the following properties.
\begin{enumerate}
\item[(a)]
The set $\Gamma_T$ meets every connected component of $X \setminus \Gamma_S$.
\item[(b)]
Along each edge of $\Gamma_T$, the functions $\sum_{i=1}^n \log s_i(M,S,\cdot)$ and $\sum_{i=1}^{n^2} \log s_i(\End(M),S,\cdot)$ are affine for $i=1,\dots,n$.
\item[(c)]
For each $x \in T$, the slope of $\sum_{i=1}^{m(x)}\log s_i(M,S,\cdot)$ along any nonskeletal branch
of $X$ at $x$ is $0$.
\end{enumerate}
Then for $i=1,\dots,n$, $\log s_i(M,S,\cdot)$ factors as the retraction $\pi_{T}$ followed by a piecewise affine
function on $\Gamma_{T}$. In particular, by Corollary~\ref{C:global triangulation}, $T$ is controlling.
\end{cor}

\begin{proof}
It suffices to verify the conditions of Lemma~\ref{L:sufficient triangulation}. Condition (a) is true by hypothesis. Condition (b) holds by Lemma~\ref{L:spectral decomposition exists}.
To check condition (c), note that
for $i=1,\dots,m(x)$,
the slope of $\log s_i(M,S,\cdot)$ at $x$ is nonnegative by
Proposition~\ref{P:transfer}, but the sum of these slopes is 0 so each slope individually must equal 0.
\end{proof}

\begin{cor}
There exists a strict triangulation $T$ which is controlling for $M$.
\end{cor}
\begin{proof}
We construct an increasing sequence of triangulations $T_0,\dots,T_n$ such that for $i=0,\dots,n$, the retraction $\pi_{T_i}$ exists and for $j = 1,\dots,i$, the functions $\log s_j(M,T_i,\cdot)$ factor as $\pi_{T_i}$ followed by a piecewise affine function on $\Gamma_{T_i}$.
To begin, let $T_0$ be any strict triangulation of $M$ for which the retraction $\pi_{T_0}$ exists.
Given $T_i$ for some $i \in \{0,\dots,n-1\}$, by Theorem~\ref{T:global triangulation} there exists a triangulation $T_{i+1}$ containing $T_i$ such that $\log s_{i+1}(S,T_i, \cdot)$ factors as $\pi_{T}$ followed by a piecewise affine function on $\Gamma_T$. If $i=0$, then 
Proposition~\ref{P:divisible closure} ensures that $T_{i+1}$ can be chosen to be strict. If $i>0$, we may make the same argument after applying Proposition~\ref{P:decompose disc} to separate the first $i-1$ radii in each disc.
\end{proof}

\begin{remark}
The methods of \cite{baldassarri-kedlaya, pulita-poineau2, pulita-poineau3, pulita-poineau},
when considered without reference to this paper,
can only prove a weaker version of Theorem~\ref{T:global triangulation}: they only provide a controlling
triangulation over a sufficiently large analytic field $K'$ containing $K$. As in Remark~\ref{R:pulita}, the problem is that 
this triangulation may involve vertices which project to type 4 points of the original curve,
which our methods are able to rule out. In the language of \cite{baldassarri},
we are able to exhibit a controlling strictly semistable model already over $\CC$, 
whereas the methods of \cite{pulita-poineau2, pulita-poineau3, pulita-poineau} provide such
a model only over a possibly larger algebraically
closed analytic field containing $\CC$.
\end{remark}

\begin{remark}
One can also give a variant of Theorem~\ref{T:global triangulation} for meromorphic (possibly irregular) connections; in this case, one must allow triangulations to have vertices at type 1 points (namely the poles of the connection). This result is described in \cite{baldassarri-kedlaya}.
\end{remark}

\subsection{Clean decompositions}
\label{subsec:clean}

One has an analogue of the spectral decomposition for the stalk of $M$ at a point $x \in X$.
Using Theorem~\ref{T:global triangulation}, we can extend this decomposition to specific subspaces of $X$.
\begin{lemma} \label{L:pointwise decomposition}
Choose $x \in X$ of type 2 or 3.
\begin{enumerate}
\item[(a)]
There exists a unique direct sum decomposition $M_x = \bigoplus_i N_i$ whose base extension to $\calH(x)$
is the spectral decomposition.
\item[(b)]
There exists a finite \'etale extension $S$ of $\calO_{X,x}$ such that $M_x \otimes_{\calO_{X,x}} S$
admits a direct sum decomposition whose base extension to $\calH(x) \otimes_{\calO_{X,x}} S$ is a
refined decomposition.
\end{enumerate}
\end{lemma}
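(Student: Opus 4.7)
The plan is to combine Theorem~\ref{T:global triangulation} with Proposition~\ref{P:decompose annulus} (for (a)) or Theorem~\ref{T:refined decomposition} (for (b)), applied on each branch of $X$ at $x$, and then to patch across $x$ by passing to stalks. Throughout I will freely use that the local ring $\calO_{X,x}$ is henselian, a standard fact about the local rings of good Berkovich spaces at any point.

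For (a), first use Theorem~\ref{T:global triangulation} to enlarge $S$ to a weak triangulation $T$ which is controlling for $M$ and whose skeleton contains $x$. Choose an affinoid neighborhood $U$ of $x$ small enough that $U \setminus \{x\}$ is a disjoint union of virtual open annuli $U_\beta$ (one for each branch of $X$ at $x$) and that on each $U_\beta$ all of the functions $f_i(M,T,\cdot)$ are affine. Iteratively applying Proposition~\ref{P:decompose annulus} on each $U_\beta$, after base-changing to handle the virtual nature of the annulus and grouping summands whose spectral radii coincide at $x$, produces a direct sum decomposition $M|_{U_\beta} = \bigoplus_s M^\beta_s$ indexed by the spectral radii $s$ of the spectral decomposition $V = \bigoplus_s V_s$ of $V = M \otimes_{\calO_{X,x}} \calH(x)$. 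The corresponding horizontal idempotents $e^\beta_s \in \End(M)(U_\beta)$ specialize at $x$, independently of $\beta$, to the idempotent $e_s \in \End(V)$ cutting out $V_s$, by uniqueness of the spectral decomposition over $\calH(x)$ from Proposition~\ref{P:field decomposition1}(a).

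The tuple $(e^\beta_s)_\beta$ then defines a horizontal section of $\End(M)$ on $U \setminus \{x\}$ whose seminorm at $x$ equals $x(e_s)$ and is therefore bounded; by a Riemann-type extension result for coherent sheaves on a smooth Berkovich curve across a single point of type 2 or 3 (which rests on the henselianity of $\calO_{X,x}$), every such bounded horizontal section extends uniquely across $x$ to an element $e_s \in \End(M)_x$. The resulting $e_s$ are pairwise orthogonal idempotents summing to the identity, horizontal for the induced connection on $\End(M)$, and hence cut out the desired decomposition $M_x = \bigoplus_s N_s$ with $N_s \otimes_{\calO_{X,x}} \calH(x) = V_s$. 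Uniqueness is then immediate, since any other such decomposition restricts on each $U_\beta$ to the unique decomposition provided by Proposition~\ref{P:decompose annulus} there, and thus determines the same idempotents in the stalk.

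For (b), Proposition~\ref{P:field decomposition1}(b) supplies a finite tamely ramified extension $E'$ of $\calH(x)$ over which $V$ admits a refined decomposition; by henselianity of $\calO_{X,x}$, this lifts uniquely to a finite étale extension $S$ of $\calO_{X,x}$ with $S \otimes_{\calO_{X,x}} \calH(x) = E'$, corresponding geometrically to a finite étale cover of a neighborhood of $x$. Running the branch-by-branch strategy over $S$, now using Theorem~\ref{T:refined decomposition} (and possibly absorbing further tamely ramified extensions into $S$) to produce a refined decomposition on each branch of the covering curve at $x$, and gluing as in (a), yields the desired decomposition of $M_x \otimes_{\calO_{X,x}} S$. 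The main obstacle will be the patching step in (a): while the gluing of a compatible system of horizontal idempotents across a point of type 2 or 3 via bounded sections is natural on cohomological grounds, verifying the bounded-section extension carefully enough to accommodate the many-branch geometry at type 2 points and its interaction with the connection is the delicate part of the argument; once this is established, the remaining reasoning is bookkeeping and appeals to uniqueness from the field case.
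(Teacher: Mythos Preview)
Your approach diverges from the paper's and contains a genuine gap at exactly the point you flag as delicate.

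First, a geometric correction: for $x$ of type $2$, it is not true that $U \setminus \{x\}$ decomposes into virtual open annuli. All but finitely many branches of $X$ at $x$ meet any small neighborhood in virtual open \emph{discs}, not annuli; only the finitely many branches pointing toward the boundary of an affinoid $U$ give annuli. So your branch-by-branch argument would need Proposition~\ref{P:decompose disc} on most branches and Proposition~\ref{P:decompose annulus} on finitely many others. This is fixable, but it is not what you wrote.

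More seriously, the patching step is not established. You propose to extend a compatible family of horizontal idempotents $(e^\beta_s)_\beta$ across $x$ by a ``Riemann-type extension result for bounded sections,'' but no such result is proved or cited, and for a type $2$ point you are asking to glue sections defined on infinitely many disjoint components of $U \setminus \{x\}$ into a single element of the stalk $\calO_{X,x}$. Boundedness of the individual seminorms at $x$ is not by itself enough: one needs uniform control on how the idempotents approach their common value as one moves toward $x$ along each branch, and one needs to know that the resulting collection actually comes from an element of some affinoid algebra over a neighborhood of $x$. This is precisely the content that your proposal defers, so as written it is a sketch rather than a proof.

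The paper avoids this gluing entirely. Its argument for (a) uses the pushforward trick from the proof of Theorem~\ref{T:global subharmonic}: choose a finite map from a neighborhood of $x$ to an open subset of the affine line (via a lift of a nonconstant element of $\kappa_{\calH(x)}$), push $M$ forward, and thereby reduce to the case where $X$ sits inside the affine line. In that case one can invoke \cite[Theorem~12.3.2]{kedlaya-book} directly, which constructs the spectral decomposition over the local ring in one stroke rather than branch by branch. Part (b) then follows by the same reduction together with henselianity of $\calO_{X,x}$ to lift the requisite tamely ramified extension of $\calH(x)$ to a finite \'etale $S$. In effect, the paper's route trades your analytic extension problem for an algebraic reduction that lands in a situation already handled in the book; your route would require proving a new extension lemma that you have not supplied.
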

\begin{proof}
Part (a) follows by using
the pushforward argument from the proof of Theorem~\ref{T:global subharmonic}
to reduce to the case where $X$ is contained in the affine line over $K$; this cases is the Dwork-Robba decomposition theorem
\cite[\S 4, Theorem, p. 20]{dwork-robba}, or can alternatively be derived by following the proof of \cite[Theorem~12.3.2]{kedlaya-book}.
Part (b) follows similarly upon noting that the local ring $\calO_{X,x}$ is henselian.
\end{proof}

\begin{theorem} \label{T:clean decomposition}
Let $T$ be a controlling triangulation for $M$.
\begin{enumerate}
\item[(a)]
For $x \notin \Gamma_T$, let $U$ be the branch of $\pi_T(x)$ containing $x$.
Then the restriction of $M$ to $U$ splits as a direct sum in which for each summand $N$,
there exists a constant $c>0$ such that $s_i(N,T,y) = c$ for $i=1,\dots,\rank(N)$ and $y \in U$.
\item[(b)]
For $x \in \Gamma_T$,
let $E$ be the open star around $x$ (i.e., the union of $x$ with the interiors of the edges of $\Gamma_T$ incident
upon $x$) and put $U = \pi_T^{-1}(E)$. 
Then there exists a unique direct sum decomposition of $M$ whose base extension to $\calH(x)$ is the spectral decomposition.
\end{enumerate}
\end{theorem}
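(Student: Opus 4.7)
The plan is to assemble the decomposition piece by piece from the decomposition theorems on discs and annuli established earlier.

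For part (a), because $T$ is controlling for $M$ and $\pi_T$ is constant on $U$, each function $s_i(M,T,\cdot)$ is constant on $U$. After base extension to $\CC$, the virtual open disc $U$ becomes a disjoint union of honest open discs; on each of these we iteratively invoke Proposition~\ref{P:decompose disc} at those indices $i$ where $s_i$ strictly exceeds $s_{i+1}$, peeling off summands on which the first $i$ intrinsic subsidiary radii all coincide. Because the resulting decomposition is uniquely characterized by the (constant) spectrum of radii, it is invariant under the action of $\Aut(\CC/K)$ and descends to the desired decomposition of $M|_U$ over $K$.

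For part (b), first apply Lemma~\ref{L:pointwise decomposition}(a) to obtain the spectral decomposition of the stalk $M_x$, furnishing orthogonal idempotents in $\End(M_x)$. These idempotents propagate to a small analytic neighborhood $V$ of $x$ in $U$ and split $M|_V$ into pieces whose intrinsic subsidiary radii at $x$ coincide. Next, for each edge $e$ of $\Gamma_T$ incident to $x$, consider the preimage $\pi_T^{-1}(\inte(e))$ of the interior of $e$: this is a virtual open annulus on which each $\log s_i(M,T,\cdot)$ is affine along $\inte(e)$ (since $T$ is controlling) and constant on the fibers of $\pi_T$. After reducing to the case of a true open annulus by base extension to $\CC$, and via a finite \'etale cover to the affine line (constructed as in the proof of Theorem~\ref{T:global subharmonic}) when $g(x)>0$, apply Proposition~\ref{P:decompose annulus} iteratively at those indices $i$ for which the affine functions $s_i$ and $s_{i+1}$ differ. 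This yields a spectral decomposition of $M$ over each edge-tube, which can then be extended across each attached virtual open disc via part (a). The various local decompositions match on overlaps by the uniqueness clauses of Proposition~\ref{P:decompose disc} and Proposition~\ref{P:decompose annulus} together with Lemma~\ref{L:pointwise decomposition}(a), and therefore glue to a decomposition of $M|_U$. By construction each summand $N$ has the property that its radii $s_i(N,T,\cdot)$ coincide for all $i \leq \rank(N)$; these common functions are the pullbacks via $\pi_T$ of piecewise affine functions on $\Gamma_T \cap U$, so $T \cap U$ is controlling for $N$.

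The principal obstacle is the compatible gluing across the various pieces in case (b), especially when $g(x) > 0$ so that $\pi_T^{-1}(\inte(e))$ is not intrinsically an annulus over $K$. We handle this by mimicking the argument in the proof of Theorem~\ref{T:global subharmonic}: choose a nonconstant $\bar f \in \kappa_{\calH(x)}$ unramified at each place corresponding to an edge of $\Gamma_T$ at $x$, lift it to $f \in \calO_{X,x}$, and replace $M$ by its pushforward along the resulting finite \'etale map onto a subspace of the affine line. The matching conditions at boundary points then reduce, via uniqueness, to the assertion that the distinct values of the radii on adjacent tubes have matching specializations at common limit points, which is furnished by Lemma~\ref{L:affine at higher genus} and the continuity established in Theorem~\ref{T:global triangulation}.
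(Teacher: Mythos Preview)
Your approach is essentially the same as the paper's: part (a) via Proposition~\ref{P:decompose disc} on the disc $U$, and part (b) by first invoking Lemma~\ref{L:pointwise decomposition} to decompose near $x$, then extending along each branch of $x$ using part (a) for $T$-nonskeletal branches and Proposition~\ref{P:decompose annulus} for $T$-skeletal branches, with gluing handled by the uniqueness clauses.

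One unnecessary detour: when $g(x)>0$, you invoke the pushforward construction from Theorem~\ref{T:global subharmonic} to reduce the edge-tube $\pi_T^{-1}(\inte(e))$ to an annulus in the affine line. This is not needed. Regardless of $g(x)$, the component of $X \setminus T$ whose skeleton is $\inte(e)$ is already a virtual open annulus by the definition of a triangulation, and Proposition~\ref{P:decompose annulus} applies to it directly after base extension to $\CC$ (with descent via uniqueness, just as you do elsewhere). The pushforward argument belongs to the proof of Lemma~\ref{L:pointwise decomposition} itself, where one must analyze the stalk at $x$; once that lemma is in hand, the genus of $x$ plays no further role in the decomposition.
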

\begin{proof}
Part (a) is immediate from Proposition~\ref{P:decompose disc}. To obtain (b), first apply
Lemma~\ref{L:pointwise decomposition} to obtain a decomposition over an uncontrolled open neighborhood $V$
of $x$. Note that $V$ already contains all but finitely many branches of $X$ at $x$. For each remaining branch $W$, it remains to construct a second decomposition which agrees with the first one on $V \cap W$. If $W$ is $T$-nonskeletal, this is immediate from (a). If $W$ is $T$-skeletal, 
we may apply Lemma~\ref{L:spectral decomposition exists} to obtain a decomposition in which each summand has a unique spectral radius along $E \cap W$. In each summand has a unique limiting spectral radius at $x$. If we group summands by limiting spectral radius, the resulting decomposition
agrees with the original one on $V \cap W$, as desired.
\end{proof}

\begin{remark}
The conclusion of Theorem~\ref{T:clean decomposition}(b) is best possible in certain senses. For one, one cannot ensure that the base extension of the decomposition to $\calH(y)$ is the spectral decomposition at any $y \in E \setminus \{x\}$, because of the coarsening step in the proof of Theorem~\ref{T:clean decomposition}.
Similarly, one cannot extend the decomposition to another vertex of $\Gamma_T$.
\end{remark}

\begin{remark}
The decompositions appearing in Theorem~\ref{T:clean decomposition} are analogues of the \emph{good formal structures} for formal meromorphic connections described in \cite{kedlaya-goodformal1,
kedlaya-goodformal2}. Additional analogues in the $p$-adic setting also appear in
\cite{kedlaya-xiao}. The decompositions given here can be used to obtain a global index formula
for connections on analytic curves, in the style of the work of Robba
\cite{robba-index1, robba-index2, robba-index3, robba-index4} and
Christol and Mebkhout \cite{cm1, cm2, cm3, cm4}. Such a formula will appear in a forthcoming paper of Baldassarri and the author.
\end{remark}

\begin{remark}
Using these results, it is tempting to look for a more global version of Theorem~\ref{T:p-adic turrittin}.
When $p>0$, one might even guess that every connection \'etale-locally satisfies the Robba condition.
However, this guess is incorrect as shown by Remark~\ref{R:bad tannakian example},
and it is not immediately obvious to us how to salvage the statement.

One motivation for doing so would be to show that the behavior of radii of convergence
for connections arising from discrete representations of the geometric fundamental group,
which can be explained in terms of Faber's Berkovich-theoretic ramification locus \cite{faber1, faber2}, is in fact
completely representative of the general case.
\end{remark}

\end{document}